\newcommand{\C}{\mathbb{C}}
\renewcommand{\P}{\mathbb{P}}
\newcommand{\R}{\mathbb{R}}
\newcommand{\Z}{\mathbb{Z}}
\newcommand{\cptwo}{\C\P^2}
\newcommand{\cpone}{\C\P^1}
\newcommand{\cptwobar}{\overline{\C\P}\,\!^2}
\newcommand{\rptwo}{\R\P^2}
\newcommand{\Ac}{\mathcal{A}}
\newcommand{\Bc}{\mathcal{B}}
\newcommand{\Cc}{\mathcal{C}}
\newcommand{\Dc}{\mathcal{E}_6}
\newcommand{\Ec}{\mathcal{E}_3}
\newcommand{\Gc}{\mathcal{G}}
\newcommand{\Hc}{\mathcal{H}}
\newcommand{\Lc}{\mathcal{L}}
\newcommand{\Vc}{\mathcal{V}}
\newcommand{\bd}{\mathbf{d}}
\newcommand{\total}[1]{\overline{#1}}
\newcommand{\proper}[1]{\widetilde{#1}}
\newcommand{\HFplus}{\HF^+}
\newcommand{\spinc}{spin$^c$~}
\newcommand{\fs}{\mathfrak{s}}
\DeclareMathOperator{\HF}{HF}
\DeclareMathOperator{\PD}{PD}
\DeclareMathOperator{\Sing}{Sing}
\DeclareMathOperator{\Iso}{Iso}
\theoremstyle{plain}
\newtheorem{theorem}{Theorem}[section]
\newtheorem{lemma}[theorem]{Lemma}
\newtheorem{prop}[theorem]{Proposition}
\newtheorem{cor}[theorem]{Corollary}
\theoremstyle{definition}
\newtheorem{question}[theorem]{Question}
\newtheorem{definition}[theorem]{Definition}
\theoremstyle{remark}
\newtheorem{remark}[theorem]{Remark}
\newtheorem{example}[theorem]{Example}
\numberwithin{equation}{section}
\title{The symplectic isotopy problem for rational cuspidal curves}
\author{Marco Golla}
\email{marco.golla@univ-nantes.fr}
\address{CNRS, Laboratoire de Math\'ematiques Jean Leray, Nantes, France}
\author{Laura Starkston}
\email{lstarkston@math.ucdavis.edu}
\address{Department of Mathematics, UC Davis, One Shields Ave, Davis, CA 95616, U.S.A.}
\begin{document}

\maketitle

\begin{abstract}
We define a suitably tame class of singular symplectic curves in 4-manifolds, namely those whose singularities are modeled on complex curve singularities. We study the corresponding symplectic isotopy problem, with a focus on rational curves with irreducible singularities (rational cuspidal curves) in the complex projective plane.
We prove that every such curve is isotopic to a complex curve in degrees up to 5, and for curves with one singularity whose link is a torus knot.
Classification results of symplectic isotopy classes rely on pseudo-holomorphic curves together with a symplectic version of birational geometry of log pairs and techniques from $4$-dimensional topology.
\end{abstract}

% !TEX root = ../rationalcuspidal.tex

\section{Introduction}
{
In this article, we take up an extensive study of singular curves in the symplectic category. We focus on rational (genus zero) curves, taking advantage of the singularities to obtain low-genus curves with high degree. We primarily study irreducible rational cuspidal curves, but also consider reducible configurations with rational cuspidal components. 
Rational cuspidal curves are a source of rich complexity in algebraic geometry~\cite{Palka6cusps,KorasPalka,PalkaPelka,PalkaPelka2}.
We use the term \emph{cusp} to refer to any locally irreducible singularity, but we focus on cusps locally modeled on $\{x^p=y^q\}$ where $p$ and $q$ are relatively prime (the link is a $(p,q)$-torus knot). 
To our knowledge, prior work on singular symplectic curves has been restricted to nodes, simple cusps ($(2,3)$-cusps), and tacnodes (simple tangencies between two branches). Rational cuspidal curves provide an effective class to work with in the symplectic category because pseudoholomorphic curves are most powerful in the rational (genus-$0$) case. We give obstruction results determining which rational cuspidal curves are realizable symplectically in the complex projective plane, as well as isotopy classification results proving uniqueness of realization up to symplectic isotopy. Both of these problems (existence and uniqueness) are difficult even in the complex algebraic category, and in the symplectic category, uniqueness has not even been proven for smooth curves of degree greater than $17$ (our results apply to singular curves of arbitrary degree).

Complex plane curves have a distinguished history in algebraic geometry: from Zariski's examples of singular sextics with distinct fundamental groups in their complements~\cite{Zariski}, to many more recent results continuing to the present from line arrangements to cuspidal curves (see for example~\cite{Hirzebruch-lines,Harris,ZaidenbergLin,Rybnikov,ArtalCarmonaCogolludo,ArtalCarmonaCogolludoBuznariz,KorasPalka}). 
Algebraic geometers have built up powerful tools to tackle these problems: braid monodromy~\cite{Moishezon,Libgober}, Alexander invariants~\cite{LibgoberA}, the log minimal model program~\cite{Miyanishi}, Miyaoka's inequalities~\cite{Miyaoka}.
For example it was conjectured that every rational cuspidal curve has at most four cusps. This has recently been proven by Koras and Palka~\cite{KorasPalka2} using the almost minimal model program~\cite{Palka6cusps} (see~\cite{Pion,ZaOrev,Tono} for previous progress on this problem and~\cite{PalkaPelka,PalkaPelka2} for more recent developments). However, a complete classification of singular plane curves is still far out of reach in this rich subject.
Beyond questions of which singular curves exist, serious study has been devoted to asking how many planar realizations there are of a given singular curve up to automorphisms of the plane, diffeomorphisms of their complements, or isomorphisms of the fundamental groups of their complements. Distinct realizations (under one of these equivalence relations) are often called ``Zariski pairs'' and it is unknown in general which curves have Zariski pairs.

Compared to algebraic geometry, the development of tools in symplectic geometry has occurred relatively recently. The classification of any type of symplectic planar curve, at first glance, is completely intractable because the space of symplectic curves is a relatively poorly understood infinite-dimensional moduli space.
By contrast, the space of complex algebraic curves of a particular degree with particular singularities is cut out by finitely many discriminant polynomials in finitely many complex variables.
However, Gromov's theory of pseudoholomorphic curves brought some hope that some information about moduli spaces of symplectic curves could be understood. In particular, Gromov classified smooth symplectic surfaces in the complex projective plane in degrees $1$ and $2$ up to symplectic isotopy. While further work extended this to classify smooth symplectic surfaces up to degree $17$~\cite{Sikorav, Shev, SiebertTian}, in degrees greater than or equal to $18$ the question remains open.

\begin{question}[Symplectic Isotopy Problem]\label{q:sympliso}
	Is every smooth symplectic surface in $\cptwo$ symplectically isotopic to a complex curve? 
	\end{question}

This question is equivalent to asking, is there a unique symplectic isotopy class of symplectic surfaces in each degree? The equivalence results from the fact that the moduli space of smooth complex curves of fixed degree is connected. (The space of all degree-$d$ curves is parameterized by the coefficients of the defining polynomial, and singular curves have positive complex co-dimension.)
%To study the space of \emph{smooth} degree-$d$ curves, we must delete the singular curves.
%The plane curve has singularities when certain discriminant equations are satisfied, so they have positive complex codimension (thus real codimension at least two) so the complement remains path connected. 
Because of the difficulty of the classification for smooth symplectic curves, there has been somewhat limited work on singular symplectic curves~\cite{McDuff-immersed,IvashShev,Shev2,Barraud,Francisco}. Nevertheless, the study of singular symplectic curves in $\cptwo$ has significant ramifications for the study of all symplectic $4$-manifolds via branched covering constructions \cite{Auroux1,Auroux2}.

%This allows us to achieve classifications in arbitrarily high degrees, without encountering the road blocks of high genus pseudoholomorphic curves which stalled further progress on Question~\ref{q:sympliso}. 

Our first result addresses curves with a single cusp singularity. Such curves exist in every degree $d \geq 3$, so we are not bound by the uniform degree constraints encountered in attempts to answer Question~\ref{q:sympliso}. Note that the degrees and singularities for possible complex curves of this type were classified in~\cite{FdBLMHN}.

\begin{theorem}\label{t:unicusp}
	In $\cptwo$, every symplectic rational unicuspidal curve whose unique singularity is the cone on a torus knot is symplectically isotopic to a complex curve, and has a unique symplectic isotopy class.
\end{theorem}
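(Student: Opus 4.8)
The plan is to reduce $C$ to a line or conic by a sequence of \emph{symplectic Cremona transformations}—realized as symplectic blow-ups and blow-downs tracked through pseudoholomorphic curves—each of which is unique up to isotopy, so that Gromov's isotopy theorem in low degree can be propagated back up to $C$. To begin, I would make $C$ pseudoholomorphic: because its singularity is modeled on $\{x^p = y^q\}$, a structural result from earlier in the paper provides a compatible almost complex structure $J$ for which $C$ is $J$-holomorphic with an honest $J$-holomorphic cusp. The question then becomes whether the space of such $J$-holomorphic curves, as $J$ ranges over compatible structures, is connected and meets the integrable locus.

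The heart of the argument is a symplectic log birational geometry of the pair $(\cptwo, C)$. Since the link of the cusp is a torus knot, the singularity has a single Puiseux pair, and its minimal embedded resolution is the sequence of blow-ups prescribed by the continued-fraction (Euclidean) data of $(p,q)$; the proper transform is a smooth rational symplectic sphere meeting the exceptional divisors in a normal-crossing configuration of completely determined combinatorial type. Working in this blow-up, positivity of intersections, the adjunction formula, and McDuff's structure theory for rational symplectic $4$-manifolds identify embedded symplectic $(-1)$-spheres in prescribed homology classes and show they are unique there; contracting them (well defined up to isotopy by McDuff's blow-down theorem) and, where needed, blowing up auxiliary points at the cusp realizes a Cremona transformation that strictly lowers the degree of the cuspidal curve. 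The list of curves to treat is finite by the classification in~\cite{FdBLMHN}, so I can arrange these moves family by family and iterate until the curve has degree one or two.

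At the base of the induction the curve is a symplectic line or conic, whose symplectic isotopy class is unique by Gromov's theorem. Because every intermediate step is a symplectic blow-up or blow-down—operations that are canonical up to symplectic isotopy—reversing the sequence of Cremona transformations propagates this uniqueness back to $C$; and since each move mirrors the corresponding algebraic transformation, the curve produced is symplectically isotopic to the complex rational unicuspidal curve of the same degree and cusp type. Rationality of all the curves involved is what makes the relevant moduli spaces of pseudoholomorphic spheres well-behaved, through automatic transversality, so that the spheres used at each step genuinely exist and vary in connected families.

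The hard part will be controlling the moduli spaces as $J$ is deformed toward the standard integrable structure $J_0$. I must show that the stratum of configurations of the prescribed combinatorial type stays connected, and that the chosen sequence of $(-1)$-sphere contractions and auxiliary blow-ups can be performed $J$-holomorphically and in compatibility with an ambient isotopy throughout the deformation, with no unexpected bubbling or change of combinatorial type. This requires a careful Gromov-compactness and index analysis carried out for each family in the finite list from~\cite{FdBLMHN}, using intersection positivity to rule out the formation of any competing configuration; verifying that these degenerations are genuinely excluded, rather than merely generic, is where the essential work lies.
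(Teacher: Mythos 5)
Your plan has the right skeleton for part of the theorem, but it contains a gap that is fatal for two of the six families, and the difficulty you defer to the end is not actually resolvable along the route you propose. First, the Fibonacci families: for a curve of degree $F_j$ with cusp of type $(F_{j-2},F_{j+2})$ one has $F_j^2 = F_{j-2}F_{j+2}-1$, so by Lemma~\ref{l:selfint-defect} the proper transform in the normal crossing resolution has self-intersection $F_j^2 - F_{j-2}F_{j+2} = -1$, and no amount of further blowing up produces a sphere of positive square. The entire McDuff-based machinery you invoke --- identifying a positive sphere with a line, pinning down $(-1)$-spheres homologically, contracting them canonically --- never gets off the ground, and there is no degree-lowering Cremona move you can certify pseudoholomorphically. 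The paper handles these families (and the $(F_j^2,F_{j+2}^2)$ family) by a completely different argument: the boundary of a neighborhood is a lens space $L(F_j^2,F_{j-2}^2)$ (or a connected sum of two such), all tight contact structures on lens spaces are planar, so by Wendl every strong filling deforms to Stein, and the appendix (Propositions~\ref{p:QHBimpliesSTD} and~\ref{p:uniqueQHBfilling}) shows the rational homology ball filling is unique up to deformation; Gromov's connectivity of ${\rm Symp}(\cptwo,\omega_{\rm FS})$ then converts uniqueness of the complement into uniqueness of the isotopy class. Your proposal contains no substitute for this contact-geometric step.

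Second, even for the four families where a positive sphere does exist, your assertion that each Cremona move is ``canonical up to isotopy'' is precisely the content requiring proof, and your proposed resolution --- a parametric Gromov-compactness and bubbling analysis as $J$ is deformed to the integrable structure --- is not how this can be made to work and is not what the paper does. The paper never deforms $J$ toward $J_0$; instead it classifies \emph{all} homological embeddings of the cap (Lemmas~\ref{l:homAp}, \ref{l:homBp}, \ref{l:homC}, \ref{l:homD}), blows down exceptional classes via Lemma~\ref{l:blowdown} to land on explicit \emph{reducible} line-and-conic configurations (not lower-degree unicuspidal curves, as in your induction), proves uniqueness of those configurations by adding auxiliary $J$-lines one at a time (Proposition~\ref{p:addline}, Proposition~\ref{p:d-with-tangent}), and propagates uniqueness back through the birational derivation (Proposition~\ref{l:biratderiveunique}). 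The homological classification step, absent from your plan, is indispensable: without it you cannot exclude exotic embeddings of the resolution, and indeed for the $(p,4p-1)$ family there genuinely are two homological embeddings, the second giving an embedding of the curve into $S^2\times S^2$, so ``the'' reduction mirroring the complex Cremona map is not unique a priori. Your blind spot is assuming the symplectic curve's resolution must sit homologically the way the complex model does; ruling out the alternatives is where the actual work lies.
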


As a corollary to our results, we see there are no Zariski pairs of rational unicuspidal curves with one Puiseux pair.
%Note that the Flenner--Za\u{\i}denberg rigidity conjecture~\cite{FlennerZaidenberg} states that complex curves (whose complement is) of log general type are rigid, i.e. their moduli spaces (modulo projective transformations) are discrete 
To our knowledge, the literature does not contain the statement that there are no Zariski pairs for such complex algebraic curves. However, experts believe this might follow from the negativity conjecture of~\cite{PalkaPelka,PalkaPelka2} (which would prove the statement for arbitrary rational cuspidal curves of log general type), together with some analysis of direct classifications in the cases of log Kodaira dimension at most $1$.

A great deal of complexity arises when we consider rational curves with more than one cusp. Although there are basic restrictions relating the degree to the genus of the torus knots appearing as the links of the singularities, there are many combinations of cusp singularity types which cannot be realized by complex algebraic curves even though they satisfy these basic adjunctive requirements. We initiate exploration of multi-cuspidal curves in the symplectic setting with low-degree rational cuspidal curves.
The corresponding problem in algebraic geometry has a long history. Complex algebraic rational cuspidal curves in degrees at  most $5$ were completely classified~\cite{Namba} (see~\cite{Moe} for a more modern exposition). The classification of which rational cupsidal curves of degree $6$ can be realized complex algebraically was completed by Fenske~\cite{Fenske}.

Here we look at symplectic curves up to degree $5$. While we do show in Section~\ref{ss:sextics} that our methods can say something for curves of degree $6$, even those with significantly different properties than those appearing in degree $5$, we do not venture into the combinatorics to give a complete classification. In fact, already in degree $5$ there is a great deal of complexity which could indicate how the symplectic category relates to the complex one. The techniques we develop to classify the variety of curves in degree $5$ provide a model for how one could approach many other cases in higher degrees. In degree $5$, there are $19$ different possible combinations of different cusp types which satisfy the basic degree-genus restrictions. Of these $19$, we show $9$ cannot embed symplectically into any symplectic manifold, $8$ embed into $\cptwo$ with a unique symplectic isotopy class, and the remaining $2$ more only embed symplectically and relatively minimally into $\cptwo\#4\cptwobar$. Comparing to known results on such algebraic curves we obtain the following theorem.

\begin{theorem}\label{t:lowdeg}\label{thm:quintics}
	In $\cptwo$, every symplectic rational cuspidal curve of degree at most five is symplectically isotopic to a complex curve and has a unique symplectic isotopy class. In particular, there are no Zariski pairs for rational cuspidal curves of degree up to 5.
\end{theorem}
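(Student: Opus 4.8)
The strategy is to combine the structural input from Theorem~\ref{t:unicusp} (which already settles the unicuspidal torus-knot cases in all degrees, hence in particular in degrees $\le 5$) with a case-by-case analysis of the multi-cuspidal configurations that survive the basic degree--genus restrictions. Since degree $\le 2$ forces a smooth curve (handled classically by Gromov) and degree $3,4$ admit only a short list of cusp configurations, the real content is degree $5$, where the paper records exactly $19$ combinatorial types. The plan is to process these $19$ types in three bins: prove that $9$ of them cannot be embedded symplectically at all, prove that $8$ embed uniquely into $\cptwo$, and show the remaining $2$ force a blow-up (embedding only into $\cptwo \# 4\cptwobar$). Once every realizable type is shown to have a unique symplectic isotopy class and a complex algebraic representative, comparison with the algebraic classification of~\cite{Namba,Moe} yields both the isotopy statement and the absence of Zariski pairs.

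First I would set up the combinatorial bookkeeping: for a rational cuspidal curve of degree $d$ with cusps whose links are torus knots $T(p_i,q_i)$, the genus formula and the adjunction/intersection constraints in $\cptwo$ pin down the admissible multisets of $(p_i,q_i)$. Enumerating these in degree $5$ produces the stated $19$ types. For the $9$ non-embeddable types, I would derive a contradiction from symplectic/pseudoholomorphic obstructions — in the symplectic category one cannot simply invoke the algebraic discriminant, so the obstruction must come from positivity of intersections of $J$-holomorphic curves, adjunction inequalities, and the log-theoretic constraints (the ``symplectic version of birational geometry of log pairs'' advertised in the abstract). The key tool is that the proper transform under blow-ups at the cusps, together with the exceptional configuration, must satisfy the adjunction and intersection constraints imposed by a genuine symplectic (indeed $J$-holomorphic) curve; a violation obstructs embeddability.

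For the uniqueness (the $8$ types embedding in $\cptwo$ and the $2$ embedding in $\cptwo \# 4\cptwobar$), I would run the following scheme for each type. Choose a compatible almost complex structure $J$ making the given symplectic curve $J$-holomorphic; use automatic transversality and the adjunction formula in the rational case to control the moduli space of $J$-holomorphic curves in the relevant homology class through the prescribed singular points. The aim is to show this moduli space is nonempty and connected, so that any two symplectic curves of the same type are joined by a path of $J$-holomorphic curves, hence symplectically isotopic; identifying one such curve with the complex model from~\cite{Namba,Moe} then upgrades the isotopy to an isotopy to a complex curve. The log-birational machinery enters to reduce a high-degree configuration to a low-complexity one where connectivity of the moduli space is tractable, and $4$-dimensional topology (e.g. understanding the minimal symplectic filling or the ambient blow-ups) is what forces the $2$ exceptional types into $\cptwo \# 4\cptwobar$ rather than $\cptwo$.

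The main obstacle I anticipate is the uniqueness step for the configurations with several cusps, where the moduli space of $J$-holomorphic curves through prescribed singular constraints need not be cut out transversally and may fail automatic transversality. Controlling its connectedness — ruling out that two symplectic realizations lie in distinct components — is exactly where the symplectic birational/log-pair techniques and careful deformation arguments must do the heavy lifting, and where the symplectic and algebraic pictures could in principle diverge. The non-embeddability bin is comparatively cleaner, since a single sharp intersection-theoretic or adjunction contradiction suffices per type, but verifying that the obstruction is genuinely symplectic (not merely algebraic) for all $9$ types requires care.
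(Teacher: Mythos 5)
Your overall architecture (enumerate the admissible cusp combinations, then sort into a $9/8/2$ split of obstructed / unique in $\cptwo$ / forced into $\cptwo\#4\cptwobar$) matches the paper's, but the technical engine you propose for uniqueness is not the one the paper uses, and it has a genuine gap. You propose to fix $J$, consider the moduli space of $J$-holomorphic curves in the class $5h$ with prescribed cusps, and show it is nonempty and connected via automatic transversality and adjunction. For \emph{multi-cuspidal} curves there is no such tool: the equisingular strata of singular $J$-holomorphic curves are not cut out transversally in general, automatic transversality statements do not apply to them, and connectivity of these strata is precisely the open-ended question one is trying to answer (it is essentially equivalent to the isotopy problem itself). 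You correctly flag this as the main obstacle, but the proposal offers no mechanism to overcome it, and "the log-birational machinery reduces to a tractable case" is exactly the missing content. The paper's actual route avoids singular moduli entirely: blow up each curve to a resolution containing a smooth symplectic $+1$-sphere, apply McDuff's theorem (Theorem~\ref{thm:mcduff}) to identify the ambient manifold with a blow-up of $\cptwo$ and the $+1$-sphere with a line, pin down the homology classes of all exceptional divisors by the arithmetic lemmas of Section~\ref{s:caps}, blow down $J$-holomorphic exceptional spheres (Lemma~\ref{l:blowdown}) to land on \emph{reducible} configurations of lines and conics, classify those up to isotopy by pencil arguments (Proposition~\ref{p:addline}) together with the bespoke arguments for $\Hc$ and $\Lc$ (Propositions~\ref{p:H} and~\ref{p:L}), and transfer uniqueness back through birational derivation/equivalence (Propositions~\ref{l:biratderiveunique} and~\ref{l:biratiso}). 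The only moduli arguments that appear are for $J$-lines through two points or a point with a tangency, where uniqueness and regularity are automatic — this is how the connectivity problem you worry about is sidestepped rather than solved.

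The obstruction bin has a parallel gap in your plan. You suggest that per obstructed type "a single sharp intersection-theoretic or adjunction contradiction suffices," but for most of the nine types this is false: they admit perfectly consistent adjunctive homological embeddings of their resolutions (only the two $[[3,3]]$ splittings die at the homological level, Proposition~\ref{p:noadjhom}). The remaining seven pass all adjunction and intersection-form tests and are killed only by blowing down to reducible configurations — the Fano plane (Theorem~\ref{thm:Fano}, Proposition~\ref{p:newfano}), or the conic--line configurations $\Gc^\star$, $\Gc_4$, $\Gc_{2,2}$ (Propositions~\ref{p:Gstar}, \ref{p:G422}, used in Propositions~\ref{p:daisies} and~\ref{p:obstrG}) — whose nonexistence is established by separate pencil/Riemann--Hurwitz or birational arguments, and then invoking Proposition~\ref{l:biratexist}. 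Riemann--Hurwitz projections (Example~\ref{ex:RH}) do obstruct the relevant embeddings into $\cptwo$ itself, but the stronger statement that these curves embed in \emph{no} closed symplectic $4$-manifold needs the homological classification plus the reducible obstructions. So to complete your proposal you would need to import essentially the whole reducible-configuration apparatus of Section~\ref{s:rediso}; without it, both the uniqueness step and most of the nonexistence step remain unproved.
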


%\MG{Added a sentence and a reference.}
Since the first version of this paper appeared, the result has been extended to degrees $6$ and $7$ by the first author and K\"utle~\cite{GKutle}.

As mentioned above, our techniques in fact obtain much stronger results than classifications in $\cptwo$. For each of the rational cuspidal curves we consider, we in fact classify the existence and uniqueness of minimal symplectic embeddings into \emph{any closed symplectic $4$-manifold} (up to symplectomorphism and symplectic isotopy of the pair).
Classifications up to symplectomorphism of the pair reduce to ambient symplectic isotopy statements in the case that the ambient manifold is $\cptwo$ by Gromov's result that the space of symplectomorphisms of $(\cptwo,\omega_{\rm FS})$ is homotopy equivalent to $PU(3)$ \cite{Gr,McDuffSalamonBig}, so in particular is path-connected.

To each singular symplectic curve $C$, we associate a contact $3$-manifold $(Y_C,\xi_C)$ which appears on the boundary of a concave neighborhood of the curve. Our classifications of symplectic embeddings of the curves provide classifications of all symplectic fillings of the contact manifolds associated to our curves. Thus we get new examples of contact manifolds which have no (strong) symplectic fillings, unique symplectic fillings, and non-unique but finitely many symplectic fillings that are classified.
Previous complete classifications of symplectic fillings have primarily been restricted to lens spaces~\cite{Li,PlamenevskayaVHM}, Seifert fibered spaces~\cite{OhtaOno,StFill}, and torus bundles~\cite{GL}. Our contact $3$-manifolds are more general graph manifolds which do not fall in any of these previous classes.

One question we had early on in our investigations, was whether symplectic embeddability of a certain type of rational cuspidal curve could distinguish the complex projective plane from a potential exotic or fake copy of $\cptwo$. We found that this cannot be the case.

\begin{theorem} \label{thm:rational}
	If $X$ is a rational homology $\cptwo$ that contains a rational cuspidal curve, then $X$ is symplectomorphic to $\cptwo$.
\end{theorem}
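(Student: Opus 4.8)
The plan is to prove that $X$ must be symplectomorphic to $\cptwo$ by using the rational cuspidal curve $C \subset X$ together with the machinery of pseudoholomorphic curves. Since $X$ is a rational homology $\cptwo$, we have $b_2(X) = 1$ and $H_2(X;\Q) \cong \Q$, so the class $[C]$ generates $H_2(X;\Q)$. First I would choose a tamed almost-complex structure $J$ for which $C$ is $J$-holomorphic (this is possible since $C$ is a singular symplectic curve of the allowed type, so it can be made pseudoholomorphic for a compatible $J$). The key invariant to extract is the self-intersection $[C]^2$ and the genus/adjunction data coming from the cusp singularities: since $C$ is rational (geometric genus $0$) with torus-knot cusps, the adjunction formula relates $[C]^2$, $c_1(X)\cdot[C]$, and the total delta-invariant $\delta$ of the singularities. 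This should pin down the numerical type of $C$ and force $[C]^2 > 0$.

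The central step is to show that $X$ is a \emph{rational} symplectic $4$-manifold. The strategy is to run Taubes--Seiberg--Witten theory: a symplectic rational homology $\cptwo$ has $b_2^+ = 1$, and the existence of an embedded (or singular but positively-squared) symplectic sphere-like curve gives strong constraints. Concretely, I would argue that $C$ provides a symplectic surface with positive self-intersection, and by McDuff's theorem on the structure of symplectic $4$-manifolds containing such curves, $X$ is rational or ruled. Since $b_2(X) = 1$, the only candidates among rational/ruled manifolds are $\cptwo$ itself (blow-ups increase $b_2$, and ruled surfaces over higher-genus bases have $b_2 \geq 2$). The subtlety is that $C$ is singular, so I cannot directly quote the smooth symplectic sphere result; the plan is to first resolve or perturb the singularities. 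One approach is to pass to a concave neighborhood of $C$ and its associated contact boundary $(Y_C,\xi_C)$, then realize $X$ as a symplectic filling of the complement, invoking the classification philosophy alluded to in the introduction. Alternatively, and more directly, one can use the rationality criterion: a closed symplectic $4$-manifold with $b_2^+ = 1$ containing a symplectic sphere of nonnegative self-intersection, or more generally admitting a $J$-holomorphic rational curve generating homology, is symplectically rational.

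With rationality established, the remaining step is a homological pigeonhole argument. Among all closed symplectic $4$-manifolds that are rational homology $\cptwo$'s, the only one up to diffeomorphism is $\cptwo$ itself (a rational surface with $b_2 = 1$ must be $\cptwo$, as the minimal rational surfaces are $\cptwo$ and the Hirzebruch surfaces, the latter having $b_2 = 2$, and any nontrivial blow-up raises $b_2$). I would then upgrade diffeomorphism to symplectomorphism: by the uniqueness of symplectic structures on $\cptwo$ in a fixed cohomology class (Taubes, McDuff, Lalonde--McDuff), every symplectic form on the smooth manifold $\cptwo$ is, after scaling, symplectomorphic to the Fubini--Study form. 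Hence $X$ is symplectomorphic to $(\cptwo,\omega_{\rm FS})$.

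The main obstacle I anticipate is the first half: handling the \emph{singularities} of $C$ carefully enough to apply the rationality/McDuff structure theorems, which are classically stated for embedded symplectic spheres. The cusps mean $C$ is neither smooth nor of genus zero in the naive sense, so I must either (i) work with the pseudoholomorphic model and use a Gromov-compactness and positivity-of-intersection argument adapted to singular curves, or (ii) symplectically smooth the singularities at the cost of changing the homology class in a controlled way and then invoke the smooth theory. Ensuring that this smoothing or the $J$-holomorphic analysis does not destroy the rationality conclusion — and that the numerics from adjunction genuinely force $b_2 = 1$ to yield $\cptwo$ rather than some blow-up — is the delicate point where the structure of torus-knot cusps and the rational-homology-sphere hypothesis on $X$ must be used together.
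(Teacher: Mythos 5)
Your proposal covers only half of the proof, and the half it misses is the genuinely hard one. Since $X$ is a rational homology $\cptwo$, any compatible $J$ satisfies $c_1^2(J)=2\chi+3\sigma=9$, so $c_1(J)=\pm 3\PD(h)$, and your argument implicitly assumes the sign is positive. In that case your route is essentially the paper's: smooth $C$ to an embedded symplectic surface $C'$ in the same class (note: smoothing does \emph{not} change the homology class, contrary to what you suggest) with $g(C')=p_a(C)=\frac12(d-1)(d-2)$, so $[C']^2=d^2\ge 2g(C')-1$, and the McDuff--Liu criterion ($\langle c_1(\omega),C'\rangle\ge 1$, $C'$ not an exceptional sphere $\Rightarrow$ rational or ruled) applies; $b_2=1$ then forces $\cptwo$, and Taubes upgrades to the standard symplectic structure. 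But when $c_1(J)=-3\PD(h)$ — the symplectic analogue of a fake projective plane — adjunction gives $p_a(C)=\frac{(d+1)(d+2)}2$, so $[C']^2=d^2<2p_a(C)-1$ and the hypothesis of the rationality criterion fails outright. Your fallback principle, that a $J$-holomorphic \emph{rational} curve of positive self-intersection generating homology forces rationality, is false in general: projective K3 surfaces contain nodal rational curves of arbitrarily large positive square, and the correct statement is exactly Liu's, requiring $\langle c_1(\omega),C\rangle\ge 1$. So nothing in your toolkit (Taubes--Seiberg--Witten, McDuff structure theorems, pigeonhole on $b_2$) rules out a symplectic rational homology $\cptwo$ with $c_1=-3h$ containing a rational cuspidal curve.

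The paper closes this case by an entirely different, Floer-theoretic argument, which is the novel content of the theorem: a regular neighborhood $N$ of $C$ has boundary $S^3_{d^2}(K)$, where $K$ is the connected sum of the links of the cusps, so the complement $X\setminus \mathrm{int}(N)$, with reversed orientation, is a rational homology ball bounding $S^3_{d^2}(K)$. For such knots $\nu^+(K)$ equals the $3$-genus, hence $\nu^+(K)=p_a(C)=\frac{(d+1)(d+2)}2$, and the bound of Aceto--Golla for rational homology balls bounding large surgeries requires $\frac{d(d-1)}2-\nu^+(K)\ge 0$; here it equals $-2d-1<0$, a contradiction. Without this step (or some substitute obstruction specific to the $c_1=-3h$ case), your proof does not go through.
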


A similar question was raised by Chen~\cite{Chen}, who asked which symplectic 4--manifolds with the same rational homology as $\cptwo$ could be split along a contact-type hypersurface into two pieces, one of which was a rational homology ball.

%At their core, our results gain their power from pseudoholomorphic curves. We particularly utilize theorems of Gromov and McDuff on recognizing rational and ruled surfaces by finding a symplectic $+1$-sphere.
%These results have an algebro-geometric counterpart in the first steps in the Kodaira--Enriques classification, more specifically in the classification of surfaces of Kodaira dimension $-\infty$, or, equivalently, of surfaces whose canonical bundle is not nef; see~\cite[Chapter VI]{BHPV} or~\cite[Chapter V]{Beauville}. The symplectic analogues analyze the moduli space of $J$-holomorphic curves for a fixed almost complex structure compatible with the symplectic form. $J$-holomorphic curves share many commonalities with complex curves particularly in dimension $4$ where we have the adjunction formula and positivity of intersection. We utilize these techniques throughout the paper to determine existence and uniqueness of singular curves (both reducible and irreducible).

We also prove a number of existence and uniqueness results for reducible configurations whose components are rational. These results play a role in proving the results for irreducible rational cuspidal curves. We summarize here some examples of these results.

\begin{theorem}\label{t:smallconfigs}
	Any configuration of symplectic conics and lines of total degree at most five either has a unique symplectic isotopy class or is obstructed in Section \ref{s:reducibleobstructions}. There is a unique symplectic isotopy class for each of the infinitely many configurations composed of one rational degree-$d$ curve $C$ with a singular point with multiplicity sequence $[d-1]$ together with a line which is tangent to $C$ of order $d$.
\end{theorem}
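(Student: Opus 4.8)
The plan is to reduce both assertions to statements about moduli spaces of pseudoholomorphic configurations and to prove that these spaces are connected, treating the two claims by different bookkeeping: the first by a finite case analysis, the second by an induction on the degree $d$ using symplectic birational geometry. The common engine is as follows. Given any symplectic configuration of lines and conics of the prescribed combinatorial type, one chooses an $\omega$-tame almost complex structure $J$ making every component $J$-holomorphic simultaneously; by positivity of intersections the incidences and tangencies are then recorded by local intersection multiplicities. Since each line and conic is a symplectic sphere of self-intersection $1$ or $4$, automatic transversality in dimension four makes these spheres, and the tangency and flex conditions cutting out the configuration, regular for generic $J$. Given two representatives of the same type, one makes them $J_0$- and $J_1$-holomorphic, joins $J_0$ to $J_1$ by a generic path $J_t$, and shows that the $J_t$-holomorphic configurations of that type form a smooth connected cobordism, whose induced ambient isotopy yields uniqueness. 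The ambient manifold is pinned down at the outset: each configuration carries a symplectic sphere of nonnegative self-intersection, so by McDuff's structure theorem the ambient manifold is rational or ruled, and for minimal embeddings Gromov's description of $\mathrm{Symp}(\cptwo,\omega_{\mathrm{FS}})$ transports everything into a fixed $(\cptwo,\omega_{\mathrm{FS}})$.

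For the first assertion I would enumerate the finitely many combinatorial types of configurations of total degree at most five: at most five lines, a conic with at most three lines, and two conics with at most one line, each decorated by its incidence pattern (transverse double points, tangencies, triple points, and the higher-order tangencies permitted by the degrees). For each type one checks the necessary numerical and positivity constraints; the types that fail are precisely those collected in Section~\ref{s:reducibleobstructions}, which we may cite, and for each surviving type one runs the connectedness argument above, assembling the configuration one component at a time from the existence and uniqueness of $J$-holomorphic lines and conics through prescribed points with prescribed contact order. The enumeration is finite but delicate, since one must verify that each incidence condition is cut out transversally and that no unexpected degeneration (a conic splitting into two lines, or two components colliding) occurs along $J_t$; these are ruled out by dimension counts together with positivity of intersections.

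For the infinite family I would induct on $d$, the base cases $d\le 2$ being a smooth conic or a line together with a maximally tangent line, which already fall under the first part. For the inductive step, take a symplectic configuration $C\cup L$ with $C$ of degree $d$ carrying a single $(d-1,d)$-cusp (multiplicity sequence $[d-1]$) and $L$ tangent to $C$ to order $d$, and make it $J$-holomorphic. The key move is a symplectic quadratic Cremona transformation based at the cusp and two further points chosen on $C$ and $L$: blowing up these base points and blowing down the resulting exceptional $-1$-spheres, performed symplectically using uniqueness of symplectic blowups and McDuff's results on rational surfaces, can be arranged to produce a configuration $C'\cup L'$ of the same shape with $C'$ of degree $d-1$ carrying a single $(d-2,d-1)$-cusp and $L'$ again maximally tangent. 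By induction $C'\cup L'$ has a unique symplectic isotopy class, and reversing the birational transformation transports this uniqueness back to $C\cup L$. Equivalently, one may resolve the cusp and the order-$d$ tangency by iterated symplectic blowups to obtain a linear plumbing of symplectic spheres in a blowup of $\cptwo$, whose isotopy class is determined by the connectedness results for configurations of symplectic spheres, and then blow down.

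The main obstacle, in both parts, is controlling the configuration under the path $J_t$ and under the blow-down maps: one must exclude that, along the cobordism of $J_t$-holomorphic configurations, a component degenerates (a conic or the degree-$d$ curve becoming reducible, two singular points merging, or an exceptional sphere acquiring the wrong intersection pattern), since any such degeneration would disconnect the moduli space. For the cusp this is the subtlest point, since near the singular point one must work with the local symplectic model of the $(d-1,d)$-cusp, ensure that the cusp persists as an honest pseudoholomorphic singularity along the family, and check that the symplectic blow-ups used in the Cremona step resolve it exactly as in the complex case. These transversality, compactness, and gluing statements — that the moduli space of $J_t$-holomorphic configurations of a fixed type is a connected manifold with boundary and that the symplectic blow-downs descend the isotopy — form the technical heart of the argument; once they are in place, the inductive and combinatorial bookkeeping becomes routine.
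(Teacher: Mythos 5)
Your proposal breaks down at the configurations that are the actual crux of the first assertion. The uniform claim that the space of $J_t$-holomorphic configurations of a fixed type forms a connected cobordism is unjustified precisely where it is needed: for the two-conics-with-a-common-tangent-line configurations $\Gc_1,\Gc_2,\Gc_3$ (total degree $5$), a fixed $J$ making both conics holomorphic admits \emph{several} common tangent $J$-lines (four, for conics in general position), so the fiber of your moduli space over each $t$ is disconnected, and genericity/automatic transversality only gives you a covering over the path of almost complex structures, whose monodromy can permute the solutions. The paper's incremental tool, Proposition~\ref{p:addline}, is deliberately restricted to adding a line with at most two transverse point constraints or a single tangency constraint, because in exactly those cases the constrained $J$-line exists and is unique for \emph{every} compatible $J$ (Gromov's theorem, and the blow-up argument of \cite[Lemma~3.3]{McDuffBlowups}); that pointwise uniqueness is what makes the path argument close up, and it fails for a line tangent to two conics. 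The cases $\Gc_1,\Gc_2,\Gc_3$ are therefore settled in the paper not by a path of $J$'s but by augmenting with auxiliary lines and exhibiting a birational equivalence to a configuration of one conic and several lines (Propositions~\ref{p:G3},~\ref{p:G2},~\ref{p:G1}), with the two conics alone handled by birational derivation to line arrangements (Proposition~\ref{p:twoconics}); your outline has no substitute for this step.

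For the infinite family there are two gaps. First, a singular point with multiplicity sequence $[d-1]$ need not be a $(d-1,d)$-cusp: the germ may have $m$ branches, with topological type given by an arbitrary partition $\bd$ of $d-1$, and both the theorem and Proposition~\ref{p:d-with-tangent} cover all such types, so your induction on unicuspidal curves treats only the one-part partition. Second, even in the unibranch case the Cremona step is asserted rather than proved: the bookkeeping of the cusp and the order-$d$ tangency under the blow-ups and blow-downs must be checked, and, more fundamentally, transporting uniqueness backward through a birational transformation requires either that the blown-down exceptional spheres belong to the configuration (birational equivalence) or the family argument of Proposition~\ref{l:biratderiveunique}, in which the base points must be re-chosen coherently along an isotopy of the derived configuration --- with base points ``chosen generically on $C$ and $L$'' this is not canonical. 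The paper's actual proof augments $C\cup L$ by the $m$ tangent lines to the branches and the line through the singular point and the tangency point, proves a single birational equivalence (Lemma~\ref{l:dbirat}) whose homological input is pinned down by McDuff's theorem together with Lemmas~\ref{l:hom} and~\ref{l:2chainfix}, and lands on a line arrangement with one $(m+1)$-fold point and one triple point whose unique isotopy class (with complex representatives) is quoted from \cite[Proposition~4.1]{StQBD}. Your alternative one-sentence route (resolve, appeal to ``connectedness results for plumbings,'' blow down) is essentially the paper's route, but its entire content lies in the two steps you omit: classifying the homological embedding of the plumbing, and identifying the blown-down configuration as a specific line arrangement already known to have a unique symplectic isotopy class.
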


Two more complicated configurations of two conics with additional lines appear in Figures \ref{fig:H} and \ref{fig:L} and with further techniques we prove they also have unique symplectic isotopy classes.

To prove these results and relate them to each other, we develop a symplectic theory of birational geometry for pairs $(X^4,C)$ where $X$ is a symplectic $4$-manifold and $C$ is a (potentially singular) symplectic curve. We apply this, along with pseudoholomorphic curve techniques, to give symplectic classifications of many reducible configurations of curves in $\cptwo$. 
We define symplectic proper transforms of curves, and see that the outcome is only well-defined \emph{up to symplectic isotopy} (meaning a related through a smoothly varying equisingular family of symplectic curves, not necessarily related by an ambient isotopy).
Because we use symplectic blow-ups and blow-downs throughout the paper, most of our statements are true only up to symplectic isotopy.
The theory of symplectic birational geometry is based in fundamental work of McDuff on symplectic blow-up and blow-down. We develop this theory for ``log'' pairs $(X^4,C^2)$ to study singular symplectic curves and their isotopy classifications. While symplectic geometry does not have the full strength of algebraic geometry's log minimal model program, by combining this symplectic birational geometry with pseudoholomorphic techniques and topological tools, we are able to prove many new results that have not been addressed with pseudoholomorphic curves alone.

%Considering reducible symplectic curves in $\cptwo$, we prove uniqueness for symplectic isotopy realizations of five particularly interesting configurations of lines and conics with particular tangencies and multi-intersections (Propositions~\ref{p:G3},~\ref{p:G2},~\ref{p:G1},~\ref{p:H}, and~\ref{p:L}) as well as one multi-parameter infinite family of singular curves with a highly tangent line (Proposition~\ref{p:d-with-tangent}). We also give obstructions to symplectic realizations in $\cptwo$ of four configurations of conics and lines (Propositions~\ref{p:GFano},~\ref{p:Gstar}, and~\ref{p:G422}). These classification and obstruction results play an important role in the classifications of rational cuspidal curves via birational transformations.

\subsubsection*{Organization}
The paper is organized as follows.
Section~\ref{s:singularcurves} defines singular symplectic curves and their equivalences, provides background on complex curve singularities and their resolutions, defines the contact manifold associated to a singular curve, and reviews previous topological obstructions to rational cuspidal curves.
Section~\ref{s:tools} develops the tools utilizing pseudoholomorphic curves and birational geometry that we constantly use throughout the paper.
In particular, Sections~\ref{ss:propertransform} and~\ref{ss:birationalequiv} contain the crucial definitions of symplectic proper transform, and of birational derivation and birational equivalence of configurations of curves, respectively.
Section~\ref{s:rational} proves Theorem~\ref{thm:rational}.
Section~\ref{s:rediso} studies symplectic isotopy problems for reducible configurations, which will be used to study the isotopy problem for rational cuspidal curves.
Sections~\ref{s:unicusp} and~\ref{s:lowdegrees} give the proofs of Theorems~\ref{t:unicusp} and~\ref{t:lowdeg}, respectively.
In Section~\ref{s:orevkov} we will give an example, communicated to us by Stepan Orevkov, of a symplectic rational (non-cuspidal) curve that is not isotopic to any complex curve.
The appendix contains a result about rational homology ball symplectic fillings of lens spaces that we use in the proof of Theorem~\ref{t:unicusp}.

\subsubsection*{Acknowlegments}
This project originated from conversations the two authors had during the intensive semester on \emph{Symplectic geometry and topology} at the Mittag-Leffler Institute; we acknowledge their hospitality and the great working environment. Further progress was made during the BIRS conference \emph{Thirty years of Floer theory for $3$-manifolds} in Oaxaca; we thank the organizers for this excellent conference and for the opportunity to collaborate.
The authors would like to thank the participants of the AIM workshop \emph{Symplectic four-manifolds through branched coverings} for their interest in the project.
MG acknowledges hospitality from Stanford University and UC Davis. LS is grateful for hospitality from Universit\'{e} de Nantes. 
LS was supported by NSF Grant No. 1501728 and 1904074.
We would like to thank J\'ozsi Bodn\'ar, Erwan Brugall\'e, Roger Casals, Anthony Conway, Paolo Ghiggini, Andr\'as N\'emethi, Tomasz Pe{\l}ka, Olga Plamenevskaya, Danny Ruberman, Andr\'as Stipsicz, and Chris Wendl.
Finally, the authors would like to thank Stepan Orevkov for his comments and for communicating us the example in Section~\ref{s:orevkov}.
}
% !TEX root = ../rationalcuspidal.tex

\section{Symplectic singular curves}\label{s:singularcurves}

In this section, we will start by reviewing some basic facts about singular complex curves. A good reference for the material we cover here is~\cite{Wall}. Then we will give our definitions of the symplectic analogues.

Let $C$ in $\C^2$ be the zero set of an analytic function $F(x,y)$, such that $F(0,0) = 0$; we say that $C$ is singular at the origin if $\frac{\partial F}{\partial x}(0,0) = \frac{\partial F}{\partial y}(0,0) = 0$.
We suppose that $F$ is locally irreducible, i.e. irreducible in the ring $\C[[x,y]]$ of power series in two variables; in this case, we will say that the singularity is a \emph{cusp}.
(Note that certain authors call cusps only the singularities of type $(2,3)$; we will refer to the latter as \emph{simple cusps}.)

Up to diffeomorphism, $C$ can be parametrized (in a neighborhood of the origin) as the image of $\phi\colon\C \to \C^2$ defined by $\phi(t) = (t^m, a_1t^{b_1} + \dots + a_kt^{b_k})$ for some positive integers $m < b_1 < \dots < b_k$ and some $a_1,\dots,a_k \in \C^*$.
Moreover, if we require that the sequence $e_i$ defined by $e_1 = \gcd(m,b_1)$, $e_{i} = \gcd(e_{i-1},b_i)$ is strictly decreasing, the exponents $m, b_i$ are uniquely determined.
In this case, $\phi$ is the Puiseux parametrization of $C$, and $(m;b_1,\dots,b_k)$ are the \emph{Puiseux exponents} of $C$; $m$ is called the \emph{multiplicity} of the singularity.

\begin{remark}\label{r:tang}
Note that with the above coordinates from the Puiseux parametrization, the cusp curve $C$ has a unique tangent line $\{y=0\}$. This is the limit of the tangent lines of the nearby smooth points as $\frac{dx}{dt}=mt^{m-1}$, $\frac{dy}{dt} = b_1a_1t^{b_1-1}+\cdots$, so $\frac{dy}{dx}\to 0$ as $t\to 0$ since $b_1>m$. For a general complex curve singularity, there are a finitely many locally irreducible branches, each with a unique complex tangent line. The multiplicity of intersection of the tangent line at a cusp point is $b_1$ (the solutions to setting $y=0$), whereas a line which is not tangent to the cusp will intersect the cusp with multiplicity $m$ (the solution to setting $y=ax=at^m$ for $a\neq 0$). See~\cite[Section 2.3]{Wall} for more details.
\end{remark}

Recall that the \emph{link} of the singularity is the diffeomorphism type of $(S_\varepsilon \cap C, S_\varepsilon)$, where $S_\varepsilon = \partial B_\varepsilon$ is the boundary of a ball of radius $\varepsilon \ll 1$.
The fact that the singularity of $C$ at the origin is irreducible translates into the condition that the link is a knot (i.e. has one component).

The Puiseux exponents determine the topology of the singularity: two singularities have the same Puiseux exponents if and only if their links are diffeomorphic~\cite[Proposition 5.3.1]{Wall}.

In this paper we will be almost exclusively concerned with singularities whose Puiseux expansion has $k=1$, i.e. it is of the form $(m;b_1)$;
we will say that the \emph{singularity is of type $(m,b_1)$}.
%\MG{Added a note about $p < q$ in singularities of type $(p,q)$ and re-phrased the sentence slightly. (I guess this is closer to what I initially had in mind when I wrote it.)}
Note that $m < b_1$; however, the link of a singularity of type $(m,b_1)$, which is the torus knot $T(m,b_1)$, is isotopic to the torus knot $T(b_1,m)$.

\subsection{Resolution of singularities}\label{ss:resolution}

Recall from~\cite{Wall} that every curve singularity can be resolved by blowing up (sufficiently many times), and the diffeomorphism type of the link determines the topology of the resolution.
%More precisely, there exists a blow-up $\pi: X \to \C^2$ at the origin, and a smooth curve $\widetilde C \subset X$, such that $\pi|_{\widetilde C}$ is one-to-one, and $\pi|_{\widetilde C \setminus \{p\}}$ is an isomorphism onto $C\setminus \{(0,0)\}$ (here $p = \pi^{-1}(0,0) \cap \widetilde C$).
%\MG{Here we define proper transform with tildes, and total transforms with bars.}
If $\pi: \widetilde X \to X$ is a (multiple) blow-up at a point $x\in X$, and $C\subset X$ is a curve, we call $\overline{\pi^{-1}(C\setminus\{x\})}$ the \emph{proper transform} of $C$ under $\pi$, and we denote it by $\widetilde C$;
we also call $\pi^{-1}(C)$ the \emph{total transform} of $C$ under $\pi$, and we denote it by $\overline C$.

For every singular curve $C\subset X$, there exists a composition of blow-ups $\pi: \widetilde X \to X$ such that $\widetilde C$ is smooth; we call any such $\pi$ a \emph{resolution} of $C$.

%\MG{New paragraph here, introducing the normalisation, viewing the geometric genus in the blow-up, and talking about isotopies. Is this all we needed?}\LS{I think so...}
Given a resolution $\pi$ of $C$, consider the restriction $n := \pi|_{\widetilde{C}}$ of $\pi$ to the proper transform $\widetilde{C}$ of $C$. We call $n: \widetilde{C} \to C$ (and, by abuse of notation, $\widetilde{C}$) the \emph{normalization} of $C$. Note that this is essentially independent of the choice of the resolution: if $\pi'$ is another resolution of $C$ and we construct $n'$ accordingly, then there is an isomorphisms $\phi$ between the sources of $n$ and $n'$ such that $n = n' \circ \phi$.
We also observe that $C$ is cuspidal if and only if the normalization is one-to-one; in particular, in this case it is a homeomorphism onto its image.
%\LS{Is $\widetilde{C}$ the proper transform? Why do you say it is only an isomorphism in the complement of $p$? I think it would be good to define total and proper transform here and assign different notation for each of them.}

There are two natural stopping points when resolving a singularity:
the \emph{minimal resolution} is the smallest resolution such that the proper transform $\widetilde C$ of $C$ is smooth;
the \emph{normal crossing resolution} is the smallest resolution such that the total transform $\overline C$ of $C$ is a normal crossing divisor, i.e. all singularities are double points
\footnote{The terminology in the log algebraic geometry community (e.g. in~\cite{KorasPalka, PalkaPelka,PalkaPelka2,KorasPalka2}) seems to by slightly different: they call \emph{minimal weak resolution} (or, in earlier papers, \emph{minimal embedded resolution}) what we call minimal resolution, and \emph{minimal log resolution} what we call normal crossing resolution.}.

Recall that the \emph{multiplicity} $m_p$ of a singularity $p$ of $C \subset X$ is the minimal intersection of a germ of a divisor $D$ at $p$ with $C$; that is, $m_p = \min_D (C\cdot D)_p$;
the multiplicity of a singularity has the following interpretation: blow up $X$ at $p$, obtaining $\widetilde X$, which contains the corresponding exceptional divisor $E$; then $m_p$ is the intersection number of $E$ and $\widetilde{C}$, i.e. $m_p = \widetilde{C}\cdot E$.
In terms of homology, there is an orthogonal decomposition $H_2(\widetilde X) = H_2(X) \oplus \Z[E]$, and we have $[\widetilde C] = [C]-m_p[E]$. In particular $[\widetilde C]^2 = [C]^2-m_p^2$.

\begin{figure}[h!]
	\labellist
	\pinlabel $q-p\left\{\phantom{\begin{array}{c}a\\b\\c\\d\\e\\f\\g\\h\end{array}}\right.$ at -7 93
	\pinlabel $\left.\phantom{\begin{array}{c}a\\b\\c\\d\\e\\f\\g\\h\end{array}}\right\}q-p$ at 220 93
	\pinlabel $\vdots$ at 37 80
	\pinlabel $\vdots$ at 175 80
	\pinlabel $+1$ at 37 20
	\pinlabel $-1$ at 222 20
	\endlabellist
	\includegraphics[width=0.5\textwidth]{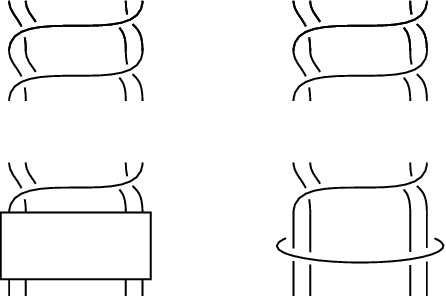}
	\caption{A handle diagram interpretation of the blow up of a singularity of type $(p,q)$. (Recall that $p<q$ in a singularity of type $(p,q)$.) There are $p$ strands on each side, and the rectangular box with the $+1$ denotes a full twist.}
\end{figure}

We record the singularity types at each step in the minimal resolution into a sequence of integers, called the \emph{multiplicity sequence}, as follows.
Suppose that the minimal resolution of a singularity of $C$ is a sequence of $k$ blow-ups, and call $C_j$ the proper transform of $C$ after the first $j-1$ blow-ups, so that $C_1 = C$, and $C_{k+1}$ is the proper transform of $C$ in the minimal resolution.
Then $C_j$ has a singularity of multiplicity $m_j$, and we write $[m_1,\dots,m_k]$ for the multiplicity sequence of the singularity of $C$.
Note that $m_{j+1} \le m_{j}$ for each $j$, but that not every sequence of integers correspond to the multiplicity sequence of a singularity; for instance $[3,2,2]$ is not the multiplicity sequence of any singularity.
The multiplicity sequence of an irreducible singularity determines the singularity~\cite[Theorem 3.5.6]{Wall}; however, the multiplicity sequence is defined both for irreducible and reducible singularities.

\begin{remark}\label{r:shortmultiseq}
	Algebraic geometers often consider the multiplicity sequence associated to the normal crossing divisors resolution of the singularity, rather than the one associated to the minimal resolution.
	In fact, one determines the other: if the latter ends in $m_k > 1$, the former coincides with it until the $k^{\rm th}$ entry, and then ends with a sequence of $1$s of length $m_k$.
	These last entries corresponds to the blow-ups needed to make the last exceptional divisor in the minimal smooth resolution disjoint from the proper transform of the curve.
\end{remark}

Given a cuspidal curve $C$, we say that the multiplicity multi-sequence of $C$ is the union of all multiplicity sequences of singularities of $C$.
This is only well-defined up to choosing an order of the cusps.
Instead of making the choice, we just sort all the entries decreasingly.
We use double-bracket notation to denote the multiplicity multi-sequence: $[[m_1,\dots, m_n]]$.

\begin{example}
	%\MG{Added a parenthetical remark to emphasise the $p<q$ business.}
	The multiplicity of a singularity whose link is the torus knot $T(p,q)$ is $\min\{p,q\}$. For a singularity of type $(p,p+1)$, one blow up already yields a smooth curve, and so the multiplicity sequence is $[p]$. In general, after blowing up once at a singularity of type $(p,q)$, the resulting curve has a singularity whose link is $T(p,q-p)$ (and therefore is of type $(p,q-p)$ or $(q-p,p)$, depending on whether $q$ is larger or smaller than $2p$). Iterating this process, eventually reaches a smooth curve.
	
	The following are the singularities that will appear in our examples.
	\begin{itemize}
		\item For a singularity of type $(p,kp+1)$, the multiplicity is $p$, and singularity of the proper transform in the first blow-up is of type $(p,(k-1)p+1)$. It follows inductively that the multiplicity sequence is the string of length $k$, $[p,\dots,p]$.
		In particular, a curve with a singularity of type $(p,2p+1)$ and a curve with two singularities of type $(p,p+1)$ have the same multiplicity multi-sequence, $[[p,p]]$.
		\item A singularity of type $(p+1,2p+1)$ has multiplicity sequence $[p+1,p]$. In particular the singularity of type $(3,5)$ has multiplicity sequence $[3,2]$.
		\item A singularity of type $(p,4p-1)$ has multiplicity sequence $[p,p,p,p-1]$ ($p\geq 3$).
	\end{itemize}
\end{example}

Here we describe the normal crossing resolution of singularities of type $(p,q)$.
This is best described in terms of continued fraction expansions; given a rational number $r$, we write
\[
r = [a_1,\dots,a_k]^- = a_1-\frac{1}{a_2-\frac1{\dots-\frac{1}{a_k}}}
\]
for its (negative or Hirzebruch--Jung) continued fraction expansion, where $a_i \ge 2$ for each $i\ge 2$.

In what follows, we will adopt the notation $a^{[\ell]}$ to denote a string of $\ell$ entries, all equal to $a$; for instance $[2^{[\ell]}]$ will be the multiplicity sequence of the singularity of type $(2,2\ell+1)$.%, and $[7^{[\ell]},5]^-$ will be the negative continued fraction expansion of the fraction $F_{4\ell + 5}/F_{4\ell+1}$.

It consists of a plumbing of spheres along a three-legged star-shaped graph decorated by Euler classes.
The Euler class on the central vertex is $-1$; that is, the central vertex is the exceptional divisor corresponding to the last blow-up in the resolution.
Two of the legs are linear chains whose decorations give the continued fraction expansions of 
\[
\frac{p}{p-q^*} \, \text{ and }\, \frac{q}{q-p^*},
\]
where $qq^*\equiv 1 \mod p$ and $0< q^*<p$, and $pp^* \equiv 1 \mod q$ and $0<p^*<q$.
%\MG{I checked, and I think it's correct as stated. A topological reference (which is enough, using Neumann) is in Owens--Strle ``Dehn surgeries and negative definite 4-manifolds'', Lemma 4.4, but is in fact due to Moser. My suspicion is that you can also find it in Eisenbud--Neumann.}

More precisely, the resolution graph of the singularity of type $(p,q)$ looks like the following:
\[
\xygraph{
	!{<0cm,0cm>;<1cm,0cm>:<0cm,1cm>::}
	!~-{@{-}@[|(2.5)]}
	!{(0.3,1.5) }*+{\bullet}="x"
	!{(1.5,2) }*+{\bullet}="a1"
	!{(4.5,2) }*+{\bullet}="a2"
	!{(-1,1.5) }*+{\circ}="c1"
	!{(1.5,1) }*+{\bullet}="b1"
	!{(4.5,1) }*+{\bullet}="b2"
	!{(3,2) }*+{\dots}="am"
	!{(3,1) }*+{\dots}="bm"
	!{(0.3,1.9) }*+{-1}
	!{(1.5,2.4) }*+{-a_1}
	!{(4.5,2.4) }*+{-a_m}
	!{(1.5,1.4) }*+{-b_1}
	!{(4.5,1.4) }*+{-b_n}
	"x"-"c1"
	"x"-"a1"
	"x"-"b1"
	"a1"-"am"
	"b1"-"bm"
	"a2"-"am"
	"b2"-"bm"
}
\]
where $\frac{p}{p-q^*} = [a_1,\dots,a_m]^-$ and $\frac{q}{q-p^*} = [b_1,\dots,b_n]^-$, and the hollow dot represents the proper transform of $C$.
(It might be helpful to recall that $\frac{p}{p-q} = [a_m,\dots,a_1]^-$ and $\frac{q}{q-r(p,q)} = [b_n,\dots,b_1]^-$, where $r(p,q)$ is the remainder of the division of $p$ by $q$.)

The following lemma is well-known among algebraic geometers; we include a proof both for completeness and to give an example of the techniques for low-dimensional topologists.

\begin{lemma}\label{l:selfint-defect}
	Suppose $C$ is a singular curve in $X^4$ with a singularity of type $(p,q)$ at $x_0$.
	Consider the normal crossing divisor resolution of the singularity of $C$ at $x_0$ in $\widetilde X$. Then the proper transform $\widetilde C$ of $C$ in $\widetilde X$ satisfies $\widetilde C\cdot \widetilde C = C\cdot C - pq$.
\end{lemma}

\begin{proof}
	We prove the result by induction on the length $\ell$ of the multiplicity sequence of the singularity.
	
	If $\ell = 1$, the singularity is of type $(p,p+1)$ for some $p$, and the first blow-up already gives the minimal smooth resolution.
	The total transform of $C$ in the minimal resolution consists of the proper transform of $C$ and the exceptional divisor, and the two have a tangency of order $p$;
	as observed above, the proper transform has self-intersection $C\cdot C - p^2$.
	To get to the normal crossing divisor resolution, we blow up $p$ times at the tangency; since the proper transform of $C$ was already smooth, each of these blow-ups decrease the self-intersection by $1$, hence $\widetilde C\cdot \widetilde C = C\cdot C - p^2 - p\cdot 1 = C\cdot C - p(p+1)$, thus proving the base case.
	
	If $\ell > 1$, blow up once at $x_0$; the singularity of the proper transform of $C$ will be of type $(p,q-p)$ (or, possibly, $(q-p,p)$), and the length of its multiplicity sequence decreases by $1$ by definition;
	therefore $\widetilde C\cdot \widetilde C = C\cdot C - p^2 - p(q-p) = C\cdot C - pq$, as desired.
\end{proof}

If $C$ is a curve of self-intersection $s$ with a unique singularity of type $(p,q)$, then the total transform of $C$ is the full plumbing graph, where the hollow dot is replaced by a vertex with Euler class $s-pq$.
In our applications, in the case of a unique singularity, if $s-pq>1$, we will blow-up further times along the edge between the central vertex and this short leg to change the coefficient on the central vertex to $-2$ and add a chain of $s-pq-2$ more $(-2)$-vertices, a single $(-1)$-vertex, and a single $+1$-vertex.

In general, we can resolve each singularity of $C$ independently of the others;
furthermore, if all singularities are of type $(p_i,q_i)$, the total transform of $C$ will look like a plumbing tree, obtained by fusing the resolution graphs of the individual singularities along the hollow vertex, whose weight is $s-\sum_i p_iq_i$.

For instance, when $C$ is a rational curve with self-intersection $16$, with a singularity of type $(2,3)$ and one of type $(2,5)$, the total transform $\overline C$ of $C$ in the normal crossing resolution of $C$ is described by the following plumbing diagram:
\[
\xygraph{
	!{<0cm,0cm>;<1cm,0cm>:<0cm,1cm>::}
	!~-{@{-}@[|(2.5)]}
	!{(0,1) }*+{\bullet}="C"
	!{(1.5,1) }*+{\bullet}="e1"
	!{(3,1.75) }*+{\bullet}="a11"
	!{(3,0.25) }*+{\bullet}="b11"
	!{(4.5,0.25) }*+{\bullet}="b12"
	!{(-1.5,1) }*+{\bullet}="e2"
	!{(-3,1.75) }*+{\bullet}="a21"
	!{(-3,0.25) }*+{\bullet}="b21"
	!{(0,1.4) }*+{0}
	!{(1.5,1.4) }*+{-1}
	!{(3,2.15) }*+{-2}
	!{(3,0.65) }*+{-3}
	!{(4.5,0.65) }*+{-2}
	!{(-1.5,1.4) }*+{-1}
	!{(-3,2.15) }*+{-2}
	!{(-3,0.65) }*+{-3}
	"C"-"e1"
	"C"-"e2"
	"e1"-"b11"
	"e1"-"a11"
	"b11"-"b12"
	"a21"-"e2"
	"b21"-"e2"
}
\]
Here the vertex labelled with $0 = 16 - 2\cdot 3 - 2\cdot 5$ is the proper transform $\widetilde C$ of $C$, and is obtained by fusing the corresponding hollow vertices in the resolution diagrams for the two singularities of $C$.

\subsection{Singular symplectic curves}
We are now ready to define the main objects of interest in the paper.

\begin{definition}\label{def:sing}
A \emph{singular symplectic curve} $C$ in a symplectic $4$-manifold $(X,\omega)$ is a subset $C\subset X$
%the image of a map $f: \Sigma\to X$ from a closed, oriented surface $\Sigma$
 such that:
\begin{itemize}
\item there exists a finite subset $\Sing(C) \in C$ such that $C\setminus \Sing(C)$ is a smooth symplectic submanifold of $X$;
%$f|_{\Sigma\setminus \Sing(f)}$ is a diffeomorphism onto its image, $C^0$;
%\item $C^0$ is $\omega$-symplectic;
\item every $q\in \Sing(C)$ has an open neighborhood $U\subset X$, such that there exists a symplectomorphism $(U,C\cap U) \to (V, D\cap V)$, where $V\subset \C^2$, and $D$ is a complex algebraic curve.
\end{itemize}
We call $\Sing(C)$ the \emph{set of singularities} of $C$, $C\setminus \Sing(C)$ the \emph{smooth part} of $C$.
%, and $f$ (and, more frequently, $\Sigma$) the \emph{normalization} of $C$.
We say that $C$ is \emph{cuspidal} if all its singularities are cusps (i.e. locally irreducible).
\end{definition}

Since the singularities are locally modeled on complex curves, we have the resolutions of Section~\ref{ss:resolution}.
In particular, $C$ can be parameterized as a locally injective smooth image of a parameterization $f:\Sigma\to X$ which is an embedding away from the singular points ($\Sigma$ is the normalization). When this normalization $\Sigma$ is connected, the curve $C$ is \emph{irreducible}. When $\Sigma$ is disconnected, $C$ is \emph{reducible} and we consider it as a curve with \emph{labeled components}.

When we want to refer to the abstract topological type of the singular curve, namely the labeled components and their homology classes, and the topological types of the singularities (encoded by the links of the singularities up to isotopy), we will call it a configuration and denote it with script font. A specific \emph{symplectic realization} $C$ of a configuration $\mathcal{C}$ will be a singular symplectic curve with the homology and singularity data specified by $\mathcal{C}$.

We restrict to singularities modeled by complex curves because these are the singularity types that can occur in pseudoholomorphic curves by two results of McDuff, that we summarize in the following theorem.

%\begin{theorem}[McDuff \cite{McDuffSing} (see also \cite{MicWh})]
%	Let $C$ be a $J$-holomorphic curve in an almost complex $4$-manifold $(X,J)$.
%	\begin{enumerate}[label={\rm Theorem \arabic*:}]
%	\item If the curve is not multiply covered, there is a neighborhood $U$ of each of its singular points such that the pair $(U,U\cap C)$ is homeomorphic to the cone over $(S^3,K_x)$ where $K_x$ is an algebraic knot in $S^3$ that depends only on the germ of $C$ at $x$.
%	\item There is an almost complex structure $J_0$ arbitrarily close to $J$ in the $C^0$ topology, and a $J_0$-holomorphic curve $C_0$, which is $C^1$-close to $C$, such that $J_0$ is integrable near each of the singular points of $C_0$ and $(X,C)$ is homeomorphic to $(X,C_0)$.
%	\end{enumerate}
%\end{theorem}
%
\begin{theorem}[McDuff~\cite{McDuffSing}; see also Micallef--White~\cite{MicWh}]
	Let $C$ be a $J$-holomorphic curve in an almost complex $4$-manifold $(X,J)$.
	\begin{enumerate}
	\item If $C$ is not multiply covered, there is a neighborhood $U$ of each of its singular points such that the pair $(U,U\cap C)$ is homeomorphic to the cone over $(S^3,K_x)$ where $K_x$ is an algebraic link in $S^3$ that depends only on the germ of $C$ at $x$.
	\item There are an almost complex structure $J_0$ and a $J_0$-holomorphic curve $C_0$, such that $J_0$ is integrable near each of the singular points of $C_0$ and $(X,C)$ is homeomorphic to $(X,C_0)$; moreover, $J_0$ can be chosen to be arbitrarily close to $J$ in the $C^0$-topology and $C_0$ can be chosen to be arbitrarily close to $C$ in the $C^1$-topology.
	\end{enumerate}
\end{theorem}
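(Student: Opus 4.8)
The plan is to reduce both statements to a local analysis near each singular point and to invoke the \emph{similarity principle} for solutions of perturbed Cauchy--Riemann equations, which is the analytic heart of the matter. First I would fix a singular point $x$ and choose smooth coordinates identifying a neighborhood of $x$ in $X$ with a ball in $\C^2$ centered at the origin, arranged so that $J$ agrees with the standard integrable structure $J_{\mathrm{std}}$ at the origin. Since $C$ is not multiply covered, each local branch through $x$ admits a somewhere-injective $J$-holomorphic parametrization $u\colon (D,0)\to(\C^2,0)$, and the finitely many branches meet only at $0$. The goal of the local analysis is to show that the germ of each branch, and the mutual position of distinct branches, is governed entirely by holomorphic leading-order data.

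For a single branch, I would write the $J$-holomorphic equation for $u$ and observe that, after subtracting the leading Taylor behavior, the relevant quantity (for instance $\partial_z u$, or the difference of a branch from its tangent line) satisfies a linear equation of the form $\bar\partial \zeta = a\,\zeta + b\,\bar\zeta$ with bounded measurable coefficients. The similarity principle then yields a factorization $\zeta = \Phi\cdot h$ with $\Phi$ continuous and nonvanishing and $h$ holomorphic, so that $\zeta$ vanishes to a well-defined finite order with a well-defined tangent direction. Iterating this argument branch by branch produces a finite Puiseux-type expansion: at each stage the leading term is holomorphic, and the exponents that appear are exactly the Puiseux exponents of a complex curve germ. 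The same circle of ideas, applied to the difference of two branches, computes their local intersection multiplicity and shows it to be a positive integer realized by a holomorphic pair (positivity of intersections). Consequently the iterated torus link encoding the topology of the singularity coincides with that of a complex algebraic germ; this is the content of part (1), and in the sharper form of Micallef--White it is realized by a $C^1$ change of coordinates carrying $C\cap U$ onto a holomorphic model curve, so that $(U,C\cap U)$ is homeomorphic to the cone over $(S^3,K_x)$ with $K_x$ algebraic.

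For part (2) I would use the local models produced above to build the global data. Near each singular point the $C^1$-straightening identifies $C$ with a holomorphic curve for the integrable structure $J_{\mathrm{std}}$; I would set $J_0 := J_{\mathrm{std}}$ there and interpolate between $J_{\mathrm{std}}$ and $J$ on a collar, using a cutoff supported in a small neighborhood of $\Sing(C)$. Because the straightening can be taken $C^1$-close to the identity and the coordinates were chosen so that $J$ equals $J_{\mathrm{std}}$ at the origin, this interpolation keeps $J_0$ within any prescribed $C^0$-distance of $J$. Away from the singularities $C$ is a smooth $J$-holomorphic, hence after a small $C^1$ perturbation $J_0$-holomorphic, submanifold, so the local model curves extend to a global $J_0$-holomorphic curve $C_0$ that is $C^1$-close to $C$; the homeomorphism $(X,C)\cong(X,C_0)$ is then assembled from the local homeomorphisms of part (1) and the identity in the smooth region.

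The hard part will be the local regularity statement underlying part (1): establishing the similarity principle with merely bounded coefficients and then bootstrapping it into a full finite Puiseux expansion whose exponents match a holomorphic germ. The non-multiply-covered hypothesis is exactly what makes this possible, since it guarantees each branch is somewhere injective and allows one to separate and compare distinct branches via positivity of intersections; without it the leading-order analysis can degenerate.
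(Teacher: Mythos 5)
First, a point of comparison: the paper does not prove this theorem at all --- it is quoted as background, with the proofs living in McDuff's paper on local behaviour of $J$-holomorphic curves and in Micallef--White. Measured against those sources, your outline of part (1) follows the actual published route faithfully: somewhere-injective local parametrizations of branches, the Carleman similarity principle applied to $\partial_z u$ and to differences of branches, a finite Puiseux-type expansion with holomorphic leading data, and positivity of intersections to control how distinct branches meet, culminating in the $C^1$-straightening onto a holomorphic model germ. That is essentially Micallef--White's argument, and the claim that the link $K_x$ is algebraic follows as you say.

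Part (2), however, has a genuine gap as written. Two issues: first, ``interpolate between $J_{\mathrm{std}}$ and $J$ on a collar, using a cutoff'' is not literally available, since a convex combination of almost complex structures need not square to $-\mathrm{Id}$; this is repairable by a standard contractibility/retraction argument, but it must be said. Second, and more seriously, after changing $J$ to $J_0$ on the transition annulus, your plan is to make $C$ $J_0$-holomorphic there by ``a small $C^1$ perturbation.'' No mechanism is offered for this: solving the Cauchy--Riemann equation for the perturbed structure would require an implicit-function-theorem or gluing setup (with attendant transversality hypotheses) that you have not put in place, and a generic perturbation need not exist or stay $C^1$-close. McDuff's actual argument avoids this entirely by keeping $C$ \emph{fixed} outside a small neighborhood of $\Sing(C)$ and interpolating $J$ only through almost complex structures that preserve the tangent planes $TC$ along $C$ --- a curve is $J_0$-holomorphic precisely when its tangent planes are $J_0$-invariant, and since both $J$ and the integrable local model preserve $TC$ along $C$ (after the $C^1$-straightening of part (1)), the fiberwise contractibility of the space of such structures makes the interpolation possible with no perturbation of $C$ away from the singular points. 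Replacing your annulus step with this tangency-preserving interpolation closes the gap; the rest of your assembly of the homeomorphism $(X,C)\cong(X,C_0)$ from the local models and the identity in the smooth region then goes through.
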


Since the symplectic condition is $C^1$-open, it is preserved by $C^1$-small isotopies. It follows that any $J$-holomorphic singular curve for any $\omega$-compatible $J$ is $C^1$-close to a curve $C_0$ whose singular points have complex models. Since $C^1$-small isotopies do not affect questions of existence or uniqueness, we model singular symplectic curves with complex Darboux charts as given in Definition~\ref{def:sing}.

There are two different types of notions of the genus of $C$ of a singular curve.
Recall that every singularity can be perturbed; that is it can be replaced with a Milnor fiber, locally defined by a perturbation of the equation for the singularity.
The genus of the Milnor fiber is also the 3-genus of its link, and it is a topological invariant of the singularity.
If $q$ is a singular point of $C$, with $r$ branches, we define $\mu(q)$, the \emph{Milnor number} of $q$, as the first Betti number of the Milnor fiber of the singularity of $C$ at $q$;
we define $\delta(q)$ by $2\delta(q) = \mu(q) + r - 1$. 
If $q$ is a cusp point of $C$, then $\delta(q)$ is just the genus of the Milnor fiber of the singularity.

For the following definition, we borrow the terminology from algebraic geometry.
\begin{definition}
%\MG{I took the easy way out, here. I feel like at some point we should say explicitly and visibly that singular symplectic curves are pseudo-holomorphic for some $J$ (and that we can impose integrability of $J$ around singular points). Then we can define `normalization' and give an alternative definition of the geometric genus. We should leave a teaser here saying that we will provide an alternative interpretation which mirrors the AG one.}
%Let $C$ be a singular symplectic curve and let $f: \Sigma \to X$ be a parametrization of $C$: that is $\Sigma$ is a smooth Riemann surface, $f(\Sigma)=C$, the critical values of $f$ are contained in the singularities of $C$, and at smooth points $c\in C$, $f^{-1}(c)$ is a single point.
%We say that $f$ (or sometimes $\Sigma$) is a \emph{normalization} of $C$.
Let $C$ be a singular symplectic curve. We call $p_g(C) := g(\Sigma)$ where $\Sigma$ is the normalization of $C$, the \emph{geometric genus} of $C$, and we will say that $C$ is \emph{rational} if its geometric genus is 0.

We call
\begin{equation}\label{e:degree-genus}
p_a(C) := p_g(C) + \sum_{p \in \Sing(C)} \delta(p)
\end{equation}
the \emph{arithmetic genus} of $C$.
\end{definition}

Since $p_a(C)$ is the genus of a symplectic smoothing of $C$, $C$ satisfies the adjunction formula
\begin{equation}\label{e:adjunction}
K_X\cdot C + C\cdot C + 2-2p_a(C) = 0.
\end{equation}
It follows from the adjunction formula that for a singularity with multiplicity sequence $[m_1,\cdots, m_n]$
\begin{equation}\label{e:deltamulti}
\delta(p) = \frac12\sum m_j(m_j-1).
\end{equation}

Our focus will be on singular symplectic curves in $\cptwo$. In this case, because $H_2(\cptwo;\Z)\cong \Z$, the homology class of a curve is determined by an integer. We will use the convention that $[\cpone]$ with the complex orientation is the positive generator which we will often denote by $h$. Note that $\langle[\omega],[\cpone]\rangle = \int_{\cpone}\omega>0$. Since all symplectic surfaces have positive symplectic area, their integral homology classes will correspond to positive integers. Extending the terminology from the algebraic case, we have the following definition.

\begin{definition}
	The \emph{degree} of a (singular) symplectic curve $C$ in $\cptwo$ is the positive integer $d$ such that $[C]=d[\cpone]\in H_2(\cptwo;\Z)$.
\end{definition}

We will focus on classifying pairs $(X,C)$ which are relatively minimal, meaning there are no exceptional spheres in $X$ that are disjoint from $C$. The following definition will be convenient.

\begin{definition}
	A singular symplectic curve $C$ is said to be \emph{minimally embedded} in a symplectic $4$-manifold $(X,\omega)$ if $X\setminus C$ contains no exceptional symplectic $(-1)$-spheres.
\end{definition}

There is a natural equivalence relation between different singular curves.
\begin{definition}\label{d:singular-iso}
	A symplectic \emph{equisingular isotopy} of singular symplectic curves is a one-parameter family of singular symplectic curves $C_t\subset (X,\omega)$ such that for each $t, t'\in [0,1]$ the curves $C_{t}$ and $C_{t'}$ have topologically equivalent singularities. If $C_t$ is reducible, we label the components of $C_t$ and require the discrete labeling to vary continuously in $t$ (i.e. the labeling is preserved by the family). %\LS{Clarified isotopies are of labeled configurations in the reducible case.}
\end{definition}

%\MG{Removed a paragraph (commented) which required the definition of normalisation. It will be found later, in one form or another.}
%{\color{red}Equivalently, we can think of an isotopy as a smooth 1-parameter family of symplectic maps $f_t \colon \Sigma \to X$ such that the singular points of $f_t$ are constant in $\Sigma$, and that they have fixed topological singularity type.}

We may sometimes drop the term equisingular for brevity and simply say that $C_0$ and $C_1$ are \emph{symplectically isotopic}, with the understanding that all of the isotopies we consider in this article are equisingular.

Note that each singularity is locally uniquely determined up to equisingular isotopy by the smooth isotopy class of its link. Our definition of a symplectic singular curve required the existence of a local symplectomorphism to a complex curve singularity. Here we verify that complex curve singularities with the same topological type have the same symplectomorphism type. Thus we can take one representative complex model for each topological type of curve singularity.

\begin{lemma} \label{l:localsing}
	Suppose $C_0$ and $C_1$ are two complex curves in $(\C^2,\omega_{\rm std})$ where $(0,0)$ is a singular point for $C_i$ such that the links of the singularities at $(0,0)$ in $C_i$ for $i=0,1$ are smoothly isotopic. Then there exist an equisingular family of curves $(C'_t)_{t\in[0,1]}$ such that $C'_t$ has a singularity at the origin for each $t$, and there are neighborhoods $U_0$ and $U_1$ of $(0,0)$ such that $C_i \cap U_i = C'_i\cap U_i$ for $i=0,1$.
\end{lemma}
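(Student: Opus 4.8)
The plan is to keep everything \emph{holomorphic} along the isotopy, so that symplecticity comes for free: any complex-analytic curve germ is holomorphic for the standard complex structure and hence symplectic with respect to $\omega_{\rm std}$. Thus it suffices to connect $C_0$ and $C_1$ by a path of complex curve germs of \emph{constant topological type}, since such a path is automatically an equisingular symplectic isotopy. Because the links of the two singularities are smoothly isotopic, the two germs have the same topological type, and in the irreducible case the same Puiseux exponents $(m;b_1,\dots,b_k)$ by \cite[Proposition 5.3.1]{Wall}; in the reducible case they share the same per-branch Puiseux data together with the same pairwise intersection multiplicities. The strategy is then to connect each $C_i$ to a fixed \emph{standard model} germ carrying these invariants, and to concatenate the two paths.

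To build the path I would first use finite determinacy of the topological type: a plane curve germ is determined up to topological equivalence by a finite jet of a parametrization, so scaling the convergent Puiseux tail $\sum_{j>N}c_jt^j$ to zero for $N$ large is an equisingular holomorphic deformation reducing $C_i$ to a \emph{polynomial} parametrization $\phi_i(t)=(t^m,\sum_j a^{(i)}_j t^{b_j})$. I would then interpolate the finitely many remaining coefficients: scale every inessential coefficient to $0$ and move each essential coefficient within $\C^\ast$ to a standard value, never letting an essential coefficient vanish and never promoting an inessential exponent to an essential one. Throughout this interpolation the set of exponents carrying nonzero coefficients, and the gcd-decreasing condition singling out the essential ones, are unchanged, so the Puiseux exponents---and hence the topological type---stay constant. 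For several branches one runs this simultaneously on all branches while holding the pairwise intersection multiplicities fixed.

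Each member of the resulting concatenated path $\phi_t$ is a complex curve, hence symplectic for $\omega_{\rm std}$, and the path has constant topological type, so it is an equisingular symplectic isotopy in the sense of Definition~\ref{d:singular-iso}, with each $C'_t$ singular at the origin. To produce honest curves rather than germs I would fix a ball $B_\varepsilon$ small enough that $C_0$, $C_1$, and all germs in the family have the standard local conic structure (transverse to every sphere $S_r$, $r\le\varepsilon$), take $C'_t$ to be the image of $\phi_t$ in $B_\varepsilon$, and extend it smoothly in $t$ to a symplectic curve outside $B_\varepsilon$ if a global curve is wanted. Since $\phi_0$ and $\phi_1$ are the original untruncated Puiseux parametrizations, $C'_0=C_0$ and $C'_1=C_1$ on a neighborhood $U_i=B_\delta$ of the origin, as required.

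The easy points are symplecticity (automatic from holomorphicity) and the endpoint matching. The real content---and the step I expect to be the main obstacle---is verifying that the coefficient interpolation genuinely remains within a single topological type, i.e.\ that germs of a fixed topological type form a \emph{connected} family. For an irreducible cusp this is transparent from the Puiseux normal form above; the subtle case is several branches, where the pairwise intersection multiplicities are extra holomorphic invariants that must be held fixed along the deformation, so one must either carry these through the explicit interpolation or appeal to the classical connectedness (irreducibility) of the equisingular stratum in the space of germs.
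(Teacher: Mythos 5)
Your proof is correct and follows essentially the same route as the paper's: both connect the two germs through the (connected) space of Puiseux coefficient sequences, with the essential coefficients constrained to $\C^*$ and the forced-zero pattern preserved, and both rely on holomorphic curves being automatically symplectic for $\omega_{\rm std}$. The finite-determinacy truncation is an unnecessary (though harmless) extra step---the paper interpolates the full coefficient sequences directly---and the paper additionally invokes connectedness of the space of local complex charts, which absorbs the coordinate normalization that your parametrized form $\phi(t)=(t^m,\sum a_j t^{b_j})$ leaves implicit.
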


\begin{proof}
	We can choose two complex charts around $(0,0)$ such that, for $i=0,1$, $C_i$ has a Puiseux expansions
	\[
	y = c^i_{b_1} x^{{b_1}/m} + c^i_{b_1+1} x^{(b_1+1)/m} + \dots
	\]
	Two singularities are topologically equivalent if and only if they have the same Puiseux exponents $(m;b_1,\dots,b_k)$. 
	In terms of the Puiseux expansion, this means that $c^i_{b_j}\neq 0$ and that $c_\ell = 0$ for every $b_j < \ell < b_{j+1}$ such that $e_j \nmid \ell$. (Recall from the beginning of the section that the sequence $\{e_i\}$ is defined recursively as $e_1 = \gcd(m,b_1)$ and $e_i = \gcd(e_{i-1},b_i)$ for $i>1$.)
	Since the space of sequences $\{c_\ell\}$ satisfying these constraints and the space of local charts are both connected, we can always isotope one singularity into the other without changing the singularity type.
\end{proof}

\begin{remark}
	For smooth symplectic curves in any symplectic $4$-manifold $(X,\omega)$, any 1-parameter family of curves $C_t$ is induced by an ambient symplectic isotopy. Namely there exist symplectomorphisms $\Psi_t:(X,\omega)\to (X,\omega)$ with $\Psi_0=Id$ and $\Psi_t(C_0)=C_t$.
	
	On the other hand, singular curves carry local symplectomorphism invariants beyond their topological type. Therefore there can be a $1$-parameter equisingular family of symplectic (or even complex) curves which may not be related by an ambient symplectic isotopy.
	
	The differences stem from the preservation by symplectomorphisms of ``angles'' between collections of symplectic planes intersecting at the origin. For a cusp singularity, the angles are not as easily visible, but after blowing up to a normal crossing resolution, the position of the intersection points of the exceptional divisors can vary. After blowing down one exceptional divisor, different relative intersection positions of other curves with that exceptional divisor will change the symplectic angles between the resulting transversally intersecting curves. These angles will have an effect on the geometry of the cusp when you blow down completely to undo the resolution.
	
	This is the analogue of the situation in the complex category, where, for instance, there are moduli of ordinary quadruple points in $(\C^2,0)$ that are connected, but not isomorphic. These are distinguished by the cross-ratio of the four points the four branches determine in $\mathbb{P}(T_{(0,0)}\C^2) \cong \cpone$.
	
	In order to see the symplectic analogue of the complex statement, we reduce to the linear case, and argue by dimension-counting.
	Consider two ordinary $n$-tuple points in $(\C^2,(0,0))$ for some $n>1$. If there is a global symplectomorphism $\psi: (U,(0,0)) \to (V,(0,0))$ between open sets containing them that sends one singularity to the other, then the differential $d\psi_{(0,0)}$ is a linear symplectomorphism of $(T_{(0,0)}\C^2,\omega)$ that identifies the $n$-tuple of tangent spaces to the branches of the two singularities.
	The space ${\rm Sp}(4)$ of linear symplectomorphisms of $(\R^4,\omega=\omega_{\rm std})$ has finite dimension $d_{\rm ls}$; the Grassmannian ${\rm Gr}_\omega(4,2)$ of symplectic planes in $(\R^4,\omega)$ has finite dimension $d_{\rm Gr}$, too.
	Since the diagonal action of ${\rm Sp}(4)$ on ${\rm Gr}_\omega(4,2)^{\times n}$ is smooth for each $n$, the dimension of the orbits is at most $d_{\rm ls}$.
	The space of $\omega$-planes intersecting positively and transversely is open and non-empty in ${\rm Gr}_\omega(4,2)^{\times n}$.
	This proves that, as soon as $d_{\rm ls} < nd_{\rm Gr}$, the action cannot be transitive on any connected component, and in particular that there are isotopic ordinary $n$-tuple points that are isotopic but not ambient-symplectomorphic.
	In fact, one can compute $d_{\rm ls}=10$ and $d_{\rm Gr} = 4$, since the symplectic condition is open, and therefore $d_{\rm Gr}$ is the same as the dimension of the Grassmannian of 2-planes in $\R^4$. This shows that $n=3$ already suffices.
\end{remark}

\subsection{The cuspidal contact structure and its caps}

\begin{theorem} Given a singular symplectic curve $C$, with specified singularity types and normal Euler number $s$.
Suppose $s=C\cdot C>0$. Then there exists a symplectic manifold $(N,\omega)$ with concave boundary such that $N$ is a regular neighborhood of $C$, such that $[C]^2=s$ in $N$. Moreover, every symplectic embedding of $C$ into a symplectic manifold $(X,\omega)$ has a concave neighborhood inducing the same contact boundary.
\end{theorem}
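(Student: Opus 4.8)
The plan is to reduce the problem to a plumbing via resolution, construct a concave neighborhood of the plumbing by the symplectic plumbing construction, and then blow down to recover a neighborhood of the singular curve $C$ itself. First I would pass to the normal crossing resolution of Section~\ref{ss:resolution}: blow up (either abstractly, or inside a given ambient manifold) at the singular points of $C$ enough times so that the total transform $\overline C$ becomes a symplectic normal crossing divisor, with dual graph $\Gamma$ the plumbing tree decorated by self-intersections and genera that are determined entirely by the singularity types of $C$ together with $s=C\cdot C$. Since every symplectic blow-up and blow-down is supported in the interior of a regular neighborhood, a regular neighborhood of $\overline C$ and a regular neighborhood of $C$ share the same boundary three-manifold. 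Hence it suffices to produce a concave neighborhood of the configuration $\overline C$ and then undo the blow-ups.

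Next I would invoke the symplectic plumbing construction (McDuff; Gay--Stipsicz) for the graph $\Gamma$: one plumbs together symplectic disk-bundle pieces over the components, each with a fibered symplectic form, and glues them over the normal-crossing intersection points, obtaining a closed neighborhood $P$ in which all components are symplectic of prescribed areas $a_i>0$ and whose boundary is of contact type. The boundary is \emph{concave} precisely when the areas can be chosen to satisfy the positivity inequalities $\sum_j (C_i\cdot C_j)\,a_j>0$ for every component $C_i$. This is where the hypothesis $s>0$ enters: after the extra blow-ups of Section~\ref{ss:resolution} the tree $\Gamma$ carries a single positively weighted vertex (the $+1$ cap) and is otherwise negative, and I would verify the inequalities by assigning a large area to this cap vertex and areas that decrease along the chains toward the negative-definite legs. (Equivalently, the areas realized by any honest symplectic embedding give such a solution, since positivity of symplectic areas forces the configuration to be positive at its top.)

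With $P$ concave in hand, I would symplectically blow down the exceptional $(-1)$-spheres of the resolution one at a time, an operation supported in the interior of $P$; the result is a concave symplectic manifold $N$ which is a regular neighborhood of $C$, with $\partial N=\partial P$ carrying the contact structure $(Y_C,\xi_C)$. Under these blow-downs the self-intersection of the proper transform climbs back to $s$ (the numerics of Lemma~\ref{l:selfint-defect} run in reverse), so $[C]^2=s$ in $N$. For the final assertion, given any embedding $C\hookrightarrow(X,\omega)$ I would perform the same resolution inside $X$, producing a symplectic normal crossing divisor with the identical graph $\Gamma$; the symplectic neighborhood theorem for such configurations, together with the fact that the contactomorphism type of the boundary depends only on $\Gamma$ and not on the chosen areas (a Gray-stability deformation within the positive cone), shows that every embedding yields a concave neighborhood inducing the same contact boundary $(Y_C,\xi_C)$.

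The step I expect to be the main obstacle is the construction in the second paragraph: checking that the \emph{indefinite} graph $\Gamma$ genuinely supports a concave symplectic filling, that is, solving the positivity inequalities simultaneously and producing an honest symplectic form on the plumbing whose boundary is concave, and then verifying that the resulting contact structure is independent of the area choices so that $(Y_C,\xi_C)$ is well defined. The reduction by blow-up/blow-down and the divisor neighborhood theorem are comparatively routine once the concave plumbing model is established.
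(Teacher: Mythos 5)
Your overall architecture --- normal crossing resolution, plumbing neighborhood, a Gay--Stipsicz-type concave construction, interior blow-downs, and a divisor-neighborhood/Gray-stability argument for the embedded case --- is the same as the paper's, which invokes the concave adaptation of Gay--Stipsicz from \cite[Theorem~1.3]{LiMak}. But the step you yourself flag as the main obstacle contains a genuine gap. Your structural claim about $\Gamma$ is false: the normal crossing resolution does \emph{not} in general carry a single positively weighted vertex with all other weights negative. By Lemma~\ref{l:selfint-defect} the proper transform of $C$ has weight $s-\sum_i p_iq_i$, which can be negative even when $s>0$; for the degree-$8$ unicuspidal curve with a $(3,22)$-singularity the corresponding vertex is decorated by $8^2-3\cdot 22=-2$, so \emph{every} weight in the normal crossing graph is negative and your recipe of ``large area at the $+1$ cap, decreasing along the legs'' has no starting point. (The $+1$-caps of Section~\ref{ss:res} come from further ad hoc blow-ups or blow-downs that only exist in special cases.) What is true in general is only that the intersection form of the plumbing is $\langle s\rangle\oplus\langle -1\rangle^{\oplus n}$, hence non-degenerate and not negative definite; extracting your positivity system from that alone requires an argument you do not supply (for instance Perron--Frobenius applied to $Q+cI$ for the connected graph, using that the top eigenvalue of $Q$ is positive, produces $z>0$ with $Qz>0$).

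The omission is more serious for the ``every embedding'' half of the statement. There the areas $a_i$ are dictated by the ambient $\omega$ and cannot be chosen, so the relevant criterion is not an inequality you arrange by hand but the solvability of $Qz=a$ with $z_i>0$ (the positive GS criterion of \cite{LiMak}), and your parenthetical ``the areas realized by any honest symplectic embedding give such a solution, since positivity of symplectic areas forces the configuration to be positive at its top'' is an assertion, not a proof. The paper's substitute is a short cohomological verification that you skip entirely: the map $H^2(U,\partial U;\Z)\to H^2(U;\Z)$ is presented by the non-degenerate form $\langle s\rangle\oplus\langle -1\rangle^{\oplus n}$, hence is a rational isomorphism, so the restriction $H^2_{\rm dR}(U)\to H^2_{\rm dR}(\partial U)$ vanishes and $\omega$ is exact on $\partial U$; this exactness is precisely the hypothesis under which \cite[Theorem~1.3]{LiMak} produces the concave neighborhood, both for the abstract model and inside any given $(X,\omega)$. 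Supplying that exactness step (or the Perron--Frobenius feasibility argument together with a deformation statement for fixed areas) is what is needed to close the gap; as written, the core of the construction is missing.
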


\begin{proof}
Let $\overline C$ denote the normal crossing resolution of $C$ which is a collection of transversally intersecting curves with specified genus, intersections, and normal Euler numbers. The intersections are generic double points.

A regular neighborhood $U$ of $\overline C$ can be constructed as a plumbing of surfaces.
Moreover, by undoing the normal crossing resolution, we see that $U$ is the blow-up of a regular neighborhood $N$ of $C$. In particular the intersection form of $U$ is $\langle s\rangle \oplus \langle -1\rangle^{\oplus n}$ for some $n\ge 0$.
	
Because $s = C\cdot C > 0$, the intersection form of the plumbing is not negative definite.
We claim that the inclusion $\partial U \to U$ induces the trivial map on the second cohomology group: $H^2_{\rm dR}(U) \to H^2_{\rm dR}(\partial U)$.
This follows from the long exact sequence of the pair $(U,\partial U)$:
in fact, the map $H^2(U,\partial U;\Z) \to H^2(U;\Z)$ is presented by the intersection form of $U$;
since the latter is non-degenerate, the map $H^2_{\rm dR}(U,\partial U) \to H^2_{\rm dR}(U)$ is an isomorphism.
It follows that $\omega_U$ is exact on $\partial U$.
(The exactness assumption is automatically satisfied if $C$ is rational and cuspidal, since in that case $\partial U$ is a rational homology sphere, and therefore $H^2_{\rm dR}(\partial U) = 0$.)

Construct a concave symplectic structure on $U$, such that $\overline{C}$ is a symplectic submanifold using the construction of \cite{GayStipsicz} adapted to the concave case~\cite[Theorem~1.3]{LiMak}. Blow down to obtain the symplectic structure on $N$.

The existence of an equivalent concave neighborhood in any symplectic embedding follows similarly from blowing up to the normal crossing resolution and using the analogous result of \cite{GayStipsicz,LiMak}.
\end{proof}

\begin{definition}
Given a singular symplectic curve $C$, defined by the collection of its singularities and its self-intersection, we define $(Y_C,\xi_C)$ by $Y_C = -\partial N$, and $\xi_C$ as the contact structure induced by the concave symplectic structure.
\end{definition}

Note that the contact structure depends only on the topological types of the singularities of $C$, its geometric genus, and its self-intersection.
Also, observe that, in order to define $(Y_C,\xi_C)$, we are using that there exists a normal crossing resolution.

This is a generalization of the contact structures introduced by Chen (see~\cite[Section 4]{Chen}) for rational curves. The focus in~\cite{Chen} was on curves in $\cptwo$ (or in symplectic manifolds with the same algebraic topology). Chen described the contact structure using transverse symplectic handle attachment, instead of using the normal crossing resolution. The two contact structures are actually the same because they are both the canonical contact structure on the boundary of a small concave neighbourhood of the curve, though we will not use this fact here.

%There are natural concave caps of the contact manifold $(Y_C,\xi_C)$, each related by blow-ups on the interior (which do not affect the symplectic and contact structure on the boundary). The first natural cap is a neighborhood $N$ of the singular curve itself, and the others are neighborhoods of the total transform of the resolutions.
%Call $(V,\omega_V)$ the blow-up of $(N,\omega_N)$ corresponding to the minimal resolution of $C$, and $(U,\omega_U)$ the blow-up to the normal crossing resolution.
%
%\begin{definition}
%We call $(N,\omega_N)$ the \emph{concave neighborhood} or \emph{neighborhood cap} of $(Y_C,\xi_C)$. We call $(U, \omega_U)$ the \emph{normal crossing cap} of $(Y_C,\xi_C)$, and $(V,\omega_V)$ the \emph{minimal cap} of $(Y_C,\xi_C)$.
%\end{definition}
%
%If a singularity has multiplicity sequence $[m_1,\dots,m_k]$, the minimal cap contains an exceptional divisor that has a tangency of order $m_k$ with the proper transform of the curve.

\begin{remark}
We recall that, if $C$ is an algebraic curve in $\cptwo$ of degree $d$, then the contact structure $\xi_C$ is defined algebraically.
It is a hyperplane section of the Veronese embedding of the degree $d$, and hence its complement inherits a Stein structure from $\C^M = \P^M \setminus H$.
\end{remark}

\begin{remark}
	Note that in our setup, $C$ need not be realised as a symplectic curve \emph{embedded} in a closed K\"ahler surface. Instead its regular neighborhood can be defined using its self-intersection number, genus, and singularities. We will exhibit examples of such curves which do not embed in a closed symplectic manifold (see Section~\ref{s:lowdegrees}).
\end{remark}

\subsection{Topological obstructions}
	We mention here two techniques used previously to obstruct rational cuspidal curves in $\cptwo$ which have topological interpretations and can thus obstruct symplectic curves (not just complex algebraic curves).
	
	The semigroup condition was first discovered by Borodzik and Livingston~\cite{BorodzikLivingston}.
	Recall that to each curve singularity $x$ there is an associated semigroup $\Gamma_x$, that records the local multiplicities of intersections of germs of curves with the singularity;
	for instance, for a singularity of type $(p,q)$, the semigroup is generated by $p$ and $q$.
	We define the counting function $R_x$ of $\Gamma_x$ as $R_x(n) = \#(\Gamma_x \cap (-\infty,n-1))$, and the minimum convolution $F\diamond G$ of two functions as $(F\diamond G)(n) = \min_{k\in \Z} F(k) + G(n-k)$;
	given a rational cuspidal curve $C$ of degree $d$, with cusps $x_1,\dots,x_k$, and associated $R$-functions $R_1,\dots, R_k$, let $R = R_1\diamond\dots\diamond R_k$.
	Then,~\cite[Theorem 6.5]{BorodzikLivingston} asserts that, for each $j=-1,0,\dots,d-2$,
	\begin{equation}\label{e:semigroup}
	R(jd+1) = \frac{(j+1)(j+2)}2.
	\end{equation}
	Note that this is a smooth obstruction: it obstructs the existence of a rational homology 4-ball which glues by a diffeomorphism of the boundaries to a regular neighbourhood of $C$.
	
	\begin{remark}
		In fact, the function $R$ only depends on the multiplicity multi-sequence, not the way it is split into multiplicity sequences for individual cusps~\cite[Theorem 1.3.12]{BodnarNemethi}.
		Moreover, one can see that the semigroup condition is a generalization of B\'ezout's theorem: see, for instance,~\cite[Proposition~3.2.1]{FdBLMHN2}.
	\end{remark}
	
	\begin{example}\label{ex:33}
		In the case of quintics, the only multiplicity multi-sequence that is excluded by the semigroup criterion is $[3,3]$, which corresponds to two rational cuspidal curves, one with a singularity of type $(3,7)$ and one with two singularities of type $(3,4)$.
	\end{example}
	
	\begin{remark}\label{r:involutive}
		There is a strengthening of~\cite{BorodzikLivingston} to curves of odd degree, using involutive Heegaard Floer homology, due to Borodzik, Hom, and Schinzel~\cite{BorodzikHom};
		in particular, by~\cite[Section 5.1]{BorodzikHom}, the curves with cusps of types $\{[2,2,2], [2,2,2]\}$ and $\{[2,2,2,2,2], [2]\}$ are also obstructed.
	\end{remark}
	
	We call \emph{spectrum semicontinuity} an inequality on the Levine--Tristram signature and nullity functions of the links, $\sigma_\bullet, \eta_\bullet: S^1\to \Z$.
	It is related to the algebro-geometric spectrum semicontinuity, formulated in terms of Hodge-theoretic data, and then recast in more topological terms by Borodzik and N\'emethi~\cite[Corollary 2.5.4]{BorodzikNemethi}.
	From a curve $C$, by choosing a generic line $\ell$, we obtain a genus-$p_g(C)$ cobordism in $S^3\times [0,1]$ from the connected sum $K$ of links of all singularities of $C$, to the torus link $T(d,d)$;
	this cobordism is obtained from removing a neighborhood of a path connecting all singularities of $C$, and a neighborhood of the line $\ell$.
	Suppose that $C$ is rational and its singularities are $k$ cusps. Then $K$ is a knot, and for every $\zeta \in S^1_! \subset S^1$ we have:
	\begin{equation}\label{e:spectrum}
		|\sigma_{T(d,d)}(\zeta) - \sigma_K(\zeta)| + |\eta_{T(d,d)}(\zeta)- \eta_{K}(\zeta)| \le d-1.
	\end{equation}
	Here $S^1_!$ is the unit circle with all \emph{Knotennullstellen} (roots of integer polynomials such that $p(1) = 1$) removed; that is,
	\[
	S^1_! = S^1 \setminus \{\alpha \mid \exists p(t) \in \Z[t], p(1) = 1, p(\alpha) = 0\}.
	\]
	The inequality above was essentially proved by Nagel and Powell~\cite{NagelPowell}, who also introduced the notation and terminology for $S^1_!$ (see also~\cite[Theorem~2.12]{Conway-survey}).
	
	To be concrete: all transcendental complex numbers of norm $1$ belong to $S^1_!$, so the set of \emph{Knotennullstellen} has measure $0$. A root of unity belongs to $S^1_!$ belongs to $S^1_!$ if and only if its order is a prime power; for this it suffices to evaluate cyclotomic polynomials at $1$: $\Phi_{p^r}(1) = p$ for every prime $p$ and positive integer $r$, while $\Phi_n(1) = 1$ whenever $n$ has at least two distinct prime factors.
	
	%\MG{Changed `degree-1 curve' into `locally flatly embedded sphere in the homology class $h$'}
	The obstruction~\eqref{e:spectrum} is topological, once we assume that there exists a locally flatly embedded sphere $\ell$ in the homology class $h$ that intersects $C$ transversely and positively in $d$ points.
	%\MG{Removed `cuspidal' from the assumption, so now it's only a `symplectic curve'.}
	This assumption is satisfied when $C$ is a symplectic curve, because we can choose an almost complex structure such that $C$ is $J$-holomorphic (by Lemma~\ref{l:Jcompatible}) and then choose a generic $J$-holomorphic line.
	Then each $J$-holomorphic line intersects $C$ positively, and the space of $J$-holomorphic lines has (real) dimension $4$. The $J$-holomorphic lines which intersect $C$ at a singular point or tangentially has positive codimension (real codimension $2$) in the space of all $J$-holomorphic lines. Therefore there is a $J$-holomorphic line which intersects $C$ positively and transversally in $d$ points.
	
	\begin{example} \label{ex:2727}
		The spectrum semicontinuity obstructs the existence of a quintic with two cusps of type $(2,7)$. As mentioned in Remark~\ref{r:involutive} above, this was also obstructed by using Floer-theoretic techniques.
		The spectrum semicontinuity is a stronger obstruction, because it holds in the topologically locally flat category, rather just the smooth category.
	\end{example}

% !TEX root = ../rationalcuspidal.tex

\section{Pseudoholomorphic techniques}\label{s:tools}
{
\subsection{Proper transforms in symplectic blow-ups}\label{ss:propertransform}

	{
	In the symplectic category, blowing up and blowing down take on a slightly different character than they do in algebraic geometry \cite{McDuffBlowups}. To perform a symplectic blow-up at a point $p$ in a symplectic manifold $(X,\omega)$, we first choose a Darboux chart centered at $p$. Remove a ball inside this chart of radius $\lambda$, and collapse the boundary by the Hopf fibration to obtain the exceptional divisor. Alternatively, remove a ball of radius $\lambda+\varepsilon$ centered at $p$ and replace it with a $\varepsilon$ neighborhood of a copy of $\cpone$ of symplectic area $\lambda$ in $\mathcal{O}(-1)$. The symplectic form on the ring of the ball between radius $\lambda$ and $\lambda+\varepsilon$ agrees with the symplectic form on $\mathcal{O}(-1)$ when the zero section is a symplectic submanifold of area $\lambda$. A symplectic blow-down reverses the operation, replacing a $\varepsilon$ neighborhood of an exceptional divisor of symplectic area $\lambda$ with a ball of radius $\lambda+\varepsilon$ (or equivalently deleting the exceptional divisor of symplectic area $\lambda$ and replacing it by a closed ball of radius $\lambda$).
		
	\begin{remark}
	In the algebraic geometric (or smooth) category, the blow-up has a well-defined effect on both the variety and its subvarieties. The effect on the subvarieties gives rise to the notions of the total and proper transforms discussed in Section~\ref{ss:resolution}.
	The proper transform turns first order data of the subvariety at the point $p$ into zeroth order information (and second order information into first order information, etc). Because the symplectic blow-up deletes an entire ball instead of just a point, we need to define the total and proper transforms of (singular) symplectic curves in symplectic $4$-manifolds with a bit more care.
	\end{remark}
	
	When we perform a symplectic blow-up at a point, we will always choose the radius $\lambda+\varepsilon$ of the symplectic ball to be sufficiently small such that every sphere $S_r(p)$ of radius $0<r\leq \lambda+\varepsilon$ intersects the symplectic curves transversally. Note that using the radial vector field in the Darboux chart as a Liouville vector field, the spheres $S_r(p)$ are contact-type hypersurfaces.
	
	Suppose now that we have a symplectic curve $C$ in $X$, and that we blow $X$ up at a (possibly singular) point $p$ on $C$.
	The transverse intersections of $C$ with the contact spheres are transverse links $T_r$ in the spheres $S_r(p)$. Note that if we identify spheres of different radii by a rescaling contactomorphism, the transverse links $T_r$ in the spheres of different radii $0<r<\lambda+\varepsilon$ are all transversally isotopic to each other. If $p$ is a smooth point, the transverse link $T_r$ will be the standard unknot of maximal self-linking number. If $p$ is a singularity, $T_r$ will be transversally isotopic to an algebraic link.
	
	For the algebraic proper transform in the blow-up, the exceptional divisor $E$ replaces the point $p$, and the proper transform $\widetilde C$ intersects $E$ according to its tangent derivative information at that point.
	Thus the proper transform is a finite-point compactification in the algebraic blow-up of the family of transverse links $T_r\subset S_r(p)$. We would like to have a similar proper transform in the symplectic blow-up, but in this case we delete a ball of radius $\lambda$ instead of only a point. 
	Therefore, the links $T_r$ for $0<r\leq \lambda$ would be cut out. We cannot guarantee that the transverse links $T_r$ for $\lambda<r<\lambda+\varepsilon$ will finite-point compactify as $r\to \lambda$ with the same diffeomorphism type as the family $T_r$ for $0<r<\lambda+\varepsilon$ as $r\to 0$. 
	
	To overcome this issue, we will squeeze the entire transverse isotopy $T_r$ for $0<r<\lambda+\varepsilon$ into the collar where $\lambda<r<\lambda+\varepsilon$ by reparametrizing the radial coordinate by a smooth, strictly increasing function $\rho$ such that $\rho(\widetilde{r})=\widetilde{r}-\lambda$ near $\widetilde{r}=\lambda$ and $\rho(\widetilde{r})=\widetilde r$ near $\lambda+\varepsilon$.
	In other words, in the symplectic blow-up, for $\lambda<\widetilde{r}<\lambda+\varepsilon$, the proper transform intersects the sphere $S_{\widetilde{r}}$ in a link $\widetilde{T}_{\widetilde{r}}$ defined by  
	\[
	\widetilde{T}_{\widetilde{r}}:=T_{\rho(\widetilde{r})}.
	\]
	As $\widetilde{r}\to \lambda$, $\rho(\widetilde{r})\to 0$. 
	Therefore this family extends via a finite-point compactification in the exceptional divisor in the same manner as the algebraic proper transform.
	Note that We do \emph{not} change the symplectic structure on the ambient symplectic $4$-manifold, we only modify the symplectic curve.
	
	\begin{lemma}
	The surface $\bigcup_{\lambda < \widetilde{r} < \lambda+\varepsilon} {\widetilde T}_{\widetilde r}$ is a symplectic curve.
	\end{lemma}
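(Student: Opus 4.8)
The plan is to reduce the statement to a pointwise positivity computation in the local symplectization model near $p$, and then to check that positivity holds provided the reparametrizing function $\rho$ degenerates at the right rate as $\widetilde r \to \lambda$. First I would fix the Darboux chart and use the radial Liouville field $v$ to identify the punctured ball $B_{\lambda+\varepsilon}\setminus\{p\}$ symplectically with a piece of the symplectization $(\R_s \times S^3,\ d(e^s\alpha))$ of the standard contact sphere $(S^3,\xi=\ker\alpha)$, the sphere $S_r(p)$ corresponding to a slice $\{s=s(r)\}$ with $s(r)\to-\infty$ as $r\to 0$. Under this identification the original curve becomes the trace $\{(s,\gamma(s,t))\}$ of the transverse isotopy $t\mapsto\gamma(s,t)$ of the links $T_r\subset S^3$, where $\gamma$ is smooth in both variables; the reparametrized surface $\bigcup_{\widetilde r}\widetilde T_{\widetilde r}$ is then $\{(\widetilde s,\gamma(\beta(\widetilde s),t))\}$, where $\beta$ is the increasing diffeomorphism of symplectization coordinates induced by $\rho$. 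Smoothness of the surface is immediate from smoothness of $\gamma$ and $\rho$, so the content is the symplectic condition.

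Next I would evaluate $\omega$ on the two tangent directions $\partial_{\widetilde s}+\beta'(\widetilde s)\gamma_s$ and $\gamma_t$, where $\gamma_s=\partial_s\gamma$ and $\gamma_t=\partial_t\gamma$ are tangent to $S^3$. Using that $\alpha$ and $d\alpha$ are pulled back from $S^3$ and that $\iota_{\partial_s}d\alpha=0$, one obtains
\[
\omega\big(\partial_{\widetilde s}+\beta'(\widetilde s)\gamma_s,\ \gamma_t\big)=e^{\widetilde s}\big(\alpha(\gamma_t)+\beta'(\widetilde s)\,d\alpha(\gamma_s,\gamma_t)\big),
\]
so the surface is symplectic exactly when $\alpha(\gamma_t)+\beta'(\widetilde s)\,d\alpha(\gamma_s,\gamma_t)>0$ at every point. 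Here $\alpha(\gamma_t)>0$ because each $T_r$ is positively transverse to $\xi$, and by compactness of the family of links together with its limit we get a uniform bound $\alpha(\gamma_t)\ge c>0$. The only dangerous term is $\beta'(\widetilde s)\,d\alpha(\gamma_s,\gamma_t)$: it can be negative, and $\beta'(\widetilde s)\to+\infty$ as $\widetilde s$ approaches the value corresponding to the exceptional divisor, since the squeezing compresses an infinite $s$-interval into the finite $\widetilde s$-collar.

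The crucial input is that as $r\to 0$ the rescaled links $T_r$ converge to the link of the tangent cone of $C$ at $p$; equivalently $\gamma_s\to 0$, so $d\alpha(\gamma_s,\gamma_t)\to 0$, and in fact $|d\alpha(\gamma_s,\gamma_t)|=O(r^\delta)$ for some $\delta>0$ determined by the Puiseux data of the singularity. This convergence with a definite rate is exactly what the local models of McDuff and Micallef--White quoted above provide; indeed, after a $C^1$-small isotopy one may take $C$ holomorphic near $p$ and read the exponent $\delta$ directly off its Puiseux parametrization. I would then choose $\rho$ so that $\beta'$ blows up slowly enough to preserve $\beta'(\widetilde s)\,|d\alpha(\gamma_s,\gamma_t)|<c\le\alpha(\gamma_t)$: solving the resulting differential inequality $\beta'\lesssim e^{-\delta\beta/2}$ forces $\rho$ to vanish like a power $(\widetilde r-\lambda)^{1/\delta}$ near $\widetilde r=\lambda$ (rather than linearly, when $\delta<1$, as for a cusp where $\delta=\tfrac12$), while away from $\lambda$ one has $\beta'$ bounded and $\rho$ close to the identity, so the inequality reduces to the symplectic condition $\alpha(\gamma_t)+d\alpha(\gamma_s,\gamma_t)>0$ already satisfied by $C$.

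The main obstacle is precisely this interplay near the exceptional divisor: the squeezing makes $\beta'$ blow up there, so positivity survives only because the radial variation of the transverse links decays, and one must match the degeneration rate of $\rho$ to the decay rate of $d\alpha(\gamma_s,\gamma_t)$. Establishing that decay with a usable exponent (via tangent-cone convergence, or via the integrable local holomorphic model) and then exhibiting a single smooth $\rho$ that works uniformly along the entire link is the technical heart of the argument; the rest is bookkeeping in the symplectization computation.
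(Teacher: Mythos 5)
Your symplectization computation is correct, and it is in fact sharper than the paper's own proof: the paper simply writes the squeezed surface as a polar reparametrization of the original one and asserts that the tangent bases differ by the positive factor $\rho'(r)$, concluding positivity at once. Your identity $\omega(\partial_{\widetilde s}+\beta'\gamma_s,\gamma_t)=e^{\widetilde s}\bigl(\alpha(\gamma_t)+\beta'\,d\alpha(\gamma_s,\gamma_t)\bigr)$ exposes exactly what that scaling argument elides: only the isotopy direction is rescaled, $\partial_{\widetilde s}$ is not, so the condition to verify is $\alpha(\gamma_t)+\beta'\,d\alpha(\gamma_s,\gamma_t)>0$, which is genuinely different from the original condition $\alpha(\gamma_t)+d\alpha(\gamma_s,\gamma_t)>0$ whenever $d\alpha(\gamma_s,\gamma_t)<0$. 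So you have correctly located where the content lies. However, your resolution has a genuine gap, and in the wrong region. Since $\rho(\lambda)=0$ and $\rho(\lambda+\varepsilon)=\lambda+\varepsilon$, the mean value theorem forces $\rho'\geq(\lambda+\varepsilon)/\varepsilon$ somewhere in the \emph{interior} of the collar, i.e.\ at radii $\widetilde r$ where $s=\rho(\widetilde r)$ has definite size. There $\beta'=\widetilde r\rho'(\widetilde r)/\rho(\widetilde r)$ is of order $\lambda/\varepsilon$, but the corresponding links $T_s$ are at a definite distance from the singular point, so the tangent-cone decay of $d\alpha(\gamma_s,\gamma_t)$ as $r\to 0$ — the engine of your rate-matching — gives no help at all. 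Your closing step, ``away from $\lambda$ one has $\beta'$ bounded \dots\ the inequality reduces to the symplectic condition already satisfied by $C$,'' is false: bounded-but-large $\beta'$ does not reduce to coefficient $1$. Nor is this a removable technicality: for a merely symplectic curve the dangerous term can be negative at such radii. For instance the smooth symplectic disk $w=g(|z|^2)\bar z$ with $g>0$ increasing (in polar form $s\mapsto s(\cos\theta(s)e^{it},\sin\theta(s)e^{-it})$ with $\theta$ increasing, $\theta<\pi/4$) is symplectic and transverse to all spheres for suitable parameters, has $d\alpha(\gamma_s,\gamma_t)=-2\sin(2\theta)\theta'<0$, and squeezing it with a $\rho$ whose steep part sits at those radii destroys positivity. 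So the statement cannot be proved from the transverse-isotopy data plus an arbitrary admissible $\rho$; some use of the complex local model, or a restriction on the Darboux chart, is unavoidable — and not only asymptotically at the divisor.

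A second, independent problem is that your fix changes the construction: the lemma concerns the surface built with the paper's $\rho$, which is required to equal $\widetilde r-\lambda$ (linear) near $\lambda$, whereas you prescribe $\rho\sim(\widetilde r-\lambda)^{1/\delta}$ with a curve-dependent exponent; proving positivity for that modified $\rho$ does not prove the lemma, and a curve-dependent admissibility class for $\rho$ would also derail the subsequent independence-of-choices proposition, which interpolates between choices by the convex combinations $\rho_s=(1-s)\rho_0+s\rho_1$ (your constraint $\beta'\lesssim e^{-\delta\beta/2}$ is not preserved by such interpolation in any obvious way). The clean repair, which makes your symplectization reduction work with no rates at all, is to choose the Darboux ball inside a chart of the local complex model — available at singular points by Definition~\ref{def:sing}, and at smooth points by the relative Darboux theorem — so that $C$ is honestly holomorphic in the chart. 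Then $d\alpha$ is a positive multiple of the pullback of the Fubini--Study form under the holomorphic projection $\C^2\setminus\{0\}\to\cpone$, hence $d\alpha(\gamma_s,\gamma_t)\geq 0$ on the complexly (equivalently symplectically) oriented tangent planes of $C$, and $\alpha(\gamma_t)+\beta'\,d\alpha(\gamma_s,\gamma_t)>0$ holds for \emph{every} increasing $\rho$, killing both the near-divisor and the mid-collar issues simultaneously.
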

	
	\begin{proof}
	We want to prove that the tangent spaces to the surface are still symplectic subspaces; the idea is that this is true because the bases for the tangent spaces only differ by a positive scaling factor $\rho'(r)$.
	Namely, if $\phi(r,t)$ denotes a polar parametrization of the original surface such that $\phi(r,t)=(T_r(t),r)$, then $\widetilde{\phi}(r,t) = (T_{\rho(r)}(t),\rho(r))$ is a parametrization for the new surface and
	\[
	\frac{\partial{\widetilde \phi}}{\partial r}(r,t) = \rho'(r)\frac{\partial \phi}{\partial r}(\rho(r),t), \qquad \frac{\partial{\widetilde \phi}}{\partial t}(r,t) = \frac{\partial \phi}{\partial t}(\rho(r),t)
	\]
	so the value of $\omega$ must be positive on an oriented basis for the curve parametrized by $\widetilde\phi$ since it is positive on an oriented basis for the curve parametrized by $\phi$.
	\end{proof}

	A symplectic blow-down reverses this procedure. Deleting a $\varepsilon$-neighborhood of an exceptional sphere of weight $\lambda$ (i.e. the symplectic area is $\pi\lambda^2$), we replace it with a symplectic ball of radius $\lambda+\varepsilon$. To define the image of a symplectic curve which intersects the exceptional divisor over $B_{\lambda+\varepsilon}(p)$ we reverse the squeezing procedure above to stretch the transverse link family back out. Namely we take the surface defined by the union of the center $p$ of the ball together with the transverse links $T_r\subset S_r(p)$ where
	\[
	T_r:=\widetilde{T}_{\rho^{-1}(r)}.
	\]
	
	Note that the definition of the proper transform depends on the choice of the Darboux chart, of $\varepsilon$ and of $\rho$. However, the \emph{symplectic isotopy class} is \emph{independent} of all these choices.
	
	\begin{prop}
		The symplectic isotopy class of the proper transform of a singular symplectic curve $C\subset (X,\omega)$ is independent of the choice of the Darboux ball, of $\varepsilon$ and $\rho$, and is related to the algebraic proper transform by a diffeomorphism supported in a neighborhood of the exceptional divisor.
	\end{prop}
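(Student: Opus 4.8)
The plan is to show that any two sets of choices are joined by a \emph{path} of admissible choices, and that the resulting one-parameter family of symplectic proper transforms is an equisingular symplectic isotopy. The mechanism that makes this work uniformly is that none of the three choices ever alters the transverse link family $T_r\subset S_r(p)$ itself; they only change the radius at which each link sits and the ambient identification near $p$. Consequently the finite-point compactification at the exceptional divisor always produces the same topological singularity type, so any smooth family of such transforms is automatically equisingular, and it suffices to produce a smooth family of \emph{symplectic} curves interpolating the two.

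First I would dispatch the reparametrization $\rho$, which is the easiest case. The set of admissible functions — smooth, strictly increasing, equal to $\widetilde r-\lambda$ near $\widetilde r=\lambda$ and to $\widetilde r$ near $\lambda+\varepsilon$ — is convex: for $\rho_0,\rho_1$ admissible, the straight-line homotopy $\rho_s=(1-s)\rho_0+s\rho_1$ is again strictly increasing (its derivative is a convex combination of positive functions) and satisfies the same boundary conditions. The preceding lemma shows each $\rho_s$ yields a symplectic curve $\widetilde C_s$, and since $T_r$ is untouched, $\{\widetilde C_s\}$ is an equisingular symplectic isotopy; hence the class is independent of $\rho$. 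Independence of $\varepsilon$ I would fold into this step: changing the collar width amounts to rescaling the radial coordinate on $\{\widetilde r>\lambda\}$ together with a compensating change of $\rho$, so it reduces to the same convexity argument.

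The hard part will be independence of the Darboux chart, since two charts at $p$ are related by a genuine symplectomorphism germ that need not be $C^0$-small. Here I would invoke that in a symplectic $4$-manifold the space of symplectic embeddings of a closed ball of fixed capacity is connected (McDuff), arranged so that the center stays mapped to $p$; this produces a symplectic isotopy $\Psi_s$ of $X$ carrying the ball of the first chart to that of the second. By naturality of McDuff's symplectic blow-up, each $\Psi_s$ lifts to a symplectomorphism $\widetilde\Psi_s$ of the blow-ups, and because the proper-transform construction refers only to the transverse links in the contact spheres $S_r(p)$, the lift carries the transform built from the first chart to the one built from chart $s$. The delicate points are checking that the lift is honestly symplectic across $E$ and that the family of blow-up forms $\widetilde\omega_s$ can be trivialized — the latter by a Moser argument, since $[\widetilde\omega_s]$ is constant (the area of $E$ is fixed at $\pi\lambda^2$ and the rest is pulled back from $X$). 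Pulling $\widetilde C_s$ back through the Moser trivialization yields the desired equisingular isotopy.

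Finally, for the comparison with the algebraic proper transform, I would observe that the linear choice $\rho(\widetilde r)=\widetilde r-\lambda$ is itself admissible, so by the first step the symplectic proper transform is isotopic to the one built with this $\rho$. Identifying the underlying smooth manifolds of the symplectic and holomorphic blow-ups by a diffeomorphism equal to the identity outside a neighborhood of $E$ (both are $X\#\cptwobar$), this curve differs from the algebraic proper transform only by the ``un-squeezing'' diffeomorphism of the collar $\{\lambda<\widetilde r<\lambda+\varepsilon\}$, which is supported near $E$. Composing the two diffeomorphisms gives exactly the claimed identification supported in a neighborhood of the exceptional divisor.
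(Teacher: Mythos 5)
Your handling of $\rho$ and $\varepsilon$ matches the paper's proof: the same convex interpolation $\rho_s=(1-s)\rho_0+s\rho_1$ (the paper reduces different $\varepsilon$'s to this by extending the smaller-collar $\rho$ by the identity on $[\lambda+\varepsilon_0,\lambda+\varepsilon_1]$, which is your rescaling remark in cleaner form), and your final comparison with the algebraic proper transform via the ``un-squeezing'' of the collar is exactly the paper's closing step. One slip there: the linear choice $\rho(\widetilde r)=\widetilde r-\lambda$ is \emph{not} admissible, since admissibility requires $\rho(\widetilde r)=\widetilde r$ near $\widetilde r=\lambda+\varepsilon$; without that boundary condition the squeezed link family fails to match $C$ at the sphere $S_{\lambda+\varepsilon}$ and the resulting surface is discontinuous there. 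Fortunately the detour through that choice is unnecessary: the re-stretching diffeomorphism (reparameterize the radial coordinate by $\rho^{-1}$, extended by the identity outside the collar) works for any admissible $\rho$ directly, which is what the paper does.

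The genuine gap is in the Darboux-chart step. Having extended the path of ball embeddings to an ambient symplectic isotopy $\Psi_s$ and lifted it to symplectomorphisms $\widetilde\Psi_s$ of the blow-ups, you claim the lift ``carries the transform built from the first chart to the one built from chart $s$.'' It does not: $\Psi_s$ has no reason to preserve the curve $C$, so $\widetilde\Psi_s(\widetilde C_0)$ is the chart-$s$ proper transform of the \emph{moved} curve $\Psi_s(C)$, not of $C$. At $s=1$ your family therefore ends at the proper transform of $\Psi_1(C)$ rather than at the chart-$1$ proper transform of $C$, which is what must be reached. This is repairable — append the family of chart-$1$ proper transforms of $\Psi_s(C)$ as $s$ runs from $1$ back to $0$, choosing $\lambda+\varepsilon$ small enough that transversality of $\Psi_s(C)$ to the chart-$1$ spheres holds uniformly in $s$ — but the paper's route avoids the problem entirely: keep $C$ fixed, let only the ball vary along a connected family of Darboux embeddings $\phi_t$ with $\lambda+\varepsilon$ smaller than the common radius, and rerun the construction for each $t$; the outputs $\widetilde C_t$ vary smoothly and directly give the equisingular symplectic isotopy, with no ambient isotopy extension or lifting needed. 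To your credit, your Moser argument trivializing the family of blow-up forms addresses the identification of the blow-ups for different balls, a point the paper's proof leaves implicit.
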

	
	\begin{proof}
		We start by showing independence of $\varepsilon$.
		Without loss of generality assume that $\varepsilon_1>\varepsilon_0$. If one blow-up/down is performed using $\varepsilon_0$ with function $\rho_0$ and the other using $\varepsilon_1$ with function $\rho_1$, we may extend $\rho$ to the interval $[\lambda,\lambda+\varepsilon_1]$ by extending by the identity on the interval $[\lambda+\varepsilon_0,\lambda+\varepsilon_1]$. Therefore, without loss of generality we may assume $\varepsilon_0=\varepsilon_1$ and simply show independence of $\rho_i$.
		
		To find the symplectic isotopy connecting the proper transforms for different choices of $\rho_i$, define a $1$-parameter family of proper transforms using $\rho_s(r) = (1-s)\rho_0(r)+s\rho_1(r)$. The required characteristics of $\rho$ are convex conditions so $\rho_s(r)$ defines a symplectic proper transform for $s\in[0,1]$ interpolating between the two choices of proper transform.
		
		We now note that the space of embeddings of Darboux balls (with varying radius) in any connected symplectic manifold is itself connected.
		Thus, in order to prove independence (up to isotopy) of the Darboux ball, we can suppose that we have a 1-parameter family $D_t$ of Darboux balls centered at $p$, parametrized by symplectomorphisms $(\phi_t\colon D^4_r \to X)_{t\in[0,1]}$ connecting two Darboux balls $D_0$ and $D_1$. (We can assume that the balls have the same volume by passing to a subfamily.)
		Then, by choosing $\rho + \varepsilon < r$, and applying the recipe above, we obtain a 1-parameter family $\widetilde{C}_t$ of symplectic proper transforms that varies smoothly with $t$, i.e. a symplectic isotopy between $\widetilde{C}_0$ and $\widetilde{C}_1$.
		
		To see that the symplectic proper transform is related to the algebraic proper transform by a diffeomorphism, simply re-stretch out the ring between radius $\lambda$ and $\lambda+\varepsilon$ in the blow-up by reparametrizing the radial coordinate by $\rho^{-1}$ (and shrink by reparametrizing by $\rho$ in the blow-down). Note, this diffeomorphism will not be a symplectomorphism.
	\end{proof}
	
	}

\subsection{Pseudoholomorphic curves}
	{
	Here we prove two technical lemmas that we will use throughout.
			\begin{lemma}\label{l:Jcompatible}
				Let $(M,\omega)$ be a symplectic $4$-manifold and $C\subset M$ be a singular symplectic surface.
				% whose singularities have a blow-up resolution such that the only singularities of the total transform are positive $\omega$-orthogonal nodes. 
				The space $\mathcal{J}^{\omega}(C)$ of almost complex structures $J$ which are compatible with the symplectic structure $\omega$ such that $C$ is $J$-holomorphic is non-empty and contractible.
			\end{lemma}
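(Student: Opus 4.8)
The plan is to realize $\mathcal{J}^{\omega}(C)$ as the space of sections of a fiber bundle over the ambient manifold whose fibers are contractible, and then to invoke the general principle that the section space of such a bundle is non-empty and contractible. The engine is a piece of linear algebra: for a symplectic vector space $(V,\omega)$ the space $\mathcal{J}(V,\omega)$ of $\omega$-compatible linear complex structures is non-empty and contractible (it is diffeomorphic to the Siegel upper half space; see McDuff--Salamon). More generally, if $W_1,\dots,W_k\subset V$ are pairwise $\omega$-orthogonal symplectic subspaces, then any compatible $J$ preserving $W_i$ automatically preserves its symplectic orthogonal complement (if $v\in W_i^{\perp_\omega}$ and $w\in W_i$, write $w=Jw'$ with $w'\in W_i$, so $\omega(Jv,w)=\omega(Jv,Jw')=\omega(v,w')=0$). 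Hence the subspace of compatible $J$ preserving all the $W_i$ splits as a product of the contractible spaces $\mathcal{J}(W_i,\omega|_{W_i})$ together with $\mathcal{J}\big((\bigoplus_i W_i)^{\perp_\omega},\omega\big)$, and is therefore itself non-empty and contractible. This is exactly the pointwise situation along $C$, and the $\omega$-orthogonality hypothesis on the nodes is precisely what guarantees this splitting.

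First I would reduce to the case in which the singularities of $C$ are positive $\omega$-orthogonal nodes. By hypothesis there is a blow-up $\pi\colon\widetilde M\to M$ for which the total transform $\overline C$ has only such singularities. The symplectic blow-up and its inverse identify $\omega$-compatible almost complex structures on the two manifolds away from the exceptional divisors, and near the integrable local complex models of the singular points the correspondence is the standard one; this yields a homotopy equivalence between $\mathcal{J}^{\omega}(C)$ on $M$ and $\mathcal{J}^{\omega}(\overline C)$ on $\widetilde M$. After this reduction we may assume $C\subset M$ is a finite union of smooth symplectic surfaces meeting transversally and $\omega$-orthogonally.

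For such a curve, being $J$-holomorphic is a pointwise first-order condition: at a smooth point $p$ one requires $J(T_pC)=T_pC$, and at a node one requires $J$ to preserve the two branch tangent planes, which are $\omega$-orthogonal and span $T_pM$. I would then define a bundle $\mathcal F\to M$ whose fiber over $p$ is the space of $\omega$-compatible $J_p$ on $(T_pM,\omega_p)$ satisfying this constraint, and the full space $\mathcal{J}(T_pM,\omega_p)$ when $p\notin C$. By the linear-algebra step every fiber is contractible, and since the constraint data vary continuously along $M$, this is a genuine fiber bundle with contractible fibers whose sections are exactly the elements of $\mathcal{J}^{\omega}(C)$. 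Both non-emptiness and contractibility then follow from the standard argument used to prove that $\mathcal{J}^{\omega}(M)$ is contractible, namely fiberwise convexity of the spaces of compatible structures combined with a partition of unity; passing from continuous to smooth sections does not change the homotopy type.

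The step I expect to be the main obstacle is the transfer between $M$ and $\widetilde M$: the symplectic blow-down deletes the exceptional divisor and does not canonically push an almost complex structure forward across it, so the homotopy equivalence $\mathcal{J}^{\omega}(C)\simeq\mathcal{J}^{\omega}(\overline C)$ requires care. The way I would resolve this is to restrict attention to $J$'s that are \emph{standard}, i.e.\ equal to the pulled-back integrable complex model, on a fixed small neighborhood of each singular point of $C$. The space of such $J$ is a deformation retract of all of $\mathcal{J}^{\omega}(C)$ because the local constraint sets are contractible, and for these normalized structures there is an honest correspondence with compatible structures on $\widetilde M$ that are standard near the exceptional divisors. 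With this normalization in place, the entire analysis takes place in the $\omega$-orthogonal nodal setting handled above, and the conclusion follows.
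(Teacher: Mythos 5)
Your proposal follows essentially the same route as the paper's proof: blow up to reduce to an $\omega$-orthogonal normal crossing configuration, use the linear-algebra fact that a compatible $J$ preserving a symplectic subspace also preserves its $\omega$-orthogonal complement so that each fiber splits as a product of contractible spaces, and conclude by the section-space-of-a-bundle-with-contractible-fibers argument. The only real variation is how you transfer between $M$ and the blow-up --- you normalize $J$ to the standard integrable model near the singular points and deformation-retract onto such structures, whereas the paper observes that blowing down induces a fibration $\mathcal{J}^{\widetilde{\omega}}(\widetilde{C})\to\mathcal{J}^{\omega}(C)$ --- and both treatments are at a comparable level of rigor.
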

				
%				All of the cuspidal curves we consider have normal crossing resolutions, and thus this lemma applies. We will occasionally use this for reducible curves, where the nodal intersections may not be $\omega$-orthogonal. However, a symplectic positive transverse intersection of two surfaces in a symplectic 4-manifold, can be transformed to an $\omega$-orthogonal intersection via a symplectic isotopy (see~\cite[Lemma 2.3]{Gompf} and~\cite[Lemma 3.2.3]{Symington}). Whenever necessary, we will implicitly perform this isotopy to make all nodes $\omega$-orthogonal. Since all of the statements we make are up to symplectic isotopy, this modification does not affect the results, so we may omit mentioning it explicitly before invoking this Lemma. 
				
				\begin{proof}
%					Let $\widetilde{C}$ be the total transform with $\omega$-orthogonal normal crossing singularities. Let $\mathcal{J}^{\widetilde{\omega}}(\widetilde{C})$ denote the set of almost complex structures $\widetilde{J}$ on $M\# N\cptwobar$ such that each component of the total transform $\widetilde{C}$ is $\widetilde{J}$-holomorphic. For each such $\widetilde{J}\in \mathcal{J}^{\widetilde{\omega}}(\widetilde{C})$ the projection $\pi: M\#N\cptwobar \to M$ induces a unique almost complex structure $J\in \mathcal{J}^{\omega}(C)$ such that the blow down $\pi$ is pseudo-holomorphic. This map $\mathcal{J}^{\widetilde{\omega}}(\widetilde{C})\to \mathcal{J}^{\omega}(C)$ is a fibration. \LS{Find reference for fibration statement in McDuff?} Therefore it suffices to prove that $\mathcal{J}^{\widetilde{\omega}}(\widetilde{C})$ is contractible and non-empty. For simplicity of notation, we will remove the tildes and  assume that $C$ is an $\omega$-orthogonal normal crossing divisor in a symplectic manifold $(M,\omega)$.
					
					The proof is a mild upgrade of a standard proof that the space of almost complex structures compatible with a given symplectic structure is contractible. There are multiple ways to prove this classical fact, and here we follow one given in~\cite[Proposition 2.63]{McDuffSalamon}. 
					
					The first observation to make is that the condition that $J\in \mathcal{J}^{\omega}(C)$ is a point-wise condition. Namely, at each point $x\in M$, $J_x$ must be an $\omega_x$-compatible almost complex structure (at the vector space level), and whenever $x\in C$, we also require that $J$ preserves all tangent spaces to $C$ at $x$. Note that at a singular point in $C$, there may be more than one branch defining finitely many distinct tangent spaces, however each branch does have a well-defined tangent space (Remark~\ref{r:tang}).
					Note that at a critical point of a $J$-holomorphic map $u:\Sigma\to M$, $du=0$ so the $J$-holomorphic condition $du\circ j=J\circ du$ is actually vacuous at that point. However, at nearby smooth points, the $J$-holomorphic condition does require that $J$ preserves the tangent space to $C$, so if $J$ varies continuously over points in $M$, since the tangent space at a singular point is the limit of nearby tangent spaces, the requirement that $J$ preserve the tangent spaces at singular points will be automatically satisfied.
					
					We want to choose $J$ as a continuous section of the bundle $\pi: E\to M$ whose fiber $E_x$ over $x$ is the space of $\omega_x$ compatible almost complex structures on $T_xM$, such that the section is constrained over points $x\in C$ to the subset $E_x(C)\subset E_x$ of compatible almost complex structures on $T_xM$ which preserve the tangent space(s) to $C$. We will show that the contraction of $E_x$ to a point from~\cite[Proposition 2.50]{McDuffSalamon} preserves the subset $E_x(C)$. Therefore, the space $\mathcal{J}^\omega(C)$ is contractible.
					
					One way to show that $E_x$ is contractible is by showing that $E_x$ is homeomorphic to the space of symmetric positive definite symplectic matrices~\cite[Proposition 2.50 (i), (iii)]{McDuffSalamon}. Fixing a standard symplectic basis on $T_xM$, let $J_0$ be the matrix for the standard complex structure. Then we can identify any almost complex structure $J$ on $T_xM$ with a symmetric, positive definite, symplectic matrix $P$ by $P = -J_0J$ (and $J=J_0P$). Here, we will choose the basis on $T_xM$ with some care along points of $C$. Near singular points of $C$, there is by definition, a symplectic identification with a subset of $\C^2$ such that $C$ is identified with a complex curve (with respect to the standard complex structure). We will choose the basis of $T_xM$ compatibly with the standard coordinates on $\C^2$ under this identification so that at singular points, $J_0$ preserves all tangent spaces to $C$ in $T_xM$. Along smooth points of $C$, the basis for $T_xM$ should extend a symplectic basis for $T_xC$. Therefore $J_0$ preserves the tangent space(s) of $C$ at every point. Away from $C$ Darboux coordinates can be extended arbitrarily.
					
					A deformation contraction from the space of symmetric positive definite symplectic matrices to a point is given by sending $P$ to $P^{1-t}$ for $t\in [0,1]$. It is proven in~\cite[Lemma 2.21]{McDuffSalamon} that $P^{1-t}$ is a (symmetric, positive definite) symplectic matrix whenever $P$ is. The contraction deforming $J=J_0P$ to $J_0P^{1-t}$ gives the contraction of $E_x$. Therefore, it suffices to check that this contraction preserves the subset $E_x(C)$ of almost complex structures $J$ on $T_xM$ which preserve the tangent spaces of $C$.
					
					Since $J_0$ preserves the tangent spaces of $C$ by assumption on the choice of frame for $TM$ along $C$, $J$ preserves the tangent spaces of $C$ if and only if the corresponding matrix $P$ does. Since $P$ is symmetric, it is diagonalizable. A subspace of $T_xM$ is an invariant subspace of $P$ if and only if it is spanned by eigenvectors of $P$. Similarly a subspace is invariant under $P^{1-t}$ if and only if it is spanned by eigenvectors of $P^{1-t}$. Since the eigenvectors of $P$ are the same as the eigenvectors of $P^{1-t}$, their invariant subspaces are the same. Therefore if $P$ preserves the tangent spaces of $C$, $P^{1-t}$ does as well. Thus the contraction preserves the subset $E_x(C)$.
				\end{proof}
						
		The strength of using $J$-holomorphic curves is that we have much greater control over their geometric intersections. Two general symplectic surfaces may intersect with a canceling pair of positive and negative intersections, but in this case they could not be realized as $J$-holomorphic simultaneously for the same almost complex structure $J$. When $(M,\omega)$ is a 4-manifold with a compatible almost complex structure $J$, the orientation induced by $\omega$ and $J$ agree. Two transversally intersecting $J$-holomorphic curves can be easily seen to have positive intersections because the almost complex structure induces complex orientations on each of the curves at an intersection point, which adds up to the positive orientation on the $4$-manifold induced by $J$. More generally, any (not necessarily transverse) intersection between simple $J$-holomorphic curves (possibly with singularities) contributes positively (see~\cite[Section 2.6, Appendix E.2]{McDuffSalamonBig}).
		
		\begin{lemma}[\cite{Mc}] \label{l:blowdown}
			Let $h,e_1,\dots, e_N$ be the standard basis for $H_2(\cptwo \# N \cptwobar)$ with $h^2=1$ and $e_i^2=-1$. Suppose $\mathcal{C}$ is a configuration of positively intersecting symplectic surfaces in $\cptwo \# N\cptwobar$. Let $e_{i_1},\dots,e_{i_\ell}$ be exceptional classes which have non-negative algebraic intersections with each of the symplectic surfaces in the configuration $\mathcal{C}$. Then there exist disjoint exceptional spheres $E_{i_1},\dots, E_{i_\ell}$ representing the classes $e_{i_1},\dots, e_{i_\ell}$ respectively such that any geometric intersections of $E$ with $\mathcal{C}$ are positive.
		\end{lemma}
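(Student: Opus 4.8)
The plan is to realise all the exceptional spheres as $J$-holomorphic curves for a single almost complex structure $J$ that simultaneously makes the whole configuration $\mathcal{C}$ holomorphic, and then to read off every geometric conclusion (positivity of the intersections with $\mathcal{C}$ and mutual disjointness) from positivity of intersections of distinct $J$-holomorphic curves. First I would invoke Lemma~\ref{l:Jcompatible} to choose a $J$ compatible with $\omega$ for which every component of $\mathcal{C}$ is $J$-holomorphic. Because I will need transversality for the spheres in the classes $e_{i_j}$, I would then perturb $J$ within this (nonempty, contractible, hence connected) space to a generic one, using a relative transversality argument: since each $e_{i_j}$ has negative square it cannot be the class of any component of $\mathcal{C}$, so a simple $J$-curve in class $e_{i_j}$ must meet the complement of $\mathcal{C}$, and one may perturb $J$ there without disturbing the holomorphicity of $\mathcal{C}$.

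Next comes existence. Each $e_{i_j}$ is an exceptional class, with $e_{i_j}^2=-1$ and $c_1\cdot e_{i_j}=1$, so by McDuff's theory of exceptional spheres \cite{Mc} it has a $J$-holomorphic representative; the point to argue is that this representative is an embedded sphere $E_{i_j}$ rather than a cusp curve. Concretely, any $J$-holomorphic representative decomposes as a sum $\sum m_b [A_b]$ of classes of simple $J$-curves with $m_b\ge 1$; primitivity of $e_{i_j}$ rules out multiple covers, and the hypothesis $e_{i_j}\cdot[C_a]\ge 0$ for every component $C_a$, together with automatic transversality for spheres of square $-1$ (which forces such spheres to be regular and isolated, since $c_1\cdot e_{i_j}=1$ gives Fredholm index $0$), controls the admissible degenerations, leaving an embedded $J$-holomorphic sphere in the class $e_{i_j}$.

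With the spheres in hand, the intersection statements are immediate from positivity. For each component $C_a$ of $\mathcal{C}$ we have $e_{i_j}\cdot[C_a]>0$, whereas $e_{i_j}^2=-1<0$, so $E_{i_j}$ cannot equal any component; being two distinct simple $J$-holomorphic curves, $E_{i_j}$ and $C_a$ meet in finitely many points, each of which counts positively \cite{McS}, so all geometric intersections of $E_{i_j}$ with $\mathcal{C}$ are positive. For disjointness, note that for $j\neq k$ the standard basis satisfies $e_{i_j}\cdot e_{i_k}=0$; since $E_{i_j}$ and $E_{i_k}$ are distinct simple $J$-holomorphic curves every intersection point would again contribute $+1$, so the vanishing algebraic intersection number forces the two spheres to be disjoint.

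The main obstacle is the middle step: producing honest embedded $J$-holomorphic exceptional spheres for a $J$ that is constrained to keep $\mathcal{C}$ holomorphic, rather than for a fully generic $J$. The delicate issue is to rule out that the minimal-area representative of $e_{i_j}$ degenerates into a cusp curve, some of whose bubble components might lie in $\mathcal{C}$; this is precisely where the hypothesis that $e_{i_j}$ meets each surface of $\mathcal{C}$ nonnegatively, combined with relative transversality and the automatic regularity of $(-1)$-spheres, is needed, and where I would rely most heavily on \cite{Mc}.
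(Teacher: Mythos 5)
Your proof is correct and follows essentially the same route as the paper: fix $J\in\mathcal{J}^{\omega}(\mathcal{C})$ via Lemma~\ref{l:Jcompatible}, invoke McDuff's result from \cite{Mc} to produce $J$-holomorphic exceptional spheres in the classes $e_{i_j}$, and conclude by positivity of intersections of distinct $J$-holomorphic curves. The only cosmetic difference is that you re-derive disjointness from $e_{i_j}\cdot e_{i_k}=0$ together with local positivity, whereas the paper takes the disjoint collection directly from \cite[Theorem 3.4]{Mc}; your sketch of why the constrained $J$ still admits embedded representatives is exactly the content delegated to that citation, in the paper as in your write-up.
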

		
		Blowing down exceptional spheres using this lemma, together with exceptional spheres appearing in a configuration, eventually we will blow down spheres in all $e_i$ classes and reach a configuration in $\cptwo$.
		
		\begin{proof}
			Fix an almost complex structure $J\in \mathcal{J}^\omega(\mathcal{C})$ which is $C^{\infty}$.
			Let $\Lambda = \langle e_{i_1},\dots, e_{i_\ell}\rangle$. As in~\cite[Theorem 3.4]{Mc}, one can find a maximal collection of disjoint exceptional $J$-holomorphic curves generating $\Lambda$. (These necessarily represent $e_i$ classes, since these are the only classes of square $-1$ which are positively oriented by $J$.) They will intersect $\mathcal{C}$ positively because both are $J$-holomorphic.
		\end{proof}
		
		The order of the $e_i$ which we choose to $J$-holomorphically blow down does depend to some extent on the configuration $\mathcal{C}$ because if one of the surfaces in $\mathcal{C}$ represents the class $e_{i_0}-e_{i_1}-\dots-e_{i_k}$, then we cannot blow down a $J$-holomorphic exceptional sphere in the class $e_{i_0}$ until we have blown down exceptional spheres in the classes $e_{i_1},\dots, e_{i_k}$ first. This is because $e_{i_0}$ has negative algebraic intersection number with $e_{i_0}-e_{i_1}-\dots -e_{i_k}$. If $k=0$ then the surface in $\mathcal{C}$ represents the class $e_{i_0}$ so it can itself act as the $J$-holomorphic exceptional sphere. However, this is the only restriction on the ordering because the positively intersecting exceptional classes can be geometrically realized disjointly.

	}

\subsection{Embeddings of plumbings into $\cptwo\#N\cptwobar$}	
	{\label{s:caps}
	Suppose $P$ is a plumbing of symplectic spheres, such that one of the spheres has self-intersection $+1$. In our context, this will typically be a neighborhood of the normal crossing resolution of a rational cuspidal curve (or possibly a further blow-up). A theorem of McDuff strongly restricts the closed symplectic manifolds in which  $P$ can symplectically embed.
	
	\begin{theorem}[\cite{Mc}] \label{thm:mcduff}
		If $(X,\omega)$ is a closed symplectic $4$-manifold and $C_0\subset X$ is a smooth symplectic sphere of self-intersection number $+1$, then there is a symplectomorphism of $(X,\omega)$ to a symplectic blow up of $(\cptwo,\lambda\omega_{\rm{FS}})$ for some $\lambda > 0$, such that $C_0$ is identified with $\cpone$.
	\end{theorem}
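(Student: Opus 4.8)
The plan is to run Gromov's theory of pseudoholomorphic spheres in the class $A = [C_0]$, exploiting that $A$ behaves like the class of a line: one has $A\cdot A = 1$ and, by the adjunction formula~\eqref{e:adjunction} for the sphere $C_0$, $c_1(X)\cdot A = 3$. First I would fix an $\omega$-compatible almost complex structure $J$ for which $C_0$ is $J$-holomorphic; such $J$ exist by Lemma~\ref{l:Jcompatible} applied to $C_0$ (which, being smooth, is trivially a normal crossing divisor). Since $C_0$ is an embedded sphere with $c_1(X)\cdot A = 3 > 0$, it is automatically regular (automatic transversality in dimension four), so the moduli space $\mathcal{M}$ of unparametrized $J$-holomorphic spheres in class $A$ is, near $C_0$, a smooth manifold of real dimension $2(c_1(X)\cdot A - 1) = 4$. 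Because $A\cdot A = 1$, the class $A$ is primitive, which rules out nontrivial multiple covers and sharply constrains how curves in $\mathcal{M}$ can degenerate.

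The key geometric input is the ``line through two points'' property: through any two distinct points $x,y\in X$ there passes exactly one curve in class $A$. Imposing passage through a point is a codimension-two condition, so curves through two points form a zero-dimensional set; uniqueness is then immediate from positivity of intersections, since two distinct simple $J$-holomorphic curves in class $A$ meet in at most $A\cdot A = 1$ point and hence cannot share both $x$ and $y$. Existence is the substantive point, and I would obtain it by a Gromov-compactness and degree argument: the signed count of curves in class $A$ through two generic points is a symplectic deformation invariant, it equals $1$ on the standard configuration of lines in $\cptwo$, and positivity of intersections forces every contributing limit configuration to be an honest (possibly nodal) curve through $x$ and $y$.

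With this in hand I would build the ruling. Fix $p\in C_0$ and consider the family of curves in class $A$ through $p$; any two of them meet only at $p$, and every other point of $X$ lies on a unique such curve, so after blowing up $p$ the proper transforms foliate $\widehat X = \mathrm{Bl}_p X$ and define a projection $\widehat X \to S^2$ onto the space of these curves, with the exceptional sphere $E$ as a section of square $-1$. A finite number of fibers will be reducible, and these are exactly the configurations where a curve through $p$ breaks as a union of an embedded sphere with some exceptional $(-1)$-spheres; I would identify and organize them using positivity of intersections together with the $\omega$-energy bookkeeping that limits which classes can bubble off. Blowing down the exceptional components of the reducible fibers (realizing them disjointly and $J$-holomorphically via Lemma~\ref{l:blowdown}) turns $\widehat X$ into an honest symplectic $S^2$-bundle over $S^2$ carrying a section of square $-1$; such a bundle is the nontrivial one, symplectomorphic to $\cptwo\#\cptwobar$, and blowing down its exceptional section produces $(\cptwo,\lambda\omega_{\rm FS})$. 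Unwinding all the blow-ups exhibits $(X,\omega)$ as a symplectic blow-up of $(\cptwo,\lambda\omega_{\rm FS})$, and tracing $C_0$---which is itself one of the fibers of the ruling, since it lies in class $A$ and passes through $p$---through the identifications sends it to a line $\cpone$; the value of $\lambda$ is pinned down by matching symplectic areas, i.e. by $\int_{C_0}\omega$.

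The main obstacle is the degeneration analysis underlying the last step: controlling how the $J$-holomorphic curves in class $A$ bubble, proving that the reducible fibers are precisely unions of one smooth sphere with disjoint exceptional spheres (no worse singularities and no multiply-covered components---this is where primitivity of $A$ and positivity of intersections are essential), and verifying that after blowing these down one genuinely obtains a smooth, fiberwise-symplectic $S^2$-bundle rather than a fibration with leftover singular fibers. This, rather than the dimension count or the uniqueness statement, is where the real work lies, and it is the content of McDuff's structure theorem for rational and ruled symplectic four-manifolds.
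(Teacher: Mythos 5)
A point of order first: the paper offers no proof of this statement --- it is imported wholesale from McDuff \cite{Mc} (see also \cite[Theorem~7.36]{rationalruled}) --- so there is no internal argument to compare yours against. Measured against McDuff's actual proof, your outline follows the same route and gets the skeleton right: the adjunction computation $c_1(X)\cdot A = 3$, automatic regularity of the embedded sphere $C_0$, the four-dimensional moduli space, uniqueness of the $A$-curve through two points via positivity of intersections, the blow-up at $p\in C_0$ producing a ruling with fiber class $F = A - E$, and the blow-down of exceptional components of singular fibers to reach an $S^2$-bundle over $S^2$ with a $(-1)$-section, hence $\cptwo\#\cptwobar$ and then $\cptwo$, with $\lambda$ fixed by $\int_{C_0}\omega$.

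Two concrete defects. First, your existence argument is circular as stated: you propose to compute the count of $A$-curves through two generic points ``on the standard configuration of lines in $\cptwo$,'' but identifying $(X,\omega)$ with a blow-up of $\cptwo$ is exactly the conclusion being proved, so the invariant of $X$ cannot be evaluated there. The non-circular computation is local to $C_0$: place the two point constraints on $C_0$ itself; any $J$-holomorphic curve in class $A$ through two points of $C_0$ meets $C_0$ with total multiplicity at least $2 > A\cdot A = 1$, hence coincides with $C_0$ by positivity of intersections, so the evaluation map has a single regular preimage over that pair and degree $1$. Second, invoking Lemma~\ref{l:blowdown} inside this proof is likewise circular, since that lemma is proved by citing \cite[Theorem~3.4]{Mc}, i.e. the very machinery under construction; in McDuff's argument the exceptional spheres are extracted directly from the decomposition of the singular fibers. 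Relatedly, your description of a singular fiber as ``one smooth sphere with some disjoint exceptional spheres'' is not quite the right picture: a singular fiber in class $F$ (with $F\cdot F = 0$, $c_1\cdot F = 2$) is a connected tree of embedded $J$-holomorphic spheres each of square at most $-1$ --- for generic $J$, exactly two $(-1)$-spheres meeting at one point --- and one blows down a maximal disjoint collection of components to smooth all fibers simultaneously. That degeneration analysis, which you correctly flag as the hard part, is the substantive content of \cite{Mc}; your sketch defers to it rather than supplying it, which is acceptable for an outline but should be acknowledged as the step you have not proved.
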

	
	More generally, McDuff's result shows that if the plumbing contains a sphere of square $p\neq 4$, $p>0$ then the symplectic manifold is the $p^{\rm th}$ Hirzebruch surface and the sphere is identified with the associated $0$-section.
	A sphere of square $4$ might be the conic in $\cptwo$ or the sphere representing $2[S^2\times \{*\}]+[\{*\}\times S^2]$ in $S^2\times S^2$.
	A sphere of square $0$ is a fiber in a ruled surface.
	We will typically work in cases where we can guarantee that our symplectic 4-manifold is a blow-up of $\cptwo$ (or occasionally in $S^2\times S^2$).
	(Note that $S^2\times S^2\#\cptwobar \cong \cptwo\#2 \cptwobar$.)
	
	We will now discuss how to classify all symplectic embeddings of $P$ into $\cptwo\#N\cptwobar$. A symplectic embedding of a rational cuspidal curve is equivalent (by a sequence of blow-ups supported in a neighborhood of the rational cuspidal curve) to a symplectic embedding of its normal crossing resolution plumbing $(U,\omega_U)$. 
	
	We classify embeddings of $P$ into $\cptwo\#N \cptwobar$ in two steps (similar arguments appear in many classification of fillings results, starting with Lisca~\cite{Li}). First, we determine the possibilities for the map on second homology induced by the embedding. Since the core spheres of the plumbing form a basis for $H_2(P)$, we just need to classify the possible classes in $H_2(\cptwo\#N\cptwobar)$ that these symplectic spheres can represent. The restrictions here are given by the adjunction formula and the intersection form on $P$, as well as the constraint that the $+1$-sphere in $P$ is identified with $\cpone\subset \cptwo\# N\cptwobar$. Next, for each possible adjunctive embedding $H_2(P)\to H_2(\cptwo\# N \cptwobar)$, we will classify the geometric realizations of embeddings $P\to \cptwo\# N\cptwobar$ up to symplectic isotopy. Typically, we will show that such a geometric embedding is unique or that it cannot exist. This will be done by supposing that we have such a geometric embedding into $\cptwo\#N\cptwobar$, keeping track of the configuration formed by the core symplectic spheres of the plumbing, and then blowing down using Lemma~\ref{l:blowdown}. We then look at the possible images of this configuration after blowing down to $\cptwo$. This will generally be a reducible configuration, which we will then try to classify up to symplectic isotopy. This classification of reducible symplectic configurations in $\cptwo$ is the subject of Section~\ref{s:rediso}.
	
%	The relevance to fillings comes from the following argument.
%	Given any symplectic filling of a contact manifold bounded by a concave plumbing cap, we can glue the filling and the plumbing cap together to get a closed symplectic manifold.
%	If the plumbing cap contains a $+1$ sphere, by~\cite{Mc}, the symplectic manifold and sphere can be identified as a relative pair $(\cptwo\# N\cptwobar, \cpone)$. Therefore the filling is the complement of a symplectic embedding of the cap (which for us will be the plumbing $P$) into $\cptwo\# N\cptwobar$, subject to the constraint that one of the spheres in the plumbing is sent to $\cpone$.

	Now we analyze the possible homology classes represented by the spheres in $P$. A symplectic surface $\Sigma$ in a symplectic 4-manifold $(X,\omega)$ satisfies the adjunction formula~\eqref{e:adjunction}. When $X=\cptwo\#N\cptwobar$, $K_X=-c_1(X) = -3H+E_1+\dots +E_N$. Here our convention is that $h$ is the class of $\cpone$, with dual $H$ and $e_i$ is represented by the $i^{\rm th}$ exceptional sphere with dual $E_i$. The following lemma is a generalization of~\cite[Propositions~4.4]{Li}
	
	\begin{lemma}\label{l:adjclass}\label{l:hom}
		Suppose $\Sigma$ is a smooth symplectic sphere in $\cptwo\#N\cptwobar$ intersecting $\cpone$ non-negatively. Then writing $[\Sigma]=a_0h+a_1e_1+\dots+a_Ne_N$ (so $a_0\geq 0$), we have:
		\begin{enumerate}
			\item \label{adj} $\sum (a_i^2+a_i)=2+a_0^2-3a_0$.
			\item \label{e:exceptional-disjoint} If $a_0=0$, there is one $i_0$ such that $a_{i_0}=1$ and all other $a_i\in \{0,-1\}$.
			\item \label{sgn} If $a_0\neq 0$, then for all $i\geq 1$, $a_i\leq 0$.
		\end{enumerate}
		Some particular cases which we will use often are:
		\begin{enumerate}\setcounter{enumi}{3}
			\item If $a_0=1$ or $a_0=2$, $a_i\in \{0,-1\}$ for all $1\leq i\leq N$.
			\item If $a_0=3$, then there exists a unique $i_0$ such that $a_{i_0}=-2$, and $a_i\in \{0,-1\}$ for all other $i$.
		\end{enumerate}
		The self-intersection number of $\Sigma$ can be used to compute how many $a_i$ have coefficient $0$ versus $-1$.
	\end{lemma}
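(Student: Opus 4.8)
The plan is to extract the numerical identity~(\ref{adj}) directly from the adjunction formula~\eqref{e:adjunction}, read off the coarse combinatorial constraints on the $a_i$ from that identity alone, and then pin down the remaining signs using positivity of intersections and of symplectic area. First I would compute~(\ref{adj}). Since $\Sigma$ is a smooth symplectic sphere, $p_a(\Sigma)=0$, so~\eqref{e:adjunction} reads $K_X\cdot\Sigma+\Sigma\cdot\Sigma+2=0$. With $K_X=-3h+e_1+\dots+e_N$ and the intersection form $h^2=1$, $e_i^2=-1$, $e_i\cdot e_j=h\cdot e_i=0$, one gets $K_X\cdot\Sigma=-3a_0-\sum_i a_i$ and $\Sigma\cdot\Sigma=a_0^2-\sum_i a_i^2$. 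Substituting and rearranging yields $\sum_i(a_i^2+a_i)=a_0^2-3a_0+2$, which is~(\ref{adj}). The observation that drives everything else is that each summand $a_i(a_i+1)$ is a nonnegative even integer, vanishing exactly when $a_i\in\{0,-1\}$ and equal to $2$ exactly when $a_i\in\{1,-2\}$.

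From this, case~(\ref{adj}) above gives the ``coarse'' parts of the remaining statements immediately. For $a_0\in\{1,2\}$ the right-hand side of~(\ref{adj}) is $0$, so every $a_i(a_i+1)=0$ and hence $a_i\in\{0,-1\}$, which is~(\ref{adj})(iv). For $a_0\in\{0,3\}$ the right-hand side equals $2$; since each term is nonnegative and even, exactly one index $i_0$ satisfies $a_{i_0}(a_{i_0}+1)=2$ (i.e.\ $a_{i_0}\in\{1,-2\}$) and all other $a_i\in\{0,-1\}$. This already produces the shape asserted in parts~(\ref{e:exceptional-disjoint}) and~(v); what remains is to decide the sign of the distinguished coefficient, together with the sign statement~(\ref{sgn}).

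The sign information comes from positivity. For~(\ref{sgn}) and for case $a_0=3$, where $a_0\neq 0$, I would use Lemma~\ref{l:Jcompatible} to pick a $J\in\mathcal{J}^\omega(\Sigma)$, generic enough that each exceptional class $e_i$ is represented by an embedded $J$-holomorphic sphere $E_i$ (as in~\cite{Mc}, cf.\ Lemma~\ref{l:blowdown}). Since $a_0\neq 0$ we have $[\Sigma]\neq e_i$, so $\Sigma$ and $E_i$ are distinct $J$-holomorphic curves, and positivity of intersections forces $0\le\Sigma\cdot E_i=-a_i$, i.e.\ $a_i\le 0$; this is~(\ref{sgn}), and in case $a_0=3$ it upgrades the distinguished coefficient from $\{1,-2\}$ to $-2$. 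For part~(\ref{e:exceptional-disjoint}), where $a_0=0$ and this argument is unavailable (the representative $E_{i_0}$ could coincide with $\Sigma$), I would instead use that $\Sigma$ is symplectic and each $e_i$ has positive $\omega$-area: if the distinguished coefficient were $-2$, then all $a_i\le 0$ with at least one strictly negative, so $[\omega]\cdot\Sigma=\sum_i a_i\,([\omega]\cdot e_i)<0$, contradicting that $\Sigma$ has positive symplectic area; hence $a_{i_0}=1$. The final assertion is then bookkeeping: in the relevant cases $\Sigma\cdot\Sigma=a_0^2-\#\{i:a_i=-1\}$, so the self-intersection determines the number of $-1$ coefficients.

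The main obstacle is the technical point underpinning~(\ref{sgn}): guaranteeing that $\Sigma$ and an exceptional sphere $E_i$ are \emph{simultaneously} $J$-holomorphic for a single $\omega$-compatible $J$. This needs both that $\mathcal{J}^\omega(\Sigma)$ is nonempty and that within it one can still arrange each exceptional class to be represented by an embedded $J$-holomorphic sphere; the delicate part is the interaction between the genericity required for the exceptional-sphere moduli problem and the constraint of keeping $\Sigma$ holomorphic. I expect this to be resolved by the nonemptiness and contractibility of $\mathcal{J}^\omega(\Sigma)$ (Lemma~\ref{l:Jcompatible}) together with the standard representability of exceptional classes on rational surfaces, but it is the step where the argument is least formal and where care is genuinely needed.
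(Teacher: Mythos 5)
Your proposal is correct and takes essentially the same route as the paper: item~(\ref{adj}) from adjunction, the sign constraints from positivity of intersections with $J$-holomorphic exceptional spheres supplied by Lemmas~\ref{l:Jcompatible} and~\ref{l:blowdown} (which is exactly the resolution you anticipate for the ``simultaneously $J$-holomorphic'' subtlety), and the particular cases from the arithmetic of $n^2+n$. The one small divergence is item~(\ref{e:exceptional-disjoint}): you rule out $a_{i_0}=-2$ by positivity of symplectic area, whereas the paper blows down the classes $e_i$, $i\neq i_0$ (precisely to sidestep the negative-intersection issue you flag in Lemma~\ref{l:blowdown}'s hypothesis) and concludes $a_{i_0}=1$ by forcing $\Sigma$ to coincide with the exceptional representative of $e_{i_0}$ --- both arguments are valid, and your area argument is arguably the cleaner of the two.
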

	
	\begin{proof}
		Since $\Sigma$ is a sphere ($g=0$), the adjunction formula gives
		\[3a_0+a_1+\dots +a_N = 2+a_0^2-a_1^2-\dots-a_N^2\]
		which can be rearranged to item~\ref{adj}. Note that $a_i^2+a_i\geq 0$ since $a_i\in \Z$, so the coefficient $a_0$ determines a bound on the possible coefficients $a_i$.
		
		To prove items~\ref{e:exceptional-disjoint} and~\ref{sgn}, we use positivity of intersections. Fix an index $0 < i_0 \le N$ and suppose first that $a_{i_0} > 0$. Since $\Sigma$ and $\cpone$ are symplectic with non-negative intersections, there is an almost complex structure on $\cptwo\#N\cptwobar$ such that $\Sigma$ and $\cpone$ are $J$-holomorphic by Lemma~\ref{l:Jcompatible}. By Lemma~\ref{l:blowdown}, we can blow-down a $J$-holomorphic exceptional sphere in the class $e_i$ for some $i$.
		If we blow down a $J$-holomorphic sphere corresponding to $e_i$ for $i\neq i_0$, there is an induced $J'$ on the blow-down such that $\Sigma$ is a singular $J'$-holomorphic curve.
		Therefore eventually, we have a situation where both $\Sigma$ and $e_{i_0}$ are represented by $J$-holomorphic spheres, so if $a_{i_0} > 0$ then $[\Sigma]\cdot e_{i_0}<0$. Since $J$-holomorphic spheres must intersect non-negatively, this can only occur if $\Sigma$ is exactly equal to the $J$-holomorphic exceptional sphere representing $e_{i_0}$.
		%\MG{Slight re-phrase here.}
		In particular, if $a_{i_0} > 0$, then $a_0 = 0$, thus proving item~\ref{sgn}. If $a_0=0$, it implies $a_{i_0}=1$ and $\Sigma$ is a blow-up of the exceptional class representing $e_{i_0}$. In this case, for ${i_j}\neq {i_0}$ the same argument shows that $a_{i_j}$ cannot be positive so we get item~\ref{e:exceptional-disjoint}.
		
		The particular cases follow from combining items~\ref{adj} and~\ref{sgn} and observing which integers give low values of $n^2+n$.
	\end{proof}
		
	In the following lemmas, $C_i$ and $C_j$ are smooth symplectic spheres in a positive plumbing in $\cptwo\#N\cptwobar$ such that $[C_i]\cdot h=[C_j]\cdot h=0$.
	
	\begin{lemma}\label{l:consecutive}
		If $[C_i]\cdot[C_j]=1$ (and $[C_i]\cdot h=[C_j]\cdot h=0$), there is exactly one exceptional class $e_i$ which appears with non-zero coefficient in both $[C_i]$ and $[C_j]$. The coefficient of $e_i$ is $+1$ in one of $[C_i],[C_j]$ and $-1$ in the other.
	\end{lemma}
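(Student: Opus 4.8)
The plan is to work entirely at the level of homology, using Lemma~\ref{l:hom} to pin down the shape of the two classes and then isolating the single combinatorial configuration consistent with positivity of area. First I would record that the hypothesis $[C_i]\cdot h = [C_j]\cdot h = 0$ says precisely that the $h$-coefficient of each class vanishes, since $h\cdot h = 1$ and $h\cdot e_k = 0$. With $a_0 = 0$, part~\ref{e:exceptional-disjoint} of Lemma~\ref{l:hom} applies to each sphere, so I may write
\[
[C_i] = e_{i_0} - \sum_{k\in S_i} e_k, \qquad [C_j] = e_{j_0} - \sum_{k\in S_j} e_k,
\]
where $i_0 \notin S_i$, $j_0\notin S_j$, and all remaining coefficients lie in $\{0,-1\}$. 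The set of exceptional classes appearing with nonzero coefficient in both is indexed by $(\{i_0\}\cup S_i)\cap(\{j_0\}\cup S_j)$, so the whole statement reduces to understanding this intersection of index sets.

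Next I would set $x = [i_0\in S_j]$, $y = [j_0\in S_i]$, and $z = |S_i\cap S_j|$, and compute. A direct expansion using $e_k\cdot e_\ell = -\delta_{k\ell}$ gives $[C_i]\cdot[C_j] = -\delta_{i_0 j_0} + x + y - z$, while the number of shared exceptional classes is $x + y + z$ (once one checks that $i_0$, $j_0$, and the elements of $S_i\cap S_j$ are genuinely distinct, which follows from $i_0\notin S_i$ and $j_0\notin S_j$). The case $i_0 = j_0$ is immediately excluded, since then $[C_i]\cdot[C_j] = -1 - z < 0$, contradicting $[C_i]\cdot[C_j]=1$. Hence $i_0\ne j_0$ and $x + y - z = 1$, an equation whose only solutions with $x,y\in\{0,1\}$ and $z\ge 0$ are $(x,y,z)\in\{(1,0,0),(0,1,0),(1,1,1)\}$. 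The first two each produce exactly one shared class ($i_0$ or $j_0$), occurring with coefficient $+1$ in one sphere and $-1$ in the other, which is the desired conclusion.

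The main obstacle is that the spurious solution $(1,1,1)$, which would yield three shared classes, is not ruled out by the intersection number alone; I expect to eliminate it by positivity of symplectic area rather than by further homological bookkeeping. In this case $i_0\in S_j$, $j_0\in S_i$, and there is some $m\in S_i\cap S_j$ distinct from $i_0$ and $j_0$, so in $[C_i]+[C_j]$ the coefficients of $e_{i_0}$ and $e_{j_0}$ cancel, the coefficient of $e_m$ equals $-2$, the $h$-coefficient is $0$, and every remaining exceptional coefficient is $\le 0$. Since $[\omega]$ pairs strictly positively with $h$ and with each exceptional class $e_k$ (the latter being represented by symplectic exceptional spheres of positive area), this forces $[\omega]\cdot([C_i]+[C_j]) < 0$. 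But $C_i$ and $C_j$ are symplectic surfaces, so $[\omega]\cdot[C_i]>0$ and $[\omega]\cdot[C_j]>0$, a contradiction. This rules out $(1,1,1)$ and leaves only the two admissible configurations, completing the proof.
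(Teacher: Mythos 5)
Your proof is correct and takes essentially the same route as the paper's: Lemma~\ref{l:hom} pins down the shape of both classes, the intersection number forces a shared leading class, and positivity of symplectic area eliminates the double-sharing configuration --- your observation that $\langle\omega,[C_i]+[C_j]\rangle<0$ is exactly the paper's pair of inequalities $0<\langle\omega,[C_i]\rangle<a-b$ and $0<\langle\omega,[C_j]\rangle<b-a$ added together. Your explicit $(x,y,z)$ enumeration and the separate exclusion of $i_0=j_0$ merely make systematic the bookkeeping that the paper leaves implicit.
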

	
	\begin{proof}
		It follows from Lemma~\ref{l:hom} item~\ref{e:exceptional-disjoint}, that $[C_i]$ and $[C_j]$ have the form $[C_i]=e_{n_0}-e_{n_1}-\dots -e_{n_k}$, $[C_j]=e_{m_0}-e_{m_1}-\dots - e_{m_\ell}$. Since the algebraic intersection is $+1$, either $e_{n_0}=e_{m_k}$ or $e_{m_0}=e_{n_p}$ for some $k,p$. To rule out the possibility that both of these occur, we consider the symplectic areas. %If $e_{n_0}=e_{m_k}$ and $e_{m_0}=e_{n_p}$, then in order for the intersection $[C_i]\cdot [C_j]$ to be $1$ instead of $2$, there must be another exceptional class in common, $e_{n_q}=e_{m_r}$. 
		Since $C_i$ and $C_j$ are both symplectic spheres, $\langle \omega, [C_i]\rangle, \langle \omega, [C_j]\rangle >0$. Each of the exceptional classes also has positive symplectic area. Let $ a := \langle \omega, e_{n_0}\rangle >0$ and $b := \langle \omega, e_{m_0}\rangle >0$%, and $\langle \omega, e_{n_q}\rangle=c>0$
		. Then
		\[
		0<\langle \omega, [C_i] \rangle < a-b \qquad \text{ and } \qquad 0<\langle \omega, [C_j] \rangle < b-a
		\]
		which is a contradiction.
	\end{proof}
	
	\begin{lemma}\label{l:pos}
		If $e_m$ appears with coefficient $+1$ in $[C_i]$ then it does not appear with coefficient $+1$ in the homology class of any other sphere in the plumbing.
	\end{lemma}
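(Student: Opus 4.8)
The plan is to derive a contradiction from positivity of intersections. Suppose toward a contradiction that $e_m$ appears with coefficient $+1$ in $[C_i]$ and also with coefficient $+1$ in the class $[C']$ of some sphere $C'\neq C_i$ of the plumbing. First I would pin down the shape of the two classes. Since $C_i$ satisfies $[C_i]\cdot h = 0$, Lemma~\ref{l:hom} item~\ref{e:exceptional-disjoint} forces $[C_i] = e_m - e_{n_1} - \dots - e_{n_k}$; that is, $e_m$ is the unique exceptional class entering with coefficient $+1$, every other coefficient lying in $\{0,-1\}$. For $C'$ I first observe that it too must satisfy $[C']\cdot h = 0$: by Lemma~\ref{l:hom} item~\ref{sgn}, any sphere with nonzero $h$-coefficient has all of its exceptional coefficients $\le 0$, so such a sphere cannot contain $e_m$ with coefficient $+1$. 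Hence $C'$ falls under item~\ref{e:exceptional-disjoint} as well, and $[C'] = e_m - e_{p_1} - \dots - e_{p_\ell}$ has the same shape, again with $e_m$ as its unique $+1$ class.

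Next I would compute the homological intersection directly. Because both classes are supported on the exceptional classes, with no $h$-component, we have $[C_i]\cdot[C'] = -\sum_r a_r a_r'$, where $a_r$ and $a_r'$ denote the coefficients of $e_r$ in $[C_i]$ and $[C']$ respectively. The $e_m$-term contributes $-(+1)(+1) = -1$, while for every index $r\neq m$ the coefficients satisfy $a_r, a_r' \in \{0,-1\}$, so $a_r a_r' \in \{0,1\}$ and the contribution $-a_r a_r'$ lies in $\{0,-1\}$. Summing, $[C_i]\cdot[C'] \le -1$.

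Finally I would invoke positivity of intersections. Since the plumbing is positive, after making its transverse positive nodes $\omega$-orthogonal (as explained in the remark following Lemma~\ref{l:Jcompatible}) we may apply Lemma~\ref{l:Jcompatible} to produce a single $\omega$-compatible almost complex structure $J$ for which every sphere of the plumbing is $J$-holomorphic; in particular $C_i$ and $C'$ are simultaneously $J$-holomorphic. As $C_i$ and $C'$ are distinct irreducible $J$-holomorphic curves, positivity of intersections gives $[C_i]\cdot[C'] \ge 0$, contradicting $[C_i]\cdot[C'] \le -1$. This rules out the assumption and proves the lemma.

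I expect the only genuinely delicate point to be the step guaranteeing that $C_i$ and $C'$ are $J$-holomorphic for one common $J$, so that positivity of intersections may legitimately be applied; this is supplied by Lemma~\ref{l:Jcompatible} once the nodes are made $\omega$-orthogonal. The reduction to the case $[C']\cdot h = 0$ via Lemma~\ref{l:hom} item~\ref{sgn} and the ensuing intersection count are short and routine. I would note that an area-based argument in the style of the proof of Lemma~\ref{l:consecutive} is also available, but the positivity-of-intersections computation is cleaner here because the shared $+1$ coefficient already pushes the intersection number strictly negative on its own.
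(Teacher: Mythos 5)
Your proof is correct and is exactly the paper's argument: the paper proves Lemma~\ref{l:pos} in one line by citing positivity of intersections and Lemma~\ref{l:hom}, and your write-up simply fills in the details (reduction to $a_0=0$ via item~\ref{sgn}, the shape of the classes via item~\ref{e:exceptional-disjoint}, the computation $[C_i]\cdot[C']\le -1$, and a common $J$ from Lemma~\ref{l:Jcompatible}). No gaps.
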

	
	\begin{proof}
		This follows from positivity of intersection and Lemma~\ref{l:hom}.
	\end{proof}
	
	\begin{lemma}\label{l:share2}
		If $[C_i]\cdot [C_j]=0$, then either there is no exceptional class which appears with non-zero coefficients in both, or there are exactly two exceptional classes $e_m$ and $e_n$ appearing with non-zero coefficients in both. One of these classes $e_m$ has coefficient $-1$ in both $[C_i]$ and $[C_j]$ and the other $e_n$ appears with coefficient $+1$ in one of $[C_i]$ or $[C_j]$ and coefficient $-1$ in the other.
	\end{lemma}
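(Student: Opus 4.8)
The plan is to follow the template of the proof of Lemma~\ref{l:consecutive}, reducing everything to a signed count of shared exceptional classes. First I would invoke Lemma~\ref{l:hom} item~\ref{e:exceptional-disjoint}: since $[C_i]\cdot h = [C_j]\cdot h = 0$, both classes have the normal form
\[
[C_i] = e_{n_0} - e_{n_1} - \dots - e_{n_k}, \qquad [C_j] = e_{m_0} - e_{m_1} - \dots - e_{m_\ell},
\]
with the indices $n_\alpha$ distinct among themselves and likewise the $m_\beta$, and with exactly one $+1$ coefficient in each class. Writing $a_s, b_s \in \{+1,-1\}$ for the coefficients of a shared index $s$ (one appearing with nonzero coefficient in both classes), the relation $e_s\cdot e_s = -1$ gives $[C_i]\cdot [C_j] = -\sum_s a_s b_s$, the sum running over shared indices. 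The hypothesis $[C_i]\cdot[C_j] = 0$ thus becomes $\sum_s a_s b_s = 0$, so that the number $p$ of shared indices on which the coefficients agree in sign (each contributing $+1$) must equal the number $q$ of shared indices on which they disagree (each contributing $-1$).

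Next I would pin down which shared indices can carry opposite signs. By Lemma~\ref{l:pos} the class $e_{n_0}$ cannot appear with coefficient $+1$ in $[C_j]$, and symmetrically $e_{m_0}$ cannot appear with coefficient $+1$ in $[C_i]$; in particular $n_0 \ne m_0$, so no shared index carries coefficient $+1$ in both. Hence an opposite-sign shared index must be $+1$ in one class and $-1$ in the other, and the only candidates are $s = n_0$ (with $a_{n_0} = +1$ and $n_0$ occurring among the $m_\beta$) and $s = m_0$ (with $b_{m_0} = +1$ and $m_0$ occurring among the $n_\alpha$). Every other shared index must carry coefficient $-1$ in both classes, contributing $+1$. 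This already gives $q \le 2$.

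The crux is to improve this to $q \le 1$, and here I would recycle the symplectic-area argument of Lemma~\ref{l:consecutive}. Set $a = \langle\omega, e_{n_0}\rangle > 0$ and $b = \langle\omega, e_{m_0}\rangle > 0$. If $n_0$ occurred among the $m_\beta$, then $0 < \langle\omega, [C_j]\rangle \le b - a$, forcing $b > a$; if simultaneously $m_0$ occurred among the $n_\alpha$, then $0 < \langle\omega, [C_i]\rangle \le a - b$, forcing $a > b$, a contradiction. So at most one of the two candidate indices occurs, i.e. $q \le 1$. I expect this to be the main obstacle: the sign count alone permits the spurious case $p = q = 2$ (four shared classes), and excluding it genuinely uses positivity of symplectic area rather than the intersection form.

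Finally, combining $p = q$ with $q \le 1$ leaves only $(p,q) = (0,0)$ or $(p,q) = (1,1)$. In the first case no exceptional class is shared. In the second there are exactly two shared classes: one class $e_n$ appearing with coefficient $+1$ in one of $[C_i], [C_j]$ and $-1$ in the other, and one class $e_m$ appearing with coefficient $-1$ in both. This is precisely the asserted dichotomy.
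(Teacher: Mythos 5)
Your proof is correct and takes essentially the same route as the paper, whose proof is a one-line deferral to the argument of Lemma~\ref{l:consecutive}: the normal form from Lemma~\ref{l:hom}, a shared class with coefficient $-1$ in both to cancel the positive intersection, and the symplectic-area comparison to exclude $e_{n_0}$ and $e_{m_0}$ both being shared. Your explicit signed count $p=q$ and the remark that the area argument is what kills the spurious case $p=q=2$ are just a cleaner bookkeeping of exactly what the paper leaves implicit.
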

	
	\begin{proof}
		This follows from a similar argument as in Lemma~\ref{l:consecutive}, but with an additional exceptional class appearing with coefficient $-1$ in both $[C_i]$ and $[C_j]$ to cancel out the positive intersection.
	\end{proof}
	
	When there is a linear chain of such symplectic spheres with self-intersection $-2$ (consecutive spheres intersect once), there are few options for the homology classes of the spheres in that chain.
	
	\begin{lemma}\label{l:2chain}
		Suppose $\Sigma_1,\dots, \Sigma_k$ are a chain of symplectic spheres of self-intersection $-2$ disjoint from $\cpone$ in $\cptwo\# N\cptwobar$. Then the homology classes are given by one of the following two options up to re-indexing the exceptional classes:
		\begin{enumerate}[label=(\Alph*)]
			\item \label{i:to} $[\Sigma_i]=e_i-e_{i+1}$ for $i=1,\dots, k$.
			\item \label{i:from} $[\Sigma_i]=e_{i+1}-e_i$ for $i=1,\dots, k$.
		\end{enumerate}
		The homology class of any surface disjoint from the chain has the same coefficient for $e_1,\dots, e_{k+1}$.
	\end{lemma}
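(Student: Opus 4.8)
The plan is to read off the shape of each individual class and then propagate that structure along the chain. Since $[\Sigma_i]\cdot h=0$, the sphere $\Sigma_i$ satisfies the hypotheses of Lemma~\ref{l:hom}, and by item~\ref{e:exceptional-disjoint} exactly one exceptional class occurs in $[\Sigma_i]$ with coefficient $+1$ while all others occur with coefficient $0$ or $-1$. Because $[\Sigma_i]^2=-2$ forces precisely one $-1$ term, we get $[\Sigma_i]=e_{a_i}-e_{b_i}$ for two distinct indices $a_i\neq b_i$. I will call $a_i$ the \emph{positive end} and $b_i$ the \emph{negative end} of $\Sigma_i$. This reduces the lemma to a combinatorial statement about how these ends fit together.

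Next I would translate the incidence data into relations between the ends. Consecutive spheres meet once, so by Lemma~\ref{l:consecutive} they share exactly one exceptional class, occurring with opposite signs in the two classes. Hence at each step either $b_i=a_{i+1}$ (call this a \emph{forward} step) or $a_i=b_{i+1}$ (a \emph{backward} step), and these two local patterns are exactly the models $e_i-e_{i+1}$ and $e_{i+1}-e_i$. The crux of the argument is to show that the chain never mixes the two, i.e. that the direction is the same at every step. To see this I would examine three consecutive spheres, which must satisfy $[\Sigma_i]\cdot[\Sigma_{i+2}]=0$. A backward step followed by a forward step gives $a_i=b_{i+1}=a_{i+2}$, so one exceptional class would carry coefficient $+1$ in both $\Sigma_i$ and $\Sigma_{i+2}$, which is forbidden by Lemma~\ref{l:pos}. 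A forward step followed by a backward step gives $b_i=b_{i+2}$, so $\Sigma_i=e_{a_i}-e_{b_i}$ and $\Sigma_{i+2}=e_{a_{i+2}}-e_{b_i}$ share their negative ends; the shared class is compatible with Lemma~\ref{l:share2} only together with a second, oppositely-signed common class, which cannot exist here. A one-line computation makes this concrete: $[\Sigma_i]\cdot[\Sigma_{i+2}]=-\delta_{a_i,a_{i+2}}-1<0$, contradicting disjointness. Therefore the direction is constant along the whole chain.

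Once the direction is fixed, say forward, the ends telescope into a single sequence $c_1=a_1,\ c_2=b_1=a_2,\ \dots,\ c_{k+1}=b_k$, and it remains to check that these $k+1$ indices are pairwise distinct so that relabelling is legitimate. The positive ends $a_1,\dots,a_k$ are distinct by Lemma~\ref{l:pos}. The coincidence $b_k=a_{k-1}$ is excluded because it would force $[\Sigma_k]=-[\Sigma_{k-1}]$, impossible since both spheres have positive symplectic area; and a coincidence $b_k=a_i$ with $i\le k-2$ is ruled out by computing $[\Sigma_i]\cdot[\Sigma_k]=1\neq0$ against disjointness. Relabelling $c_1,\dots,c_{k+1}$ as $1,\dots,k+1$ then puts the chain in form~\ref{i:to}, while the backward case yields form~\ref{i:from} by the mirror argument.

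Finally, for a surface $\Sigma$ disjoint from the chain I would compute directly. Writing $[\Sigma]=t_0h+\sum_j t_j e_j$, disjointness in model~\ref{i:to} gives $0=[\Sigma]\cdot[\Sigma_i]=[\Sigma]\cdot(e_i-e_{i+1})=-t_i+t_{i+1}$ for each $i$, so $t_1=\dots=t_{k+1}$, which is the asserted equality of coefficients of $e_1,\dots,e_{k+1}$. I expect the genuine obstacle to be the no-mixing step together with the distinctness bookkeeping of the $c_j$; everything else follows formally from the homological lemmas already established, so the main effort lies in organizing those intersection computations cleanly.
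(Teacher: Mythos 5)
Your proof is correct and follows essentially the same route as the paper's: extract the shape $e_{a_i}-e_{b_i}$ from Lemma~\ref{l:hom}, reduce to the two local step-types via Lemma~\ref{l:consecutive}, rule out a backward-to-forward switch by the distinctness of positive ends (Lemma~\ref{l:pos}) and a forward-to-backward switch by the computation $[\Sigma_{i}]\cdot[\Sigma_{i+2}]=-\delta_{a_i,a_{i+2}}-1<0$, then derive the final coefficient statement from $0=[\Sigma]\cdot[\Sigma_i]$. Your only addition is the explicit check that the telescoped indices $c_1,\dots,c_{k+1}$ are pairwise distinct, a bookkeeping point the paper's proof leaves implicit; it is a welcome bit of extra care but not a different argument.
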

	
	\begin{proof}
		By Lemma~\ref{l:hom} item~\ref{e:exceptional-disjoint}, each $[\Sigma_i]=e_{k^i_1}-e_{k^i_2}$. Since consecutive spheres intersect once positively, by Lemma~\ref{l:share2} either 
		\begin{enumerate}
			\item \label{i:left} $k^i_2=k^{i+1}_1$ or 
			\item \label{i:right} $k^i_1 = k^{i+1}_2$.
		\end{enumerate} 
		Each exceptional class can appear with positive coefficient at most once by lemma~\ref{l:pos} so all $k^i_1$ are distinct. Therefore if for any $i_0$, $k^{i_0}_1=k^{i^0+1}_2$, we must have that $k^i_1=k^{i+1}_2$ for all $i_0\leq i \leq k$.  We can never switch from choosing option~\ref{i:right} to choosing option~\ref{i:left} as we go down the chain.
		
		If $k^{i_0}_2=k^{i_0+1}_1$ and $k^{i_0+1}_1=k^{i_0+2}_2$ (i.e. if we switch from option~\ref{i:left} to option~\ref{i:right}), then we have $[\Sigma_{i_0}]=e_a-e_b$, $[\Sigma_{i_0+1}]=e_b-e_c$, and $[\Sigma_{i_0+2}]=e_d-e_b$. Since $[\Sigma_{i_0}]\cdot[\Sigma_{i_0+2}]=0$ we must have $(e_a-e_b)\cdot(e_d-e_b)=0$ but $(e_a-e_b)\cdot(e_d-e_b)=e_a\cdot e_d -1<0$, so we cannot switch from option~\ref{i:left} to option~\ref{i:right} either.
		
		Therefore if we start with option~\ref{i:left}, we get choice~\ref{i:to}, and if we start with option~\ref{i:right} we get choice~\ref{i:from}. For  last statement, let $a_i$ denote the coefficient of $e_i$ in $[\Sigma]$. Then the statement follows from $0=[\Sigma]\cdot[\Sigma_i]=\mp(a_i-a_{i+1})$.
	\end{proof}
	
	Often, only one of these options can occur. The following particular case will appear frequently in our homological classifications.
	
	\begin{lemma}\label{l:2chainfix}
		 If $\Sigma_1,\dots, \Sigma_k$ form a chain of $(-2)$-spheres as in Lemma~\ref{l:2chain}, such that the chain is attached to another symplectic sphere $\Sigma_0$ which does intersect $\cpone$, option~\ref{i:from} can only occur if $e_2,\dots, e_{k+1}$ all appear with coefficient $-1$ in $[\Sigma_0]$. In particular if $[\Sigma_0]\cdot h=1$, option~\ref{i:from} can only occur if $[\Sigma_0]^2\leq 1-k$.
	\end{lemma}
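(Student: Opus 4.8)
The plan is to determine the coefficients of $[\Sigma_0]$ by intersecting it with the chain and then exploiting the sign constraints of Lemma~\ref{l:hom}. First I would fix notation and write $[\Sigma_0] = a_0 h + \sum_{i=1}^N a_i e_i$. Since $\Sigma_0$ meets $\cpone$ we have $a_0 = [\Sigma_0]\cdot h > 0$, so Lemma~\ref{l:hom} item~\ref{sgn} forces $a_i \le 0$ for every $i \ge 1$. I would also record the plumbing data: as the chain is attached to $\Sigma_0$ at its $\Sigma_1$ end, we have $[\Sigma_0]\cdot[\Sigma_1] = 1$ while $[\Sigma_0]\cdot[\Sigma_j] = 0$ for $j = 2,\dots,k$.

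Next I would run the intersection computation in option~\ref{i:from}. With $[\Sigma_j] = e_{j+1} - e_j$ one finds $[\Sigma_0]\cdot[\Sigma_j] = a_j - a_{j+1}$. Disjointness from $\Sigma_2,\dots,\Sigma_k$ then forces $a_2 = a_3 = \dots = a_{k+1}$, and the relation $[\Sigma_0]\cdot[\Sigma_1] = 1$ gives $a_1 - a_2 = 1$. Combining these with $a_1 \le 0$ shows that the common value satisfies $a_2 = \dots = a_{k+1} = a_1 - 1 \le -1$; in particular each of $e_2,\dots,e_{k+1}$ occurs with strictly negative coefficient. (Consistency check: when $a_1 = 0$ the class $e_1$ drops out, which is why the statement singles out $e_2,\dots,e_{k+1}$.)

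To upgrade the inequality to the equality $a_2 = \dots = a_{k+1} = -1$, I would invoke the coefficient restrictions of Lemma~\ref{l:hom}. This is cleanest, and most important for us, in the case $[\Sigma_0]\cdot h = 1$: there the case $a_0=1$ of Lemma~\ref{l:hom} forces every $a_i \in \{0,-1\}$, so the common value is exactly $-1$ and $a_1 = 0$. The self-intersection bound is then a direct computation, $[\Sigma_0]^2 = a_0^2 - \sum_{i\ge1} a_i^2 = 1 - \sum_{i\ge1} a_i^2 \le 1-k$, since $a_2,\dots,a_{k+1}$ alone contribute $k$ to $\sum a_i^2$. For larger $a_0$ the same conclusion follows from the adjunction identity in Lemma~\ref{l:hom} item~\ref{adj}: a repeated common value $c \le -2$ would contribute at least $2k$ to $\sum_{i\ge1}(a_i^2 + a_i) = 2 + a_0^2 - 3a_0$, which fails once $k$ is large relative to $a_0$, while for $k \ge 2$ the case $a_0 = 3$ of Lemma~\ref{l:hom} already forbids two coefficients equal to $-2$.

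The only subtle step is this final pinning to $-1$: the intersection-theoretic bookkeeping is entirely routine, but one must be careful that pure homology guarantees only $a_2 = \dots = a_{k+1} \le -1$, so the sharp statement genuinely relies on the extra control over coefficients provided by the adjunction formula through Lemma~\ref{l:hom}. I would therefore organize the write-up so that the generic intersection argument is separated from the short case analysis that rules out common values below $-1$, with the $a_0 = 1$ case highlighted since it is the one feeding the self-intersection bound used later.
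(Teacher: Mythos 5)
Your proof is correct and follows essentially the same route as the paper's: intersect $[\Sigma_0]=a_0h+\sum a_ie_i$ with the chain to get $a_1-a_2=1$ and $a_2=\dots=a_{k+1}$, then use the coefficient restrictions of Lemma~\ref{l:hom} (all $a_i\in\{0,-1\}$ when $a_0=1$) to conclude $[\Sigma_0]^2\le 1-k$. Your care in distinguishing the homological conclusion $a_2=\dots=a_{k+1}\le -1$ from the sharper claim of equality with $-1$ is actually warranted and slightly more scrupulous than the paper, which asserts $a_2=-1$ outright; and since the lemma's quantitative conclusion and all of its applications in the paper concern the case $[\Sigma_0]\cdot h=1$, the incompleteness of your adjunction-based case analysis for $a_0\ge 4$ is harmless.
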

	
	\begin{proof}
		By Lemma~\ref{l:hom}, all coefficients of exceptional classes in $[\Sigma_0]$ are negative. Let $[\Sigma_0]=a_0h+\sum_i a_ie_i$. Since $[\Sigma_0]\cdot [\Sigma_1]=1$ because they are joined in the chain, the exceptional class which appears with coefficient $+1$ in $[\Sigma_1]$ must appear with coefficient $-1$ in $[\Sigma_0]$. If the chain $\Sigma_1,\dots, \Sigma_k$ has homology classes as in option~\ref{i:from}, this means $a_{2}=-1$. Since $[\Sigma_0]\cdot[\Sigma_i]=0$ for $i=2,\dots, k$, we find that $-a_{i+1}+a_{i}=0$ for $i=2,\dots, k$ so $a_2=\dots = a_{k+1}=-1$.
		
		When $a_0=1$, $a_i\in \{0,-1\}$ for all $i\geq 1$ by Lemma~\ref{l:hom} so $[\Sigma_0]^2=1-\#\{a_i\neq 0\}$. If we have the chain of $-2$ spheres with homology classes as in option~\ref{i:from}, we must have at least $k$ non-zero coefficients so $[\Sigma_0]^2\leq 1-k$.
	\end{proof}

These lemmas, together with some arithmetic considerations, will generally suffice to allow us to classify all possibilities for the homology classes of the spheres in a normal crossing resolution. Given these homology classes we can apply Lemma~\ref{l:blowdown} to blow down to a configuration in $\cptwo$. The way that the exceptional classes appeared in the homology classes of the plumbing spheres affects the intersections between the proper transforms. We record the data of these intersections, including the degree of tangency between surfaces at each intersection and when intersections between different components coincide. There may also be singularities in a single component which we record as well. Fixing this data of the singularities and intersections, we then try to classify symplectic configurations of surfaces in $\cptwo$ up to equisingular symplectic isotopy. In the next section we solve this classification for families of singular reducible configurations of symplectic surfaces in $\cptwo$ that we will need.

	}
	
\subsection{Birational transformations}\label{ss:birationalequiv}
{
	In complex dimension $2$, a birational transformation is a sequence of blow-ups and blow-downs. Since blow-ups and blow-downs can be done symplectically~\cite{Mc}, these transformations from algebraic geometry can be imported into the symplectic context. When we have a singular surface in $\cptwo$, the birational transformation may change the singularities and self-intersection number of the surface. The blow-ups can begin to resolve singularities, and blow-downs may create new singularities.
	
	There are two ways that we will relate singular symplectic surfaces in $\cptwo$ using birational transformations. The first notion is weaker, but for two surfaces related in this way, the existence of one type of singular surface will imply the existence of another type of singular surface.
	
	\begin{definition}
	A configuration $\mathcal{C}_2$ in $(M_2,\omega_2)$ is \emph{birationally derived} from another configuration $\mathcal{C}_1\subset (M_1,\omega_1)$ if for \emph{every} symplectic realization $\Sigma_1$ of $\mathcal{C}_1$ in $(M_1,\omega_1)$, there is a sequence of blow-ups of the pair $(M_1,\Sigma_1)$ to the total transform $(M_1\#_N\cptwobar,\total{\Sigma}_1)$, followed by a sequence of blow-downs of exceptional spheres $\pi: M_1\#_N\cptwobar \to M_2$, such that $\Sigma_2 = \pi(\total{\Sigma}_1)$ is a realization of $\mathcal{C}_2$.
	\end{definition}

	From this definition, if we birationally invert the blow-down $\pi$ by blowing-up, we see that the total transform of $\Sigma_2$ in $M_1\#_N\cptwobar$, contains the total transform of $\Sigma_1$, i.e. $\total{\Sigma}_1\subseteq \total{\Sigma}_2$. This follows from the fact that a set is contained in the preimage of its image, so $\total{\Sigma}_2 = \pi^{-1}(\Sigma_2) = \pi^{-1}(\pi(\total{\Sigma}_1)) \supseteq \total{\Sigma}_1$. For this reason, we can also refer to the birational derivation relation as saying that $\mathcal{C}_1$ is \emph{birationally contained} in $\mathcal{C}_2$.
	
	Note that this relation is directional. If $\mathcal{C}_2$ is birationally derived from $\mathcal{C}_1$, it is not typically true that $\mathcal{C}_1$ is birationally derived from $\mathcal{C}_2$. To strengthen this notion, we impose a more restrictive condition that the exceptional spheres that are blown down by $\pi$ are actually contained in $\total{\Sigma}_1$.
	%The following definition will allow us to relate the symplectic isotopy classifications of $\Sigma_1$ and $\Sigma_2$.
	
	\begin{definition}
		$\mathcal{C}_1$ in $(M_1,\omega_1)$ is \emph{birationally equivalent} to $\mathcal{C}_2$ in $(M_2,\omega_2)$ if for every symplectic realization $\Sigma_1$ of $\mathcal{C}_1$ there is a sequence of blow-ups of the pair $(M_1,\Sigma_1)$ to the total transform $(M_1\#_N\cptwobar, \total{\Sigma}_1)$, followed by a sequence of blow-downs of exceptional spheres $\pi:M_1\#_N\cptwobar \to M_2$ such that the exceptional locus of $\pi$ is contained in $\total{\Sigma}_1$ and $\Sigma_2 = \pi(\total{\Sigma}_1)$ is a symplectic realization of $\mathcal{C}_2$.
	\end{definition}
	
	Equivalently, a birational equivalence is a birational derivation where in the blow-up $\total{\Sigma}_1 = \total{\Sigma}_2$. 
	Reversing the sequence of blow-ups and blow-downs, we see that any realization $\Sigma_2$ of $\mathcal{C}_2$ in $(M_2,\omega_2)$ will have exceptional spheres contained in its total transform $\total{\Sigma}_2$ which can be blown down to obtain a realization of $\mathcal{C}_1$ in $(M_1,\omega_1)$. Therefore birational equivalence is a symmetric relation. 
	%Another way to see this is that $\pi^{-1}(\Sigma_2)=\total{\Sigma}_1$ is obtained by a sequence of blow-ups of $\Sigma_2$, and the exceptional spheres needed to blow-down $\total{\Sigma}_1$ to $\Sigma_1$ are contained in $\total{\Sigma}_1$. 
	If $\mathcal{C}_1$ and $\mathcal{C}_2$ are birationally equivalent, they are each birationally derived from the other. 
	
	Without the extra condition for birational equivalence, a birational derivation is not an equivalence relation. More precisely, suppose that a configuration $\mathcal{C}_2$ that is birationally derived from a configuration $\mathcal{C}_1$. Then for any realization $\Sigma_1$ of $\mathcal{C}_1$ it has a resolution such that $\total{\Sigma}_1\subseteq \total{\Sigma}_2$ for some realization $\Sigma_2$ of $\mathcal{C}_2$. However, $\total{\Sigma}_2$ may contain some additional exceptional spheres that were not contained in $\total{\Sigma}_1$. Therefore when we blow down the exceptional spheres in $\total{\Sigma}_1$, the image of $\total{\Sigma}_2$ will only \emph{contain} $\Sigma_1$ instead of being \emph{equal} to $\Sigma_1$. A weaker equivalence relation could be obtained from the definition of birational derivation by replacing the condition $\Sigma_2=\pi(\total{\Sigma}_1)$ by $\Sigma_2\subseteq \pi(\total{\Sigma}_1)$ however this relation will not be particularly useful to us.
	
	Note that the number of components of the configuration is preserved by a birational equivalence but not by a birational derivation.
	
	\begin{example}\label{ex:birat}
		Let $\mathcal{C}_1$ be a configuration in $(\cptwo,\omega_{FS})$ whose realizations consist of a single symplectic conic $C$ with three symplectic lines $L_1,L_2,L_3$ that intersect the conic tangentially at three distinct points $p_1,p_2,p_3$ respectively, and intersect each other transversally at three distinct points. Blow up at each of the three tangential intersection points, and denote the resulting proper transforms by $\proper{C}$ and $\proper{L}_i$. The resulting self-intersection numbers satisfy $[\proper{C}]^2=1$ and $[\widetilde{L_i}]^2=0$. The intersections between $\proper{C}$ and $\proper{L}_i$ are transverse and there are three new exceptional spheres $E_1, E_2, E_3$ which pass through those intersection points. See Figure~\ref{fig:biratderiv}. Because $[\proper{C}]^2=+1$ and $\proper{C}$ is a smooth sphere, we can find a symplectomorphism of $\cptwo\# 3\cptwobar$ which identifies $\proper{C}$ with $\cpone$. We calculate the possible homology classes of the other surfaces in the picture in terms of a standard basis which identifies $[\proper{C}]=h$ using Lemma~\ref{l:hom} and intersection numbers. We find the only option is
		\[ [\proper{C}]=h, \qquad [\proper{L}_1] = h-e_1, \qquad [\proper{L}_2] = h-e_2, \qquad [\proper{L}_3] = h-e_3 \]
		\[ [E_1] = h-e_2-e_3, \qquad [E_2] = h-e_1-e_3, \qquad [E_3] = h-e_1-e_2 \]
		Lemma~\ref{l:blowdown} implies that there exist exceptional spheres in classes $e_1,e_2,e_3$ which intersect the labeled surfaces non-negatively. Blowing down such exceptional spheres results in a configuration of seven symplectic lines intersecting in six triple points (the three triple points that already existed between $C,L_i,E_i$ for $i=1,2,3$, and the three triple points that are created when blowing down $e_1,e_2,e_3$). Therefore the configuration of seven lines with six triple points can be \emph{birationally derived} from the configuration of a conic with three tangent lines. Note, the exceptional spheres which are blown down in the last step of the transformation are not included in the configuration (because none of the surfaces $\proper{C},\proper{L}_i$ or $E_i$ represented a class $e_i$). Therefore this is not a \emph{birational equivalence}.
		
		\begin{figure}
			\centering
			\includegraphics[scale=.3]{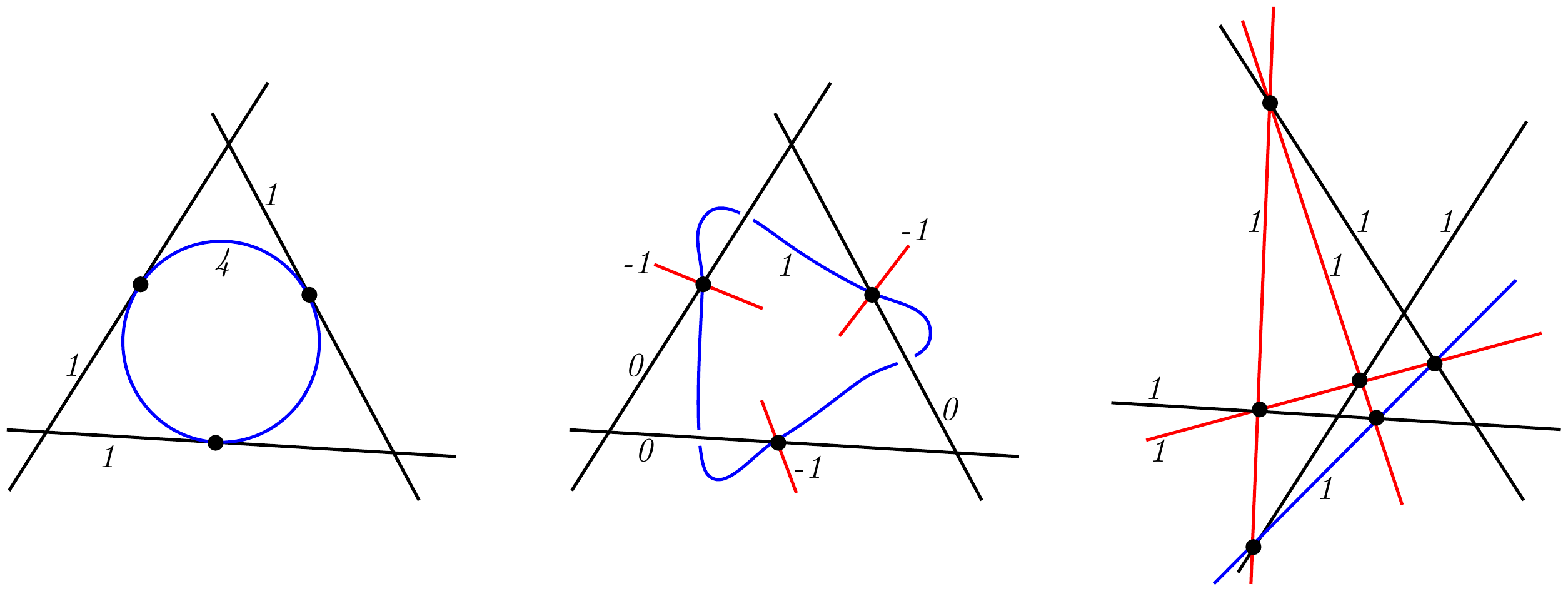}
			\caption{Birational derivation. The black lines forming the triangle on the left are $L_1,L_2,L_3$, and the blue circle represents the conic $C$. After blowing up as indicated in the center figure, we obtain three exceptional spheres $E_1,E_2,E_3$ indicated in red. There exist three other exceptional spheres which are not visible in the center diagram that can be blown down to result in the configuration shown on the right.}
			\label{fig:biratderiv}
		\end{figure}
		
		In order to get a birational equivalence, we would need to augment the original configuration by adding in components which will become the exceptional spheres that eventually will be blown down. We can reverse engineer to predict what must be added to our configuration to obtain a birational equivalence to the same final configuration. The exceptional sphere $\proper{S}_1$ in $\cptwo\#3\cptwobar$ representing $e_1$ intersects $E_2$ and $E_3$. Reversing the birational transformation by blowing down $E_2$ and $E_3$, we find that the image $S_1$ in $\cptwo$ will be a sphere of self-intersection $+1$ which must pass transversally through the tangential intersections between $C$ with $L_2$ and $C$ with $L_3$. The other exceptional spheres descend similarly to $+1$ spheres passing through two of the tangential intersections. 
		Replace our starting configuration by a configuration consisting of the original conic $C$ and lines $L_1,L_2,L_3$ with $L_i$ tangent to $C$ at $p_i$ together with three additional symplectic lines $S_1,S_2,S_3$ such that $S_i$ intersects $C$ transversally at the points $p_{i+1}$ and $p_{i+2}$ (where indices are taken mod $3$). Now the same birational transformation becomes a birational equivalence between a configuration consisting of a conic with three tangent lines and three lines through the three pairs of tangent points with a configuration of seven lines with six triple points and three double points. See Figure~\ref{fig:biratequiv}.
		
		\begin{figure}
			\centering
			\includegraphics[scale=.3]{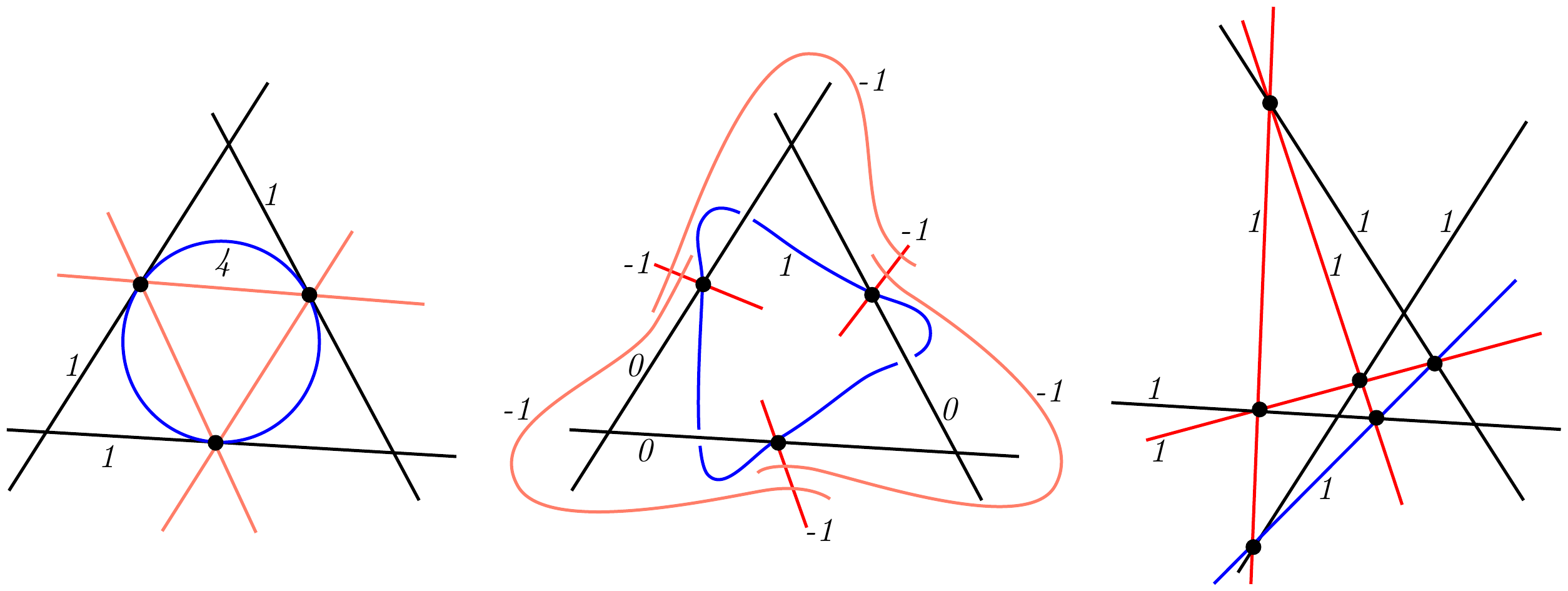}
			\caption{An example of a birational equivalence. The black lines forming the outer triangle in the left figure represent $L_1,L_2,L_3$, the blue circle represents the conic $C$, and the orange lines forming the inscribed triangle represent $S_1,S_2,S_3$. After blowing up at the marked points to get the center figure, the proper transforms $\proper{S}_1,\proper{S}_2,\proper{S}_3$ are exceptional spheres. The red curves in the center figure represent the exceptional spheres $E_1,E_2,E_3$. The right figure is obtained by blowing down $\proper{S}_1,\proper{S}_2,\proper{S}_3$. The images of $E_1,E_2,E_3$ after this blow-down are lines in the right figure.}
			\label{fig:biratequiv}
		\end{figure}
	\end{example}
	
	\begin{remark}
		In this article, we will prove the existence of birational derivations and birational equivalences using pseudoholomorphic curves. As demonstrated in Example~\ref{ex:birat}, we will primarily infer the existence of a particular birational derivation from a singular curve using Theorem~\ref{thm:mcduff} and Lemma~\ref{l:blowdown}. Alternatively, we can look for birational equivalences. As seen in Example~\ref{ex:birat}, to upgrade a birational derivation to a birational equivalence, we generally need to augment the original configuration $\Sigma_1$ by adding extra components. If these components are symplectic lines (degree one) in $\cptwo$ with sufficiently simple intersections with the other components of $\Sigma_1$, we will see that we can use pseudoholomorphic curves to infer the existence of such augmenting curves through Proposition~\ref{p:addline}. In practice, we will typically use Theorem~\ref{thm:mcduff} and Lemma~\ref{l:blowdown} along with the homological analysis from Section~\ref{s:caps} to discover a birational derivation. If desired, we can then reverse engineer to find an augmented configuration yielding a birational equivalence. Then in proving our results, we may justify the existence of the birational derivation \emph{or} justify the existence of the augmentation using the pseudoholomorphic curve results mentioned above. Note that once a configuration is augmented to produce a birational equivalence, no pseudoholomorphic curve result is necessary to justify the existence of the birational equivalence since all of the exceptional curves which one needs to blow-down are visibly included in the total transform of the configuration. Instead, pseudoholomorphic curves are used for augmenting the configuration to get the birational equivalence. By contrast, to state the existence of a birational derivation from one configuration to another can require pseudoholomorphic curve machinery to imply the existence of appropriate exceptional divisors which may not be visible.
	\end{remark} 
		
	Our use of birational derivations and equivalences arises from their implications to symplectic isotopy problems, which we state next. These implications are somewhat immediate from the definitions, so the mathematical power goes into proving such birational derivations exist as described in the previous remark.
			
	 Now we give the relations between symplectic isotopy problems and birational derivations and equivalences.
	
	\begin{prop} \label{l:biratexist}
		If $\mathcal{C}_1$ is a configuration in $(M_1,\omega_1)$, and $\mathcal{C}_2$ in $(M_2,\omega_2)$ is birationally derived from $\mathcal{C}_1$, then any subconfiguration of $\mathcal{C}_2$ symplectically embeds into $(M_2,\omega)$.
	\end{prop}
	
	This statement is immediate from the definition, but the utility of the statement comes from its contrapositive. Namely, we will show that certain configurations $\mathcal{C}_1$ cannot be symplectically realized in a closed symplectic manifold $(M_1,\omega_1)$, using the non-existence of a subconfiguration of symplectic curves that can be birationally derived from $\mathcal{C}_1$.
	
	\begin{prop}\label{l:biratderiveunique}
		Suppose a configuration $\mathcal{C}_2$ in $(M_2,\omega_2)$ is birationally derived from $\mathcal{C}_1$ in $(M_1,\omega_1)$, and suppose $\mathcal{C}_2$ has a unique (non-empty) equisingular symplectic isotopy class in $(M_2,\omega_2)$. Then $\mathcal{C}_1$ also has a unique (non-empty) symplectic isotopy class in $(M_1,\omega_1)$. If $(M_1,\omega_1)=(M_2,\omega_2)=(\cptwo,\omega_{\rm FS})$ and $\mathcal{C}_2$ can be realized by a complex curve, then $\mathcal{C}_1$ can also be realized by a complex curve.
	\end{prop}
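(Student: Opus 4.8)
The plan is to reduce the uniqueness (and complex realizability) for $\Sigma_1$ to the corresponding statements for $\Sigma_2$ by pushing representatives of $\Sigma_1$ forward through the birational derivation and then transporting the resulting ambient isotopy backwards. Write the derivation as a blow-up $\beta\colon M\#N\cptwobar\to M$ of the pair $(M,\Sigma_1)$, with exceptional divisors $F_1,\dots,F_N$, so that the total transform $\widetilde\Sigma_1=\beta^{-1}(\Sigma_1)$ contains the $F_j$, followed by a blow-down $\pi\colon M\#N\cptwobar\to M$ of exceptional spheres $S_1,\dots,S_N$ with $\Sigma_2=\pi(\widetilde\Sigma_1)$. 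The key structural observation is that inside the common manifold $M\#N\cptwobar$ one has $\pi^{-1}(\Sigma_2)=\widetilde\Sigma_1\cup\{\text{extra } S_j\}$, so that $\Sigma_1$ and $\Sigma_2$ are both recovered from $\pi^{-1}(\Sigma_2)$ by blowing down $\{F_j\}$ and $\{S_j\}$ respectively; in particular $\widetilde\Sigma_1$ sits as a subconfiguration of the total transform of $\Sigma_2$, distinguished from the extra $S_j$ by its homology classes.

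For the forward direction I take two symplectic curves $A$ and $B$ of configuration type $\Sigma_1$ (the class is nonempty, since $\Sigma_1$ is the source of the derivation). Applying $\beta$ — which, by the discussion of symplectic proper transforms in Section~\ref{ss:propertransform}, is well-defined up to symplectic isotopy — produces total transforms $\widetilde A,\widetilde B$ of type $\widetilde\Sigma_1$; applying $\pi$, realized by blowing down $J$-holomorphic exceptional spheres in the classes $[S_j]$ as in Lemma~\ref{l:blowdown} (whose spaces of representatives are connected, so the outcome is again canonical up to isotopy), produces curves $\Sigma_2^A,\Sigma_2^B$ of type $\Sigma_2$. Since $\Sigma_2$ has a unique symplectic isotopy class, there is an ambient symplectic isotopy $\Phi_t$ of $M$ with $\Phi_0=\mathrm{id}$ and $\Phi_1(\Sigma_2^A)=\Sigma_2^B$.

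The heart of the argument is transporting $\Phi_t$ backwards. First I lift it through $\pi$: the contraction points $\pi(S_j)$ are distinguished points of $\Sigma_2^A$ determined by the combinatorial type recorded in the derivation, and as they move under $\Phi_t$ one blows them up in a one-parameter family to obtain an ambient symplectic isotopy $\widetilde\Phi_t$ of $M\#N\cptwobar$ covering $\Phi_t$ and carrying $\pi^{-1}(\Sigma_2^A)$ to $\pi^{-1}(\Sigma_2^B)$. Because $\widetilde\Phi_t$ is a symplectomorphism it preserves homology classes and the combinatorial type of the configuration, so it carries the subconfiguration $\widetilde A$ onto $\widetilde B$ and preserves the family of exceptional divisors $\{F_j\}$. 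Finally I descend $\widetilde\Phi_t$ through the blow-down $\beta$ of $\{F_j\}$ — again using that blow-down is natural with respect to an isotopy preserving the spheres it contracts — obtaining an ambient symplectic isotopy $\psi_t$ of $M$ with $\psi_1(A)=B$. This shows $A$ and $B$ are symplectically isotopic, establishing uniqueness for $\Sigma_1$.

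I expect the main obstacle to be precisely this transport of ambient isotopies across blow-up and blow-down with moving centers: one must check that the lift can be chosen to preserve the contracted spheres (so that it descends), and that it respects the splitting of $\pi^{-1}(\Sigma_2)$ into $\widetilde\Sigma_1$ and the extra exceptional spheres — the latter being exactly where the directionality of a mere \emph{derivation} (as opposed to an equivalence) could cause trouble, and where one leans on the distinct homology classes of the two families. For the final assertion I start from a complex curve $D$ realizing $\Sigma_2$ and run the reverse transformation algebraically: blowing up $\cptwo$ at the algebraically determined points $\pi(S_j)$ lying on $D$ produces a complex configuration $\pi^{-1}(D)$ containing a complex copy of $\widetilde\Sigma_1$, and contracting the complex $(-1)$-curves $F_j$ yields a complex curve of type $\Sigma_1$; hence the unique isotopy class of $\Sigma_1$ contains a complex representative.
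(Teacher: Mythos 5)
Your overall skeleton is the same as the paper's (push two realizations $Q_0,Q_1$ of $\Sigma_1$ forward through the derivation to realizations $R_0,R_1$ of $\Sigma_2$, invoke uniqueness for $\Sigma_2$, transport the connecting isotopy back), but there is a genuine gap at the pivotal step: you claim that uniqueness of the equisingular symplectic isotopy class of $\Sigma_2$ produces an \emph{ambient} symplectic isotopy $\Phi_t$ of $(M,\omega)$ carrying $\Sigma_2^A$ to $\Sigma_2^B$. Under Definition~\ref{d:singular-iso}, an equisingular symplectic isotopy is only a one-parameter \emph{family of curves}, and for singular curves this cannot in general be upgraded to an ambient isotopy: the paper stresses (in the remark following Lemma~\ref{l:localsing}) that singularities carry local symplectomorphism invariants beyond their topological type --- e.g.\ the configuration of tangent planes at an ordinary $n$-fold point for $n\geq 3$, by the $\mathrm{Sp}(4)$ versus ${\rm Gr}_\omega(4,2)^{\times n}$ dimension count --- so there exist equisingular families of symplectic (even complex) curves not induced by any ambient symplectomorphism. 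In the intended applications $\Sigma_2$ is precisely such a singular or reducible configuration (triple points, tangencies), so the $\Phi_t$ you build everything on need not exist, and the entire transport mechanism you describe as the heart of the argument (lifting $\Phi_t$ through $\pi$ with moving blow-up centers, then descending through $\beta$) has no input. Even granting $\Phi_t$, the family blow-up of an ambient isotopy is an additional construction you would have to supply; but that is secondary to the unjustified ambient upgrade.

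The repair is the paper's actual proof: transport the \emph{curve family} rather than an ambient map. Connect $R_0$ to $R_1$ by the equisingular family $R_t$ given by uniqueness for $\Sigma_2$; perform the blow-ups of the derivation at the corresponding smooth or singular points of $R_t$, which move smoothly with $t$, using the symplectic total/proper transform of Section~\ref{ss:propertransform} (well defined up to symplectic isotopy, and carried out in a $t$-family by choosing Darboux balls varying smoothly with $t$) to obtain a family $\widetilde{Q}_t$ agreeing with the blow-ups of $Q_0,Q_1$ at the endpoints; then blow down, again in the family, the exceptional spheres contained in $\widetilde{Q}_t$ that correspond to the exceptional divisors of the first stage, producing an equisingular family $Q_t$ of realizations of $\Sigma_1$ from $Q_0$ to $Q_1$. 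No ambient isotopy appears at any stage, which is exactly why the proposition is stated, and can only be stated, for isotopy in the sense of Definition~\ref{d:singular-iso}. Your remaining ingredients are sound and consistent with the paper: the use of Lemma~\ref{l:blowdown} to realize the blow-downs for an arbitrary realization of $\Sigma_1$, the homological separation of $\widetilde{\Sigma}_1$ from the extra exceptional spheres of $\pi$ (two distinct $J$-holomorphic spheres cannot share an exceptional class by positivity of intersections), and the complex-realizability argument, which is the paper's one-line observation that complex curves are preserved under birational transformations.
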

	
	\begin{proof}
		Suppose any two symplectic embeddings of $\mathcal{C}_2$ into $(M_2,\omega_2)$ are symplectically isotopic.
		Let $Q_0$ and $Q_1$ be two symplectic embeddings of $\mathcal{C}_1$ into $(M_1,\omega_1)$. 
		By definition of birational derivation, for each $i=0,1$ there is a sequence of blow-ups of $Q_i$ to $\total{Q}_i$ and a sequence of blow-downs that contract $\total{Q}_i$ to $R_i$, where $R_i$ is a symplectic embedding of $\mathcal{C}_2$ into $(M_2,\omega_2)$. 
		Then there exists a family $R_t$ of equisingular symplectic embeddings of $\mathcal{C}_2$ into $(M_2,\omega_2)$ for $t\in[0,1]$ which connects $R_0$ and $R_1$. 
		For each $R_t$, perform the sequence of blow-ups along the appropriate smooth or singular points in $R_t$ to obtain $\total{R}_t$. By definition of birational derivation, there is a distinguished subset of the components $\total{Q}_t\subseteq \total{R}_t$ for $t\in[0,1]$, agreeing $\total{Q}_0$ and $\total{Q}_1$ for $t=0,1$. There are exceptional spheres in $\total{Q}_t$ for each $t\in [0,1]$ which can be blown down to give equisingular symplectic embeddings $Q_t$ of $\mathcal{C}_1$ into $(M_1,\omega_1)$. 
		This gives a symplectic family connecting $Q_0$ and $Q_1$.
		
		Since complex curves are preserved under birational transformations, the last statement follows from the same proof.
	\end{proof}
	
	If $\mathcal{C}_1$ and $\mathcal{C}_2$ are birationally equivalent, they are each birationally derived from the other yielding the following corollary.	
	
	\begin{cor} \label{l:biratiso}
		Suppose $\mathcal{C}_1$ in $(M_1,\omega_1)$ and $\mathcal{C}_2$ in $(M_2,\omega_2)$ are birationally equivalent. There is a unique equisingular symplectic isotopy class for $\mathcal{C}_1$ in $(M_1,\omega_1)$, if and only if there is a unique equisingular symplectic isotopy class for $\mathcal{C}_2$ in $(M_2,\omega_2)$. If $(M_1,\omega_1)=(M_2,\omega_2)=(\cptwo,\omega_{FS})$ and if the equisingular symplectic isotopy class contains complex representatives for one configuration, it contains complex representatives for the other.
	\end{cor}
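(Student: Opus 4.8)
The plan is to deduce the corollary formally from Proposition~\ref{l:biratderiveunique}, using the symmetry of birational equivalence. The one substantive input is the observation recorded immediately before the statement: if $\Sigma_1$ and $\Sigma_2$ are birationally equivalent, then each is birationally derived from the other. Indeed, $\Sigma_2$ is birationally derived from $\Sigma_1$ directly by the definition of birational equivalence, while $\Sigma_1$ is birationally derived from $\Sigma_2$ because $\pi^{-1}(\Sigma_2) = \widetilde{\Sigma_1}$ is a sequence of blow-ups of $\Sigma_2$ and the exceptional spheres blown down to recover $\Sigma_1$ all lie in $\widetilde{\Sigma_1}$.

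First I would establish the implication ``$\Sigma_1$ unique $\Rightarrow \Sigma_2$ unique''. Assume $\Sigma_1$ has a unique non-empty equisingular symplectic isotopy class. Since $\Sigma_1$ is birationally derived from $\Sigma_2$, I can apply Proposition~\ref{l:biratderiveunique} with $\Sigma_1$ playing the role of the \emph{derived} curve and $\Sigma_2$ the role of the source; its hypothesis (the derived curve has a unique non-empty class) then holds, so its conclusion gives that $\Sigma_2$ has a unique non-empty class. The reverse implication is symmetric: assuming $\Sigma_2$ unique and using that $\Sigma_2$ is birationally derived from $\Sigma_1$, the proposition gives that $\Sigma_1$ has a unique non-empty class. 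Together these yield the biconditional.

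For the statement on complex representatives in $(\cptwo,\omega_{\rm FS})$, I would invoke the final sentence of Proposition~\ref{l:biratderiveunique} in exactly the same two directions: if the unique isotopy class of one of the curves contains a complex representative, then, that curve being birationally derived from the other, the isotopy class of the other also contains a complex representative.

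I do not expect a genuine obstacle, as the argument is entirely formal. The only point demanding care is bookkeeping the two directions of derivation: in each application of Proposition~\ref{l:biratderiveunique}, the curve assumed to have a unique class must be the one occupying the role of the derived curve $\Sigma_2$ in that proposition, and one must check that non-emptiness is propagated at each step, which it is, since it is built into both the hypothesis and the conclusion of the proposition.
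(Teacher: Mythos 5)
Your proposal is correct and matches the paper exactly: the paper derives the corollary in one line from Proposition~\ref{l:biratderiveunique}, noting that birationally equivalent curves are each birationally derived from the other, which is precisely the two-directional application you spell out. Your bookkeeping of which curve plays the ``derived'' role in each application is the right (and only) point of care, and you have it right.
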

}

\subsection{Riemann--Hurwitz}
{
	The Riemann--Hurwitz obstruction uses symplectic information in a more global way.
	Fix an almost complex structure $J$ on $\cptwo$ such that $C$ is $J$-holomorphic.
	Fix a point $p\in \cptwo$, and consider the pencil of $J$-holomorphic lines through $p$, and the associated projection $\pi_p: \cptwo \setminus \{p\} \to\cpone$. Restricting this projection to $\pi: C\to \cpone$, and pre-composing with the normalization map $n:\widetilde C \to C$, gives a ramified covering map $\pi\circ n : \widetilde C \to \cpone$.
	The fact that the only singularities are ramification points follows from positivity of intersections between $J$-holomorphic curves. Ramification points arise from tangencies between lines in the pencil with $C$ and from singular points of $C$. A singular point $q$ whose multiplicity sequence has the first two terms $m_q$ and $m_{q,2}$ will give rise to a ramification point $x=n^{-1}(q)$ of index $m_q$ if the $J$-line from $p$ to $q$ is transverse to $C$ at $q$. If the $J$-line from $p$ to $q$ is tangent to $C$ at $q$ then $x$ will have ramification index $m_q+m_{q,2}$.
	A priori, if $p\in C$, the map is not well-defined at $p$, but in fact, $\pi$ has a unique continuous extension defined by sending $p$ to the image of the $J$-holomorphic line through $p$ which is tangent to $C$ at $p$. If $p$ is a singular point whose multiplicity sequence starts with $[m_p,m_{p,2}]$ (where we set $m_{p,2}=1$ if the multiplicity sequence has length $1$) then the ramification index of $\pi\circ n$ at $p$ is $m_{p,2}$.

	The Riemann--Hurwitz formula is the calculation of the Euler characteristic of the branched covering in terms of the ramification indices and degree of the cover. If $\widetilde{\pi}:=\pi \circ n: \widetilde C \to \cpone$ is a $k$-fold ramified cover with ramification points $x_1,\dots, x_\ell$ with corresponding ramification indices $e_{\widetilde{\pi}}(x_j)$ then
	\[
	\chi (\widetilde C) = k(2-\ell ) + \sum_{j=1}^\ell (k+1-e_{\widetilde{\pi}}(x_j)) = 2k-\sum_{j=1}^\ell (e_{\widetilde{\pi}}(x_j)-1).
	\]
	In our case, $\widetilde C$ is a $2$-sphere so $\chi(\widetilde C)=2$. Suppose $d$ is the degree of $C$. If we choose $p\notin C$, then a generic line through $p$ intersects $C$ $d$ times, so the degree of the cover is $d$. Therefore the above equation specializes to
	\[
	2d-2=\sum_{j=1}^\ell (e_{\widetilde{\pi}}(x_j)-1).
	\]
	If instead, we choose $p\in C$, where $p$ has multiplicity $m_p$ (where $m_p$ is the first entry of the multiplicity sequence if $p$ is a singular point and is $1$ if $p$ is a smooth point of $C$), then a generic line through $p$ intersects $C$ at $d-m_p$ other points.
	%\MG{Removed ``degree'' from ``degree x-fold cover''.}
	Therefore $\widetilde{\pi}$ is a $(d-m_p)$-fold cover. This gives the following equation and inequality. The inequality is particularly useful as an obstruction to symplectically realizing certain cuspidal curves in $\cptwo$.
	\[
	2(d-m_p)-2=\sum_{j=1}^\ell (e_{\widetilde{\pi}(x_j)}-1) \geq \sum_{q\neq p} (m_q-1) +(m_{p,2}-1),
	\]
	which we re-write as follows:
	\begin{equation}\label{e:riemannhurwitz}
	2d-2m_p \ge 2 + \sum_{q\neq p} (m_q-1) + (m_{p,2} - 1).
	\end{equation}
	
	\begin{example}%\label{ex:RH352323}\label{ex:RH352323}\label{ex:RH2523232323}\label{ex:RH232323232323} 
		\label{ex:RH}
	We can apply Riemann--Hurwitz to exclude the following configurations of cusps for a quintic: $\{[3,2], [2], [2]\}$, $\{[3], [2], [2], [2]\}$, $\{[2,2], [2], [2], [2], [2]\}$, and $\{[2], [2], [2], [2], [2], [2]\}$.
	In the first case, we project from the $[3,2]$-cusp, and we obtain the following contradiction.
	%\MG{Changed brackets to cdots: instead of $2(5)$ I prefer $2\cdot 5$.}
	\[2\cdot 5-2\cdot 3 \geq 2+1+1+1.\]
	In the second, we project from the $[3]$-cusp:
	\[2\cdot 5-2\cdot 3 \geq 2+1+1+1.\]
	In the third case we project from the $[2,2]$-cusp:
	\[2\cdot 5-2\cdot 2 \geq 2+1+1+1+1+1,\]
	and in the last from any of the cusps, to obtain:
	\[2\cdot 5-2\cdot 2 \geq 2+1+1+1+1+1.\]
	Thus, in each of the four cases, we get a contradiction.
	\end{example}
	
%	\begin{example}\label{ex:RH352323}
%		Suppose there were a quintic with cusps with multiplicity sequences $\{[3,2], [2], [2]\}$ and let $p$ be the $[3,2]$ cusp. Then Riemann--Hurwitz would imply 
%		$$2(5)-2(3) \geq 2+1+1+1,$$ 
%		but in fact $4$ is not greater than or equal to $5$.
%	\end{example}
%	
%	\begin{example} \label{ex:RH352323}
%		Suppose there were a quintic with cusps $\{[3], [2], [2], [2]\}$ and let $p$ be the $[3]$ cusp. Riemann--Hurwitz would imply 
%		$$2(5)-2(3) \geq 2+1+1+1,$$ 
%		again a contradiction.
%	\end{example}
%
%	\begin{example} \label{ex:RH2523232323}
%		Suppose there were a quintic with cusps $\{[2,2], [2], [2], [2], [2]\}$ and let $p$ be the $[2,2]$ cusp. Riemann--Hurwitz would imply 
%		$$2(5)-2(2) \geq 2+1+1+1+1+1,$$ 
%		giving the contradiction that $6\geq 7$.
%	\end{example}
%
%	\begin{example} \label{ex:RH232323232323}
%		Suppose there were a quintic with cusps $\{[2], [2], [2], [2], [2], [2]\}$, and let $p$ be any cusp. Riemann--Hurwitz would imply 
%		$$2(5)-2(2) \geq 2+1+1+1+1+1,$$ 
%		again a contradiction.
%	\end{example}

}

}
% !TEX root = ../rationalcuspidal.tex

\section{The necessity of rationality}\label{s:rational}
	{
	An interesting question to ask is the following: can we distinguish exotic symplectic 4-manifolds (for example a potentially exotic $\cptwo$) in terms of the singular symplectic submanifolds they contain?
	For example, we will show that there are certain rational cuspidal curves which admit no symplectic embedding in $\cptwo$, but a priori such curves could admit symplectic embeddings in an exotic symplectic $\cptwo$. Though this is an alluring hope, we explain here that rational cuspidal curves we consider here cannot exist in any exotic or even homology $\cptwo$.
	As mentioned in the introduction, a similar question was raised, and partially addressed, by Chen~\cite{Chen}.
	
	We recall that Taubes proved that any symplectic structure on the standard, smooth $\cptwo$ is in fact symplectomorphic, up to rescaling, to the Fubini--Study form~\cite[Theorem~0.3]{Taubes}.
	
	We consider triples $(X,\omega,C)$, where $X$ is a homotopy $\cptwo$, $\omega$ is a symplectic form on $X$, and $C$ is a symplectic rational cuspidal curve, and show that $X$ is necessarily $\cptwo$.
	
	This question is answered in the algebraic setting by dividing into two cases, depending on the sign of the canonical divisor. If $X$ is an algebraic surface that is a rational homology $\cptwo$, either $K_X$ is not nef, which proves that $X$ is rational (and hence $\cptwo$, by our homological assumption) or $K_X$ is nef, in which case Yau proved that $X$ is a ball quotient~\cite{Yau,Aubin}.
	These are known as \emph{Mumford surfaces}, or \emph{fake projective planes}; the first example was given by Mumford~\cite{mumford}, and were classified by Cartwright and Steger~\cite{fakenews}, building on work of Prasad and Yeung~\cite{PrasadYeung}.
	Since they are all ball quotients, they have no symplectic rational curves, for maps from $S^2$ lift to the universal cover, and there are no compact complex (or symplectic) curves in $B^4$.
	
	In the proof of Theorem~\ref{thm:rational} we, too, will distinguish between the two cases, according to the sign of the canonical divisor $K_X$ (or, equivalently, of the first Chern class); we will use techniques inspired by gauge theory in both cases, albeit in two different ways.
	The proof in the case where $K_X < 0$ is essentially known to experts, and was already proved by Chen~\cite[Corollary 2.3]{Chen}; we give a slightly different proof here. The proof in the case $K_X > 0$ uses tools from Heegaard Floer homology, and is the part of the proof that is genuinely novel.

	\begin{proof}[Proof of Theorem~\ref{thm:rational}]		
	Recall that, for links of curve singularities and their connected sums one has that the invariant $\nu^+$ coming from Heegaard Floer homology~\cite{HomWu}, the slice genus, and the 3-genus all agree~\cite[Proposition~3]{HomWu}.
	Since $p_g(C) = 0$, the arithmetic genus $p_a(C)$ of $C$ is equal to the sum of the $\delta$-invariants of the singular points of $C$.
	In particular, if we let $K$ denote the connected sum of all links of singularities of $C$, then $\nu^+(K) = p_a(C)$.
		
	Since $X$ admits a compatible almost-complex structure $J$, $c_1^2(J)-2\chi(X)-3\sigma(X) = 0$.
	As $X$ is assumed to be a homotopy $\cptwo$, this implies that $c_1^2(J) = 9$, and thus $c_1(J) = \pm 3\PD(h)$ (where $h=[\cpone]\in H_2(\cptwo;\Z)$).
		
	If $C$ has degree $d$ and $c_1(J) = 3h$, as it is for $(\cptwo,\omega_{\rm FS})$, the adjunction formula~\eqref{e:adjunction} yields $p_a(C) = \frac12(d-1)(d-2)$, and Proposition~\ref{p:rationalsurface} below guarantees that $X$ is the standard $\cptwo$.
		
	We turn now to the case when $c_1(J) = -3h$; as in the algebraic case, we argue that there are no symplectic $4$-manifolds $X$ with $c_1(J) = -3h$ admitting rational cuspidal curves.
	In fact, now the adjunction formula yields:
	\[
		p_a(C) = \frac{(d+1)(d+2)}2.
	\]
	Consider the boundary of a regular neighborhood $N$ of $C$, which is homeomorphic to $S^3_{d^2}(K)$.
	Then the complement of the interior of $N$, taken with the opposite orientation, is a rational homology ball whose boundary is $S^3_{d^2}(K)$.
	Since $p_a(C) = \nu^+(K)$ we have that
	\[
		\frac{d(d-1)}2 - \nu^+(K) =  \frac{d(d-1) - (d+1)(d+2)}2 = -2d-1 < 0,
	\]
	contradicting the bound obtained in~\cite[Theorem~5.1]{AG}.
	\end{proof}
		%We conclude the section by proving the following proposition, which is mostly a corollary of~\cite[Theorem~7.36]{rationalruled}; this result, in turn, follows from work of Taubes on Gromov--Witten theory, and has since then generalized by Li, Mak, and Yasui~\cite[Proposition~4.4]{LiMakYasui}.
	We conclude the proof with the following proposition, which is mostly a corollary of~\cite[Corollary~1.5]{McS-survey}. This, in turn, follows from work of Liu~\cite{Liu96}, building on Taubes' work on Seiberg--Witten theory. (See also~\cite[Theorem~7.36]{rationalruled} for a more modern and self-contained treatment.)
	The theorem asserts that, if $C$ is a smooth symplectic curve in a closed symplectic 4-manifold $(X,\omega)$ such that $\langle c_1(\omega), C \rangle \geq 1$, and $C$ is not a $(-1)$-sphere, then $(X,\omega)$ is rational or ruled.
	Using the adjunction formula, we can rephrase the hypothesis on the first Chern class into an assumption on the self-intersection. Namely $\langle c_1(\omega),C\rangle = 2-2g(C)+[C]^2$, so the hypothesis is equivalent to $[C]^2 \geq 2g(\Sigma)-1$ and $C$ is not an exceptional sphere.
	(Note that the case $g(\Sigma) = 0$ is Theorem~\ref{thm:mcduff}.)
		
	\begin{prop}\label{p:rationalsurface}
		Suppose $C$ is a rational cuspidal curve with $C\cdot C \ge 2p_a(C)-1 > 0$.
		If $C$ symplectically embeds into a closed symplectic manifold $(X,\omega)$, then $(X,\omega)$ is a rational surface.
	\end{prop}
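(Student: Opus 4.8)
The plan is to reduce to the theorem of Liu recalled just above the statement (see McDuff--Salamon~\cite{McS-survey}) by replacing the singular curve $C$ with a \emph{smooth} symplectic surface in the same homology class, and then to rule out the irrational ruled case by hand, using the rationality of $C$ in an essential way.

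First I would take a symplectic smoothing $C'$ of $C$: replacing each singularity by its Milnor fiber produces a smoothly embedded symplectic surface $C'\subset(X,\omega)$ with $[C']=[C]$ and $g(C')=p_a(C)$, exactly as recorded before the adjunction formula~\eqref{e:adjunction}. Since $[C']=[C]$, the self-intersection is unchanged, $[C']^2=C\cdot C$, so the hypothesis $C\cdot C\ge 2p_a(C)-1$ reads $[C']^2\ge 2g(C')-1$; equivalently, by adjunction, $\langle c_1(\omega),[C']\rangle = 2-2g(C')+[C']^2\ge 1$. Moreover $[C']^2=C\cdot C\ge 2p_a(C)-1>0$, so $C'$ is certainly not a $(-1)$-sphere. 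The cited result then applies verbatim to $C'$ and shows that $(X,\omega)$ is rational or ruled.

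It remains to exclude the possibility that $X$ is an iterated blow-up of a geometrically ruled surface over a base $B$ of genus $g\ge 1$. Here I would go back to the original curve $C$ rather than its smoothing: since $C$ is rational, its normalization is a sphere $\nu\colon S^2\to X$ with $\nu_*[S^2]=[C]$. Composing the blow-down $X\to X'$ to the minimal ruled model with the ruling $X'\to B$ gives a projection $p\colon X\to B$, and hence a map $p\circ\nu\colon S^2\to B$. Because $g\ge 1$ forces $\pi_2(B)=0$, this map is null-homotopic and so has degree $0$. On the other hand its degree equals the algebraic intersection of $C$ with a generic fibre $F$, whence $[C]\cdot F=0$. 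Writing $[C]$ in terms of a section class $S$, the fibre class $F$ (with $F^2=0$, $S\cdot F=1$ and $F\cdot e_i=0$), and the exceptional classes $e_i$, the relation $[C]\cdot F=0$ kills the coefficient of $S$, so $[C]^2=-\sum_i c_i^2\le 0$. This contradicts $[C]^2=C\cdot C>0$. Thus $X$ cannot be irrational ruled, and the dichotomy collapses to $(X,\omega)$ being a rational surface.

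The first paragraph is routine; the conceptual point, and the step I would be most careful about, is the exclusion of the ruled case. One must argue with the genus-$0$ curve $C$ itself, since the smoothing $C'$ has positive genus and carries no information obstructing a higher-genus base. The only thing to check carefully there is that the ruling structure survives all the blow-ups in the precise homological form used, namely that $F^2=0$, $S\cdot F=1$, $F\cdot e_i=0$ with $\{S,F,e_1,\dots,e_k\}$ spanning $H_2(X)$, so that $[C]\cdot F=0$ genuinely forces $[C]^2\le 0$; this is standard for blow-ups of $S^2$-bundles over a surface.
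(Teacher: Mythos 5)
Your proposal is correct, and its first half coincides with the paper's proof: the paper also smooths $C$ to a symplectic surface $C'$ of genus $p_a(C)$ in the same class and invokes \cite[Corollary~1.5]{McS-survey} to land in the rational-or-ruled dichotomy. Where you genuinely diverge is in excluding the irrational ruled case. The paper does this via Lemma~\ref{l:genusboundsruled}, a pseudoholomorphic argument: choose $J\in\mathcal{J}^{\omega'}(C)$, blow down to a minimal ruled model using $J$-holomorphic exceptional spheres, use automatic regularity of the fiber class $B$ (since $B^2=0>-2$) to produce a $J$-holomorphic projection $\pi\colon X\to\Sigma$, and conclude from positivity of intersections that the normalization $C'\to\Sigma$ is surjective with positive degree, forcing $p_g(C)\ge g(\Sigma)$ --- contradicting $p_g(C)=0$. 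You instead argue purely homotopically and homologically: the normalization gives a spherical class, $\pi_2(B)=0$ for $g(B)\ge1$ makes the composite $S^2\to B$ null-homotopic, hence $[C]\cdot F=0$, and the intersection form of a blow-up of an $S^2$-bundle over $B$ (basis $S,F,e_1,\dots,e_k$ with $F^2=0$, $S\cdot F=1$, $F\cdot e_i=0$) then forces $[C]^2=-\sum_i c_i^2\le 0$, contradicting $C\cdot C>0$. Both contradictions are sound; your route is more elementary in that it needs no moduli of $J$-holomorphic fibers and no regularity statement, only a continuous blow-down map to the minimal model (which exists, e.g., by collapsing exceptional spheres, or via McDuff's blow-down). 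What the paper's route buys in exchange is the stronger and reusable Lemma~\ref{l:genusboundsruled}, valid for curves of arbitrary geometric genus with positive self-intersection ($p_g(C)\ge g(\Sigma)$), whereas your argument is specific to rational curves --- though that is exactly what the proposition requires. The one point you rightly flag, the homological description of blow-ups of ruled surfaces, is indeed standard and your use of it is correct.
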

		
	\begin{proof}
		By deforming the curve $C$ in a regular neighborhood using the Milnor fibration model, we can find a \emph{smooth}, symplectic surface $C'\subset (X,\omega)$ in the same homology class as $C$, with genus $g(C')=p_a(C)$.
		By~\cite[Corollary~1.5]{McS-survey}, $(X,\omega)$ must be either a rational surface or an irrational ruled surface $(X',\omega')$.
%		{\color{cyan} By deforming the curve $C$ in the cap $P_{\min}$ using the Milnor fibration model, we can find a \emph{smooth}, symplectic surface $C'$ in the same homology class of $C$, with genus $g(C') = p_a(C)$.
%		By~\cite[Corollary~1.5]{McS-survey}, gluing any filling $(W,\omega_W)$ of $\xi_C$ to $(P_{\min}, \omega_{\min})$ yields either a rational surface or an irrational ruled surface $(X',\omega')$.}
		However, Lemma~\ref{l:genusboundsruled} below shows that $X'$ cannot be irrational ruled.
	\end{proof}
		
	\begin{lemma}\label{l:genusboundsruled}
		Let $C$ be a (possibly singular) curve of positive self-intersection in a (possibly non-minimal) symplectic $4$-manifold $(X',\omega')$, ruled over a Riemann surface $\Sigma$.
		Then $p_g(C) \ge g(\Sigma)$.
	\end{lemma}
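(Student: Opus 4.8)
The plan is to exploit the ruling projection $p\colon X' \to \Sigma$ to produce a branched cover from the normalization of $C$ onto $\Sigma$, and then to invoke the classical fact that a map of nonzero degree between closed oriented surfaces cannot increase the genus. Write $n\colon \widetilde C \to C \subset X'$ for the normalization, so that $\widetilde C$ is a closed Riemann surface with $g(\widetilde C) = p_g(C)$, and set $f := p\circ n\colon \widetilde C \to \Sigma$. I will assume $C$ is irreducible, so that $\widetilde C$ is connected; this is the case relevant to Proposition~\ref{p:rationalsurface}, and the general case follows by applying the argument to the components of $\widetilde C$, since $p_g$ is the sum of their genera.

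The first step is to compute the degree of $f$. For a generic $x\in\Sigma$ the fiber $F_x = p^{-1}(x)$ is a smooth surface in the fiber class $F$ which meets $C$ transversally, away from $\Sing(C)$; there $n$ is a local diffeomorphism, so the local degree of $f$ at each preimage equals the local intersection sign of $C$ with $F_x$. Summing over $f^{-1}(x)$ gives $\deg(f) = [C]\cdot F$. (If one prefers, choosing $J$ so that $C$ and a representative of $F$ are simultaneously $J$-holomorphic makes all these local contributions positive, by positivity of intersections, so that no cancellation occurs; but for the genus inequality only the nonvanishing of the degree is needed.)

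The second step is the homological input: $C\cdot C>0$ forces $[C]\cdot F \neq 0$. Here I use that a ruled surface, and any blow-up of one, has $b_2^+ = 1$, together with the fact that the fiber class satisfies $F^2 = 0$ and $F \neq 0$. By the light-cone lemma for forms of positive index one — concretely, writing the intersection form as $\mathrm{diag}(1,-1,\dots,-1)$ and applying Cauchy--Schwarz to the relation $[C]\cdot F = 0$ — any class orthogonal to the nonzero isotropic class $F$ has nonpositive square. Since $[C]^2 > 0$ by hypothesis, we conclude $[C]\cdot F \neq 0$, hence $\deg(f)\neq 0$.

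Finally, a map $f$ of nonzero degree between closed oriented surfaces induces an injection $f^*\colon H^1(\Sigma;\mathbb{Q}) \to H^1(\widetilde C;\mathbb{Q})$, because $f_*\circ f^* = \deg(f)\cdot \mathrm{id}$; comparing first Betti numbers gives $2g(\widetilde C)\ge 2g(\Sigma)$, i.e. $p_g(C) = g(\widetilde C)\ge g(\Sigma)$, as desired. I expect the only delicate point to be the identification $\deg(f) = [C]\cdot F$, which requires checking that the singular points of $C$ and the non-transverse (possibly reducible) fibers do not contribute for generic $x$; this is handled by the genericity of $x$ together with the fact that $\Sing(C)$ is finite.
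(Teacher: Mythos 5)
Your proof is correct, and it takes a genuinely different route from the paper's. The paper's argument is pseudoholomorphic throughout: it fixes $\widetilde J\in\mathcal{J}^{\omega'}(C)$ via Lemma~\ref{l:Jcompatible}, blows down a maximal collection of $\widetilde J$-holomorphic exceptional spheres (using~\cite[Theorem 3.4]{Mc}) to reach a \emph{minimal} ruled surface, and there uses automatic regularity of the fiber class $B$ (since $B^2=0>-2$) to produce a foliation by $J$-holomorphic fiber spheres and hence a projection $\pi\colon X\to\Sigma$; surjectivity of $\pi$ restricted to the (blown-down) curve then comes from compactness and positivity of intersections, and precomposing with the normalization gives the genus bound. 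You instead work directly on the non-minimal $X'$ with its smooth ruling projection, and replace all the pseudoholomorphic input by smooth topology and homology: Sard's theorem to find a fiber transverse to $C$ and disjoint from the finite set $\Sing(C)$, the identity $\deg(p\circ n)=[C]\cdot F$, the light-cone lemma in the $b_2^+=1$ intersection form to deduce $[C]\cdot F\neq 0$ from $[C]^2>0$ and $F^2=0$, and the classical fact that a nonzero-degree map between closed oriented surfaces cannot increase genus. Your version is more elementary (no moduli spaces, no blow-down step) and isolates the homological mechanism that implicitly powers the paper's positivity step; the paper's version buys positivity of every local degree and sidesteps the need for a globally defined smooth fibration on $X'$ itself --- though the one you use does exist, since blowing up a sphere bundle merely introduces nodal fibers while the generic fiber still represents a class $F$ with $F^2=0$, $F\neq 0$.

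Two minor points, neither fatal. First, your reduction of the reducible case ``component by component'' is slightly off as stated: an individual component $C_i$ need not have $[C_i]^2>0$, so you cannot rerun the light-cone step on it; instead apply the light-cone lemma once to the total class to get $[C]\cdot F\neq 0$, whence \emph{some} component satisfies $[C_i]\cdot F\neq 0$ and maps onto $\Sigma$ with nonzero degree, which suffices since $p_g(C)\geq g(\widetilde{C_i})$. (In the paper's application the curve is an irreducible rational cuspidal curve, so this case does not actually arise.) Second, the identification of the local degree of $p\circ n$ at a transverse smooth point of $C\cap F_x$ with the local intersection sign of $C$ and $F_x$ depends on compatible orientation conventions; as you note, only nonvanishing of the degree matters, so any residual global sign ambiguity is harmless.
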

		
	\begin{proof}
		Let $\widetilde{J}\in \mathcal{J}^{\omega'}(C)$. By~\cite[Theorem 3.4]{Mc} we can find a maximal collection of disjoint $\widetilde{J}$-holomorphic exceptional spheres, to blow down to a minimal surface $b:(X',\omega')\to (X,\omega)$ that is still ruled over $\Sigma$.
		Note that $b(C)$ is still a singular symplectic surface and it is $J$-holomorphic where $J$ is the almost complex structure on $X$ induced from $\widetilde{J}$. 
		
		Now, $(X,\omega)$ is symplectomorphic to a ruled surface. Let $B$ be the homology class of a smooth fiber. Consider the moduli space $\mathcal{M}(J',B)$. As argued in \cite[Proposition 4.1]{Mc}, if $J$ is regular for the associated Fredholm operator, there is a unique $J$-holomorphic $B$ curve from $\mathcal{M}(J,B)$ through each point $p\in X$. Moreover, because $B^2=0>-2$, $J$ is automatically a regular value of the associated Fredholm operator $P_B$ (\cite{Gr}, \cite[Lemma 2.8]{Mc}, \cite{HLS}). Therefore we have a projection map $\pi:X\to \Sigma$ whose fibers intersect $b(C)$ positively.
		
		%First observe that whether or not our chosen $J\in \mathcal{J}^\omega(\overline{C})$ is a regular value, $\mathcal{M}(J,B)\neq \emptyset$. (If it were empty, $J$ would be a regular value vacuously but then $\mathcal{M}(J',B)$ would be empty for all $J'$ tamed by $\omega$ which we know is not true for a ruled surface.) Therefore, there exists at least one $J$-holomorphic $B$ curve, $f:(S^2,j)\to X$ which necessarily intersects $C$ positively. (Note that $\overline{C}$ cannot be a $B$ curve because $C$ has positive self-intersection so $\overline{C}$ has positive self-intersection whereas $B^2=0$.) Let $F=f(S^2)$ denote the image of this $J$-holomorphic $B$ curve. Perform a symplectic isotopy of $F$ near the transverse intersections to make them $\omega$-orthogonal (\cite{Gompf}). After possibly performing a $C^1$ small symplectic isotopy of $\overline{C}$, we can choose $J'\in \mathcal{J}^{\omega}(\overline{C}\cup F)$ which is split and integrable in a neighborhood of $F$ following the proof of Lemma~\ref{l:Jcompatible}. Let $f':(S^2,j')\to X$ be the corresponding $J'$-holomorphic curve parametrizing $F$. Then by \cite[Lemma 2.3.4]{McDuffElliptic}, $(f',j',J')$ is a regular point for the Fredholm operator $P_B$.  

		%Since $C \subset X'$ has positive self-intersection, $\pi(C)$ is not contained in the proper transform of a fiber in $X'$.
		By compactness and positivity of intersection, $(\pi\circ b)|_C$ is onto $\Sigma$.
		Let $n: C' \to C$ be the normalization map, and note that, by definition, $g(C') = p_g(C)$.
		Pre-composing $\pi\circ b$ with $n$ yields a surjective map $\pi' = (\pi\circ b)\circ n: C' \to \Sigma$, hence $g(C') \ge g(\Sigma)$, as desired.
	\end{proof}
	
}		
% !TEX root = ../rationalcuspidal.tex

\section{Symplectic isotopy problems for reducible configurations}
\label{s:rediso}

In order to obstruct and classify symplectic isotopy classes of rational cuspidal curves and symplectic fillings of their associated contact manifolds, we will perform birational transformations to relate the original problem to a classification of a reducible configuration consisting of multiple lower degree smooth components intersecting in a particular way. Depending on the intersection pattern, such a configuration may be verified to have a unique symplectic isotopy class, or shown not to exist in $\cptwo$. Examples of such configurations shown not to exist symplectically in $\cptwo$ appeared in~\cite{RuSt}. The example from that article that we will use most is the Fano plane---a configuration of seven lines intersecting in seven transverse triple points.

Typically, we will use script letters to denote abstract configuration types, and non-script letters to denote actual realizations.

\subsection{Existence and uniqueness} Our first method of extending symplectic isotopy results to a larger collection of reducible configurations is through the following lemma that allows us to add a line with limited constraints to an existing configuration. A single smooth component is known to have a unique symplectic isotopy class in $\cptwo$ if its degree is at most $17$~\cite{SiebertTian}.

The following proposition gives a way of obtaining many configurations in $\cptwo$ with unique isotopy classes by adding degree $1$ components sufficiently generically.

%\begin{prop}\label{p:addline}
%	Suppose $\mathcal{C}_1$ is a configuration in $\cptwo$ obtained from $\mathcal{C}_0$ by adding a single symplectic line $L$ intersecting $\mathcal{C}_0$ positively such that either
%	\begin{enumerate}
%		\item \label{i:tang} $L$ has a simple tangency to one of the curves of $\mathcal{C}_0$ (either at a special point or a generic point on that curve) and \LSt{all other intersections of $L$ with $\mathcal{C}_0$ are transverse double points}, or
%		\item \label{i:trans} $L$ intersects the curves of $\mathcal{C}_0$ transversally in at most two singular points of $\mathcal{C}_0$
%	\end{enumerate}
%	then $\mathcal{C}_0$ has a unique equisingular symplectic isotopy class if and only if $\mathcal{C}_1$ does.
%\end{prop}

All configurations in the statements have labelled components and labelled singular points, and isotopies preserve the labellings. That is to say, if $\mathcal{C}$ is a configuration of $n$ curves, then its components are labelled $B_1,\dots, B_n$, and its singular points are labelled $P^1,\dots,P^m$. When we say that two realizations of $\mathcal{C}$ are isotopic, we mean that the isotopy preserves the labelling. Viewed differently, an isotopy from a labelled realization of $\mathcal{C}$ to an unlabelled one induces a labelling of $\mathcal{C}$.

\begin{prop}\label{p:addline}
	Suppose $\mathcal{C}_1$ is a configuration of singular symplectic curves in $(\cptwo,\omega_{\rm{FS}})$ obtained from $\mathcal{C}_0$ by adding a single symplectic line $L$.
	Suppose that in the configuration $\mathcal{C}_1$ either:
	\begin{enumerate}
		\item \label{i:trans} $L$ intersects the curves of $\mathcal{C}_0$ transversally and the intersection points of $L$ with $\mathcal{C}_0$ contain at most two singular points, $P^i$ and $P^j$, of $\mathcal{C}_0$, or
		
		\item \label{i:tang} $L$ has a simple tangency to components $B_1,\dots, B_k$ at a single point $P^i$ in $\mathcal{C}_0$ ($P^i$ may be either a smooth or singular point of $B_\ell$ and it can be a singular point of $\mathcal{C}_0$ in which case it uses the existing label, or a smooth point of $\mathcal{C}_0$ in which case it takes a new label index) and all other intersections of $L$ with $\mathcal{C}_0$ are transverse double points. Further assume in this case that in $\mathcal{C}_1$, there are no other intersections of $L$ with the components $B_1,\dots, B_k$ outside of the tangent point.
%		\begin{enumerate}
%			\item If $\mathcal{C}_1$ has a unique equisingular symplectic isotopy class, then $\mathcal{C}_0$ does as well.
%			\item \label{i:conic} If furthermore, the component to which $L$ has a simple tangency is degree $2$ (a conic), then if $\mathcal{C}_0$ has a unique equisingular isotopy class, then $\mathcal{C}_1$ does as well.
%		\end{enumerate}
	\end{enumerate}
	%Then $\mathcal{C}_0$ has a unique equisingular symplectic isotopy class if and only if $\mathcal{C}_1$ does.
	Then there is a bijection between the isotopy classes of realizations of $\mathcal{C}_0$ and those of $\mathcal{C}_1$. In particular, $\mathcal{C}_0$ has a unique equisingular symplectic isotopy class if and only if $\mathcal{C}_1$ does.
\end{prop}

The hypothesis in item (\ref{i:tang}) fixes the multiplicity of the intersection of $L$ with the components of $\mathcal{C}_0$ at the point $P$. The requirement that the tangency is simple means that the multiplicity of the intersection is as small as possible for a symplectic curve with tangential components. More specifically when $B_\ell$ is smooth at $P$, a line has a simple tangency with $B_\ell$ if and only if the multiplicity of intersection is $2$. If $B_\ell$ has a singularity at $P$, a line has a simple tangency to $B_\ell$ if and only if the multiplicity of intersection is equal to the third element of the semigroup of $(B_\ell,P)$. The second requirement in the hypothesis in item (\ref{i:tang}) says that the multiplicity of intersection of $L$ with $B_\ell$ at $P$ is as large as it possibly can be for global degree reasons. Thus, these two conditions ensure that the multiplicity of intersection between $L$ and $B_\ell$ is automatically as it should be whenever $L$ is tangent to $B_\ell$ and their union is a singular symplectic configuration.

Note that the second constraint in item (\ref{i:tang}) is automatically satisfied when $B_\ell$ is a smooth conic (degree-$2$ curve). It also holds when the tangency occurs at a singular point of $B_\ell$ with multiplicity sequence $[d-1]$ when $B$ has degree $d$. This will be sufficient to cover our applications in this article. We hope to generalize this isotopy statement for reducible configurations and in particular, remove the second constraint from the hypothesis in item (\ref{i:tang}) through future work.

In most cases, in this article we will not need to worry about the labels: in most cases, the \emph{type} of the points $P^i$ and $P^j$ (or, in case (2), of $P^i$) determine the labelling. (For instance, there could be exactly two triple points in the configuration $\mathcal{C}_0$, so that ``the line passing through the two triple points'' is well defined, or a unique conic in $\mathcal{C}_0$, so that ``the line tangent to the conic at a generic point'' is well defined.)
However, in general the labels are important when applying this proposition. Specifically there may be more than one point in the configuration $\mathcal{C}_0$ of the same type (even lying on the same collection of curves), which one may choose as candidates for $P^i$ (or $P^j$). In this case, the configuration $\mathcal{C}_1$ obtained from one choice of candidate for $P^i$ is different than the configuration $\mathcal{C}_1'$ obtained from a different choice of candidate for $P^i$, and there need not be an equisingular isotopy which takes a realization of $\mathcal{C}_1$ to a realization of $\mathcal{C}_1'$, even if $\mathcal{C}_0$ has a unique equisingular isotopy class.

\begin{example}
This phenomenon is made explicit in~\cite{ASST}. The configuration concerned, $\mathcal{B}$ comprises two conics $C_1$ and $C_2$ which are tangent at two points, two generic tangents $L_1$ and $L_2$ to $C_1$, and a third line $L$ passing through a point in $L_1 \cap C_2$ and through a point in $L_2 \cap C_2$. We present two realizations of $\mathcal{B}$ in Figure~\ref{f:whynolabels} (the line $L$ can be either the blue line or the red line). As an unlabelled abstract configuration, $\mathcal{B}$ is uniquely determined by these data; as a labelled configuration, there are (a priori) four choices for the fourth line. It is shown in~\cite{ASST} that two of the four choices give realizations $B_1$ and $B_2$ whose complements have non-isomorphic fundamental groups. (In fact it is clear from the figure that choices come in symmetric pairs.)

We now show that an un-labelled version of Proposition~\ref{p:addline} would lead to a contradiction.

If we forget the line $L$, the configuration $\mathcal{B}$ has a unique isotopy class: this can be seen, for instance, using the techniques we will develop in this section to show that the two conics have a unique isotopy class and then applying Proposition~\ref{p:addline} twice. However, the two configurations $B_1$ and $B_2$ of Figure~\ref{f:whynolabels} (which is taken from~\cite[Figure~1]{ASST}) are both realizations of the same abstract \emph{unlabelled} configuration $\mathcal{B}$. In particular, the (abstract, unlabelled) configuration $\mathcal{B}$ has two non-isotopic realizations, while an un-labelled version of Proposition~\ref{p:addline} would imply that it only has one.
\end{example}

\begin{figure}
\centering
\includegraphics[width=0.4\textwidth]{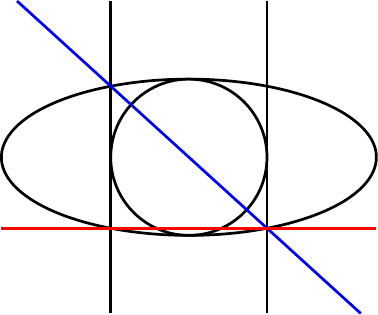}
\caption{Two realizations $B_1$ and $B_2$ of $\mathcal{B}$. The black sub-configuration comprises $C_1$, $C_2$, $L_1$, and $L_2$, while $L$ is blue in $B_1$ and red in $B_2$. The figure makes it apparent that there is a symmetry of the black subconfiguration, so the choice of the intersection point of the red and the blue line is immaterial.}\label{f:whynolabels}
\end{figure}

\begin{proof}
	Let $\Iso(\Cc_0)$ and $\Iso(\Cc_1)$ denote the sets of symplectic isotopy classes in $(\cptwo,\omega_{\rm{FS}})$ of the configurations $\Cc_0$ and $\Cc_1$ respectively. There is a natural map
	\[
	\Psi: \Iso(\Cc_1) \to \Iso(\Cc_0)
	\]
	defined by $\Psi([C'])=[C]$ where $C$ is the realization of $\Cc_0$ obtained from the realization $C'$ of $\Cc_1$ by deleting the realization in $C'$ of the line $L$. It is clear that this map is well-defined, because if $C'_1$ and $C'_2$ are symplectically isotopic realizations of $\Cc_1$, there exists a symplectic isotopy $C'_t$ between them and dropping the realization of the line $L$ from each $C'_t$ yields a symplectic isotopy between the realizations $C_1$ and $C_2$ of $\Cc_0$. We will show that the map $\Psi$ is surjective and injective to obtain the stated result.
	
	Recall that Gromov proved that for any almost complex structure $J$ compatible with the symplectic form, and any two distinct points $(P,Q)$, there is a unique $J$-holomorphic line through $P$ and $Q$ \cite{Gr}. Similarly, for any almost complex structure $J$ compatible with the symplectic form, and any point and tangent vector $(P,T)$, there is a unique $J$-holomorphic line through $P$ with tangent vector $T$. This is shown in the proof of~\cite[Theorem 6.1]{Wendl} (see also \cite{McDuffBlowups}).
	
	\textbf{Outline:} Before diving further into the proof, we provide an outline of the arguments that follow. For the proofs of surjectivity and injectivity, we will augment a realization $C$ of $\mathcal{C}_0$ with an additional line to a realization of $\mathcal{C}_1$. For injectivity, (the harder direction), we will augment a $1$-parameter family of realizations $C_t$ of $\mathcal{C}_0$ with lines $L_t$ to form a family of realizations of $\mathcal{C}_1$. We will obtain these augmented configurations in three steps (performing these three steps first in the case of discrete realizations, and then in the $1$-parametric version). We summarize the three steps here (stated for a single realization $C$ of $\mathcal{C}_0$). In the first step, choose an almost complex structure $J$ which makes $C$ $J$-holomorphic, and use the above results from~\cite{Gr,McDuffBlowups,Wendl} to find the unique $J$-holomorphic line $L$ passing through two points $P,Q$, or through a single point $P$ with tangency $T$, so that $L$ intersects $C$ in the two singular points or one tangency on $C_0$ required by the hypotheses. It may seem like at this point we are done, but in fact $C\cup L$ may not yet realize $\mathcal{C}_1$ because $L$ may intersect $C$ more degenerately than required. We address this issue in the second and third steps. In step two, case~\ref{i:trans}, we adjust the line $L$ in a $C^1$ small manner locally near the points $P$ and $Q$ so that it intersects $C$ transversally at those points (in case it was originally accidentally tangent at those points). This step is unnecessary in case~\ref{i:tang} because the multiplicity of the tangency is fixed by the hypotheses. For either case, we perform step three, where we look at all other intersections of $L$ with $C$ outside of $P$ (and $Q$), and adjust $L$ keeping it fixed in small neighborhoods of $P$ (and $Q$) so that at the end, away from $P$ and $Q$, $C$ and $L$ only intersect in generic transverse double points. This provides the augmentation to a realization of $\mathcal{C}_1$. Now we provide the details of this argument and apply it to prove each direction of the statement.
	
	\textbf{Surjectivity:} We begin with the easier direction: that for each symplectic isotopy class $[C]\in \Iso(\Cc_0)$, there exists a symplectic isotopy class $[C']\in \Iso(\Cc_1)$ such that $\Psi([C'])=[C]$.
	
	\textbf{Step 1:} Suppose $C$ is a symplectic realization of $\mathcal{C}_0$. Let $J$ be an almost complex structures making $C$ $J$-holomorphic. In case (\ref{i:trans}), let $P$ and $Q$ be the special points in $C$ which $L$ is require to pass through to form the configuration $\mathcal{C}_1$ (if there are less than two singular points, one or both of these points can be chosen generically). Similarly, in case (\ref{i:tang}), let $(P,T)$ be the point and tangent direction at which $L$ must be placed to form the configuration $\Cc_1$ (again, the point can be chosen generically if it is not a singular point of $\mathcal{C}_0$). Now using the results above from~\cite{Gr,McDuffBlowups,Wendl}, let $L$ be the unique $J$-holomorphic line through $P$ and $Q$ or through $P$ tangent to $T$. It is possible that at this stage, $L$ passes through additional singular or tangency points of $C$ which is not desirable for the configuration $\mathcal{C}_1$. It also is possible that the intersections at $P$ or $Q$ fail to be transverse in case (\ref{i:trans}). In case (\ref{i:tang}), the multiplicity of the tangency at $P$ with the components of $i$ are fixed by the hypotheses so this latter problem is not relevant. We deal with the latter problem in case (\ref{i:trans}) first, and then deal with degenerate intersections away from $P$ and $Q$ in both cases.
	
	\textbf{Step 2:} For case (\ref{i:trans}), we now explain how to make any required adjustments to ensure that the intersections of $L$ with $C$ at $P$ and $Q$ are transverse. Let $\widehat U_P$ and $\widehat U_Q$ be open neighborhoods of $P$ and $Q$ with disjoint closures, such that $P$ and $Q$ are the only singular points of $L\cup C$ in the closures of $\widehat U_P$ and $\widehat U_Q$. Let $U_P$ and $U_Q$ be open neighborhoods of $P$ and $Q$ such that the closures satisfy $\overline{U}_P\subset \widehat U_P$ and $\overline{U}_Q\subset \widehat U_Q$. Let $\mathcal{M}_P$ be the moduli space of $J$-holomorphic lines which pass through $P$, and $\mathcal{M}_Q$ the corresponding moduli space of lines through $Q$. Each of these moduli spaces is diffeomorphic to $\cpone$. The subset of $\mathcal{M}_P$ of lines which are tangent to $C$ at $P$ is a compact $0$-dimensional subset (similarly for $Q$). Because this subset has codimension $2$, there exists a $J$-holomorphic line $L_P$ (respectively $L_Q$) which is $C^\infty$-close to $L$ and intersects $C$ transversally at $P$ (respectively $Q$).  Because $L_P$ and $L_Q$ are $C^\infty$-close to $L$, and the closures of the neighborhoods $\widehat U_P$ and $\widehat U_Q$ are disjoint, these lines can be spliced together symplectically. Our splicing will be a symplectic line which agrees with $L_P$ inside $U_P$, agrees with $L_Q$ inside $U_Q$, and agrees with the original $L$ outside of $\widehat U_P \cup \widehat U_Q$. Because $L$ has no intersections with $C$ in $\widehat U_P\setminus U_P$ or $\widehat U_Q\setminus U_Q$, by choosing the appropriate $C^{\infty}$-closeness for $L_P$ and $L_Q$ in terms of the fixed neighborhoods, we can ensure that the spliced line has no intersections with $C$ in $\widehat U_P\setminus U_P$ or $\widehat U_Q\setminus U_Q$. Therefore any intersections of the spliced line with $C$ occur in regions where it is $J$-holomorphic, thus creating singularities of $C\cup L$ with allowable models for singular symplectic curves (in particular the intersections are positive). Abusing notation, we rename this spliced line as $L$. Note that $C$ has not been changed.
	
	Now the new $L\cup C$ satisfies the singularity requirements of $\mathcal{C}_1$ near $P$ and $Q$ (in case (\ref{i:trans}) via the modification and in case (\ref{i:tang}) by the assumption constraining its intersections). However, it is still possible that $L$ passes through additional singular or tangency points of $C$ which cause it to differ from the configuration $\mathcal{C}_1$, because it should be intersecting $C$ generically away from $P$ and $Q$.
	
	\textbf{Step 3:} To make the final adjustment to $L$ so that $C\cup L$ will represent the configuration $\mathcal{C}_1$, let $U$, $\widehat U$  be open sets with $\overline{U}\subset \widehat U$ such that $P$ and $Q$ are not in the closure of $\widehat{U}$ and any singular point of $C\cup L$ which is not one of the designated points $P$ or $Q$ is contained in $U$. For a given $J$, the space of $J$-holomorphic lines in $\cptwo$ has real dimension four. The subspace of lines which intersect $C$ at a particular singular point is stratified with real co-dimension at least two, and similarly the subspace of lines which intersects $C$ tangentially is stratified with real co-dimension at least two. In particular, there exists a $J$-holomorphic line $\tilde L$ which is $C^\infty$-close to $L$ that intersect $C$ generically inside of $U$, and has no intersections with $C$ in $\widehat U\setminus U$. (Again, we use the assumption that there are no intersections of $L$ with $C$ in the closure of $\widehat{U}\setminus U$.) Although $\tilde L$ likely does not satisfy the required intersection properties at $P$ or $Q$, we can splice together $L$ with $\tilde L$ inside of $\widehat U$. Because each $\tilde L$ is chosen $C^\infty$-close to $L$, we can construct a symplectic line which agrees with $\tilde L$ inside of $U$, agrees with $L$ outside of $\widehat U$, and has no intersections with $C$ in $\widehat U\setminus U$. Again, we rename this spliced line as $L$. Note that because $P$ and $Q$ lie outside of the $\widehat U$, the new $L$ still intersects $C$ in the special points there as required.
		
	\textbf{Conclusion:} Since $C$ has not been modified, and $L$ has been constructed so that $C\cup L$ is a a symplectic realization of $\mathcal{C}_1$, we have found that $\Psi([C\cup L])=[C]$, as desired so $\Psi$ is surjective.
	
	\textbf{Injectivity:} To show that $\Psi$ is injective, suppose that $\Psi(A)=\Psi(B)$ for $A,B\in \Iso(\Cc_1)$. This means that $A$ is realized by a configuration $L^0\cup C^0$ of $\Cc_1$ and $B$ is realized by a configuration $L^1\cup C^1$ of $\Cc_1$ such that $C^0$ and $C^1$ are symplectically isotopic realizations of $\Cc_0$. Using the symplectic isotopy from $C^0$ to $C^1$ as realizations of $\Cc_0$, we will extend this to a symplectic isotopy from $L^0\cup C^0$ to $L^0\cup C^1$ as realizations of $\mathcal{C}_1$. Let $C^t$ denote the equisingular symplectic isotopy from $C^0$ to $C^1$. 
	
	\textbf{Step 1 ($1$-parametric):} By Lemma~\ref{l:Jcompatible}, there exists a family of compatible almost complex structures $J_t$ such that $C^t$ is $J_t$-holomorphic for $t\in[0,1]$. 
	Since $C^i\cup L^i$ is a singular symplectic curve, the space of compatible almost complex structures $\mathcal{J}^{\omega}(C^i\cup L^i)$ is a non-empty, contractible subspace of $\mathcal{J}^{\omega}(C^i)$, which is also contractible. Since $\mathcal{J}^{\omega}(C^i\cup L^i) \subseteq \mathcal{J}^{\omega}(C^i)$, there are almost complex structures $J_t$, $t\in [-1,2]$ agreeing with the previously defined $J_t$ for $t\in [0,1]$ such that $J_{-1}\in \mathcal{J}^{\omega}(C^0\cup L^0)$, $J_2\in \mathcal{J}^{\omega}(C^1\cup L^1)$, $J_t\in \mathcal{J}^{\omega}(C^0)$ for $t\in [-1,0]$ and $J_t\in \mathcal{J}^{\omega}(C^1)$ for $t\in [1,2]$. 
	
	Let $C^t=C^0$ for $t\in [-1,0]$, and $C^t=C^1$ for $t\in [1,2]$.
	Now $C^t$ for $t\in [-1,2]$ is an equisingular symplectic isotopy which is $J_t$-holomorphic. Define $L^{-1}:=L^0$ and $L^2:=L^1$. We will construct an equisingular symplectic isotopy connecting $L^{-1}\cup C^{-1}=L^0\cup C^0$ to $L^2\cup C^2=L^1\cup C^1$.
	
	Let $(P^t,Q^t)$ (respectively $(P^t,T^t)$) be the two points in $C^t$ (respectively point and tangent direction in $C^t$) which the line $L^t$ must pass through in order to satisfy the constraints of the configuration $\mathcal{C}^1$. Even if some of these points are chosen generically, we make sure to choose them to vary smoothly with $t$, and such that $P^i$ and $Q^i$ lie on $L^i$ for $i=-1,2$. As in the first direction, we will use Gromov's and McDuff's theorems to initially define $L^t$ for $t\in [-1,2]$ as the unique $J_t$-holomorphic line passing through $P^t$ and $Q^t$ (respectively passing through $P^t$ with tangent direction $T^t$). Because $L^{-1}$ and $L^2$ are the unique lines satisfying the point (respectively point and tangency) conditions, this provides a 1-parameter family of $J_t$-holomorphic symplectic lines connecting $L^{-1}$ to $L^2$.
	
	Again we need to deal with the possibility that $L^t$ may develop non-generic intersections away from $P^t$ and $Q^t$, or that the intersections at $P^t$ or $Q^t$ may become tangencies between $L^t$ and components of $C^t$ in case (\ref{i:trans}). We will perform a similar $C^\infty$-small adjustment of the line near such problematic points, but now fitting this into the $1$-parameter family relative to the endpoints of the family. Let $\mathbb{I}=[-1,2]$ denote the parameter interval.
	
	\textbf{Step 2 ($1$-parametric):} We again start by ensuring the intersection behavior of $L^t$ is correct near $P^t$ and $Q^t$. (Note this stage is unnecessary in case (\ref{i:tang}) because of the stronger hypothesis.) For this, fix open sets $\widehat U_P^t$ and $\widehat U_Q^t$ with disjoint closures with $P_t\in U_P^t\subset \overline{U}_P^t \subset \widehat U_P^t$ and $Q_t \in U_Q^t\subset \overline{U}_Q^t\subset \widehat U_Q^t$ for all $t\in \mathbb{I}$, such that $L^t\cup C^t$ has no singular points in $\widehat U_P^t\setminus U_P^t$ or $\widehat U_Q^t\setminus U_Q^t$ and where
	$$ \widehat{\mathbb{U}}_P = \bigcup_{t\in \mathbb{I}} \widehat U_P^t\subset \cptwo\times \mathbb{I}$$
	and
	$$ \widehat{\mathbb{U}}_Q = \bigcup_{t\in \mathbb{I}} \widehat U_Q^t\subset \cptwo\times \mathbb{I}$$
	are open neighborhoods of the paths $P_t$ and $Q_t$ in $\cptwo\times \mathbb{I}$ (and similarly for the smaller neighborhoods denoted without hats).
	Let $\mathbb{M}_P$ be the moduli space of pairs $(K^t,t)$ where $t\in \mathbb{I}$ and $K^t$ is a $J_t$-holomorphic line in $\cptwo$ which passes through $P_t$. Similarly, let $\mathbb{M}_Q$ be the moduli space of pairs $(K^t,t)$ where $t\in \mathbb{I}$ and $K^t$ is a $J_t$-holomorphic line in $\cptwo$ which passes through $Q_t$. Both $\mathbb{M}_P$ and $\mathbb{M}_Q$ have natural maps $\pi_P:\mathbb{M}_P\to \mathbb{I}$ and $\pi_Q:\mathbb{M}_Q\to \mathbb{I}$ (sending $(K^t,t)$ to $t$), and these maps are fibrations with fiber diffeomorphic to $\cpone$. The subset $B_{P}$ of $\mathbb{M}_{P}$ of pairs $(K^t,t)$ which are tangent to $C^t$ at $P^t$ is a finite set of points in each $t$-slice, which form a section or multi-section of the projection $\mathbb{M}_P\to \mathbb{I}$. The same statement holds for the analogous subset $B_Q\subset \mathbb{M}_Q$. Since it is required to pass through $P^t$ and $Q^t$, $L^t$ gives a section of $\pi_P$ and a section of $\pi_Q$. Since $C^t\cup L^t$ realizes the configuration $\mathcal{C}_1$ for $t$ near $\partial \mathbb{I}$, $L^t$ avoids the bad subsets $B_{P}$ (respectively $B_{Q}$) of $\mathbb{M}_{P}$ (respetively $\mathbb{M}_Q$) near $\pi_{P}^{-1}(\partial\mathbb{I})$ (respectively $\pi_{Q}^{-1}(\partial\mathbb{I})$). For $X\in\{P,Q\}$, let $L^t_{X}$ be a section of $\pi_{X}: \mathbb{M}_{X}\to \mathbb{I}$ which agrees with $L^t$ near $\pi_{X}^{-1}(\partial \mathbb{I})$, is $C^{\infty}$-close to the section $L^t$ everywhere and which avoids the bad subset $B_{X}$. Then $L_P^t$ and $L_Q^t$ are $J_t$-holomorphic lines which are $C^{\infty}$-close to $L^t$. We will splice them together to a smoothly varying $\mathbb{I}$-family of symplectic lines which agree with $L_P^t$ in $U_P^t$, with $L_Q^t$ in $U_Q^t$ and with the original $L^t$ outside of $\widehat{U}_P^t\cup \widehat{U}_Q^t$. To make the splicing vary smoothly with $t\in \mathbb{I}$, we choose the cut-off functions to vary continuously with $t$ by letting them be the restrictions to $t$ slices of a smooth cut-off function supported on $\widehat{\mathbb{U}}_{X}\setminus \mathbb{U}_{X}$, for $X\in\{P,Q\}$. The splicing is trivial near $\partial \mathbb{I}$. We rename the spliced family $L^t$.
	
	\textbf{Step 3 ($1$-parametric):} Once the behavior near $P^t$ and $Q^t$ is correct, we next fix any overly degenerate intersections of $L^t$ with $C^t$ away from $P^t$ and $Q^t$ in either case (\ref{i:trans}) or (\ref{i:tang}). Let $\mathbb{U}\subset \overline{\mathbb{U}}\subset \widehat{\mathbb{U}}\subset \cptwo\times \mathbb{I}$ be open subsets whose closures do not contain $(P^t,t)$ or $(Q^t,t)$ for any $t \in \mathbb{I}$, such that every intersection point of $L^t$ with $C^t$ which is not $P^t$ or $Q^t$ is contained in $\mathbb{U}\cap \cptwo\times\{t\}$. Note that this is possible to achieve because no other intersection points can approach $P^t$ or $Q^t$ after the above modification which ensures that the multiplicities of the intersection of $L^t$ with $C^t$ at $P^t$ and $Q^t$ remain constant. 
	
	Let $\mathbb{M}$ be the moduli space of all pairs $(K^t,t)$ where $t\in \mathbb{I}$ and $K^t$ is any $J_t$-holomorphic line in $\cptwo$. There is a natural fibration $\pi: \mathbb{M}\to \mathbb{I}$ sending $(K^t,t)$ to $t$, whose fibers $\mathbb{M}_t$ are diffeomorphic to $\cptwo$ (in particular, the fibers are $4$-dimensional manifolds). The subset $B_t$ of $\mathbb{M}_t$ of $J_t$-lines $K^t$ which are tangent to $C^t$ at some point other than $P^t$ in case (\ref{i:tang}) or pass through a singular point of $C^t$ other than $P^t$ or $Q^t$ is a stratified subspace with strata of codimension at least $2$ in $\mathbb{M}_t$. Since these subspaces vary smoothly with $t$, the union $\mathbb{B}=\cup_{t\in \mathbb{I}}B_t$ forms a stratified subspace of $\mathbb{M}$ with strata of codimension at least $2$. The family $(L^t,t)$ provides a section of $\pi: \mathbb{M}\to \mathbb{I}$, which avoids $\mathbb{B}$ near its endpoints, but may intersect $\mathbb{B}$ at interior values of $t$. Let $\widetilde{L}^t$ be a section which agrees with $L^t$ near its end points, is $C^\infty$-close to $L^t$ everywhere and is disjoint from $\mathbb{B}$. We splice together $L^t$ and $\widetilde{L}^t$ to form a smoothly varying family of symplectic lines which agree with $\widetilde{L}^t$ inside of $\mathbb{U}$ and with $L^t$ outside of $\widehat{\mathbb{U}}$. This ensures that the intersections of $L^t$ with $C^t$ near $P^t$ and $Q^t$ remain as they should be outside of $\widehat{\mathbb{U}}$ and the other intersections of $L^t$ with $C^t$ remain inside of $\mathbb{U}$ (by $C^{\infty}$-closeness) and are made generic as required for the configuration $\mathcal{C}_1$. 
	
	\textbf{Conclusion:} Replacing $L^t$ with this splicing provides the required equisingular symplectic isotopy from $C^{-1}\cup L^{-1}$ to $C^2\cup L^2$. Since $C^{-1}\cup L^{-1}=C^0\cup L^0$ and $C^2\cup L^2=C^1\cup L^1$, this shows that any two realizations of $\mathcal{C}_1$ are equisingularly symplectically isotopic.
\end{proof}

\begin{proof}
	Recall that Gromov proved that for any almost complex structure $J$ compatible with the symplectic form, and any two distinct points $(P,Q)$, there is a unique $J$-holomorphic line through $P$ and $Q$ \cite{Gr}. Similarly, for any almost complex structure $J$ compatible with the symplectic form, and any point and tangent vector $(P,T)$, there is a unique $J$-holomorphic line through $P$ with tangent vector $T$. This is shown in the proof of~\cite[Theorem 6.1]{Wendl} (see also \cite{McDuffBlowups}).
	
	\textbf{Outline:} Before diving further into the proof, we provide an outline of the arguments that follow. For each direction, we will use these results to augment a realization $C$ of $\mathcal{C}_0$ with an additional line to a realization of $\mathcal{C}_1$. For the harder direction, we will augment a $1$-parameter family of realizations $C_t$ of $\mathcal{C}_0$ with lines $L_t$ to form a family of realizations of $\mathcal{C}_1$. We will obtain these augmented configurations in three steps (performing these three steps first in the case of discrete realizations, and then in the $1$-parametric version). We summarize the three steps here (stated for a single realization $C$ of $\mathcal{C}_0$). In the first step, choose an almost complex structure $J$ which makes $C$ $J$-holomorphic, and use the above results from~\cite{Gr,McDuffBlowups,Wendl} to find the unique $J$-holomorphic line $L$ passing through two points $P,Q$, or through a single point $P$ with tangency $T$, so that $L$ intersects $C$ in the two singular points or one tangency on $C_0$ required by the hypotheses. It may seem like at this point we are done, but in fact $C\cup L$ may not yet realize $\mathcal{C}_1$ because $L$ may intersect $C$ more degenerately than required. We address this issue in the second and third steps. In step two, case~\ref{i:trans}, we adjust the line $L$ in a $C^1$ small manner locally near the points $P$ and $Q$ so that it intersects $C$ transversally at those points (in case it was originally accidentally tangent at those points). This step is unnecessary in case~\ref{i:tang} because the multiplicity of the tangency is fixed by the hypotheses. For either case, we perform step three, where we look at all other intersections of $L$ with $C$ outside of $P$ (and $Q$), and adjust $L$ keeping it fixed in small neighborhoods of $P$ (and $Q$) so that at the end, away from $P$ and $Q$, $C$ and $L$ only intersect in generic transverse double points. This provides the augmentation to a realization of $\mathcal{C}_1$. Now we provide the details of this argument and apply it to prove each direction of the statement.
	
	\textbf{First direction:} We begin with the easier direction: that if $\mathcal{C}_1$ has a unique equisingular symplectic isotopy class, then $\mathcal{C}_0$ does as well.
	
	\textbf{Step 1:} Suppose $C^0$ and $C^1$ are symplectic realizations of $\mathcal{C}_0$. Let $J_i$ be almost complex structures making $C^i$ $J_i$-holomorphic for $i=0,1$. In case (\ref{i:trans}), let $P^i$ and $Q^i$ be the special points in $C^i$ which $L^i$ is require to pass through to form the configuration $\mathcal{C}_1$ (if there are less than two singular points, one or both of these points can be chosen generically) for $i=0,1$. Similarly, in case (\ref{i:tang}), let $(P^i,T^i)$ be the point and tangent direction at which $L^i$ must be placed to form $C^i$ (again, the point can be chosen generically if it is not a singular point of $\mathcal{C}_0$). Now using the results above from~\cite{Gr,McDuffBlowups,Wendl}, let $L^i$ be the unique $J_i$-holomorphic line through $P^i$ and $Q^i$ or through $P^i$ tangent to $T^i$ for $i=0,1$. It is possible that at this stage, $L^i$ passes through additional singular or tangency points of $C^i$ which is not desirable for the configuration $\mathcal{C}_1$. It also is possible that the intersections at $P^i$ or $Q^i$ fail to be transverse in case (\ref{i:trans}). In case (\ref{i:tang}), the multiplicity of the tangency at $P^i$ with the components of $C^i$ are fixed by the hypotheses so this latter problem is not relevant. We deal with the latter problem in case (\ref{i:trans}) first, and then deal with degenerate intersections away from $P^i$ and $Q^i$ in both cases.
	
	\textbf{Step 2:} For case (\ref{i:trans}), we now explain how to make any required adjustments to ensure that the intersections of $L^i$ with $C^i$ at $P^i$ and $Q^i$ are transverse. Let $\widehat U_P^i$ and $\widehat U_Q^i$ be open neighborhoods of $P^i$ and $Q^i$ with disjoint closures for $i=0,1$, such that $P^i$ and $Q^i$ are the only singular points of $L^i\cup C^i$ in the closures of $\widehat U_P^i$ and $\widehat U_Q^i$. Let $U_P^i$ and $U_Q^i$ be open neighborhoods of $P^i$ and $Q^i$ such that the closures satisfy $\overline{U}_P^i\subset \widehat U_P^i$ and $\overline{U}_Q^i\subset \widehat U_Q^i$. Let $\mathcal{M}_P^i$ be the moduli space of $J_i$-holomorphic lines which pass through $P^i$, and $\mathcal{M}_Q^i$ the corresponding moduli space of lines through $Q^i$. Each of these moduli spaces is diffeomorphic to $\cpone$. The subset of $\mathcal{M}_P^i$ of lines which are tangent to $C^i$ at $P^i$ is a compact $0$-dimensional subset (similarly for $Q$). Because this subset has codimension $2$, there exists a $J_i$-holomorphic line $L_P^i$ (respectively $L_Q^i$) which is $C^\infty$-close to $L^i$ and intersects $C^i$ transversally at $P^i$ (respectively $Q^i$).  Because $L_P^i$ and $L_Q^i$ are $C^\infty$-close to $L^i$, and the closures of the neighborhoods $\widehat U_P^i$ and $\widehat U_Q^i$ are disjoint, these lines can be spliced together symplectically. Our splicing will be a symplectic line which agrees with $L_P^i$ inside $U_P^i$, agrees with $L_Q^i$ inside $U_Q^i$, and agrees with the original $L^i$ outside of $\widehat U_P^i \cup \widehat U_Q^i$. Because $L^i$ has no intersections with $C^i$ in $\widehat U_P^i\setminus U_P^i$ or $\widehat U_Q^i\setminus U_Q^i$, by choosing the appropriate $C^{\infty}$-closeness for $L_P^i$ and $L_Q^i$ in terms of the fixed neighborhoods, we can ensure that the spliced line has no intersections with $C^i$ in $\widehat U_P^i\setminus U_P^i$ or $\widehat U_Q^i\setminus U_Q^i$. Therefore any intersections of the spliced line with $C^i$ occur in regions where it is $J^i$-holomorphic, thus creating singularities of $C^i\cup L^i$ with allowable models for singular symplectic curves (in particular the intersections are positive). Abusing notation, we rename this spliced line as $L^i$.
	
	Now the new $L^i\cup C^i$ satisfies the singularity requirements of $\mathcal{C}_1$ near $P^i$ and $Q^i$ (in case (\ref{i:trans}) through the modification and in case (\ref{i:tang}) by the assumption constraining its intersections). However, it is still possible that $L^i$ passes through additional singular or tangency points of $C^i$ which cause it to differ from the configuration $\mathcal{C}_1$, because it should be intersecting $C^i$ generically away from $P^i$ and $Q^i$.
	
	\textbf{Step 3:} To make the final adjustment to $C^i\cup L^i$ to a representative of the configuration $\mathcal{C}_1$, let $U^i$, $\widehat U^i$  be open sets with $\overline{U}^i\subset \widehat U^i$ such that $P^i$ and $Q^i$ are not in the closure of $\widehat{U}^i$ and any singular point of $C^i\cup L^i$ which is not one of the designated points $P^i$ or $Q^i$ is contained in $U^i$. For a given $J_i$, the space of $J_i$-holomorphic lines in $\cptwo$ has real dimension four. The subspace of lines which intersect $C^i$ at a particular singular point is stratified with real co-dimension at least two, and similarly the subspace of lines which intersects $C^i$ tangentially is stratified with real co-dimension at least two. In particular, there exist $J_i$ holomorphic lines $\tilde L^i$ which are $C^\infty$-close to $L^i$ that intersect $C^i$ generically inside of $U^i$, and have no intersections with $C^i$ in $\widehat U^i\setminus U^i$. Although $\tilde L^i$ likely does not satisfy the required intersection properties at $P^i$ or $Q^i$, we can splice together $L^i$ with $\tilde L^i$ inside of $\widehat U^i$. Because each $\tilde L^i_j$ is chosen $C^\infty$-close to $L^i$, we can construct a symplectic line which agrees with $\tilde L^i$ inside of $U^i$, agrees with $L^i$ outside of $\widehat U^i$, and has no intersections with $C^i$ in $\widehat U^i\setminus U^i$. Again, we rename this spliced line as $L^i$. Note that because $P^i$ and $Q^i$ lie outside of the $\widehat U^i$, the new $L^i$ still intersects $C^i$ in the special points there as required. This gives nearby symplectic realizations of $\mathcal{C}_1$ with subconfiguration $C^i$ for $i=0,1$.

	\textbf{Conclusion:} By assumption on $\mathcal{C}_1$, there is an equisingular symplectic isotopy $C^t\cup L^t$ from $C^0\cup L^0$ to $C^1\cup L^1$ (where the components of the curves are labeled and the isotopy preserves the labeling). Deleting $L^t$ from this isotopy gives an equisingular symplectic isotopy from $C^0$ to $C^1$.
	
	\textbf{Second direction:} For the reverse direction, we assume that the configuration $\mathcal{C}_0$ has a unique symplectic isotopy class, and try to extend an isotopy to $\mathcal{C}_1$. Let $L^0\cup C^0$ and $L^1\cup C^1$ be symplectic realizations of $\mathcal{C}_1$. By assumption, there exists an equisingular symplectic isotopy $C^t$ of $C^0=C^0$ to $C^1=C^1$. 
	
	\textbf{Step 1 ($1$-parametric):} By Lemma~\ref{l:Jcompatible}, there exists a family of compatible almost complex structures $J_t$ such that $C^t$ is $J_t$-holomorphic for $t\in[0,1]$. 
	Since $C^i\cup L^i$ is a singular symplectic curve, the space of compatible almost complex structures $\mathcal{J}^{\omega}(C^i\cup L^i)$ is a non-empty, contractible subspace of $\mathcal{J}^{\omega}(C^i)$, which is also contractible. Since $J_i\in \mathcal{J}^{\omega}(C^i)$, there are almost complex structures $J_t$, $t\in [-1,2]$ agreeing with the previously defined $J_t$ for $t\in [0,1]$ such that $J_{-1}\in \mathcal{J}^{\omega}(C^0\cup L^0)$, $J_2\in \mathcal{J}^{\omega}(C^1\cup L^1)$, $J_t\in \mathcal{J}^{\omega}(C^0)$ for $t\in [-1,0]$ and $J_t\in \mathcal{J}^{\omega}(C^1)$ for $t\in [1,2]$. 
	
	Let $C^t=C^0$ for $t\in [-1,0]$, and $C^t=C^1$ for $t\in [1,2]$.
	Now $C^t$ for $t\in [-1,2]$ is an equisingular symplectic isotopy which is $J_t$-holomorphic. Define $L^{-1}:=L^0$ and $L^2:=L^1$. We will construct an equisingular symplectic isotopy connecting $L^{-1}\cup C^{-1}=L^0\cup C^0$ to $L^2\cup C^2=L^1\cup C^1$.
	
	Let $(P^t,Q^t)$ (respectively $(P^t,T^t)$) be the two points in $C^t$ (respectively point and tangent direction in $C^t$) which the line $L^t$ must pass through in order to satisfy the constraints of the configuration $\mathcal{C}^1$. Even if some of these points are chosen generically, we make sure to choose them to vary smoothly with $t$, and such that $P^i$ and $Q^i$ lie on $L^i$ for $i=-1,2$. As in the first direction, we will use Gromov's and McDuff's theorems to initially define $L^t$ for $t\in [-1,2]$ as the unique $J_t$-holomorphic line passing through $P^t$ and $Q^t$ (respectively passing through $P^t$ with tangent direction $T^t$). Because $L^{-1}$ and $L^2$ are the unique lines satisfying the point (respectively point and tangency) conditions, this provides a 1-parameter family of $J_t$-holomorphic symplectic lines connecting $L^{-1}$ to $L^2$.
	
	Again we need to deal with the possibility that $L^t$ may develop non-generic intersections away from $P^t$ and $Q^t$, or that the intersections at $P^t$ or $Q^t$ may become tangencies between $L^t$ and components of $C^t$ in case (\ref{i:trans}). We will perform a similar $C^\infty$-small adjustment of the line near such problematic points, but now fitting this into the $1$-parameter family relative to the endpoints of the family. Let $\mathbb{I}=[-1,2]$ denote the parameter interval.
	
	\textbf{Step 2 ($1$-parametric):} We again start by ensuring the intersection behavior of $L^t$ is correct near $P^t$ and $Q^t$. (Note this stage is unnecessary in case (\ref{i:tang}) because of the stronger hypothesis.) For this, fix open sets $\widehat U_P^t$ and $\widehat U_Q^t$ with disjoint closures with $P_t\in U_P^t\subset \overline{U}_P^t \subset \widehat U_P^t$ and $Q_t \in U_Q^t\subset \overline{U}_Q^t\subset \widehat U_Q^t$ for all $t\in \mathbb{I}$, such that $L^t\cup C^t$ has no singular points in $\widehat U_P^t\setminus U_P^t$ or $\widehat U_Q^t\setminus U_Q^t$ and where
	$$ \widehat{\mathbb{U}}_P = \bigcup_{t\in \mathbb{I}} \widehat U_P^t\subset \cptwo\times \mathbb{I}$$
	and
	$$ \widehat{\mathbb{U}}_Q = \bigcup_{t\in \mathbb{I}} \widehat U_Q^t\subset \cptwo\times \mathbb{I}$$
	are open neighborhoods of the paths $P_t$ and $Q_t$ in $\cptwo\times \mathbb{I}$ (and similarly for the smaller neighborhoods denoted without hats).
	Let $\mathbb{M}_P$ be the moduli space of pairs $(K^t,t)$ where $t\in \mathbb{I}$ and $K^t$ is a $J_t$-holomorphic line in $\cptwo$ which passes through $P_t$. Similarly, let $\mathbb{M}_Q$ be the moduli space of pairs $(K^t,t)$ where $t\in \mathbb{I}$ and $K^t$ is a $J_t$-holomorphic line in $\cptwo$ which passes through $Q_t$. Both $\mathbb{M}_P$ and $\mathbb{M}_Q$ have natural maps $\pi_P:\mathbb{M}_P\to \mathbb{I}$ and $\pi_Q:\mathbb{M}_Q\to \mathbb{I}$ (sending $(K^t,t)$ to $t$), and these maps are fibrations with fiber diffeomorphic to $\cpone$. The subset $B_{P}$ of $\mathbb{M}_{P}$ of pairs $(K^t,t)$ which are tangent to $C^t$ at $P^t$ is a finite set of points in each $t$-slice, which form a section or multi-section of the projection $\mathbb{M}_P\to \mathbb{I}$. The same statement holds for the analogous subset $B_Q\subset \mathbb{M}_Q$. Since it is required to pass through $P^t$ and $Q^t$, $L^t$ gives a section of $\pi_P$ and a section of $\pi_Q$. Since $C^t\cup L^t$ realizes the configuration $\mathcal{C}_1$ for $t$ near $\partial \mathbb{I}$, $L^t$ avoids the bad subsets $B_{P}$ (respectively $B_{Q}$) of $\mathbb{M}_{P}$ (respetively $\mathbb{M}_Q$) near $\pi_{P}^{-1}(\partial\mathbb{I})$ (respectively $\pi_{Q}^{-1}(\partial\mathbb{I})$). For $X\in\{P,Q\}$, let $L^t_{X}$ be a section of $\pi_{X}: \mathbb{M}_{X}\to \mathbb{I}$ which agrees with $L^t$ near $\pi_{X}^{-1}(\partial \mathbb{I})$, is $C^{\infty}$-close to the section $L^t$ everywhere and which avoids the bad subset $B_{X}$. Then $L_P^t$ and $L_Q^t$ are $J_t$-holomorphic lines which are $C^{\infty}$-close to $L^t$. We will splice them together to a smoothly varying $\mathbb{I}$-family of symplectic lines which agree with $L_P^t$ in $U_P^t$, with $L_Q^t$ in $U_Q^t$ and with the original $L^t$ outside of $\widehat{U}_P^t\cup \widehat{U}_Q^t$. To make the splicing vary smoothly with $t\in \mathbb{I}$, we choose the cut-off functions to vary continuously with $t$ by letting them be the restrictions to $t$ slices of a smooth cut-off function supported on $\widehat{\mathbb{U}}_{X}\setminus \mathbb{U}_{X}$, for $X\in\{P,Q\}$. The splicing is trivial near $\partial \mathbb{I}$. We rename the spliced family $L^t$.
	
	\textbf{Step 3 ($1$-parametric):} Once the behavior near $P^t$ and $Q^t$ is correct, we next fix any overly degenerate intersections of $L^t$ with $C^t$ away from $P^t$ and $Q^t$ in either case (\ref{i:trans}) or (\ref{i:tang}). Let $\mathbb{U}\subset \overline{\mathbb{U}}\subset \widehat{\mathbb{U}}\subset \cptwo\times \mathbb{I}$ be open subsets whose closures do not contain $(P^t,t)$ or $(Q^t,t)$ for any $t \in \mathbb{I}$, such that every intersection point of $L^t$ with $C^t$ which is not $P^t$ or $Q^t$ is contained in $\mathbb{U}\cap \cptwo\times\{t\}$. Note that this is possible to achieve because no other intersection points can approach $P^t$ or $Q^t$ after the above modification which ensures that the multiplicities of the intersection of $L^t$ with $C^t$ at $P^t$ and $Q^t$ remain constant. 
	
	Let $\mathbb{M}$ be the moduli space of all pairs $(K^t,t)$ where $t\in \mathbb{I}$ and $K^t$ is any $J_t$-holomorphic line in $\cptwo$. There is a natural fibration $\pi: \mathbb{M}\to \mathbb{I}$ sending $(K^t,t)$ to $t$, whose fibers $\mathbb{M}_t$ are diffeomorphic to $\cptwo$ (in particular, the fibers are $4$-dimensional manifolds). The subset $B_t$ of $\mathbb{M}_t$ of $J_t$-lines $K^t$ which are tangent to $C^t$ at some point other than $P^t$ in case (\ref{i:tang}) or pass through a singular point of $C^t$ other than $P^t$ or $Q^t$ is a stratified subspace with strata of codimension at least $2$ in $\mathbb{M}_t$. Since these subspaces vary smoothly with $t$, the union $\mathbb{B}=\cup_{t\in \mathbb{I}}B_t$ forms a stratified subspace of $\mathbb{M}$ with strata of codimension at least $2$. The family $(L^t,t)$ provides a section of $\pi: \mathbb{M}\to \mathbb{I}$, which avoids $\mathbb{B}$ near its endpoints, but may intersect $\mathbb{B}$ at interior values of $t$. Let $\widetilde{L}^t$ be a section which agrees with $L^t$ near its end points, is $C^\infty$-close to $L^t$ everywhere and is disjoint from $\mathbb{B}$. We splice together $L^t$ and $\widetilde{L}^t$ to form a smoothly varying family of symplectic lines which agree with $\widetilde{L}^t$ inside of $\mathbb{U}$ and with $L^t$ outside of $\widehat{\mathbb{U}}$. This ensures that the intersections of $L^t$ with $C^t$ near $P^t$ and $Q^t$ remain as they should be outside of $\widehat{\mathbb{U}}$ and the other intersections of $L^t$ with $C^t$ remain inside of $\mathbb{U}$ (by $C^{\infty}$-closeness) and are made generic as required for the configuration $\mathcal{C}_1$. 
	
	\textbf{Conclusion:} Replacing $L^t$ with this splicing provides the required equisingular symplectic isotopy from $C^{-1}\cup L^{-1}$ to $C^2\cup L^2$. Since $C^{-1}\cup L^{-1}=C^0\cup L^0$ and $C^2\cup L^2=C^1\cup L^1$, this shows that any two realizations of $\mathcal{C}_1$ are equisingularly symplectically isotopic.
\end{proof}

We can immediately recover unique isotopy classifications for small line arrangements.
\begin{cor}\label{c:linearr}
A symplectic line arrangement with at most six lines has a unique symplectic isotopy class. \end{cor}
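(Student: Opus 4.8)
The plan is to induct on the number $n$ of lines, peeling off one line at a time and invoking Proposition~\ref{p:addline}. The base case $n=1$ is a single smooth degree-one symplectic curve, which has a unique symplectic isotopy class by Gromov (the $d=1$ case of the smooth isotopy classification). Since any two distinct lines in $\cptwo$ meet transversally in a single point, no tangencies between components ever arise, so every application of Proposition~\ref{p:addline} will go through its transverse alternative~\ref{i:trans} rather than~\ref{i:tang}. The whole argument takes place with the combinatorial (equisingular) type of the arrangement fixed, and establishes uniqueness within that type; arrangements with fewer lines have strictly smaller combinatorial type, so the inductive hypothesis applies to any sub-arrangement.

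The inductive step reduces to a purely combinatorial statement: in any arrangement $\mathcal{A}$ of $n\le 6$ lines there is a line $L$ passing through at most two points of multiplicity $\ge 3$. Granting this, set $\mathcal{C}_0=\mathcal{A}\setminus\{L\}$. The singular points of $\mathcal{C}_0$ are exactly the points where at least two lines of $\mathcal{A}\setminus\{L\}$ meet, and $L$ passes through such a point if and only if that point has multiplicity $\ge 3$ in $\mathcal{A}$. Hence $L$ meets the curves of $\mathcal{C}_0$ transversally in at most two singular points of $\mathcal{C}_0$, which is precisely hypothesis~\ref{i:trans} of Proposition~\ref{p:addline}. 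By induction $\mathcal{C}_0$ (an arrangement of $n-1\le 5$ lines) has a unique symplectic isotopy class, and the proposition upgrades this to $\mathcal{A}=\mathcal{C}_1$.

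The combinatorial lemma is the only real content, and I would prove it by a double count. Let $P$ be the set of points of multiplicity $m_p\ge 3$, and suppose for contradiction that every line passes through at least three points of $P$. Counting line--point incidences in two ways gives $\sum_{p\in P}m_p\ge 3n$. On the other hand, any two of the $n$ lines meet in exactly one point, so $\binom{n}{2}=\sum_{p}\binom{m_p}{2}\ge\sum_{p\in P}\binom{m_p}{2}$. Since $\binom{m}{2}\ge m$ whenever $m\ge 3$, we get $\binom{n}{2}\ge\sum_{p\in P}m_p\ge 3n$, forcing $n\ge 7$ and contradicting $n\le 6$.

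The main obstacle is identifying this counting mechanism and recognizing that its threshold is \emph{exactly} at six lines: the inequality $\binom{n}{2}\ge 3n$ first holds at $n=7$, and indeed at seven lines there exist arrangements (such as the Fano plane, in which every line passes through three triple points) where the peeling argument breaks down—consistent with the fact that the Fano configuration is not even symplectically realizable. Thus the sharpness of the bound $n\le 6$ is not an artifact of the method but reflects genuine rigidity of small line arrangements, and the proof naturally stops precisely where that rigidity fails.
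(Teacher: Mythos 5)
Your proof is correct and is essentially the paper's argument: the corollary is stated as an immediate consequence of Proposition~\ref{p:addline}, obtained by adding (equivalently, peeling off) lines one at a time through the transverse case, exactly as you do. The only difference is that your global double count is heavier than needed, since for $n\le 6$ \emph{every} line works: a point of multiplicity $\ge 3$ on a line $L$ absorbs at least two of the at most five other lines, and distinct points of $L$ involve disjoint sets of those lines (two lines meet exactly once), so $L$ passes through at most $\lfloor 5/2\rfloor = 2$ singular points of the complementary arrangement --- the same local count that also explains the failure at $n=7$ (Fano).
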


\begin{cor}\label{c:oneconicthreelines}
	Any configuration of one symplectic conic with three symplectic lines has a unique symplectic isotopy class unless it is the configuration $\mathcal{G}$ of Figure \ref{fig:G} (which we obstruct in Proposition \ref{p:GFano}).
\end{cor}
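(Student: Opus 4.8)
The plan is to construct every such configuration out of the conic by adding the three lines one at a time, invoking Proposition~\ref{p:addline} at each stage. A single smooth conic has a unique symplectic isotopy class by Gromov's classification of low-degree symplectic curves~\cite{Gr}, so it is enough to show that, for every configuration other than $\mathcal{G}$, the three lines can be ordered $L_1,L_2,L_3$ in such a way that, setting $\mathcal{C}^{(0)}=C$ and $\mathcal{C}^{(k)}=C\cup L_1\cup\dots\cup L_k$, each $L_{k+1}$ meets $\mathcal{C}^{(k)}$ according to hypothesis~\ref{i:tang} or~\ref{i:trans} of Proposition~\ref{p:addline}. Uniqueness then propagates from $\mathcal{C}^{(0)}$ up to the full configuration.

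First I would record the elementary incidence constraints. By positivity of intersections a line meets the conic in exactly two points counted with multiplicity, hence is either transverse to $C$ at two points or simply tangent at one; and any two lines meet in a single point. It follows that, relative to a subconfiguration $\mathcal{C}^{(k)}$, a new line $L_{k+1}$ fails \emph{both} hypotheses of Proposition~\ref{p:addline} precisely when either it is tangent to $C$ and in addition passes through a singular point of $\mathcal{C}^{(k)}$ distinct from its tangency point, or it is transverse to $\mathcal{C}^{(k)}$ but runs through three or more singular points of $\mathcal{C}^{(k)}$. I will call a line with one of these two properties \emph{obstructed} relative to the given subconfiguration.

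The core of the argument is then a short combinatorial analysis of when all three lines are simultaneously obstructed relative to the other three curves. In the transverse case, a line meets the conic in only two points and each neighbouring line in only one, so forcing it through three singular points requires a line--line vertex to lie on $C$; chasing these coincidences shows that no configuration has all three lines transversely obstructed. In the tangent case, two distinct lines cannot be tangent to $C$ at the same point, so an obstructed tangent line must pass through the vertex cut out by the other two; requiring this of all three lines forces $L_1,L_2,L_3$ to be mutually tangent to $C$ and concurrent, which is exactly the configuration $\mathcal{G}$ of Figure~\ref{fig:G}. Thus for every configuration other than $\mathcal{G}$ at least one line is non-obstructed and may be added last; the same analysis applied to the conic with two lines and the conic with one line (where, at most two singular points being available, no obstruction can occur) supplies the remaining steps of the induction. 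For $\mathcal{G}$ itself every line is obstructed in every order, and uniqueness does not hold; instead we invoke Proposition~\ref{p:GFano}, which rules the configuration out entirely, since birationally it derives the Fano arrangement, which admits no symplectic embedding.

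The main obstacle is the combinatorial analysis of the previous paragraph: one must carefully sift through the ways a line can be forced through several special points of $C\cup L'\cup L''$ --- coincidences of a tangency point with a line--line vertex, concurrences occurring on the conic, and the like --- and check in each case that some line remains non-obstructed, with the single symmetric configuration $\mathcal{G}$ as the only exception. This bookkeeping is elementary but delicate, because equisingular isotopy is sensitive to exactly which intersection points are required to coincide, so the enumeration of combinatorial types must be done with care.
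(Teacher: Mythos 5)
Your proposal is correct and takes essentially the same route as the paper: starting from Gromov's uniqueness for the conic and adding the lines one at a time via Proposition~\ref{p:addline}, with a combinatorial incidence count (a line meets the conic twice and each other line once) showing the hypotheses can always be met except for $\mathcal{G}$, which is then handled by Proposition~\ref{p:GFano}. The only cosmetic difference is the bookkeeping: the paper fixes the order up front (tangent lines first, then transverse lines needing at most $i-1\le 2$ point constraints), whereas you peel off a non-obstructed line to add last and induct downward --- the same counting argument run in reverse.
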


\begin{proof}
	Gromov proved there is a unique symplectic isotopy class of a conic $Q$ in $\cptwo$ \cite{Gr}. We are adding at most three lines, and we will add lines tangent to the conic first. Therefore we will add the first line, $L_1$ which intersects this conic either tangentially at a point or transversally at two points and this does not change the uniqueness of the isotopy classification by Proposition \ref{p:addline}. If the second line $L_2$ is tangent to the conic, it must intersect $L_1$ generically off of $Q$. Similarly, if all three lines are tangent to $Q$, they must intersect generically unless they form the configuration $\mathcal{G}$ where they intersect at a triple point away from $Q$. Therefore tangent lines can be added with no further constraint than their tangency to the conic at a point, and thus Proposition~\ref{p:addline} suffices in these cases. For lines being added with no tangency condition to $Q$, the only combinatorial constraints can be that $L_i$ should pass through an intersection point of $Q\cap L_j$ or $L_j\cap L_k$ for $j,k<i$. Since $L_i$ can intersect $L_j$ at most once, the combinatorics requires $L_i$ to pass through at most $i-1\leq 2$ points, so Proposition \ref{p:addline} suffices to prove there is a unique symplectic isotopy class of such a configuration.
\end{proof}

Our second strategy to extend the known list of unique symplectic isotopy classes of reducible curves is to use birational transformations to modify configurations to collections of curves where at most one of the components is degree greater than one. We use a birational equivalence to relate the reducible configuration of interest to a reducible configuration we can understand through Proposition~\ref{p:addline}. We give below a collection of examples using this technique, which we will use later on in the paper.

\subsubsection{Two conics with a common tangent line}
Let $\Gc_3$ denote the configuration consisting of two conics $Q_1$ and $Q_2$ and a line $L_1$ tangent to both $Q_1$ and $Q_2$ (at different points) such that $Q_1$ and $Q_2$ intersect at one point with multiplicity $3$ and at another point transversally. See Figure~\ref{fig:G3}.

\begin{figure}
	\centering
	\includegraphics[scale=.4]{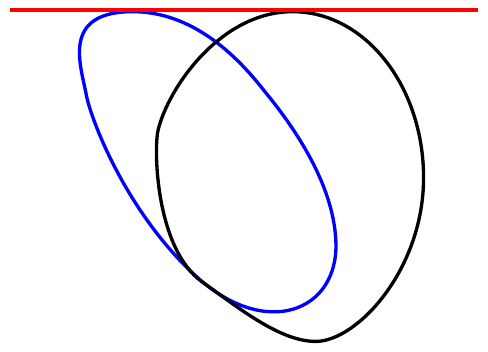}
	\caption{Configuration $\Gc_3$.}
	\label{fig:G3}
\end{figure}

\begin{prop} \label{p:G3} $\Gc_3$ has a unique equisingular symplectic isotopy class. \end{prop}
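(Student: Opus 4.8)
The plan is to use birational transformations to trade the two conics for a configuration consisting of a single conic together with a few lines, whose uniqueness we already control, and then transport the conclusion back via Proposition~\ref{l:biratderiveunique}.

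Fix a symplectic realization $Q_1\cup Q_2\cup L_1$ of $\Gc_3$ and label the distinguished points: $p$ (the order-$3$ contact of $Q_1$ and $Q_2$), $p'$ (their transverse intersection), and $t_1,t_2$ (the tangencies of $L_1$ with $Q_1$ and with $Q_2$). Using Lemma~\ref{l:Jcompatible} I would choose an almost complex structure making all three components $J$-holomorphic, and then blow up once at each of the three points $t_1$, $p'$, $p$, all of which lie on $Q_1$. Since each blow-up is at a smooth point of $Q_1$, the proper transform $\widetilde{Q_1}$ acquires self-intersection $4-3=1$, while the order-$3$ contact at $p$ drops to an order-$2$ (tangential) contact between $\widetilde{Q_1}$ and $\widetilde{Q_2}$ and the transverse point $p'$ is separated. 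By McDuff's theorem (Theorem~\ref{thm:mcduff}) I may then identify the $+1$-sphere $\widetilde{Q_1}$ with $\cpone$ inside a blow-up of $(\cptwo,\omega_{\rm FS})$.

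In this identification the homology classes of the remaining components are forced by the adjunction and positivity constraints of Lemma~\ref{l:hom} together with the intersection bookkeeping of Lemmas~\ref{l:consecutive},~\ref{l:share2},~\ref{l:2chain} and~\ref{l:2chainfix}: up to relabeling the exceptional classes, $\widetilde{Q_2}$ is a conic, while $\widetilde{Q_1}$, $\widetilde{L_1}$ and the three exceptional divisors are lines, and the tangencies persist. I would then invoke Lemma~\ref{l:blowdown} to blow down the appropriate exceptional spheres back to $\cptwo$, obtaining a configuration $\mathcal{D}$ made of one symplectic conic and several symplectic lines, with the intersection data (which lines are tangent to the conic, which are secant, and where the double and triple points occur) prescribed by the computation above. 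The uniqueness of $\mathcal{D}$ would follow from Gromov's uniqueness of a symplectic conic~\cite{Gr} combined with repeated application of Proposition~\ref{p:addline}, adding the lines one at a time in an order so that each is either simply tangent to the conic or meets the current subconfiguration transversally in at most two of its singular points; in particular one checks that $\mathcal{D}$ is never the obstructed configuration $\mathcal{G}$ of Corollary~\ref{c:oneconicthreelines}. Since $\mathcal{D}$ is by construction birationally derived from $\Gc_3$, Proposition~\ref{l:biratderiveunique} then yields that $\Gc_3$ has a unique equisingular symplectic isotopy class, and --- as $\mathcal{D}$ admits a complex representative --- that this class is realized by a complex curve.

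The main obstacle is twofold. First, one must verify that identifying $\widetilde{Q_1}$ with $\cpone$ genuinely forces the classes of all the other components, so that the blow-down lands in a single combinatorial type $\mathcal{D}$ rather than in several competing ones; this is exactly where the chain Lemmas~\ref{l:2chain} and~\ref{l:2chainfix} are needed, and some care is required to discard the homologically admissible but geometrically impossible alternatives. Second, one must track the tangency and contact data --- in particular the residual order-$2$ contact created at $p$ --- accurately through the blow-ups and blow-downs, and then organize the lines of $\mathcal{D}$ into an order for which the hypotheses of Proposition~\ref{p:addline} hold at every step, checking along the way that no forbidden subconfiguration (such as a Fano-type line arrangement) is produced.
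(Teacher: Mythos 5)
Your argument is sound, but it takes a genuinely different route from the paper's. The paper never invokes Lemma~\ref{l:blowdown} here: instead it \emph{augments} $\Gc_3$ by two auxiliary lines to a configuration $\Ac$ (harmless by Proposition~\ref{p:addline}), exhibits a birational \emph{equivalence} between $\Ac$ and a configuration $\Bc$ of one conic and four lines (Lemma~\ref{l:biratAB}, blowing up twice at the order-$3$ contact and once at the $L_1$-tangency, then blowing down the proper transforms of the two added lines and one exceptional divisor), and concludes via Corollary~\ref{l:biratiso}. You instead blow up once at each of the order-$3$ contact, the transverse point, and the $L_1$-tangency, and run the McDuff/blow-down machine directly, concluding via the one-directional Proposition~\ref{l:biratderiveunique} --- exactly the scheme the paper itself uses for Proposition~\ref{p:twoconics}, so the template is legitimate. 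Your two flagged obstacles do resolve: the homology classes are indeed forced up to relabeling ($[\widetilde{Q_2}]=2h-e_1-e_2$, $[\widetilde{L_1}]=h-e_3$, and the exceptional divisors over the contact point, transverse point, and tangency become $h-e_1-e_3$, $h-e_2-e_3$, $h-e_1-e_2$), and positivity of intersections pins down the derived configuration: one conic and \emph{five} lines, with two lines tangent to the conic, one line through one tangency point, three triple points created by the blow-downs, and no residual higher tangency (the order-$2$ contact at $p$ descends to the tangency of one line with the conic, since the relevant exceptional sphere classes are disjoint from $\widetilde{Q_1}$). The one place where your sketch could genuinely have failed is the ordering for Proposition~\ref{p:addline}: in the derived configuration, two of the lines each lie on \emph{three} special points, so added last they would carry three point conditions, which Proposition~\ref{p:addline} does not permit (this is precisely the failure mode that forces the harder arguments for $\Hc$ and $\Lc$ elsewhere in the paper). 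A valid ordering exists --- conic; the two tangent lines at generic points; then the line through the single tangency point (one condition); then the secant line through the triple point of the two tangent lines and one conic-intersection of the previous line (two conditions); then the last line through two remaining singular points (two conditions) --- so your plan closes, but this check is essential, not routine. Compared with the paper, your route avoids the auxiliary configuration $\Ac$ at the cost of a larger derived configuration (five lines rather than four) and the careful ordering analysis; the paper's equivalence keeps the component count fixed and transports the classification in both directions for free.
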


Let $\Ac$ denote the augmented configuration obtained from $\Gc_3$ by adding two additional lines $L_2$ and $L_3$ where $L_2$ is tangent to $Q_1$ and $Q_2$ at their multiplicity $3$ tangency point and $L_3$ intersects $Q_1$ transversally at its tangent point with $L_1$ and its tangent point with $Q_2$. See upper left of Figure~\ref{fig:G3birat}.

Let $\Bc$ denote the configuration consisting of a conic $R$ and four lines $M_1,M_2,M_3,M_4$ such that $M_1$ and $M_2$ are tangent to $R$, the intersection of $M_3$ with $M_2$ lies on $R$, the intersection of $M_3$ with $M_4$ lies on $R$ and $M_1$, $M_2$ and $M_4$ intersect at a triple point. See lower right of Figure~\ref{fig:G3birat}.

\begin{lemma}\label{l:biratAB}
	There is a birational equivalence between $\Ac$ and $\Bc$.
\end{lemma}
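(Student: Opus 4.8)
The plan is to exhibit an explicit sequence of blow-ups followed by blow-downs that carries the configuration $\Ac$ to $\Bc$, with the key constraint that every exceptional sphere we blow down is contained in the total transform of $\Ac$ (so that the transformation is a birational \emph{equivalence}, not merely a derivation). First I would set up homology: identify $Q_1$ with $\cpone$ after blowing up the appropriate tangency points so that some conic acquires self-intersection $+1$, and then use Lemmas~\ref{l:hom}, \ref{l:consecutive}, and~\ref{l:share2} to pin down the homology classes of all the curves in the blown-up configuration. The tangencies in $\Ac$ (the line $L_1$ tangent to both conics, $L_2$ tangent at the triple-contact point, and the order-$3$ contact between $Q_1$ and $Q_2$) dictate where to blow up: each simple tangency is resolved by one blow-up, and the multiplicity-$3$ contact point between $Q_1$ and $Q_2$ requires blowing up repeatedly (creating a short chain of exceptional spheres) until the contact is transverse.

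The main steps, in order, are: (i) blow up the tangential and high-contact intersection points of $\Ac$ until all intersections among proper transforms are transverse double points, recording the resulting total transform $\widetilde\Ac$ as a plumbing of symplectic spheres; (ii) compute the homology classes of all components of $\widetilde\Ac$ in $\cptwo\#N\cptwobar$ using the homological lemmas of Section~\ref{s:caps}, checking consistency with the adjunction formula~\eqref{e:adjunction} and the prescribed intersection pattern; (iii) identify a collection of exceptional spheres inside $\widetilde\Ac$ whose blow-down produces a configuration combinatorially equal to $\Bc$ (two tangent lines $M_1,M_2$ to a conic $R$, the chain-of-contacts conditions on $M_3,M_4$, and the triple point among $M_1,M_2,M_4$); and (iv) verify that this blow-down locus really is contained in $\widetilde\Ac$, so that by the definition of birational equivalence the transformation is reversible. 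Because birational equivalence is symmetric, it then suffices to confirm that running the construction backward recovers $\Ac$ from $\Bc$, which is automatic once the exceptional locus lies in $\widetilde\Ac = \pi^{-1}(\Bc)$.

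The hard part will be the bookkeeping in step~(iii): matching the precise incidence data of $\Bc$ (which tangencies, which triple point, which pairs of contacts lie on $R$) to the image of $\widetilde\Ac$ under a carefully chosen blow-down, and in particular ensuring that the single conic $R$ of $\Bc$ is the image of the correct sphere in the plumbing. I expect that the resolution of the order-$3$ contact between $Q_1$ and $Q_2$ generates the exceptional spheres that descend to the lines $M_3, M_4$ and create the coincidences ``$M_3\cap M_2\in R$'' and ``$M_3\cap M_4\in R$,'' while the spheres resolving the two common tangencies descend to contribute to the triple point $M_1\cap M_2\cap M_4$. Once the homology classes are fixed by the lemmas of Section~\ref{s:caps}, the geometric realization of each blow-down is governed by Lemma~\ref{l:blowdown}, so the remaining work is purely combinatorial verification that the two incidence structures agree; I would present this most cleanly by drawing the intermediate plumbing graph (as in Figure~\ref{fig:G3birat}) and labeling each vertex with its homology class.
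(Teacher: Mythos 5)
Your overall route is the same as the paper's: blow up at the degenerate intersection points of $\Ac$, then contract $(-1)$-spheres that are components of the total transform, and use the symmetry of birational equivalence. Concretely, the paper blows up exactly three times --- twice at the order-$3$ contact of $Q_1$ with $Q_2$ and once at the tangency of $L_1$ with $Q_1$ --- after which $\widetilde{L_2}$ and $\widetilde{L_3}$ are $(-1)$-spheres; contracting them makes the proper transform $\widetilde{D_1}$ of the first exceptional divisor into a $(-1)$-sphere, which is contracted last, landing on $\Bc$. One structural difference: the homological scaffolding you put at the center (McDuff identification plus the lemmas of Section~\ref{s:caps}) appears in the paper only in the remark following the lemma, as the heuristic by which $\Ac$ and $\Bc$ were \emph{found}; the proof itself just tracks self-intersections and incidences of proper transforms, which is shorter and avoids any appeal to abstract homological uniqueness.

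Two concrete corrections. First, your step~(i) (``blow up \dots until all intersections among proper transforms are transverse double points'') over-resolves: the tangency of $L_1$ with $Q_2$ must \emph{not} be blown up, and the triple point of $\widetilde{Q_1}$, $\widetilde{Q_2}$, and the second exceptional divisor (created by the two blow-ups at the order-$3$ point) must not be separated --- these surviving non-generic intersections are exactly what become the tangency of $M_1$ with $R$ and the incidence ``$M_3\cap M_4\in R$'' in $\Bc$, while the second tangent line $M_2$ (the image of the exceptional divisor over the $L_1$--$Q_1$ tangency) acquires its tangency to $R$ only at the final contraction of $\widetilde{D_1}$. Extra blow-ups are not fatal, since the new exceptional divisors lie in the total transform and can be blown straight back down, but then your expected dictionary is off: for instance $M_4$ is the image of $Q_1$, not of an exceptional sphere, and the triple point $M_1\cap M_2\cap M_4$ is the point where $\widetilde{L_1}$, $\widetilde{Q_1}$, and the exceptional divisor over their tangency all meet. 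Second, and more seriously, Lemma~\ref{l:blowdown} is the wrong tool here: it manufactures $J$-holomorphic exceptional spheres that are \emph{not} components of the configuration, and contracting those yields only a birational \emph{derivation}, destroying the symmetry your step~(iv) relies on. For an equivalence, every contracted sphere must itself be a component of $\widetilde{\Ac}$, and contracting such a symplectic $(-1)$-sphere is just McDuff's symplectic blow-down, needing no lemma; since you state the containment constraint correctly in step~(iv), simply delete the appeal to Lemma~\ref{l:blowdown} and the argument closes as in Figure~\ref{fig:G3birat}.
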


\begin{proof}
	Starting with a realization of $\Ac$ blow up twice at the multiplicity three tangential intersection of $Q_1$ with $Q_2$ and once at the tangential intersection of $L_1$ with $Q_1$. The resulting configuration in $\cptwo\#3\cptwobar$ is shown on the upper right of Figure~\ref{fig:G3birat}. There are three exceptional divisors $D_1,D_2,D_3$ where $D_2$ and $D_3$ have self-intersection $-1$ and $D_1$ has self-intersection $-2$. The proper transforms of $L_2$ and $L_3$ have self-intersection $-1$ so they can be symplectically blown down. After this the proper transform of $D_1$ becomes a $(-1)$-exceptional sphere which can be blown down as well. The resulting curve has configuration type $\Bc$ as in the bottom row of Figure~\ref{fig:G3birat}.	
\end{proof}
	
	\begin{figure}
		\centering
		\includegraphics[scale=.5]{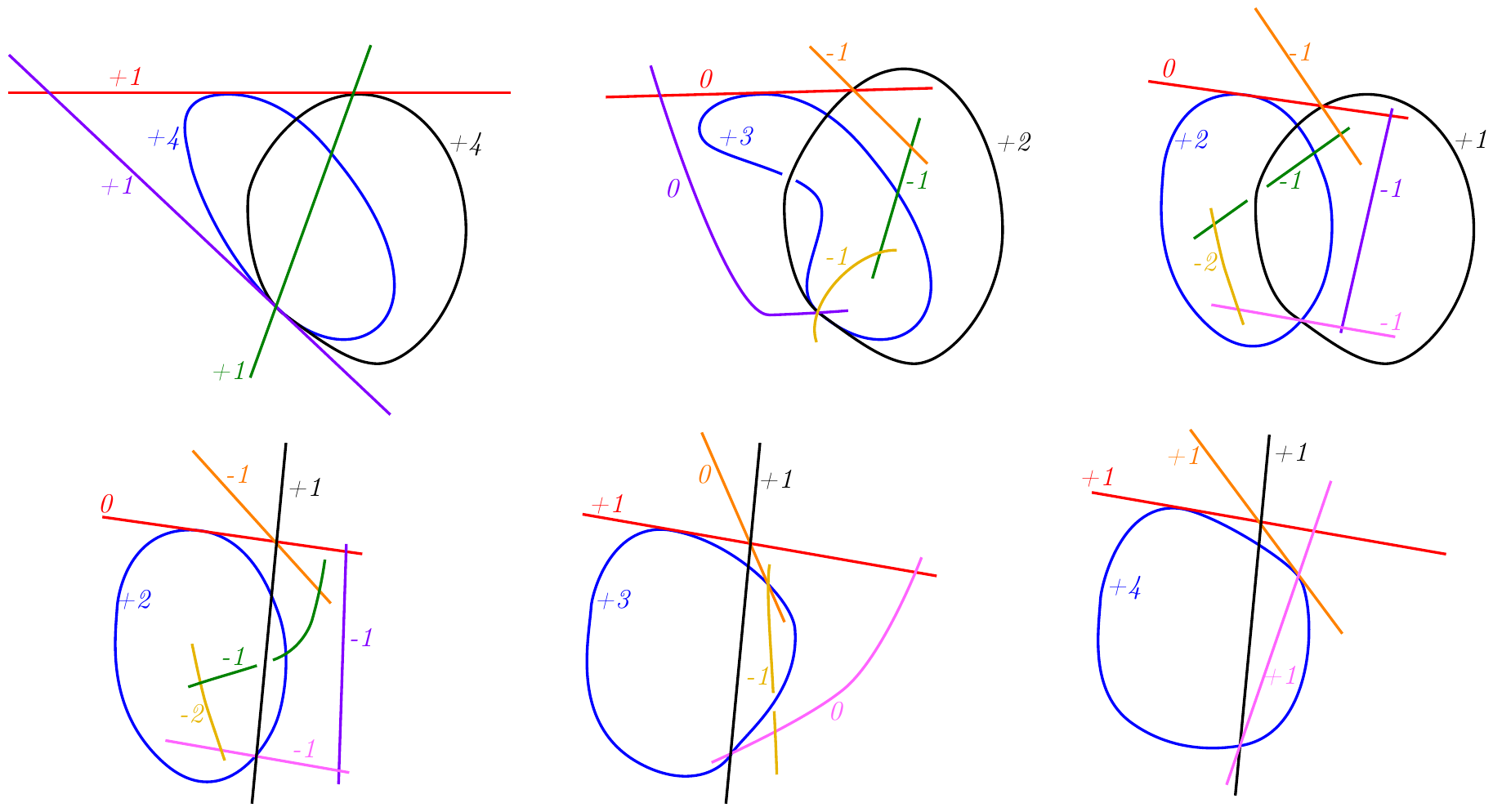}
		\caption{Birational equivalence from $\Ac$ to $\Bc$. The upper right and lower left configurations are the same, just redrawn indicating a symplectomorphism of $\cptwo\#3\cptwobar$ identifying the black $+1$-curve with $\cpone$.}
		\label{fig:G3birat}
	\end{figure}

\begin{proof}[Proof of Proposition~\ref{p:G3}]
	Since $\Ac$ is obtained from $\Gc_3$ by adding one line with a point tangency condition, and one line with two transverse intersection conditions, $\Gc_3$ has a unique equisingular symplectic isotopy class if and only if $\Ac$ does by Proposition~\ref{p:addline}. By Lemma~\ref{l:biratAB}, and Corollary~\ref{l:biratiso}, $\Ac$ has a unique equisingular symplectic isotopy class if and only if $\Bc$ does. We prove $\Bc$ has a unique symplectic isotopy class by iteratively applying Proposition~\ref{p:addline} to add lines to the configuration of a single conic $R$ which has a unique symplectic isotopy class by Gromov~\cite{Gr}. We apply Proposition~\ref{p:addline} four times to add the four lines, starting with the two tangent lines $M_1$ and $M_2$, then adding $M_3$ transversally through the tangent intersection of $M_2$ with $R$, and finally adding $M_4$ through $M_1\cap M_2$ and the other intersection of $M_3$ with $R$. Therefore $\Bc$ has a unique equisingular symplectic isotopy class so $\Gc_3$ does as well.
\end{proof}

\begin{remark}
	The configurations $\Ac$ and $\Bc$ were not pulled out of thin air, but rather come from augmenting a birational derivation obtained using Theorem~\ref{thm:mcduff} and Lemma~\ref{l:blowdown}. To find these configurations, start with $\Gc_3$ and blow up twice at the tangency of $Q_1$ and $Q_2$ and once at the tangency of $Q_1$ and $L_1$ to make these intersections transverse and to bring the self-intersection number of $Q_1$ to $+1$. Keep track of the exceptional divisors in the total transform. Now apply theorem~\ref{thm:mcduff} to identify the proper transform of $Q_1$ with $\cpone$ and determine the possible homology classes of the other curves in the configuration in terms of the standard basis (they are uniquely determined up to re-indexing). Next use Lemma~\ref{l:blowdown} to blow down exceptional curves in classes $e_1,e_2,e_3$ and observe the effect on the configuration (it will descend to $\Bc$). To change this from a birational derivation to a birational equivalence, we need to add in curves representing the $e_i$ which are not represented by curves in the configuration. Back-tracking these curves to $\Gc_3$ we find the two lines we need to augment $\Gc_3$ by to get $\Ac$.
\end{remark}

We can prove similar uniqueness statements for configurations of two conics with a common tangent line when the conics intersect more generically. Let $\Gc_2$ denote the configuration of two conics simply tangent to each other at one point with a line tangent to both conics away from their intersections. Let $\Gc_1$ denote two transversally intersecting conics with a line tangent to both.

\begin{prop}\label{p:G2}
$\Gc_2$ has a unique equisingular symplectic isotopy class.
\end{prop}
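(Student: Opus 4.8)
The plan is to mirror the proof of Proposition~\ref{p:G3}: I would augment $\Gc_2$ by auxiliary lines so that uniqueness for $\Gc_2$ becomes equivalent (via Proposition~\ref{p:addline}) to uniqueness for the augmented configuration $\Ac'$, exhibit a birational equivalence between $\Ac'$ and a configuration $\Bc'$ consisting of a single conic together with several lines, and then build $\Bc'$ one line at a time from Gromov's unique conic using Proposition~\ref{p:addline}. The conclusion then follows from Corollary~\ref{l:biratiso} together with Proposition~\ref{p:addline}.

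Concretely, write $Q_1,Q_2$ for the two conics (meeting with a simple tangency at $p_1$ and transversally at two further points $p_3,p_4$) and $L_1$ for the common tangent line, tangent to $Q_1$ at $a_1$. First I would bring $Q_1$ to square $+1$ by blowing up: blowing up at $p_1$ resolves the conic--conic tangency to a transverse intersection, blowing up at $a_1$ resolves the $Q_1$--$L_1$ tangency, and a third blow-up at the transverse node $p_3$ brings $\widetilde{Q_1}$ to self-intersection $4-3=+1$ in $\cptwo\#3\cptwobar$. By Theorem~\ref{thm:mcduff} I identify $\widetilde{Q_1}$ with $\cpone$, and Lemma~\ref{l:hom}, together with the intersection numbers recorded during the blow-ups, pins down the homology classes of $\widetilde{Q_2}$, $\widetilde{L_1}$ and the three exceptional spheres uniquely up to relabeling. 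As in the remark following Proposition~\ref{p:G3}, blowing down the three exceptional classes of the new basis (Lemma~\ref{l:blowdown}) produces a configuration $\Bc'$ of one conic and several lines in $\cptwo$; choosing the auxiliary lines added to $\Gc_2$ to be exactly the lines whose proper transforms are these blown-down spheres turns the birational derivation into a birational equivalence, giving the analogue of Lemma~\ref{l:biratAB}. Each such auxiliary line passes through at most two of the special points $p_1,a_1,p_3$, so when added to $\Gc_2$ it satisfies the hypotheses of Proposition~\ref{p:addline}.

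Finally I would verify that $\Bc'$ has a unique equisingular isotopy class by starting from the unique conic $R$ \cite{Gr} and adding its lines one at a time, in an order for which each new line meets the already-constructed subconfiguration either in a single simple tangency or transversally in at most two of its singular points, so that Proposition~\ref{p:addline} applies at every stage. I expect the main obstacle to be precisely this combinatorial bookkeeping: choosing the blow-up sequence (and, if convenient, replacing the node blow-up by a second blow-up at the tangency $p_1$ to create a $(-2)$-curve, exactly as in the passage from $\Ac$ to $\Bc$, in order to reduce the number of auxiliary lines) so that the resulting $\Bc'$ is a conic-and-lines configuration whose lines admit such an admissible ordering, and checking that the triple points and tangencies created by the blow-downs never force a single added line through more than two special points. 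Once an admissible order is found, uniqueness propagates back through the birational equivalence to $\Gc_2$.
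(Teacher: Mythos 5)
Your proposal is correct and follows essentially the same route as the paper's proof: the paper likewise augments $\Gc_2$ by auxiliary lines through the special points (justified by Proposition~\ref{p:addline}), exhibits a birational equivalence to a configuration $\Bc_2$ of a single conic and lines, and builds that configuration line-by-line from Gromov's unique conic via Proposition~\ref{p:addline}, exactly the systematic recipe described in the remark after Proposition~\ref{p:G3} that you are implementing. Your only deviations are at the bookkeeping level (blowing up once at each tangency plus once at a transverse node, hence adding three auxiliary lines and landing on a conic with five lines, versus the paper's two added lines and conic with four), and the admissible ordering you defer does exist — e.g.\ add the tangent line first, then the two-point lines in an order so each passes through at most two already-created singular points — so the argument goes through.
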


\begin{proof}
	Let the conic components of $\Gc_2$ be denoted by $Q_1, Q_2$ and the line by $L_1$. By Proposition~\ref{p:addline}, the symplectic isotopy classification of $\Gc_2$ is equivalent to the classification for the augmented configuration $\Ac_2$ obtained from $\Gc_2$ by adding two more lines $L_2$ through the tangential intersection of $Q_1$ with $Q_2$ and the tangency point $L\cap Q_2$. There is a birational equivalence of $\Ac_2$ to a configuration $\Bc_2$ consisting of a single conic, two tangent lines, a line passing through one of the tangency points (and otherwise generic), and a line passing through the intersection of the two tangent lines (and otherwise generic). (See Figure~\ref{fig:AB2}.) $\Bc_2$ can be built from the single conic configuration by repeated applications of Proposition~\ref{p:addline} so it has a unique equisingular symplectic isotopy class.
\end{proof}

\begin{figure}
	\centering
	\includegraphics[scale=.35]{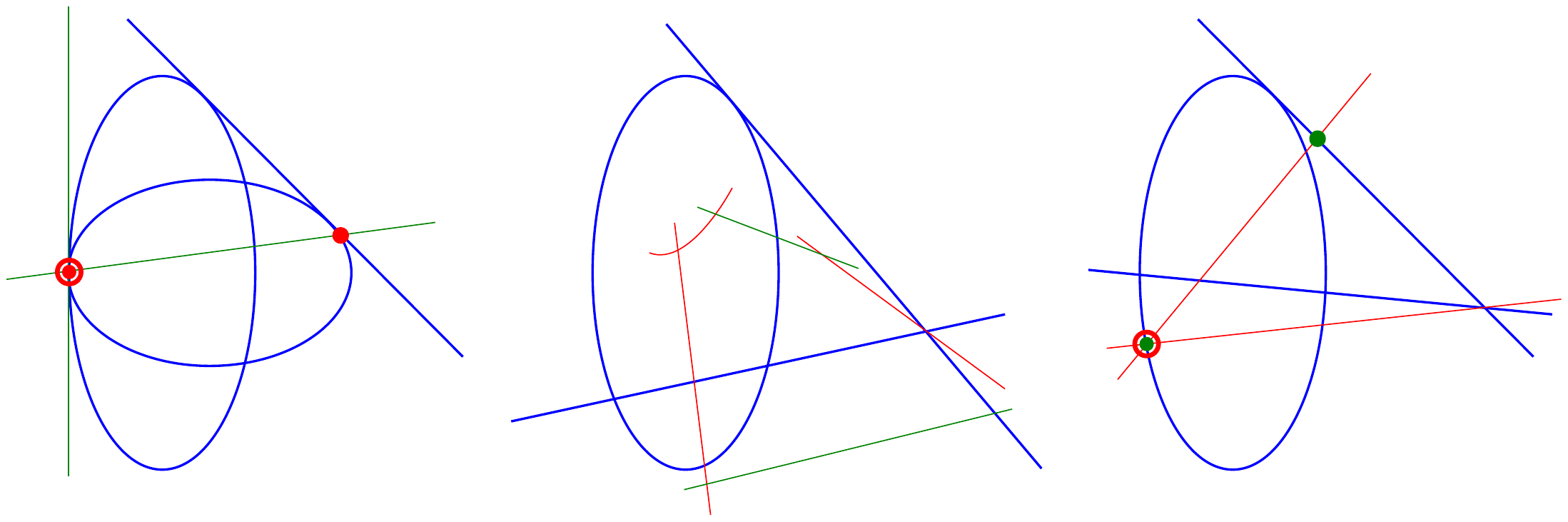}
	\caption{Birational transformation between $\Ac_2$ (left) and $\Bc_2$ (right).}
	\label{fig:AB2}
\end{figure}

\begin{prop}\label{p:G1}
$\Gc_1$ has a unique equisingular symplectic isotopy class.
\end{prop}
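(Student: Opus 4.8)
The plan is to follow the same template used for $\Gc_2$ and $\Gc_3$: augment $\Gc_1$ by finitely many lines so that Proposition~\ref{p:addline} relates it to an augmented configuration, pass through a birational equivalence to a configuration consisting of a single conic together with several lines, and then build that configuration one line at a time starting from a conic. Write $Q_1,Q_2$ for the two conics, meeting transversally at four distinct points $p_1,\dots,p_4$, and $L_1$ for the line, tangent to $Q_1$ at $a$ and to $Q_2$ at $b$, with $a,b,p_1,\dots,p_4$ all distinct. First I would augment $\Gc_1$ to a configuration $\Ac_1$ by adding the three lines $\overline{p_1p_2}$, $\overline{p_1p_3}$, $\overline{p_2p_3}$ joining pairs of three of the four intersection points. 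Each of these meets the configuration present when it is added transversally in exactly the two singular points it is required to pass through (its remaining intersections fall on smooth points of $L_1$), so by three applications of Proposition~\ref{p:addline}, case~\ref{i:trans}, the configuration $\Gc_1$ has a unique equisingular symplectic isotopy class if and only if $\Ac_1$ does.

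Next I would perform the quadratic Cremona transformation based at $p_1,p_2,p_3$: blow up these three points and blow down the proper transforms of $\overline{p_1p_2},\overline{p_1p_3},\overline{p_2p_3}$, which are precisely the three $(-1)$-spheres joining pairs of the centers. Because the contracted spheres lie in the total transform of the configuration, this is a genuine birational equivalence between $\Ac_1$ and a configuration $\Bc_1$, so by Corollary~\ref{l:biratiso} it is enough to treat $\Bc_1$. Reading off homology classes as in the worked examples, $Q_1$ and $Q_2$ (which pass through all three centers) become two lines $M_1,M_2$, the line $L_1$ (which avoids the centers) becomes a conic $C$, and the three exceptional divisors become three further lines $N_1,N_2,N_3$. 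Since the tangencies of $L_1$ to $Q_1$ and $Q_2$ occur away from the centers they survive the transformation: $C$ is tangent to $M_1$ and to $M_2$ (at the images of $a$ and $b$) and meets $N_1,N_2,N_3$ transversally, the three contracted lines produce three triple points $\{C,N_1,N_2\}$, $\{C,N_1,N_3\}$, $\{C,N_2,N_3\}$, the lines $M_1,M_2$ meet the $N_i$ at the points lying over $p_1,p_2,p_3$, and $M_1$ meets $M_2$ at the image of $p_4$.

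Finally I would build $\Bc_1$ from a single conic $C$ (unique by Gromov~\cite{Gr}) by introducing the five lines in the order $M_1,M_2,N_1,N_2,N_3$ via Proposition~\ref{p:addline}. The lines $M_1$ and $M_2$ are added as simple tangents to $C$ (case~\ref{i:tang}), each meeting nothing else special at the moment of its addition; $N_1$ is added transversally through no singular point of $\{C,M_1,M_2\}$; $N_2$ is added transversally through the single singular point created on $C$ by $N_1$; and $N_3$ is added transversally through exactly the two singular points it is forced to meet. Every step stays within the hypotheses of Proposition~\ref{p:addline}, so $\Bc_1$ has a unique equisingular symplectic isotopy class, containing a complex representative, and hence so does $\Gc_1$.

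The hard part is the middle step: correctly extracting the geometry of $\Bc_1$ from the Cremona transformation — in particular locating all three triple points and the two conic–line tangencies — and then certifying that the lines can be ordered so that each newly added line passes through at most two singular points of the partial configuration. The final line $N_3$ is the tight case, meeting exactly two of the singular points already present; a careless ordering would instead force some line through three singular points and place it outside the reach of Proposition~\ref{p:addline}, so the combinatorial bookkeeping of which coincidences are already singular at each stage is where the real care is needed.
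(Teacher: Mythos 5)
Your argument is correct and is essentially the paper's own proof of Proposition~\ref{p:G1}: augment $\Gc_1$ by three lines using Proposition~\ref{p:addline}, apply a quadratic Cremona transformation as a birational equivalence (legitimate, since the contracted $(-1)$-spheres lie in the total transform, so Corollary~\ref{l:biratiso} applies), and rebuild the resulting conic-plus-five-lines configuration one line at a time via Proposition~\ref{p:addline}. The only deviation is your choice of Cremona centers --- three of the four transverse intersections $p_1,p_2,p_3$ of the conics, rather than the paper's choice of two intersection points $q_1,q_2$ together with the tangency point $p=L_1\cap Q_2$ --- so your target $\Bc_1$ carries two conic--line tangencies and three triple points instead of the paper's single tangent line (Figure~\ref{fig:AB1}), but your bookkeeping is sound (the chords $\overline{p_ip_j}$ automatically avoid $a$ and $b$ since a line meets a conic at most twice, the contracted spheres meet $L_1,E_i,E_j$ at three distinct points so the $r_k$ are ordinary triple points, and in the order $M_1,M_2,N_1,N_2,N_3$ each added line passes through at most two singular points of the partial configuration), so the reduction goes through just as in the paper.
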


\begin{proof}
	Let the conic components of $\Gc_1$ be denoted by $Q_1, Q_2$ and the line by $L_1$. Fix two of the four intersection points $q_1,q_2$ of $Q_1$ with $Q_2$ and let $p=L_1\cap Q_2$. By Proposition~\ref{p:addline}, the symplectic isotopy classification of $\Gc_1$ is equivalent to the classification for the augmented configuration $\Ac_1$ obtained from $\Gc_1$  by adding three lines passing transversally through the three pairs of the points $p,q_1,q_2$. $\Ac_1$ is birationally equivalent to $\Bc_1$, the configuration built from one conic with one tangent line, and four other lines intersecting as in Figure~\ref{fig:AB1}. $\Bc_1$ can be built from the single conic configuration by repeated applications of Proposition~\ref{p:addline} so it has a unique equisingular symplectic isotopy class.
\end{proof}

\begin{figure}
	\centering
	\includegraphics[scale=.35]{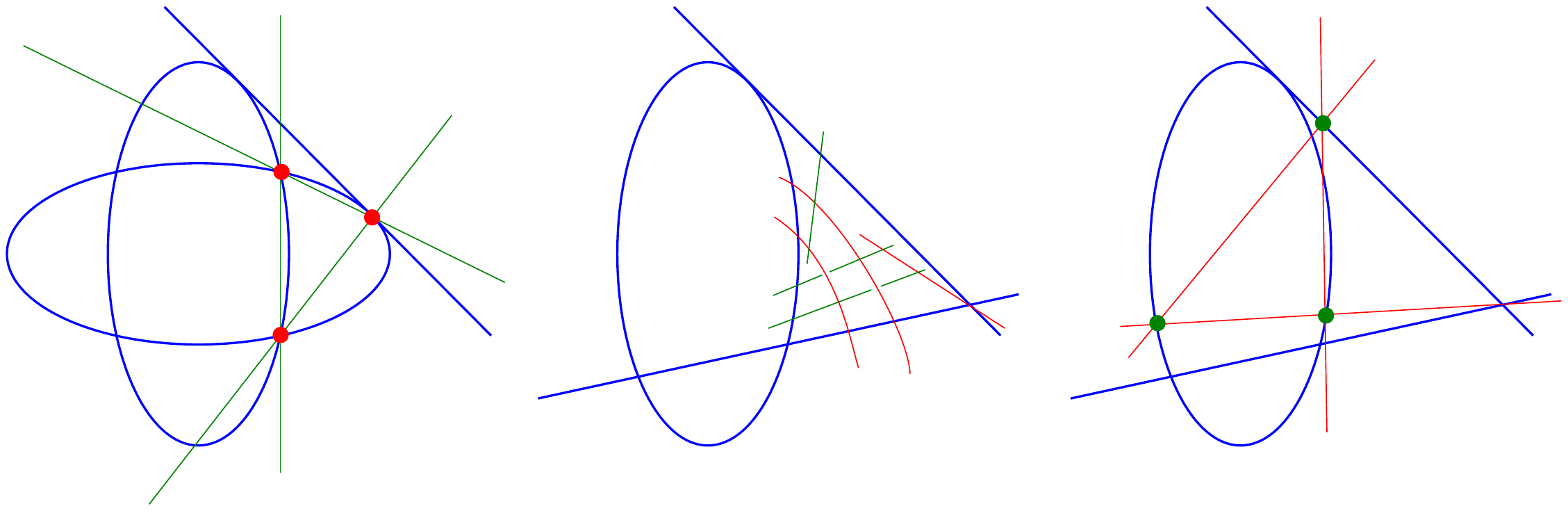}
	\caption{Birational transformation between $\Ac_1$ (left) and $\Bc_1$ (right).}
	\label{fig:AB1}
\end{figure}

By a similar set of birational transformations, we can prove uniqueness of the symplectic isotopy class of two conics without the additional tangent line. There are five ways that two conics can intersect each other: transversally at four points, with one simple tangency and two transverse points, with one triple tangency and one transverse point, with two simple tangencies, or with one quadruple tangency. Note that the last two configurations cannot be realized with an additional common tangent line (see Proposition~\ref{p:G422}).

\begin{prop} \label{p:twoconics}
	Any of the five configurations of two positively intersecting symplectic conics has a unique equisingular symplectic isotopy class.
\end{prop}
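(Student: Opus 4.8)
The plan is to prove uniqueness for each of the five intersection patterns by the same birational strategy already used for $\Gc_1$, $\Gc_2$, and $\Gc_3$: augment the pair of conics with auxiliary lines to a configuration $\Ac$, use a birational equivalence to pass to a configuration $\Bc$ containing a single conic plus several lines, and then build $\Bc$ from a single conic by repeated application of Proposition~\ref{p:addline}. The key tool is Corollary~\ref{l:biratiso}: if $\Ac$ is birationally equivalent to $\Bc$ and $\Bc$ has a unique equisingular symplectic isotopy class, then so does $\Ac$, and by Proposition~\ref{p:addline} the augmented $\Ac$ has a unique class if and only if the unaugmented pair of conics does. Gromov's uniqueness of a single symplectic conic \cite{Gr} gives the base case for building $\Bc$.

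First I would handle the three cases that already admit a common tangent line. The transverse (four points), simple-tangency-plus-two-transverse, and triple-tangency-plus-one-transverse configurations are precisely the two-conic parts of $\Gc_1$, $\Gc_2$, and $\Gc_3$; deleting the tangent line $L_1$ from those configurations via Proposition~\ref{p:addline} already shows the underlying two-conic configurations have unique isotopy classes, since Propositions~\ref{p:G1}, \ref{p:G2}, \ref{p:G3} establish uniqueness of the configurations \emph{with} $L_1$ and $L_1$ is added under a simple point/tangency condition. So for these three cases the result follows immediately from the propositions just proved together with the backward direction of Proposition~\ref{p:addline}.

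The two remaining cases are the two-simple-tangency configuration and the single quadruple-tangency configuration, which (as the paragraph before the statement notes, referencing Proposition~\ref{p:G422}) cannot carry an additional common tangent line and so are genuinely new. For each I would blow up at the tangency points to reduce one conic to self-intersection $+1$ and straighten the tangencies to transverse intersections, keeping careful track of the exceptional divisors in the total transform. Applying McDuff's Theorem~\ref{thm:mcduff} identifies the proper transform of the distinguished conic with $\cpone$, and Lemmas~\ref{l:hom}, \ref{l:consecutive}, \ref{l:share2} pin down the homology classes of all curves up to reindexing. Blowing down the exceptional classes $e_i$ via Lemma~\ref{l:blowdown} produces a configuration $\Bc$ of a conic together with lines meeting in a prescribed pattern; augmenting with the auxiliary lines that represent the missing $e_i$ classes upgrades the birational derivation to a birational equivalence, exactly as in the remark following the proof of Proposition~\ref{p:G3}. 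Then $\Bc$ is built up by successive applications of Proposition~\ref{p:addline}, and Corollary~\ref{l:biratiso} transports uniqueness back to the original two-conic configuration.

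The main obstacle will be the quadruple-tangency case, where the two conics meet at a single point with contact of order four. Here the blow-ups needed to resolve a fourth-order tangency produce a longer chain of exceptional spheres, so the bookkeeping of homology classes (using Lemmas~\ref{l:2chain} and~\ref{l:2chainfix} to constrain the $(-2)$-chains) is more delicate, and I must verify that the homological data determines a \emph{unique} geometric configuration $\Bc$ rather than several, and that $\Bc$ is reachable by the generic line-addition moves of Proposition~\ref{p:addline} (in particular that no line needs to satisfy more than the allowable one tangency or two transverse point conditions). Confirming that the augmenting lines intersect the resulting configuration with the multiplicities required by Proposition~\ref{p:addline}, and that the target $\Bc$ is one of the line-plus-conic configurations already shown to be unique, is where the real care lies.
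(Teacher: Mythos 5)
There is a genuine gap in your first step. Proposition~\ref{p:addline} does \emph{not} cover the line $L_1$ in $\Gc_1$, $\Gc_2$, $\Gc_3$: that line is tangent to \emph{both} conics, hence imposes two tangency conditions, while the hypotheses of Proposition~\ref{p:addline} allow only a single simple tangency (with no other special-point incidences) or at most two transverse point conditions. So your claim that "$L_1$ is added under a simple point/tangency condition" is false, and you cannot invoke the backward direction of Proposition~\ref{p:addline} to delete $L_1$ and transfer uniqueness from $\Gc_i$ down to the bare two-conic configuration. The paper itself flags exactly this distinction in the proof of Corollary~\ref{c:twoconics}, where the bitangent case is handled by Propositions~\ref{p:G3},~\ref{p:G2},~\ref{p:G1}, and~\ref{p:G422} precisely \emph{because} Proposition~\ref{p:addline} does not apply. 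Even if you tried to run the argument in the other direction (augment two given conic pairs by bitangents and appeal to uniqueness of $\Gc_i$), you would first need an existence result: that \emph{every} symplectic realization of the pair admits a common tangent line disjoint from the intersection locus. That is a nontrivial statement (it would require a Riemann--Hurwitz or dual-curve count in the style of Proposition~\ref{p:bitangent}) which you never supply.

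The deeper issue is that your insistence on birational \emph{equivalence} is what forces you into this trouble, including the difficulties you anticipate in the quadruple-tangency case, where the augmenting lines would have to satisfy double-tangency or infinitely-near conditions again outside the scope of Proposition~\ref{p:addline}. The paper's proof avoids all of this by using the weaker, one-directional Proposition~\ref{l:biratderiveunique}: a birational \emph{derivation} to a configuration with a unique isotopy class already transports uniqueness back, with no need to augment. Concretely, all five cases are handled uniformly: blow up three times at the (possibly infinitely near) intersection points of the two conics, so that \emph{both} proper transforms become $+1$-spheres and all intersections in the total transform are transverse with pairwise multiplicity at most one; apply McDuff's Theorem~\ref{thm:mcduff} to identify one $+1$-sphere with $\cpone$, whereupon Lemma~\ref{l:adjclass} forces the other conic into class $h$ and the exceptional divisors into classes $h-e_i-e_j$ or $e_i-e_j$; then blow down exceptional spheres in the classes $e_i$ via Lemma~\ref{l:blowdown} to reach a configuration of at most five \emph{lines} (not a conic plus lines, as you project), whose uniqueness follows from Proposition~\ref{p:addline}. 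Your outline for the last two cases could likely be repaired along these lines, but as written the three "common tangent" cases rest on a misapplied lemma and an unproved existence statement.
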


\begin{proof}
	Blow up the configuration three times at intersection points between the two conics. This transforms each of the conics to a $+1$-sphere, and includes three exceptional divisors (whose intersection configuration depends on the configuration of conics we started with). Regardless of the starting configuration, the pairwise intersections between curves in the configuration (including the exceptional divisors and proper transforms of the conics) are transverse and each pair of curves intersects at most once. By McDuff's Theorem~\ref{thm:mcduff}, there is a symplectomorphism of the resulting $\cptwo\#3\cptwobar$ which identifies the proper transform of one of the conics with $\cpone$. By Lemma~\ref{l:adjclass}, the proper transform of the other conic represents the class $h=[\cpone]$, and the exceptional divisors represent classes of the form $h-e_i-e_j$ or $e_i-e_j$. By Lemma~\ref{l:blowdown}, we can realize new exceptional spheres in the classes $e_i$ which intersect the configuration positively, and blow these down. Afterwards, we can blow down any exceptional spheres which then appears in the configuration representing a class $e_i$. Repeating this if needed, we reach a configuration of at most five curves in $\cptwo$, each representing the class $h=[\cpone]$. Therefore a configuration of at most five symplectic lines can be birationally derived from the original configuration of two conics. Since any configuration of at most five lines has a unique nonempty symplectic isotopy class by Proposition~\ref{p:addline}, we conclude that the configuration of two conics has a unique nonempty symplectic isotopy class by Proposition~\ref{l:biratderiveunique}.
\end{proof}

\begin{cor}\label{c:twoconics}
	Any configuration of two conics with one line has a unique equisingular symplectic isotopy class (which is empty only if it is obstructed in Proposition~\ref{p:G422}).
\end{cor}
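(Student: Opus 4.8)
The plan is to split into two cases according to whether the added line is tangent to both conics, and to reduce each case to results already established. Throughout I may take the conics $Q_1,Q_2$ and the line $L$ to be $J$-holomorphic for a common compatible $J$, so that all intersections are positive and $[L]\cdot[Q_i] = h\cdot 2h = 2$.

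First I would treat the case where $L$ is tangent to both conics. A simple tangency to $Q_i$ has local intersection multiplicity $2$, which exhausts all of $L\cdot Q_i=2$; hence such an $L$ meets each conic in a single point. Running through the five ways two conics can meet (Proposition~\ref{p:twoconics}), the realizable configurations in which $L$ is tangent to both conics are exactly $\Gc_1$, $\Gc_2$, and $\Gc_3$, arising when the conics meet transversally, with a single simple tangency, or with a triple tangency; these have a unique equisingular symplectic isotopy class by Propositions~\ref{p:G1}, \ref{p:G2}, and~\ref{p:G3}. When instead the two conics meet in two simple tangencies or in one quadruple tangency, Proposition~\ref{p:G422} shows that no common tangent line can be added, so these combinatorial types are empty (obstructed), matching the parenthetical assertion of the corollary.

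Second I would handle the case where $L$ is tangent to at most one conic, starting from the fact that the two-conic subconfiguration $\mathcal{C}_0$ has a unique equisingular isotopy class (Proposition~\ref{p:twoconics}) and adding $L$ by Proposition~\ref{p:addline}. The singular points of $\mathcal{C}_0$ are the points of $Q_1\cap Q_2$. If $L$ is transverse to both conics, then each point $x\in Q_1\cap Q_2$ through which $L$ passes consumes one of the two intersections counted by $L\cdot Q_1=2$ and one of those counted by $L\cdot Q_2=2$, so $L$ can pass through at most two singular points of $\mathcal{C}_0$; this is exactly case~(\ref{i:trans}) of Proposition~\ref{p:addline}. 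If $L$ is simply tangent to one conic, say $Q_1$, then the tangency absorbs all of $L\cap Q_1$, so $L$ meets no point of $Q_1\cap Q_2$ except possibly the tangency point itself, and its remaining intersections with $Q_2$ lie at non-singular points of $\mathcal{C}_0$; this is case~(\ref{i:tang}). In either case Proposition~\ref{p:addline} promotes uniqueness for $\mathcal{C}_0$ to uniqueness for the full configuration.

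The main obstacle I anticipate is the bookkeeping of the degenerate combinatorial types, in which the tangency point of $L$, or the transverse intersections of $L$, are forced to coincide with points of $Q_1\cap Q_2$ (in the extreme, a line tangent to both conics at a common point where the conics are themselves tangent). These must be sorted so that each realizable type either falls under one of the two hypotheses of Proposition~\ref{p:addline} or matches one of $\Gc_1,\Gc_2,\Gc_3$, while every type excluded by Proposition~\ref{p:G422} is correctly recognized as empty. The intersection-number constraint $L\cdot Q_i=2$ is the key tool that keeps this enumeration finite and bounds the number of singular points to which the line can be incident.
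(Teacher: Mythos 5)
Your proposal is correct and takes essentially the same route as the paper: the paper's proof makes the identical two-case split, handling the line-tangent-to-both-conics case via Propositions~\ref{p:G1},~\ref{p:G2},~\ref{p:G3} and~\ref{p:G422}, and all other cases via Propositions~\ref{p:twoconics} and~\ref{p:addline}, using exactly your intersection-number observation that $L\cdot Q_i=2$ forces a tangent line to meet no further special points and a transverse line to meet at most two. The degenerate type you flag (a common tangent at a point where the conics are themselves tangent) is absorbed by case~(\ref{i:tang}) of Proposition~\ref{p:addline}, whose hypothesis explicitly allows the simple tangency to occur at a special point of $\mathcal{C}_0$, the tangency to the second conic then being automatic.
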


\begin{proof}
	This follows from Proposition~\ref{p:twoconics} and Proposition~\ref{p:addline} when the line is not required to be tangent to both conics. This is because the line must intersect each conic exactly twice (with multiplicity), and the only special points occur at the intersection of \emph{both} conics (so a tangent line could not go through any special points, and a transverse line could go through at most two special points). When the line is tangent to both conics, this follows from Propositions~\ref{p:G3},~\ref{p:G2},~\ref{p:G1}, and~\ref{p:G422}.
\end{proof}

\subsubsection{Unisingular curves with a maximally tangent line}
Suppose $(C,0)$ is the germ of a (not necessarily locally irreducible) singularity with multiplicity sequence $[d-1]$.
Up to topological equivalence, $(C,0)$ is determined by a partition $\bd = (d_1,\dots,d_m)$ of $d-1$, i.e. an un-ordered $m$-tuple of positive integers $d_1,\dots,d_m$ such that $d_1+\dots+d_m = d-1$;
here $m$ is the number of branches of $(C,0)$.
Therefore, if $(C,0)$ has multiplicity sequence $[d-1]$, we say that it is of type $\bd$ if the corresponding partition is $\bd$.
Note that link of a singularity of type $\bd$ is obtained by cabling the link given by $m$ fibers of the Hopf fibration with cabling parameters $(d_1,d_1+1), \dots, (d_m, d_m+1)$.

\begin{prop}\label{p:d-with-tangent}
	Let $d$ be a positive integer and $\bd$ a partition of $d-1$.
	Let $\mathcal{C}$ denote the configuration with the following two irreducible components:
	\begin{itemize}
		\item $C$ is a rational curve of degree $d$ in with a singular point $P$ with multiplicity sequence $[d-1]$ and of type $\bd$;
		\item $L$ is a line with a tangency of order $d$ with $C$ at a point $Q$, where that $P\neq Q$.
	\end{itemize}
	Then $\mathcal{C}$ has a unique equisingular symplectic isotopy class, and this unique isotopy class contains a complex curve.
\end{prop}

%\MGt{In the proof of Proposition~\ref{p:d-with-tangent} we're using the full power of the statement, but there are two caveats:
%\begin{itemize}
%\item I think we are only using the existence of the ``extended'' configuration (i.e. only using the existence part of Proposition~\ref{p:addline}), which essentially only uses Gromov and a perturbation. From there, it's a matter of birational derivation from a uniquely realised configuration being unique.
%\item In any case, during the isotopy no higher order tangencies (to cusps or to smooth branches) can appear, for degree/multiplicity reasons.
%\end{itemize}}

\begin{proof}
	Suppose that $C$ has $m$ branches at $P$; number them so that each of them has a singularity of multiplicity $d_i$, where $\bd = [d_1,\dots,d_m]$ is the given partition of $d-1$.
	In particular, $d_1+\dots+d_m = d-1$.

	By iteratively applying Proposition~\ref{p:addline}, the uniqueness of the symplectic isotopy class of the configuration $\mathcal{C}$ is equivalent to the uniqueness of the symplectic isotopy class of a configuration $\mathcal{C}'$ obtained from $\mathcal{C}$ by adding $m$ distinct lines tangent to $C$ at $P$ along the $m$ distinct branches, together with one line that passes transversally through $P$ and $Q$. Note that for degree reasons the only other intersections are transverse double points between $L$ and each of the $m$ tangent lines. Note that for the tangent lines, to satisfy the hypotheses of Proposition~\ref{p:addline}, it is important that they have no other intersections with $C$. This is because the intersection at $P$ has multiplicity $d$: a generic line through $P$ has intersection multiplicities $d_1,\dots,d_m$ with its branches; if $L$ is tangent to the $i^{\rm th}$ branch of $C$ at $P$, then its intersection with $C$ at $P$ satisfies:
	\[
	(C\cdot L)_P \ge d_1 + \dots + d_{i-1} + (d_i+1) + d_{i+1} + \dots d_m = d,
	\]
	which simultaneously implies that $(C\cdot L)_P = d$ and that the intersection multiplicity of $L$ with the $i^{\rm th}$ branch is exactly $d_i+1$.
	
	By Lemma~\ref{l:dbirat}, there is a birational equivalence from $\mathcal{C}'$ to a configuration of lines  with a single $(m+1)$-fold point and a single triple point. This latter configuration has a unique symplectic isotopy class by Proposition~\ref{p:addline}. Therefore by Corollary~\ref{l:biratiso}, the configurations $\mathcal{C}'$ has a unique symplectic isotopy class containing complex curve representatives, and thus $\mathcal{C}$ does as well.
\end{proof}

\begin{lemma}\label{l:dbirat}
	Let $\mathcal{C}'$ denote the configuration above, obtained from $\mathcal{C}$ by adding $m$ lines tangent to $C$ at $P$ as well as one line through $P$ and $Q$.
	Then the configuration $\mathcal{C}'$ is birationally equivalent to a configuration of $m+3$ lines where $m+1$ intersect at a single point and the other two intersect the $m+1^{st}$ at a triple point. (See Figure~\ref{fig:dbirat}.)
\end{lemma}

\begin{figure}
	\centering
	\includegraphics[scale=.7]{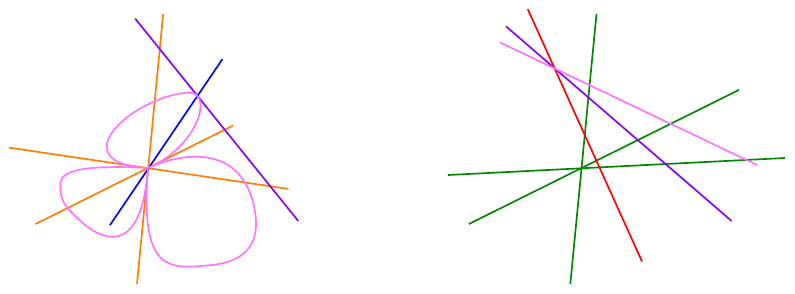}
	\caption{Birationally equivalent configurations.}
	\label{fig:dbirat}
\end{figure}

\begin{proof}
	%Denote the tangent lines to $C$ at $P$ by $T_1,\dots, T_m$, and denote the line through $P$ and $Q$ by $R$.
	Blowing up at the singular point of $C$ yields a smooth rational curve, the proper transform of $C$, with $m$ points of tangency with the exceptional divisor $E_0$, of order $d_1,\dots,d_m$ respectively.
	Blow up $d_i$ times at the $i$-th tangency, so that the proper transform of $E_0$ is disjoint from that of $C$. Next, blow up $d-1$ times at the tangency of $C$ with $L$, so that their proper transforms intersect transversely.
	
	We obtain the configuration of rational curves represented in Figure~\ref{f:d-with-tangent}.
	
	The proper transform of $C$ has self-intersection $+1=d^2-(d-1)^2-2(d-1)$, and is a smooth sphere. We identify it with $\cpone$ using Theorem~\ref{thm:mcduff} and use this to determine the homology classes of the other curves in the configuration.
	
	By Lemma~\ref{l:hom}, and their pairwise disjointness, the vertical $(-1)$-curves intersecting $C$ must represent classes $h-e_0-e_1^i$ for $i=1,\dots, m$ with the final vertical $(-1)$-curve which also intersects $L$ in the class $h-e_0-e_1^0$. The chains of $(-2)$-curves emanating from these $(-1)$-curves are fully determined by Lemma~\ref{l:2chainfix} to be $e_1^i-e_2^i,\dots, e_{d_i-1}^i-e_{d_i}^i$. The last chain of $(-2)$-curves is similarly $e_1^0-e_2^0,\dots, e_{d-2}^0-e_{d-1}^0$. Intersection relations imply $[E_0]=e_0-e_1^0-\dots - e_{d-1}^0$ and $[L]=h-e_1^1-\dots-e_{d_1}^1-\dots-e_1^m-\dots - e_{d_m}^m$.
	The proper transforms of the tangent lines represent the classes $e_{d_i}^i$ for $1\leq i \leq m$, and the proper transform of the line through the two singular points represents the class $e^0_{d-1}$. Therefore we can blow down these exceptional curves, and consequently blow down the entire chain of $(-2)$-curves as the end curve becomes an exceptional divisor. 
	
	The resulting configuration consists of $m+3$ symplectic lines coming from the proper transforms of $C$, $L$, and the $m+1$ vertical $(-1)$-curves. The first $m$ vertical $(-1)$-curves will have a common intersection point which is the image of the blow-down of $e_0$. $C$, $L$ and the last vertical $(-1)$-curve have a common triple intersection point before blowing down, so this is preserved in the proper transform.
%	
%	 the proper transform of the initial exceptional divisor $E_0$ is $e_0-f_1-\dots-f_{d-1}$, and the chain of $-2$-curves originating from the order-$d$ tangency is $f_1-f_2, \dots, f_{d-2}-f_{d-1}$;
%	\MG{Maybe the definition of $D_j$ is a bit off, but the substance is there.}
%	the exceptional divisor coming from the last blow up at the tangency must live in the class $h-e_0-f_1$, while all the other $-1$-curves in the final configuration are in homology classes of the form $h-e_0-e_{D_j+1}$, where $D_j = \sum_{i<j} d_i$;
%	the other $-2$-curves are of the form $e_{j-1}-e_{j}$, for $j \neq D_j$, and the proper transform of the line is $h-e_1-\dots-e_{d-1}$.
%	
%	Next we will blow-down positively intersecting exceptional spheres in the classes $e_j,f_i$. 
%	
%	This configuration blows down to a configuration of $m+3$ lines with a $m+1$-fold point and a triple point.
\end{proof}

\begin{figure}
	\labellist
	\pinlabel $C$ at 20 72
	\pinlabel $+1$ at 20 53
	\pinlabel $E_0$ at 42 30
	\pinlabel $-d$ at 42 12
	\pinlabel $-1$ at 68 5
	\pinlabel $-1$ at 105 5
	\pinlabel $-1$ at 198 5
	\pinlabel $-1$ at 235 5
	\pinlabel $-2$ at 61 110 
	\pinlabel $\vdots$ at 61 185
	\pinlabel $-2$ at 43 215
	\pinlabel $-2$ at 43 135
	\pinlabel $-2$ at 43 156
	\pinlabel $-2$ at 98 110 
	\pinlabel $\vdots$ at 99 185
	\pinlabel $-2$ at 80 215
	\pinlabel $-2$ at 80 135
	\pinlabel $-2$ at 80 156
	\pinlabel $-2$ at 187 110 
	\pinlabel $\vdots$ at 189 185
	\pinlabel $-2$ at 173 215
	\pinlabel $-2$ at 173 135
	\pinlabel $-2$ at 173 156
	\pinlabel $-2$ at 225 110
	\pinlabel $\vdots$ at 226 185
	\pinlabel $-2$ at 210 215
	\pinlabel $-2$ at 210 135
	\pinlabel $-2$ at 210 156
	\pinlabel $-2$ at 340 110 
	\pinlabel $\vdots$ at 343 185
	\pinlabel $-2$ at 327 215
	\pinlabel $-2$ at 327 135
	\pinlabel $-2$ at 327 156
	\pinlabel $-1$ at 352 70
	\pinlabel $2-d$ at 423 100
	\pinlabel $L$ at 413 120
	\pinlabel \rotatebox[origin=c]{90}{$\vphantom{a}_{d_1-2}$} at 18 170
	\pinlabel $\left\{\vphantom{\begin{matrix}{l}1\\1\\1\\1\\1\\1\\1\\1\end{matrix}}\right.$ at 30 170
	\pinlabel \rotatebox[origin=c]{-90}{$\vphantom{a}_{d_2-2}$} at 125 170
	\pinlabel $\left.\vphantom{\begin{matrix}{l}1\\1\\1\\1\\1\\1\\1\\1\end{matrix}}\right\}$ at 113 170
	\pinlabel \rotatebox[origin=c]{90}{$\vphantom{a}_{d_{m-1}-2}$} at 147 170
	\pinlabel $\left\{\vphantom{\begin{matrix}{l}1\\1\\1\\1\\1\\1\\1\\1\end{matrix}}\right.$ at 159 170
	\pinlabel \rotatebox[origin=c]{-90}{$\vphantom{a}_{d_m-2}$} at 253 170
	\pinlabel $\left.\vphantom{\begin{matrix}{l}1\\1\\1\\1\\1\\1\\1\\1\end{matrix}}\right\}$ at 241 170
	\pinlabel \rotatebox[origin=c]{90}{$\vphantom{a}_{d-3}$} at 303 170
	\pinlabel $\left\{\vphantom{\begin{matrix}{l}1\\1\\1\\1\\1\\1\\1\\1\end{matrix}}\right.$ at 315 170
	\endlabellist
	\centering
	\includegraphics[width=0.8\textwidth]{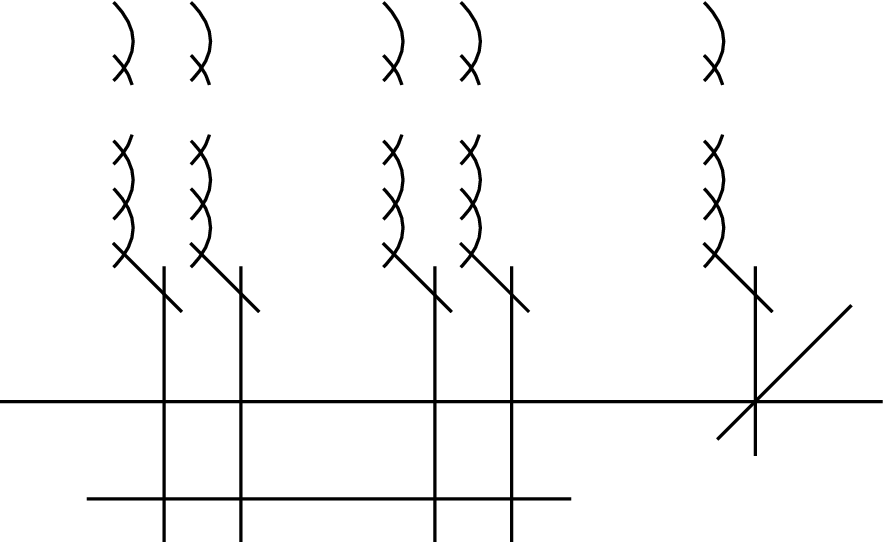}
	\caption{The configuration of curves in the proof of Lemma~\ref{l:dbirat}. The long curved orange and blue curves are $(-1)$-spheres which will be blown down to obtain the line arrangement on the right of Figure~\ref{fig:dbirat}.}\label{f:d-with-tangent}
\end{figure}

	This completes the proof of Theorem~\ref{t:smallconfigs}.
	\begin{proof}[Proof of Theorem~\ref{t:smallconfigs}]
	Line arrangements of degrees up to six are proven to have a unique and non-empty isotopy class in Corollary~\ref{c:linearr}
	Configurations of one conic and up to three lines are dealt with in Corollary~\ref{c:oneconicthreelines}, while configurations of two conics, or two conics and a line are taken care by Corollary~\ref{c:twoconics}, while Proposition~\ref{p:d-with-tangent} settles the case of a rational degree-$d$ curve with a unique singularity of multiplicity $[d-1]$ and a line with an order-$d$ tangency.
	\end{proof}

	\subsubsection{Two more configurations}
	
	Here we show two additional configurations have a unique symplectic isotopy class. The proofs require some new ideas along with the pseudoholomorphic techniques and birational transformations we have been relying on. For the first, we utilize a fixed point argument. For the second, we define certain branched covering maps from a singular curve to $\cpone$ utilizing pseudoholomorphic tangent lines and pencils. We will need the uniqueness of these configurations to prove there are unique symplectic isotopy classes of certain cuspidal quintics in Propositions~\ref{p:252525} and~\ref{p:27232323}.
	
	\begin{figure}
			\labellist
			\pinlabel $A$ at 87 128
			\pinlabel $B$ at 16 23
			\pinlabel $C$ at 140 23
			\pinlabel $\ell_1$ at 58 3
			\pinlabel $\ell_2$ at 139 40
			\pinlabel $\ell_3$ at 46 100
			\endlabellist
		\centering
		\includegraphics[scale=.8]{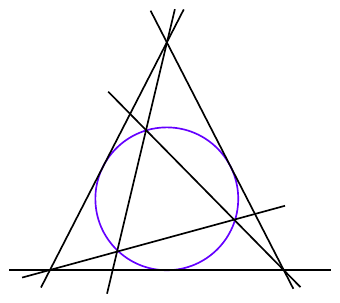}
		\caption{The configuration $\Hc$ built from a conic with three tangent lines and three additional lines intersecting in triple points as shown.}
		\label{fig:H}
	\end{figure}
	
	The first configuration, that we call $\Hc$, is comprised of a conic inscribed in a triangle of tangent lines meeting in vertices $A$, $B$, and $C$, and three lines $\ell_1$, $\ell_2$,  $\ell_3$ through $A$, $B$, and $C$ respectively, such that their pairwise intersections are on the conic. See Figure~\ref{fig:H}.
	
	\begin{prop}\label{p:H} There is a unique equisingular symplectic isotopy class of the configuration $\Hc$.
	\end{prop}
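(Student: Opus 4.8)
The plan is to realize the passage from the conic--triangle subconfiguration to the full $\Hc$ as a fixed-point problem for a self-map of the conic, and then to run a parametrized (family) version of that picture to join any two realizations. First I would isolate the rigid core. The subconfiguration consisting of the conic $Q$ together with its three tangent lines $L_1,L_2,L_3$ is a configuration of one conic and three lines forming a triangle, so it is \emph{not} the configuration $\Gc$ of Figure~\ref{fig:G} (in which the three tangent lines are concurrent), and hence it has a unique equisingular symplectic isotopy class by Corollary~\ref{c:oneconicthreelines}. As these are smooth components, this isotopy is ambient, so given two realizations of $\Hc$ I may use it to place their conic--triangle cores into a single one-parameter family $(Q^t,L_1^t,L_2^t,L_3^t)$, $t\in[0,1]$, together with compatible almost complex structures $J^t$ making $Q^t\cup L_1^t\cup L_2^t\cup L_3^t$ pseudoholomorphic and interpolating between the two given ones (Lemma~\ref{l:Jcompatible}). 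Write $A^t,B^t,C^t$ for the vertices.

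Next I would set up the closing-up map. For each vertex $X\in\{A,B,C\}$ the pencil of $J^t$-lines through $X$ restricts on $Q\cong\cpone$ to a degree-two branched cover (a generic $J^t$-line meets $Q$ twice, by positivity of intersections), whose deck transformation is an orientation-preserving involution $f_X^t\colon Q\to Q$ with exactly two fixed points, the tangency points of $J^t$-lines from $X$. The key observation is that completing the core to a configuration of type $\Hc$ amounts to choosing a point $p=\ell_1\cap\ell_2\in Q$ and setting $\ell_1=\overline{Ap}$ (meeting $Q$ again at $f_A(p)$), $\ell_3=\overline{C\,f_A(p)}$ (meeting $Q$ again at $f_C f_A(p)$), and $\ell_2=\overline{Bp}$; the configuration closes precisely when $f_B(p)=f_C f_A(p)$, i.e. when $p$ is a fixed point of $g^t:=f_B^t\circ f_C^t\circ f_A^t$. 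Each line is then recovered uniquely from its incidence data by Gromov's theorem~\cite{Gr}. Since $g^t$ is an orientation-preserving diffeomorphism of $S^2$, it is isotopic to the identity and so has Lefschetz number $\chi(S^2)=2$; this already forces a fixed point to exist, and one checks that generically there are two nondegenerate fixed points, both distinct from the tangency points (so that both yield honest $\Hc$-configurations with the prescribed triple points).

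I would then assemble these fibers into a family. After a generic perturbation of the path, $Z=\{(t,p):p\in\mathrm{Fix}(g^t)\}\subset[0,1]\times Q$ is a compact $1$-manifold with boundary $\mathrm{Fix}(g^0)\sqcup\mathrm{Fix}(g^1)$; following the arc of $Z$ emanating from the fixed point used by the first realization produces an equisingular symplectic isotopy (with the $\ell_i^t$ reconstructed along the way by~\cite{Gr}) connecting it to one of the two completions over the second core. During this isotopy the local modifications of Proposition~\ref{p:addline} remove any spurious tangencies or incidences that appear momentarily, keeping the family equisingular of the correct combinatorial type.

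The main obstacle, and the real content of the fixed-point argument, is that this arc may land on the \emph{other} completion of the target core: one must still show that the two completions over a single fixed core are symplectically isotopic, equivalently that the two-sheeted fixed-point correspondence over the space of cores is \emph{connected}. (Note that varying $J^t$ alone cannot help, since the space of compatible $J$ for a fixed core is contractible, so the two fixed points stay separate over it.) I would establish connectedness by exhibiting a loop of conic--triangle cores along which the two fixed points of $g$ collide and are exchanged: the discriminant locus of cores for which the two inscribed triangles coincide is nonempty, and a small loop around it realizes the transposition monodromy. Because every symplectic conic--triangle core is isotopic to a complex one, this monodromy computation may be carried out in the complex-algebraic model, where the relevant incidence correspondence is classically irreducible; combined with the arc-tracing above, this yields a single equisingular isotopy class for $\Hc$.
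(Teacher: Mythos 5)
Your setup is exactly the paper's: the vertex pencils give involutions $f_A,f_B,f_C$ of the conic, completions of a core $(H_0,J_0)$ to a realization of $\Hc$ correspond to fixed points of the composition $g$, and the problem reduces to connecting the two sheets of the resulting double cover. The genuine gap is in your mechanism for connecting them. The discriminant you propose to loop around is \emph{empty}: a holomorphic involution of $\cpone$ is determined by its two fixed points, so $f_A,f_B,f_C$ depend only on the three tangency points $t_1,t_2,t_3$ up to the uniformization of $Q$, and normalizing $(t_1,t_2,t_3)=(0,1,\infty)$ gives $f_A(z)=2-z$, $f_B(z)=-z$, $f_C(z)=z/(2z-1)$; the fixed points of the composition are the roots of $z^2-z-1$, namely $(1\pm\sqrt{5})/2$, which are always distinct. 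Since every core is conjugate to this model (this holds for arbitrary compatible $J$, not just integrable ones, because $g$ is a M\"obius transformation for the complex structure induced on the $J_0$-holomorphic sphere $Q$), the two inscribed triangles never collide, no loop of cores realizes the transposition, and your appeal to ``classical irreducibility'' of the incidence correspondence is circular -- irreducibility of that correspondence is precisely the connectedness you are trying to prove. Indeed the cover is honestly unbranched, and over \emph{labeled} cores it is disconnected (the space of labeled complex cores is a $PGL_3(\C)$-torsor, and $\pi_1(PGL_3(\C))=\Z/3$ admits no index-two subgroup), so the sheets can only be exchanged by a symmetry permuting the three tangent lines, never by monodromy.

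That symmetry is exactly how the paper closes the argument: it takes the maximally symmetric real model (equilateral triangle with inscribed circle), shows by an explicit area computation that the two completions correspond to angles $\theta_0$ and $\frac{\pi}{3}-\theta_0$, observes that a reflection of the triangle swaps them, and extends the reflection to an element of $PU(3)$; since $PU(3)$ is path-connected in ${\rm Symp}(\cptwo)$, the two completions over that core are symplectically isotopic, and your own covering/arc-tracing argument then propagates this to all cores. A secondary soft spot: your Lefschetz-number argument yields existence of a fixed point of $g^t$ but not the exact count of two for \emph{every} $(H_0,J_0)$, which the double-cover structure requires; an orientation-preserving diffeomorphism of $S^2$ isotopic to the identity may have many fixed points. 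The clean fact, which the paper uses and which the computation above confirms, is that $g^t$ is a holomorphic automorphism of $Q\cong\cpone$, hence has exactly two fixed points, here always distinct. With the monodromy step replaced by the symmetric-model argument, the rest of your proposal (the parametrized family $(Q^t,L_i^t,J^t)$, reconstruction of the lines via Gromov's uniqueness, and the perturbation of spurious incidences as in Proposition~\ref{p:addline}) goes through as in the paper.
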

	
	\begin{proof}
		Consider the configuration $\mathcal{H}_0$ of a single symplectic conic with three distinct tangent lines. This configuration has a unique equisingular symplectic isotopy class by Corollary~\ref{c:oneconicthreelines}. Moreover, for each symplectic realization of $\mathcal{H}_0$ in $\cptwo$, the space of almost complex structures $J$ on $\cptwo$ for which the realization is $J$-holomorphic is non-empty and contractible by Lemma~\ref{l:Jcompatible}. Therefore the space of pairs $(H_0, J_0)$ where $H_0$ is a $J_0$-holomorphic realization of $\mathcal{H}_0$ is a path-connected space.
		
		For a fixed pair $(H_0, J_0)$, we will prove that we can add $J_0$-holomorphic lines to $H_0$ to get a realization of $\mathcal{H}$ in exactly two ways, and the map $p:\{H\in \mathcal{H} \text{ symplectic}\} \to \{(H_0,J_0)\in \mathcal{H}_0\times \mathcal{J}^{\omega}(H_0) \}$ is a double covering map. Then, to show that it is the connected cover rather than the trivial covering, we will show that for a particular pair $(H_0^{\rm std},i)$, the two extensions to complex realizations of $\mathcal{H}$ are symplectically isotopic. Therefore any two symplectic realizations $H_1, H_2$ of $\mathcal{H}$ can be equisingularly symplectically isotoped each to one of the two extensions of $(H_0^{\rm std},i)$ and these two extensions are equisingularly symplectically isotopic, so $H_1$ and $H_2$ are as well.
		
		For a given $B=(H_0, J_0)$, let $Q$ denote the conic in $H_0$, and let $p_1,p_2,p_3$ denote the three points of intersection of the three tangent lines. Consider the three pencils of lines $\pi_i: \cptwo\setminus \{p_i\} \to \ell_i$ through the points $p_i$, $i=1,2,3$, where $\ell_i$ are $J_0$-holomorphic lines not containing $p_i$. Restricting these pencils to the conic $Q$, gives a branched double covering map $\pi_i|_Q: Q\to \ell_i$ with exactly two branch points. The map is degree two because a generic line intersects the conic $Q$ in two points. There are two branch points, each corresponding to a point $q\in Q$ where the $J_0$-holomorphic line through $q$ and $p_i$ is tangent to $Q$, by the Riemann--Hurwitz formula: $2=2(2)-\sum_{q\in Q} (e_{\pi_i}(q)-1)$.
		
		Let $f^B_i:Q\to Q$ denote the unique involution such that $\pi_i\circ f^B_i = \pi_i|_Q$. Note that $f^B_i$ is independent of the choice of the auxiliary line $\ell_i$. Informally, $f^B_i(q)$ is the other intersection of the line $\ell_{p_i,q}$ through $p_i$ and $q$ with $Q$; if $\ell_{p_i,q}$ is tangent to $Q$, then $f^B_i(q) = q$. 
		
		Consider now the composition $f^B_3\circ f^B_2 \circ f^B_1: Q\to Q$: this is a complex automorphism of $Q$, which therefore has two fixed points $a_1$ and $a_2$. Note that if $q$ is one of the tangent points between $Q$ and a line in $H_0$, then $q$ is not a fixed point of $f^B_3\circ f^B_2 \circ f^B_1$ because each tangency point is fixed by exactly two of the $f^B_i$ and is sent to a different point by the third $f^B_i$. 
		
		Add to $H_0$ $J_0$-holomorphic lines $L_1$ through $p_1$ and $a$, $L_2$ through $p_2$ and $f_1(a)$, and $L_3$ through $p_3$ and $f_2(f_1(a))$. Note that $L_3$ intersects $L_1$ at a point on $Q$ if and only if $a\in \{a_1,a_2\}$. Therefore each realization of $\mathcal{H}$ which is $J_0$-holomorphic and whose conic and tangent lines agree with $H_0$, is given by $H\cup L_1\cup L_2\cup L_3$ where the $L_i$ are defined using $a\in \{a_1,a_2\}$. This tells us that the preimage of a point in the map $p:\{H\in \mathcal{H} \text{ symplectic}\} \to \{(H_0,J_0)\in \mathcal{H}_0\times \mathcal{J}^{\omega}(H_0) \}$ is two realizations of $\mathcal{H}$. Moreover, the constructions of $a_1,a_2$ and $L_1,L_2,L_3$ depend continuously on $J_0$, the conic $Q$, and the points $p_1,p_2,p_3$, which depend continuously on $H_0\in\mathcal{H}_0$, so $p$ is a covering map.
		
		Finally, we consider the relation between two explicit complex realizations of $\mathcal{H}$ with the same underlying $H_0$. Let $H_0$ be the following realization of $\mathcal{H}_0$. Start with three lines in $\R^2$ intersecting to form an equilateral triangle $ABC$ with side length $\overline{AB} = 1$ in the real Euclidian plane, and its inscribed circle $Q$, and then we complexify the configuration.
		For $\theta\in(0,\frac\pi3)$, consider the half-line $\ell^\theta_A$ starting from $A$, interior to the angle $BAC$, and such that the angle between $\ell_A$ and $AB$ is $\theta$. Analogously, define $\ell^\theta_B$ and $\ell^\theta_C$.
		Let $X = \ell^\theta_B\cap\ell^\theta_C$, $Y = \ell^\theta_C\cap\ell^\theta_A$, and $Z = \ell^\theta_A\cap \ell^\theta_B$ (here we drop the dependence on $\theta$ for convenience).
		We claim that there is a unique $\theta_0\in(0,\frac\pi6)$ such that $X$, $Y$, and $Z$ all lie on $Q$ and that for $\frac\pi3 - \theta_0$ the corresponding points all lie on $Q$, too.
		The area of the triangle $XYZ$ is given by:
		\begin{align*}
		A_{XYZ} &= A_{ABC} - A_{ABZ} - A_{BCX} - A_{CAY} = A_{ABC} - 3A_{ABZ}\\
		&= \frac{\sqrt3}4 - 3\cdot\frac12 \overline{AX}\cdot\overline{AB}\cdot\sin\theta
		= \frac{\sqrt3}4 - 3\cdot\frac12\cdot\frac{\overline{AB}\cdot\sin\big(\frac\pi3-\theta\big)}{\sin\frac{2\pi}{3}}\cdot \overline{AB}\cdot\sin\theta\\
		&= \frac{\sqrt3}4 - \sqrt3\cdot\sin\big(\textstyle\frac\pi3-\theta\big)\sin\theta
		= \frac{\sqrt3}4\big(1- 2\cos\big(\textstyle\frac\pi3-2\theta\big) + 2\cos\big(\textstyle\frac\pi3\big)\big)\\
		&= \frac{\sqrt3}2\big(1-\cos\big(\textstyle\frac\pi3-2\theta\big)\big)
		\end{align*}
%		\MG{The red version is a shortened-up version of the computation.}
%		{\color{red}
%		\begin{align*}
%		A_{XYZ} &= A_{ABC} - 3A_{ABZ} = \frac{\sqrt3}4 - 3\cdot\frac12 \overline{AX}\cdot\overline{AB}\cdot\sin\theta\\
%		&= \frac{\sqrt3}4 - 3\cdot\frac12\cdot\frac{\sin\big(\frac\pi3-\theta\big)}{\sin\frac{2\pi}{3}}\cdot\sin\theta = \frac{\sqrt3}2\big(1-\cos\big(\textstyle\frac\pi3-2\theta\big)\big)
%		\end{align*}
%		}
		and the latter is a continuous, decreasing function from $[0,\frac\pi6]$ to $[0,+\infty)$, taking values $\frac{\sqrt3}4$ at $\theta = 0$ and $0$ at $\theta = \frac\pi6$.
		This means that there is a unique values of $0<\theta_0<\frac\pi6$ such that the area of $XYZ$ is the area of the inscribed triangle in $Q$. 
		By rotational symmetry, at $\theta_0$ the triangle $XYZ$ is in fact inscribed in $Q$, i.e. the configuration of $H = H_0 \cup \ell^{\theta_0}_A \cup \ell^{\theta_0}_B \cup \ell^{\theta_0}_C$ is a realization of $\mathcal H$.
		We observe that a reflection across one of the axes of the triangle preserves the triangle and the inscribed circle. 
		Since it preserves incidences, it sends the configuration $H$ to a configuration $H'$ that realizes $\mathcal H$ (indeed, $H'$ corresponds to the solution $\theta_1 = \frac\pi3-\theta_0$; note that $\theta_1 > \frac\pi6$).
		We can extend the reflection to a complex linear isometry in $PU(3)$. Since $PU(3)$ is a path-connected subset of ${\rm Symp}(\cptwo)$, there is a family of symplectomorphisms carrying $H$ to $H'$ and tracing out a symplectic isotopy between them.
	\end{proof}
	
	The second configuration, that we call $\Lc$, is made up of two conics $\mathcal{Q}_1,\mathcal{Q}_2$, tangent at two points, together with three lines such that each line is tangent to $\mathcal{Q}_1$ and the pairwise intersections of the lines are three distinct points on $\mathcal{Q}_2$. See Figure~\ref{fig:L}.
	
	\begin{figure}
		\centering
		\labellist
		\pinlabel $\mathcal{Q}_1$ at 128 98
		\pinlabel $\mathcal{Q}_2$ at 63 200
		\endlabellist
		\includegraphics[scale=.5]{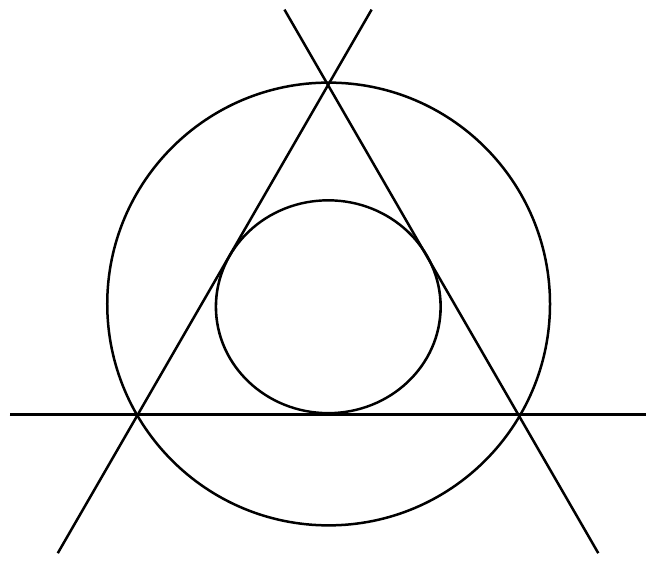}
		\caption{The configuration $\Lc$ of two conics and three lines. There are two simple tangency points between the conics not visible in this real picture.}
		\label{fig:L}
	\end{figure}
	
	\begin{prop}\label{p:L}
	There is a unique non-empty equisingular symplectic isotopy class of the configuration $\Lc$.
	\end{prop}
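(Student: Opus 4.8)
The plan is to mirror the strategy used for $\Hc$ in Proposition~\ref{p:H}: fix a base subconfiguration with a unique isotopy class, realize the full configuration as a finite covering over the space of (base, compatible $J$) pairs, and then trivialize the monodromy of this covering by exhibiting a single symmetric complex model. For the base I would take the configuration $\mathcal{H}_0$ consisting of the conic $Q_1$ together with its three tangent lines: this has a unique equisingular symplectic isotopy class by Gromov's theorem for a single conic~\cite{Gr} and three applications of Proposition~\ref{p:addline}, and the space of pairs $(\mathcal{H}_0,J_0)$ with $\mathcal{H}_0$ a $J_0$-holomorphic realization is path-connected by Lemma~\ref{l:Jcompatible}. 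The vertices $p_1,p_2,p_3$ of the triangle of tangent lines are three points off $Q_1$, and completing $\mathcal{H}_0$ to a realization of $\Lc$ amounts to choosing a $J_0$-holomorphic conic $Q_2$ through $p_1,p_2,p_3$ that is bitangent to $Q_1$. Fixing the triangle in the base is what removes the one-parameter Poncelet freedom and makes the set of completions finite.

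The heart of the argument is to organize these completions by a branched covering. For each $q\in Q_1$ there should be a unique $J_0$-holomorphic conic $Q_2(q)$ passing through $p_1,p_2,p_3$ and tangent to $Q_1$ at $q$: this is five independent incidence conditions on the five-dimensional space of $J_0$-holomorphic conics, and existence and uniqueness follow as for the $J$-holomorphic lines used in Proposition~\ref{p:addline} (cf.~\cite{Gr}), with positivity of intersections guaranteeing the conditions are independent. Since $Q_2(q)\cdot Q_1=4$ and $Q_2(q)$ meets $Q_1$ with multiplicity two at $q$, the residual intersection $(Q_2(q)\cap Q_1)\setminus\{q\}$ consists of two points $r_1(q),r_2(q)$, and the assignment $q\mapsto\{r_1(q),r_2(q)\}$ defines a degree-two correspondence $\Gamma\to Q_1\cong\cpone$. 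The conic $Q_2(q)$ is bitangent to $Q_1$ exactly when $r_1(q)=r_2(q)$, that is, at the branch points of $\Gamma\to Q_1$; by Riemann--Hurwitz there are only finitely many such points, and they depend continuously on $(\mathcal{H}_0,J_0)$. This exhibits the forgetful map from $\{L\in\Lc \text{ symplectic}\}$ to the base as a finite covering over the open dense locus where the branch points are simple and distinct, so the bitangent completions form a finite set varying continuously with the base.

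To finish I would trivialize this covering on an explicit model, exactly as in the area computation of Proposition~\ref{p:H}. Taking $Q_1$ to be the incircle of an equilateral triangle and $Q_2$ one of the conics bitangent to $Q_1$ through the three vertices should produce a genuine complex-algebraic realization of $\Lc$ and show that the finitely many bitangent completions over this base are permuted transitively by the reflection symmetries of the configuration. Each such symmetry extends to an element of $PU(3)$, which is path-connected inside ${\rm Symp}(\cptwo)$, and hence yields a symplectic isotopy between the different completions. Consequently the covering space $\{L\in\Lc \text{ symplectic}\}$ is connected: given two realizations of $\Lc$, both are $J$-holomorphic for suitable $J$, their bases are joined by a path in the connected base space, the path lifts through the covering, and the symmetry argument lets one pass between sheets. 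This simultaneously gives non-emptiness (from the model) and uniqueness of the equisingular symplectic isotopy class, together with the existence of a complex representative.

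The main obstacle I anticipate is the branched-covering step: making the family $q\mapsto Q_2(q)$ of $J_0$-holomorphic conics genuinely well defined and continuous, controlling the degenerations where $Q_2(q)$ becomes reducible or where a residual point collides with $q$, and verifying that the branch points correspond precisely to the admissible bitangent completions. The truly delicate point, as for $\Hc$, is the last one: confirming that the explicit equilateral model is non-degenerate (two distinct tangencies between $Q_1$ and $Q_2$, three distinct vertices on $Q_2$, branch points simple and distinct) and that the symmetry group of the model acts \emph{transitively} on the fiber, so that the monodromy of the covering is actually trivialized rather than merely reduced. Everything else is a direct adaptation of the fixed-point scheme already carried out for $\Hc$.
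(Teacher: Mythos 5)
Your proposal diverges from the paper's proof, and it breaks at two genuine points. The paper never works with auxiliary $J$-holomorphic conics: it first applies a birational equivalence (blow up the three vertices, blow down the three lines) converting $\Lc$ into $\Lc'$, a tricuspidal quartic with the triangle of lines through its cusps plus a bitangent line, and then proves in Proposition~\ref{p:bitangent} that the bitangent is \emph{unique} via a degree-$6$ branched cover and Riemann--Hurwitz, in which every auxiliary curve is a $J$-\emph{line} --- precisely the setting where existence, uniqueness and continuity for arbitrary compatible $J$ are available (Gromov for two point constraints, McDuff's fiber-class argument for a point-plus-tangency constraint). Your foundational step --- ``for each $q\in Q_1$ there is a unique $J_0$-holomorphic conic $Q_2(q)$ through $p_1,p_2,p_3$ tangent to $Q_1$ at $q$, as for lines'' --- is not a routine adaptation, and it is actually false at three points: when $q=t_i$ is the contact point of a side with $Q_1$, the side $s_i$ (which passes through the other two vertices and is tangent to $Q_1$ at $t_i$) together with \emph{any} line through the opposite vertex satisfies all the constraints, giving a pencil of reducible solutions, and in the integrable model there are moreover two distinct irreducible solutions tangent at $t_i$. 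So your degree-two correspondence is only defined on $Q_1\setminus\{t_1,t_2,t_3\}$, and its compactification at those points --- exactly where your plan says ``controlling the degenerations'' --- is not an honest double cover; a naive branch-point count even yields a parity contradiction (two branch points from a bitangent plus three boundary contributions), so the hard analysis cannot be deferred.

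Second, the endgame is unsupported, and the symmetric model would actively mislead you. In the equilateral model there are four complex conics through the three vertices bitangent to the incircle: the circumcircle (tangent at the two circular points at infinity --- this is the paper's Figure~\ref{fig:L}, whose caption notes the tangencies are invisible in the real picture) and the three conics $Q_1+\lambda\,\ell_m^2$ built on the midlines $\ell_m$. The dihedral symmetry has orbits of sizes $1$ and $3$ on this set, so ``the symmetries permute the completions transitively'' fails. What rescues the statement is that the three midline conics are degenerate as realizations of $\Lc$: for an equilateral triangle the incircle touches each side at its midpoint, and $\ell_m$ passes through two such midpoints, so $Q_1+\lambda\,\ell_m^2$ is tangent to $Q_1$ exactly at two of the contact points $t_i$, creating a triple tangency with a side that violates the equisingular type of $\Lc$. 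Hence the genuine fiber is a single point --- but establishing that is the entire content of the theorem, and it is exactly what the paper's Riemann--Hurwitz count delivers after the birational equivalence. Your covering-and-monodromy formalism provides no route to this: with a one-point fiber there is no monodromy to trivialize, and your proposal contains no mechanism for computing the degree of the covering in the first place.
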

	
	\begin{proof}
	We define two auxiliary configurations, $\Lc'$ and $\Lc'_0$.
	The latter, $\Lc'_0$, consists of a tricuspidal quartic $\mathcal{V}$ and the triangle of lines passing transversally through its singularities.
	The former, $\Lc'$, is obtained from $\Lc'_0$ by adding a bitangent to $\mathcal{V}$, i.e. a line that is tangent to $\mathcal{V}$ at two distinct points, each of which is a smooth point of $\Vc$.
			
	First, observe that $\Lc$ is birationally equivalent to the configuration $\Lc'$.
	This is seen by blowing up once at each vertex of the triangle formed by the three lines, and then blowing down the proper transforms of the three lines (which are $(-1)$-spheres).

	Next, we see that $\Lc'_0$ has a unique symplectic realization, up to isotopy.
	Using the inverse birational equivalence just described, we see that $\Lc'_0$ is birationally equivalent to the configuration $\mathcal{H}_0$ of a conic inscribed in a triangle of lines, which has a unique realization, up to isotopy, by Corollary~\ref{c:oneconicthreelines}.
	
	What we really want to show is that $\Lc'$ has a unique symplectic isotopy realization, which will follow from showing that there is a unique way, up to symplectic isotopy, to add the bitangent line to $\Lc'_0$ to get $\Lc'$.
	This is shown in the following proposition. Then because of the birational equivalence above, $\Lc$ itself will have a unique non-empty equisingular isotopy class.
	\end{proof}		

	\begin{prop}\label{p:bitangent}
	In any symplectic realization $V\cup L_1\cup L_2\cup L_3$ of $\Lc'_0$, the tricuspidal quartic $V$ has a unique bitangent up to isotopy, and this bitangent necessarily intersects the lines $L_i$ in generic transverse double points.
	Therefore, $\Lc'$ has a unique symplectic realization.
	\end{prop}
	
%	\MGt{I think I have a shorter proof than this. Suppose you have two bitangents, and take the triple cover of $\cptwo$ branched over the (reducible) sextic containing the quartic and the two lines. This is a singular surface, whose desingularisation contains the resolutions of the cover over the three cusps ($D_4 \times 3$), the resolution of the covers of the four tangency points ($E_6 \times 4$), and the resolution of the cover of the double point ($A_2 \times 1$), which gives a contribution of $3\cdot 4 + 4 \cdot 6 + 2 = 38$ to $b_2^-$. But the desingularisation of the triple cover has $b_2^- = 37$ (from the usual computations of Euler characteristic and signature. This needs further checking, in any case.}
%	
	\begin{proof}
	Fix a tricuspical quartic $V\subset \cptwo$, and call $V_0$ the set of smooth points of $V$.
	Let $n:\cpone \to V$ be the normalization map.
	Fix an almost complex structure $J$ compatible with $\omega$ such that $V$ is $J$-holomorphic. Note that the set of such choices is contractible by Lemma \ref{l:Jcompatible}. 
	We will prove that there is a unique $J$-holomorphic bitangent line to $V$. Since this will hold for every $J$	compatible with $V$, we will have a unique $J$-holomorphic realization of $\Lc'$ whose tricuspidal quartic agrees with $V$ for each $J\in \mathcal{J}^\omega(V)$. Varying $J$ through this contractible (and thus path-connected) space yields equisingular isotopies fixing $V$ and isotoping the lines (the triangle of lines $L_1,L_2,L_3$ through the cusps and the bitangent line) in the configuration of $\Lc'$ by definining the lines $L_i$ and $T$ as the unique $J_t$-holomorphic lines with the specified intersection properties. Therefore any two symplectic extensions of $V$ to a realization of $\Lc'$ will be related by an equisingular symplectic isotopy.
	
	From now on, we fix a $J$ which makes $V$ $J$-holomorphic, and all of our choices of curves will be $J$-holomorphic.	
	Fix a generic $J$-holomorphic line $\ell$ to be the target of a pencil-like map.
	
	First, we associate to each point $p\in V_0$, a $J$-holomorphic line $\ell_p$ through $p$ with the property that $\ell_p$ is tangent to $V$ at another point $q\neq p$, $q\in V$. We prove that such $\ell_p$ exists and is unique in Lemma~\ref{l:defellp}.
	By tangency at $q$, we mean that either $q\in V_0$ and $\ell_p$ is tangent to $V$ at $q$, or that $q$ is a cusp of $V$, and $\ell_p$ is the tangent to the cusp.
	We also note that, in fact, the line $\ell_p$ might also be tangent to $V$ at $p$, in which case it is a bitangent to $V$.
	
	Next, define a map $\phi_0: V_0 \to \ell$ by $p\mapsto \ell_p\cap \ell$. 
	Extend this map to $V$ by letting, for each cusp $p$ of $V$, $\ell_p$ be the tangent to $V$ at the cusp.
	We can show that this extension gives a continuous map $\phi_1: V\to \ell$ as follows.
	For any point $q$ in a neighborhood of a cusp $p$, let $T_q$ be the unique $J$-holomorphic tangent line to $V$ at $q$. Then the family of submanifolds $T_q$ varies continuously with $q$. Therefore the intersection points with multiplicities $T_q\cap V$ also vary continuously with $q$. When $q\neq p$, $T_q\cap V=\{q,p_q,r_q\}$ where $q$ has multiplicity two and $p_q$ and $r_q$ have multiplicity one. When $q=p$, $T_p\cap V = \{p,r\}$ where $p$ has multiplicity three and $r$ has multiplicity one. Therefore we must have $r_q\to r$ as $q\to p$ and $p_q\to p$ as $q\to p$. Now, let $N$ be a small closed neighborhood of $p$ in $V$, and consider the map $f: N\to V$ defined by $f(q)=p_q$ (we choose $N$ sufficiently small to be able to distinguish $p_q$ (the points which converge to $p$ as $q\to p$) from $r_q$ (the points which converge to $r$ as $q\to p$). Then $f$ is a continuous map and $f(p)=p$. Additionally, $f$ is injective because if $f(q)=f(q')=p_q$ then the $J$-holomorphic pencil based at $p_q$ would include a tangent line to $V$ at $q$ and a tangent line to $V$ at $q'$, but such a pencil can only include a single tangent line to $V$ by Lemma~\ref{l:defellp}. Since $N$ is compact, and $f$ is continuous and injective, $f$ is a homeomorphism onto its image. Therefore $f(N)\subset V$ is homeomorphic to a disk centered at $p$. In particular, there is a neighborhood $U$ of $p$ such that for all $p_t'\in U$, $\ell_{p_t'}$ is tangent to $V$ inside $N$ so $\ell_{p_t'} = T_{q_t}$ for some $q_t\in N$. Therefore if $p_t'\to p$, $\ell_{p_t'}=T_{q_t}\to T_p$.
	
	Let $\phi = \phi_1 \circ n:\cpone\to\ell$. We will show that $\phi:\cpone \to \ell$ is a branched covering of degree $6$ and analyze all the ways in which ramification points can arise.
	
	Given a point $s\in \ell\setminus V$, $\phi^{-1}(s)$ consists of points $p$ such that the unique $J$-holomorphic line through $s$ and $p$ is tangent to $V$ at a point different from $p$ (i.e. $\ell_p$ passes through $s$). Therefore, we need to understand the $J$-holomorphic lines in the pencil based at $s$ which have tangencies to $V$. 
	Lemma \ref{l:tangentsinapencil} proves that if the lines in such a pencil have tangencies of multiplicities $m_1,\dots, m_k$ then $\sum_{i=1}^k (m_i-1) = 3$. Therefore we could have in the pencil through $s$ either
	\begin{enumerate}[label=(\alph*)]
		\item three distinct tangent lines to $V$, each intersecting $V$ transversally at two other points (or if the tangency is at a cusp, intersecting $V$ only at one other point transversally, but counting the cusp as the other point),
		\item one bitangent line to $V$ and one tangent line with two transverse intersections to $V$, or
		\item at least one line in the pencil has tangencies to $V$ of multiplicity higher than $2$.
	\end{enumerate}
	
	(Note that there cannot be a line with three distinct tangencies to $V$ since $V$ only has degree $4$ and three tangencies would contribute $6$ to the intersection.)
	
	In the first case (the generic case), we have $6$ points in $\phi^{-1}(s)$ coming from the six transverse intersections of the tangent lines with $V$. (In the case that one of the tangent lines is tangent at the cusp, the two points are the cusp point and the transverse intersection.) Since transversality is an open condition, any such $s$ has an open neighborhood $U$ such that the pencil through $s'\in U$ falls into the first case. After possibly shrinking $U$, we can show $\phi^{-1}(U)$ is homeomorphic to $U\times \{1,2,3,4,5,6\}$. To show this, let $q\in V$ be a point such that the line from $q$ to $s$ is tangent to $V$ at $q$. For $q_t$ converging to $q$, the tangent line to $V$ at $q_t$ intersects $\ell$ at points $s_t$ convering to $s$, and intersects $V$ at points $p_t^1$ and $p_t^2$ converging to $p^1,p^2\in \phi^{-1}(s)$. Because the number of tangent lines to $V$ in any given pencil is finite, we can ensure that if $q_t$ is sufficiently close to $q$, then $p_t^i\neq p_{t'}^i$ for $i=1,2$, and $s_t\neq s_{t'}$ for $t\neq t'$. Therefore the maps $q_t\mapsto s_t$ and $q_t\mapsto p_t^i$ are continuous and injective maps, so they locally have continuous inverses. Since $\phi(p_t)=s_t$, we see that $\phi$ is a covering map of degree $6$ at generic points.	
	
	In the second case, the bitangent line intersects $V$ at the two tangent points, $p_0,p_1$. 
	By Lemma~\ref{l:tangentsinapencil}, if $\tau_0$ is a bitangent $J$-line to $V$ through $s\in \ell\setminus V$, then there can be only one other $J$-line $\tau_1$ which is simply tangent to $V$ and passes through $s$. 
	Therefore by definition of $\phi$, $\phi^{-1}(s)$ consists of the two transverse intersections of $\tau_1$ with $V$ and the two tangential intersections $\tau_0\cap V$. Thus there are only four preimage points instead of $6$ so the contribution to the Euler characteristic reduction is $\sum_{x\in \phi^{-1}(s)}(e_\phi(x)-1)=2$.
	
	The third case is ruled out as a possibility in Lemmas~\ref{l:noinfl} and~\ref{l:no4tan}.
	
	Next, consider a point $s\in \ell\cap V$ (there are four of them). It follows from Lemma \ref{l:defellp} that there is a unique line $\ell_s$ which passes through $s$ and is tangent to $V$ at a different point $q\neq s$. Its two transverse intersections with $V$ ($s$ and one other point $p$) will be regular points in $\phi^{-1}(s)$ as in the generic case. The only other way that a line tangent to $V$ could pass through $s$ is if the tangency occurs at $s$. Let $T_s$ be the tangent $J$-line to $V$ at $s$. Then the only other points in $\phi^{-1}(s)$ are the two points where $T_s$ intersects $V$ transversally. (By genericity of $\ell$, we ensure that $T_s$ intersects $V$ at two points transversally.) Therefore $\phi^{-1}(s)$ consists of four points instead of six, so at each intersection $s\in \ell\cap V$, $\sum_{s\in \phi^{-1}(s)}(e_\phi(x)-1)=2$.
	
	Finally, we apply the Riemann--Hurwitz formula to $\phi$:
	\[
	2 = \chi(\cpone) = 6\chi(\cpone) - \sum_{x\in\cpone} (e_\phi(x)-1) = 12 - 8 - \sum_{x\in\cpone\setminus n^{-1}(\ell)} (e_\phi(x)-1).
	\]
	Therefore $\sum_{x\in \cpone\setminus n^{-1}(\ell)} (e_\phi(x)-1)=2$.
	Since the only contribution to ramification indices for points in $\phi^{-1}(\ell\setminus V)$ is a bitangent, and each bitangent contributes $2$ to this sum, we conclude that there is a unique $J$-holomorphic bitangent line to $V$ as claimed.
	\end{proof}	
		
	\begin{lemma} \label{l:defellp}
	For every point $p\in V_0$ there is a unique line $\ell_p$ through $p$ that has a tangency to $V$ at another point $q\neq p$.
	\end{lemma}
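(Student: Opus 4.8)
The plan is to analyze the projection from $p$ and feed it into Riemann--Hurwitz. Fix an $\omega$-compatible $J$ making $V$ holomorphic and let $n\colon\cpone\to V$ be the normalization; here $V$ is rational because a tricuspidal quartic has $p_a = 3$ and three cusps of $\delta$-invariant $1$, so $p_g = 0$. Consider the pencil of $J$-lines through $p$ and its projection $\pi_p\colon\cptwo\setminus\{p\}\to\cpone$, which extends over $p$ exactly as in the Riemann--Hurwitz discussion above. Composing with $n$ yields a holomorphic branched cover $\widetilde\pi_p = \pi_p\circ n\colon\cpone\to\cpone$ whose only branching comes from singular points of $V$ and from lines of the pencil tangent to $V$ (positivity of intersections of $J$-holomorphic curves rules out any other ramification). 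Since $p$ is a smooth point of the quartic, the degree of $\widetilde\pi_p$ is $d-m_p = 4-1 = 3$, so Riemann--Hurwitz gives
\[
\sum_{x\in\cpone}\big(e_{\widetilde\pi_p}(x)-1\big) = 2\cdot 3 - 2 = 4.
\]

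First I would pin down the contribution of the three cusps $c_1,c_2,c_3$. Each $c_i$ has multiplicity $2$ and is unibranch, so $n^{-1}(c_i)$ is a single point whose ramification index equals the local intersection multiplicity of $\overline{pc_i}$ with $V$ at $c_i$: this is $2$ (contributing $1$) when $\overline{pc_i}$ is transverse to the cuspidal tangent, and $3$ (contributing $2$) when $\overline{pc_i}$ is the cuspidal tangent. Writing $a$ for the number of cusps whose cuspidal tangent passes through $p$, the cusps contribute exactly $3+a$. The key remaining input is that $p$ is \emph{not} an inflection point of $V$, so the tangent $J$-line at $p$ has contact exactly $2$ and $\widetilde\pi_p$ is unramified over $n^{-1}(p)$, contributing $0$; this is the $J$-holomorphic analogue of the Plücker count $\iota = 0$ and is precisely Lemma~\ref{l:noinfl}. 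Every other ramification point $x = n^{-1}(q)$ then lies over a smooth $q\neq p$ and records a $J$-line through $p$ tangent to $V$ at $q$, with $e_{\widetilde\pi_p}(x)-1\ge 1$; note that a bitangent through $p$ must have $p$ itself as one of its two tangency points (the bitangent meets $V$ only at those two points), so it is still accounted for by its tangency at the \emph{other} point $q\neq p$.

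Feeding this into the budget, tangencies at points $q\neq p$ account for exactly $4-(3+a) = 1-a$, which forces $a\le 1$ and splits the argument into two cases. If $a = 0$ there is exactly one further ramification point, necessarily a \emph{simple} tangency at a smooth $q\neq p$ (the leftover is $1$), and no cuspidal tangent passes through $p$; the line $\overline{pq}$ is the unique $\ell_p$. If $a = 1$ the leftover is $0$, so there is no tangency at a smooth point and the unique cuspidal tangent through $p$ is $\ell_p$, tangent at the corresponding cusp, matching the extended notion of tangency at $q$. In either case $\ell_p$ exists and is unique, which is the claim. The main obstacle is the no-inflection input: the count collapses if the tangent at $p$ had contact $\ge 3$, since then $p$ would absorb the leftover ramification and no line tangent elsewhere would survive. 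Thus the delicate point is to establish Lemma~\ref{l:noinfl} for $J$-holomorphic tricuspidal quartics by an argument independent of this lemma (for instance a separate Riemann--Hurwitz computation, projecting from a cusp, where the degree drops to $4-2 = 2$) so as to avoid any circular dependence.
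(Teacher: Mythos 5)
Your proof is correct and is essentially the paper's own argument: the same pencil-from-$p$ projection composed with the normalization, the same degree-$3$ Riemann--Hurwitz budget of $4$ with the cusps contributing $3+a$ where $a\in\{0,1\}$ counts cuspidal tangents through $p$, and the same two-case conclusion. Your worry about circularity with Lemma~\ref{l:noinfl} is moot, since the paper proves that lemma by projecting from the \emph{second} intersection $q$ of the putative flex line (reusing only the construction of the branched cover, not the statement of Lemma~\ref{l:defellp}); note also that your suggested alternative of projecting from a cusp would only rule out flex lines passing through that cusp, not inflections in general.
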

	
	\begin{proof}
	Consider the $J$-linear pencil based at $p$: $\pi_0: \cptwo\setminus\{p\} \to \cpone$.
	Restrict $\pi_0$ to $V\setminus \{p\}$, and extend it to $\pi_1: V \to \cpone$ by defining $\pi_1(p) = T_p$, where $T_p$ is the tangent to $V$ at $p$.
	The map $\pi_1$ is continuous: the tangent $T_p$ is the only curve that has local multiplicity of intersection $2$ with $C$ at $p$, and the lines $\ell_{pq}$ through $p$ and $q$ converge to such a curve as $q$ limits to $p$.
	Finally, define $\pi: \cpone \to \cpone$ by $\pi = \pi_1\circ n$, where $n: \cpone \to V$ is the normalization.
	By positivity of intersections, $\pi$ is a branched cover $\cpone \to \cpone$ of degree $4-1 = 3$.
	Moreover, if $n(x)$ is a cusp of $V$, then $x$ is a ramification point of $x$.
	The index of $x$ as a ramification point is $2$ if $p$ is not the intersection of the tangent to $\pi(x)$ with $V$, and $3$ otherwise.
	
	In the former case, the Riemann--Hurwitz formula yields:
	\[
	2 = \chi(\cpone) = 3\chi(\cpone) - \sum_{x\in\cpone} (e_\pi(x)-1) = 6 - 3 - \sum_{n(x)\in V_0} (e_\pi(x)-1),
	\]
	which implies that there is exactly one point $x$ in $\cpone$ such that $\pi$ has ramification of index $2$ at $x$.
	This means exactly that $\ell_p$ is tangent to $V$ at $n(x)$.
	
	In the latter case, Riemann--Hurwitz gives:
	\[
	2 = \chi(\cpone) = 3\chi(\cpone) - \sum_{x\in\cpone} (e_\pi(x)-1) = 6 - 2 - 1 - 1 - \sum_{n(x)\in V_0} (e_\pi(x)-1),
	\]
	which means that the only ramification points of $\pi$ are at the preimages of cusps of $V$.
	This means that the only tangent to $V$ through $p$ is the tangent to the cusp.
	
	We exclude the possibility that $n^{-1}(p)$ itself is a ramification point of $\pi$ in Lemma~\ref{l:noinfl}.
	If this occurred, $T_p$ would be a simple inflection line of $V$, which is exactly what the lemma obstructs.
	\end{proof}
	
	\begin{lemma}\label{l:tangentsinapencil}
		Given a point $s\in \cptwo\setminus V$, consider the linear $J$-holomorphic pencil through $s$. The multiplicities of tangencies of $J$-lines in this pencil with $V$ satisfy $\sum (m_i-1)=3$.
	\end{lemma}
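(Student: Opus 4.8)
The plan is to reuse the pencil-and-Riemann--Hurwitz machinery of Lemma~\ref{l:defellp}, adapted to a base point $s$ lying \emph{off} the curve. Fix an $\omega$-compatible almost complex structure $J$ for which $V$ is $J$-holomorphic (such $J$ exist and form a contractible set by Lemma~\ref{l:Jcompatible}), and consider the $J$-holomorphic pencil based at $s$, i.e. the projection $\pi_s\colon\cptwo\setminus\{s\}\to\cpone$. Since $s\notin V$, the restriction of $\pi_s$ to $V$ is defined on all of $V$, and precomposing with the normalization $n\colon\cpone\to V$ yields a map $\pi=\pi_s\circ n\colon\cpone\to\cpone$. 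Because $V$ is $J$-holomorphic of degree $4$ and $s\notin V$, a generic line of the pencil meets $V$ transversally in $4$ points, so $\pi$ is a branched cover of degree $4$.

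First I would identify the ramification. By positivity of intersections between $J$-holomorphic curves, the fiber of $\pi_s|_V$ over a line $\ell_{s,q}$ of the pencil is $\ell_{s,q}\cap V$, and the local degree of $\pi$ at $x=n^{-1}(q)$ equals the local intersection multiplicity $(\ell_{s,q}\cdot V)_q$. Thus ramification occurs exactly where a line of the pencil meets $V$ with multiplicity at least $2$: at a smooth point $q$ this multiplicity is the order $m_q$ of tangency of $\ell_{s,q}$ with $V$, while at each of the three cusps the line through $s$ meets $V$ with multiplicity equal to the cusp multiplicity $2$, or equal to $3$ precisely when $\ell_{s,q}$ is the cuspidal tangent. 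Applying the Riemann--Hurwitz formula to the degree-$4$ cover $\pi$ of $\cpone$ gives
\[
2=\chi(\cpone)=4\cdot 2-\sum_{x\in\cpone}\bigl(e_\pi(x)-1\bigr),
\qquad\text{so}\qquad
\sum_{x\in\cpone}\bigl(e_\pi(x)-1\bigr)=6.
\]

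Finally I would separate the fixed contribution of the cusps from the tangency contribution. Each of the three cusp preimages carries ramification index at least $2$, hence contributes at least $1$ apiece, for a guaranteed total of $3$; what remains, namely the sum of $m_i-1$ over the tangency points of the pencil (with the convention that a cuspidal-tangent line is recorded as raising the relevant cusp's index from $2$ to $3$), is exactly $6-3=3$, which is the asserted identity $\sum_i(m_i-1)=3$. The only delicate point is the local analysis at the cusps: one must verify that the ramification index of $\pi$ at a cusp preimage really equals $(\ell_{s,q}\cdot V)_q$, jumping from $2$ to $3$ exactly when $\ell_{s,q}$ is the cuspidal tangent, so that the bookkeeping produces exactly $3$ for \emph{every} $s\notin V$ and not merely for generic $s$. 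This I expect to be the main obstacle, and it follows from a direct computation in the local model $t\mapsto(t^2,t^3)$ of the cusp, comparing the order of vanishing in $t$ of the slope of the line $\ell_{s,(t^2,t^3)}$ as $t\to 0$ in the two cases.
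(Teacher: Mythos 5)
Your argument is correct and is essentially the paper's own proof: both restrict the pencil based at $s$ to $V$, precompose with the normalization to obtain a degree-$4$ branched cover of $\cpone$, and apply Riemann--Hurwitz, with the three cusps contributing a baseline of $3$ to $\sum_x (e_\pi(x)-1)=6$, so that tangencies account for exactly the remaining $3$. Your local-model computation at the cusp, showing the ramification index at a cusp preimage equals the local intersection multiplicity and jumps from $2$ to $3$ precisely when the line is the cuspidal tangent, is a detail the paper leaves implicit (it states only that the index is at least $2$, ``higher exactly if there is a tangent line in the pencil''), and your verification of it is sound.
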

	
	\begin{proof}
		Let $\pi$ be the composition of the normalization map $n$ with the restriction of the linear pencil to $V$. Since $V$ is degree $4$, a generic line in the pencil intersects $V$ at $4$ points, so $\pi$ is a degree-4 map. 
		It has ramification points of index at least $2$ at each of the preimages of the cusps (which will be higher exactly if there is a tangent line in the pencil).
		The Riemann--Hurwitz formula now gives:
		\[
		2 = 8 - \sum_{x\in\cpone} (e_\pi(x)-1).
		\]
		Since there is a contribution of $3$ to $\sum_{x\in \cpone}(e_\pi(x)-1)$ coming from the three cusps, there must be exactly a contribution of $3$ to $\sum_{x\in \cpone}(e_\pi(x)-1)$ coming from tangencies.
	\end{proof}
	
		\begin{lemma} \label{l:noinfl}
			The tricuspidal quartic $V$ has no simple inflection $J$-lines at smooth points.
		\end{lemma}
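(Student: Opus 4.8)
The plan is to study the tangent-line (Gauss) map of $V$ and bound its ramification via Riemann--Hurwitz, exactly as in Lemmas~\ref{l:defellp} and~\ref{l:tangentsinapencil}. Fix a compatible $J$ making $V$ $J$-holomorphic (the space of such $J$ is contractible by Lemma~\ref{l:Jcompatible}) and fix a \emph{generic} $J$-line $\ell$. For $x\in\cpone$ let $T_{n(x)}$ be the $J$-tangent line to $V$ at $n(x)$---the cuspidal tangent when $n(x)$ is a cusp---and define $\psi\colon\cpone\to\ell\cong\cpone$ by $\psi(x)=T_{n(x)}\cap\ell$. The same continuity argument used for the map $\phi_1$ in the proof of Proposition~\ref{p:bitangent} shows $\psi$ is continuous across the cusp preimages, and positivity of intersections of $J$-holomorphic curves shows $\psi$ is an orientation-preserving branched cover, so Riemann--Hurwitz applies.

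First I would compute $\deg\psi=3$. The fiber $\psi^{-1}(s)$ over a generic $s\in\ell$ consists of the tangency points of the $J$-lines through $s$ that are tangent to $V$; by Lemma~\ref{l:tangentsinapencil} the tangency multiplicities of the pencil through $s$ satisfy $\sum(m_i-1)=3$, so for generic $s$ there are exactly three simple tangent lines, each tangent at one smooth point, giving three preimages. Riemann--Hurwitz then reads
\[
2=\chi(\cpone)=3\cdot 2-\sum_{x}(e_\psi(x)-1),\qquad\text{so}\qquad \sum_x(e_\psi(x)-1)=4.
\]

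Next I would locate the ramification. For generic $\ell$ the four points of $\ell\cap V$ are distinct smooth, non-inflection points of $V$, and a direct local computation (writing $V$ near such a point as a graph $y=cx+ax^2+\dots$ with $a,c\neq 0$ yields $\psi(x)\sim\frac{a}{c}x^2$) shows each is a ramification point of index exactly $2$, contributing $1$; these four already exhaust the entire budget of $4$. On the other hand, a \emph{simple inflection} at a smooth point $p=n(x_0)$ means $T_p$ has contact of order $3$, i.e.\ the tangent line is stationary: in an affine chart the Gauss map $\gamma(x)=T_{n(x)}$ has $\gamma'(x_0)=0$. Since $\psi$ is the composition of $\gamma$ with the intersect-with-$\ell$ projection, $\psi'(x_0)=0$ for \emph{every} choice of $\ell$, so $x_0$ is a ramification point with $e_\psi(x_0)\ge 2$. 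For generic $\ell$ the inflection point $p\notin\ell$, so $x_0$ is \emph{not} one of the four points over $\ell\cap V$. Hence the existence of a simple inflection would force $\sum_x(e_\psi(x)-1)\ge 5>4$, a contradiction. (A parallel local computation shows cusp preimages and bitangent tangency points are unramified for generic $\ell$, consistent with the budget.) Therefore $V$ has no simple inflection $J$-lines at smooth points.

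The main obstacle is setting up $\psi$ as an honest holomorphic branched cover in the almost-complex category: one must know that the tangent-line map $x\mapsto T_{n(x)}$ into the smooth $4$-manifold of $J$-lines is suitably holomorphic (equivalently, that $\psi$ has standard local models $z\mapsto z^k$), which is where positivity of intersections and the structure of the space of $J$-lines enter, just as in the paper's other pencil-based Riemann--Hurwitz arguments. Once this is granted, the ramification bookkeeping above is elementary, and the key conceptual point is simply that the $J$-lines meeting $\ell\cap V$ already saturate the Hurwitz defect, leaving no room for a stationary tangent.
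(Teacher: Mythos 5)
Your route is genuinely different from the paper's. The paper stays entirely inside its pencil technology: given a simple inflection $T_p$ at a smooth point $p$, it takes $q$ to be the residual intersection of $T_p$ with $V$ (a smooth point, by positivity of intersections) and projects $V$ from the pencil based at $q$. That projection has degree $3$, so the Riemann--Hurwitz budget is $\sum_x(e_{\pi'}(x)-1)=4$; the three cusps consume $3$, while $T_p$ is itself a member of this pencil meeting $V$ at $p$ with multiplicity $3$, so $n^{-1}(p)$ contributes $2$ more --- contradiction, in two lines, using only pencil projections whose branched-covering structure the paper has already justified by positivity of intersections. Your bookkeeping for the Gauss-type map $\psi$ is, in itself, correct (it is the classical Pl\"ucker-style count: $\psi$ is the projection of the dual curve, a curve of class $3$, from the generic point $\ell^{*}$), and it can even be made cleaner and multiplicity-theoretic, with no Taylor expansions: over each of the four points $p\in\ell\cap V$ the fiber of $\psi$ consists of exactly two points ($x_0=n^{-1}(p)$ itself, since $p\in T_p$, and the tangency point of the line $\ell_p$ of Lemma~\ref{l:defellp}), contributing $3-2=1$ each; over $s=T_p\cap\ell$ for a simple inflection, Lemma~\ref{l:tangentsinapencil} shows $T_p$ exhausts the tangency budget of the pencil through $s$, so the fiber is the single point $x_0$ with multiplicity $3$, contributing $2$; the total then exceeds $4$.

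The genuine gap is exactly the step you flagged and then deferred: nothing in your argument (or in the paper) establishes that $\psi$ is a branched covering with positive local degrees, i.e.\ with local models $z\mapsto z^k$, in the almost-complex category. The paper's Riemann--Hurwitz arguments all concern restrictions of pencil projections to $J$-holomorphic curves, where positivity of intersections of $J$-lines with $V$ directly gives the covering structure; $\psi$ is not such a restriction. On the target side it is the projection of the dual curve from $\ell^{*}$, but the dual of a $J$-holomorphic curve is not known (and is not shown here) to be pseudoholomorphic for any structure on the space of $J$-lines, so positivity of the local degrees of $\psi$ is not automatic; your local computations ($\psi(x)\sim(a/c)x^2$ at points of $\ell\cap V$, and $\psi'(x_0)=0$ ``for every $\ell$'' at an inflection) implicitly treat $V$ as a holomorphic graph, which is only legitimate where $J$ is integrable. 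The closest precedent in the paper is the non-pencil map $\phi$ in the proof of Proposition~\ref{p:bitangent}, and there the covering property is established by a bespoke continuity/injectivity argument; your proof needs an analogous argument for $\psi$, as well as a justification that inflection points are finite in number (you use this implicitly when choosing $\ell$ to avoid them). Since supplying the covering property is the entire analytic content of the lemma in the symplectic setting, the proposal as written is incomplete --- though once that property is granted, your count is sound and in fact over-saturates the Hurwitz budget, whereas the paper's choice of pencil base point $q$ makes the whole issue disappear.
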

		
		\begin{proof}
			Suppose $p\in V_0$ is a simple inflection point, and let $q$ be the unique other intersection of $T_p$ and $Q$.
			Observe that, by positivity of intersections, $q\in V_0$.
			Now define the branched covering map $\pi': \cpone \to \cpone$ as in Lemma~\ref{l:defellp}, but using the point $q$ as the base of the pencil instead of $p$. 
			This is a degree $3$ cover.
			The three cusps contribute $3$ to $\sum_{x\in \cpone} (e_{\pi'}(x)-1)$.
			Riemann--Hurwitz implies that there can be at most one additional simple ramification point of $\pi'$, but $n^{-1}(p)$ is a ramification point of index $2$, a contradiction.
		\end{proof}
		
		\begin{lemma} \label{l:no4tan}
			The tricuspidal quartic $V$ has no multiplicity-$4$ tangent $J$-lines.
		\end{lemma}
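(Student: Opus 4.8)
The plan is to mimic the pencil-and-Riemann--Hurwitz strategy already used in Lemmas~\ref{l:defellp} and~\ref{l:noinfl}, but now basing the pencil at the tangency point itself. Suppose, for contradiction, that $T$ is a $J$-holomorphic line with a tangency of multiplicity $4$ to $V$. Since $\deg V = 4$, positivity of intersections between $J$-holomorphic curves forces $T\cap V = \{p\}$, with the entire intersection concentrated at a single point $p$; in particular $p\in V_0$ is a smooth point and $T = T_p$ is the tangent line to $V$ at $p$, meeting $V$ there with local multiplicity $4$.

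First I would set up the branched cover $\pi\colon \cpone\to\cpone$ exactly as in Lemma~\ref{l:defellp}, but taking the base point of the $J$-linear pencil to be $p$ itself: restrict the pencil $\pi_0\colon\cptwo\setminus\{p\}\to\cpone$ to $V$, extend it continuously across $p$ by $p\mapsto T_p$ (continuity being justified as in Lemma~\ref{l:defellp}), and precompose with the normalization $n\colon\cpone\to V$. A generic line through $p$ meets $V$ in $p$ together with $3$ further points, so $\pi$ has degree $3$.

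Next I would account for the ramification. By positivity of intersections the only ramification points are the preimages of the three cusps and of $p$. Each cusp, having multiplicity $2$, forces any line through $p$ to meet $V$ there with multiplicity at least $2$, so each contributes at least $1$ to $\sum_x (e_\pi(x)-1)$, giving at least $3$ from the cusps. The crucial local computation is the ramification index at $n^{-1}(p)$: in coordinates where $V$ is modeled on $y=x^4$ and $T_p$ is the $x$-axis, the pencil based at the origin sends $t\mapsto t^4/t = t^3$, so $e_\pi(n^{-1}(p)) = 4-1 = 3$, contributing $2$ to the sum.

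Finally, Riemann--Hurwitz for the degree-$3$ cover reads $2 = 3\cdot 2 - \sum_x(e_\pi(x)-1)$, i.e.\ $\sum_x(e_\pi(x)-1) = 4$. But the three cusps already contribute at least $3$ and $n^{-1}(p)$ contributes $2$, for a total of at least $5 > 4$, a contradiction. I expect the only delicate point to be the verification that the ramification index at $n^{-1}(p)$ equals $k-1$ when $T_p$ meets $V$ with multiplicity $k$ at $p$ (here $k=4$); this is the one place where the geometry of a pencil based \emph{at} a point of $V$ genuinely enters, and it is settled by the local model above, which is valid precisely because $p$ is a smooth point of the $J$-holomorphic curve.
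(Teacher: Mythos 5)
Your proof is correct and is essentially the paper's argument: both base a $J$-holomorphic pencil at the multiplicity-$4$ tangency point $p$ and derive a Riemann--Hurwitz contradiction for the resulting degree-$3$ cover from the three cusps. The only (cosmetic) difference is that the paper works with the punctured curve, i.e.\ a cover $\C\to\C$ where the cusps alone give $\sum(e_\pi(x)-1)\ge 3>2$, whereas you extend over $p$ as in Lemma~\ref{l:defellp} and add its ramification index $3$ (correctly computed, and harmlessly so even though your claim that the cusps and $p$ are the \emph{only} ramification points is unnecessary, since you only use lower bounds) to get $5>4$.
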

		
		\begin{proof}
			If there were such a $J$-line, tangent with multiplicity $4$ at a point $p\in V_0$, then we can look at the pencil of $J$-lines through $p$ and restrict to $V\setminus \{p\}$ and precompose with the normalization. This gives a branched covering $\pi':\C\to \C$ with ramification points of index $2$ at each of the three cusps. This violates the Riemann--Hurwitz formula because we would have
			\[1=3\cdot 1 -\sum_{x\in \C}e_{\pi'}(x) \leq 3-1-1-1=0.\qedhere\]
		\end{proof}

%	\begin{lemma} \label{l:dualpencil} Let $V$ be a $J$-holomorphic curve in $\cptwo$, and $\ell$ a $J$-holomorphic line. Let $p_0\in V$ be a smooth point of $V$ such that $p_0\notin\ell$. For each $p\in V$, let $\tau_p$ denote the unique $J$-holomorphic line tangent to $V$ at $p$. Then there exists a neighborhood $U$ of $p_0$ in $V$ such that the map $T: U\to \ell $ defined by $T(p) = \tau_p\cap \ell$ is a homeomorphism onto its image.
%	\end{lemma}
%	
%	\begin{proof}
%		Varying the point $p$ in a neighborhood of $p_0$ where $V$ remains smooth gives a continuous family of $J$-holomorphic lines $\tau_p$. The unique positive intersection of $\tau_p$ with $\ell$ also varies continuously with $p$. 
%	\end{proof}

%	\begin{lemma}\label{l:degphi}
%	The degree of $\phi$ is $6$, and $\phi$ has exactly two ramification points, both of index $2$, in $n^{-1}(V_0)$.
%	\end{lemma}
%	
%	\begin{proof}
%	By Lemma~\ref{l:tangentsinapencil}, for each generic $q$ in $\ell$ there are three lines drawn through $q$ that are tangent to $V$;
%	each of them corresponds to \emph{two} points in $\phi^{-1}(q)$, namely the two other intersection points of each of the three tangents with $V$.
%	This proves the first assertion, i.e. that $\deg \phi = 6$.
%	

	\subsection{Obstructions} \label{s:reducibleobstructions}
	
	Next we will show that certain singular configurations do not symplectically embed into $\cptwo$ using birational derivations, and the following result.
	
	\begin{theorem}[\cite{RuSt}] \label{thm:Fano}
		There is no symplectic embedding in $\cptwo$ of the Fano plane: seven lines intersecting positively at seven triple intersection points (see Figure~\ref{fig:Fano}).
	\end{theorem}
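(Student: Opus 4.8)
The plan is to argue by contradiction, converting the statement into a question about disjoint symplectic $(-2)$-spheres in a blow-up of $\cptwo$, and then obstructing that configuration gauge-theoretically. So suppose seven symplectic lines $L_1,\dots,L_7$ realize the Fano incidence, meeting at seven triple points $p_1,\dots,p_7$ and nowhere else. (This is forced: the $\binom{7}{2}=21$ pairs of lines are exactly accounted for by the $7\binom{3}{2}=21$ pairs concurrent at the triple points, so there are no double points.) After the standard $\omega$-orthogonalization of the transverse intersections, Lemma~\ref{l:Jcompatible} supplies a compatible $J$ making all seven lines $J$-holomorphic. Blowing up symplectically at $p_1,\dots,p_7$ produces, in $\cptwo\#7\cptwobar$, proper transforms $\widetilde L_1,\dots,\widetilde L_7$. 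Since each line passes through exactly three triple points, $[\widetilde L_i]=h-\sum_{j\in S_i}e_j$ with $|S_i|=3$, so each $\widetilde L_i$ is a symplectic sphere of square $1-3=-2$; and since two lines meet at exactly one triple point, $|S_i\cap S_j|=1$ and $[\widetilde L_i]\cdot[\widetilde L_j]=1-1=0$, so the seven spheres are \emph{pairwise disjoint}. Thus a symplectic Fano arrangement would yield seven disjoint symplectic $(-2)$-spheres in the stated classes.

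Next I would record the lattice-theoretic shape of this configuration, both to organize the argument and to see precisely where an elementary proof must fail. The incidence matrix of the Fano plane is nonsingular over $\Q$ (its Gram-type product is $2I+J$, with nonzero determinant $9\cdot 2^6$), so the $[\widetilde L_i]$ span a rank-$7$ sublattice, isomorphic to $A_1^{\oplus 7}$, sitting negative-definitely inside $H_2(\cptwo\#7\cptwobar)$ of signature $(1,7)$. Its orthogonal complement is spanned by $-K=3h-\sum_j e_j$, with $(-K)^2=2$; equivalently $K\cdot[\widetilde L_i]=0$, realizing the $A_1^{\oplus 7}\subset E_7$ picture inside $K^\perp$. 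Crucially, neither homology nor symplectic area alone gives a contradiction: writing $\langle[\omega],h\rangle=\lambda>0$ and $\langle[\omega],e_j\rangle=\mu_j>0$, the seven necessary inequalities $\langle[\omega],[\widetilde L_i]\rangle=\lambda-\sum_{j\in S_i}\mu_j>0$ together with $[\omega]^2>0$ are simultaneously solvable (take $\lambda=1$ and all $\mu_j$ equal and less than $\tfrac13$). Hence the homological bookkeeping of Lemma~\ref{l:hom} and its corollaries, which suffices for many of the other obstructions in this paper, cannot by itself rule the Fano arrangement out.

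The crux, and the genuinely hard step, is therefore to show that these seven mutually disjoint symplectic $(-2)$-spheres cannot coexist. Here I would follow the gauge-theoretic strategy of~\cite{RuSt}: pass to a suitable cyclic branched cover of the pair $(\cptwo,\bigcup_i L_i)$ (the symplectic analogue of the abelian covers Hirzebruch uses to disprove complex realizability of the Fano arrangement), resolve it to a closed symplectic $4$-manifold, and extract a contradiction from Seiberg--Witten/Taubes--Gromov constraints on that cover. I expect the main obstacle to be exactly the point where the classical complex proof cannot be imitated: Hirzebruch's argument runs through the Bogomolov--Miyaoka--Yau inequality $c_1^2\le 3c_2$, which has \emph{no} symplectic counterpart, so one must replace it with a gauge-theoretic invariant that is controlled under the branched-cover construction and whose value is pushed out of the allowable range by the Fano incidence. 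Producing and estimating such an invariant (for instance via basic classes of the cover, or via correction-term/definiteness obstructions on the plumbed boundary of the resolution, which would even yield topological local-flat non-realizability) is the technical heart of the matter, and it is precisely this input that Theorem~\ref{thm:Fano} of~\cite{RuSt} provides.
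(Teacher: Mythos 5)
Your reduction is sound as far as it goes: the incidence count showing a symplectic Fano arrangement has no double points, the blow-up at the seven triple points producing seven pairwise disjoint symplectic $(-2)$-spheres in classes $h-e_{j_1}-e_{j_2}-e_{j_3}$, the identification of the orthogonal complement of this $A_1^{\oplus 7}$ with the span of $-K=3h-\sum_j e_j$, and the (worth making) observation that neither the adjunction bookkeeping of Lemma~\ref{l:hom} nor symplectic-area constraints alone can rule the configuration out are all correct. But the proof stops exactly where it would have to begin. Your final paragraph does not obstruct the seven disjoint $(-2)$-spheres; it asserts that some gauge-theoretic invariant of some branched cover should do so, and then concedes that producing this invariant ``is precisely this input that Theorem~\ref{thm:Fano} of~\cite{RuSt} provides.'' That is circular: the statement you were asked to prove \emph{is} the theorem of~\cite{RuSt}, so deferring the crux to it proves nothing. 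No cover is specified, no invariant is computed, and no contradiction is derived --- ``basic classes of the cover, or correction-term/definiteness obstructions'' is a list of candidate tools, not an argument.

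You should also know that your claim that the technical heart must be gauge-theoretic is contradicted, in the symplectic category, by this paper itself: Proposition~\ref{p:newfano} together with Corollary~\ref{cor:Fano} gives an elementary pseudoholomorphic proof. A symplectic Fano arrangement is birationally related to a rational cuspidal quintic with one $(3,5)$-cusp and two $(2,3)$-cusps: blowing up the four triple points off a chosen line, plus three further points (two infinitely near the fourth blow-up, one generic), reproduces the configuration of Figure~\ref{fig:32-2-2}, whose seven negative spheres can then be blown down to yield the quintic in $\cptwo$. That quintic is obstructed with bare hands by the Riemann--Hurwitz inequality~\eqref{e:riemannhurwitz} applied to the pencil of $J$-holomorphic lines through the $(3,5)$-cusp, namely $2(5)-2(3)\ge 2+1+1+1$ (Example~\ref{ex:RH}) --- no Seiberg--Witten theory, no BMY surrogate. (The gauge-theoretic machinery of~\cite{RuSt} buys the stronger \emph{smooth}-category statement, but it is not needed for the symplectic one.) To salvage your write-up, you would either have to actually carry out an obstruction to the seven disjoint symplectic $(-2)$-spheres, or pivot to the birational/Riemann--Hurwitz route above.
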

	
	\begin{figure}
		\centering
		\includegraphics[scale=.5]{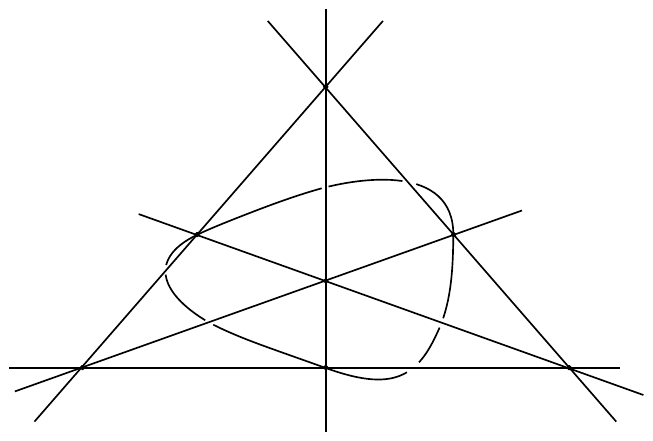}
		\caption{The Fano configuration of seven lines intersecting in seven triple points.}
		\label{fig:Fano}
	\end{figure}
	
	The next configuration we will consider is made up of three lines each tangent to a conic such that the three lines intersect each other at a triple point. We will call this configuration $\mathcal{G}$ (Figure~\ref{fig:G}).
	
	\begin{figure}
		\centering
		\includegraphics[scale=.4]{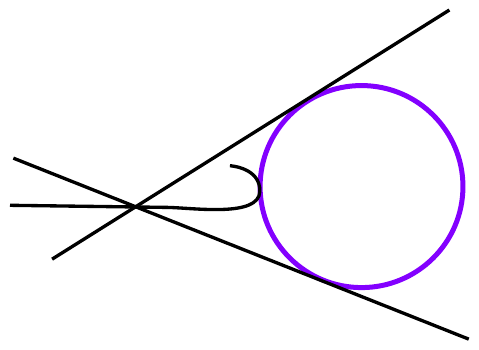}
		\caption{Configuration $\mathcal{G}$ consists of three lines intersecting at a common triple point, each tangent to a conic.}
		\label{fig:G}
	\end{figure}

	\begin{prop}\label{p:GFano} There is no symplectic embedding of $\mathcal{G}$ into $\cptwo$. \end{prop}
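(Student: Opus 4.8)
The plan is to show that the Fano plane of Theorem~\ref{thm:Fano} is birationally derived from $\mathcal{G}$, so that Proposition~\ref{l:biratexist} turns the nonexistence of the Fano plane into the nonexistence of $\mathcal{G}$. This runs exactly parallel to the birational derivation worked out in the example of Figure~\ref{fig:biratderiv}; the only difference is that in $\mathcal{G}$ the three tangent lines pass through one common point rather than meeting in three separate nodes, and this is precisely what upgrades the resulting six-triple-point arrangement to the seven-triple-point Fano configuration.

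Concretely, suppose $\mathcal{G} = Q\cup L_1\cup L_2\cup L_3$ embeds symplectically in $\cptwo$, where $Q$ is the conic, $L_i$ is tangent to $Q$ at a point $t_i$, and the three lines meet at a common point $p$ away from $Q$. First I would blow up at $t_1,t_2,t_3$. As in the cited example, each blow-up at a simple tangency drops $Q\cdot Q$ by one and separates the two branches transversally, so the proper transform $\widetilde Q$ has square $4-3=1$, each $\widetilde{L_i}$ has square $0$, and each old tangency becomes a transverse triple point $\widetilde Q\cap\widetilde{L_i}\cap E_i$; the triple point $\widetilde{L_1}\cap\widetilde{L_2}\cap\widetilde{L_3}$ at $p$ survives untouched. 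Using Theorem~\ref{thm:mcduff} I identify $\widetilde Q$ with $\cpone$ in $\cptwo\#3\cptwobar$, and then Lemma~\ref{l:hom} together with the intersection data fixes the homology classes (up to relabeling) as $[\widetilde Q]=h$, $[\widetilde{L_i}]=h-e_i$, and $[E_i]=h-e_j-e_k$ with $\{i,j,k\}=\{1,2,3\}$, exactly as in the example.

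Next I would apply Lemma~\ref{l:blowdown}: each $e_i$ meets every curve of the configuration non-negatively, so (as in that same example) I can realize disjoint $J$-holomorphic exceptional spheres in the classes $e_1,e_2,e_3$ and blow them down, after which all seven curves $\widetilde Q,\widetilde{L_1},\widetilde{L_2},\widetilde{L_3},E_1,E_2,E_3$ descend to lines in $\cptwo$. The crucial — and most delicate — step is to confirm that the resulting incidence pattern is precisely that of the Fano plane. There are three triple points $\{Q,L_i,E_i\}$ (the old tangencies), the triple point $\{L_1,L_2,L_3\}$ coming from $p$, and three new triple points $\{L_i,E_j,E_k\}$ created when blowing down $e_i$, since the curves carrying $-e_i$ are exactly $\widetilde{L_i}$ and the two $E$'s indexed by $j\neq i$ (one checks these three are pairwise disjoint before the blow-down, so they acquire no spurious extra intersections). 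This yields seven lines meeting in seven triple points, and the homological bookkeeping closes the count: there are $\binom{7}{2}=21$ pairs of lines and $7\cdot\binom{3}{2}=21$ incidences at triple points, so every pairwise intersection is a node located at one of the seven distinct triple points, with no leftover double points. This is the Fano configuration of Figure~\ref{fig:Fano}.

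Having exhibited the Fano plane as a configuration birationally derived from $\mathcal{G}$, Proposition~\ref{l:biratexist} shows that an embedding of $\mathcal{G}$ into $\cptwo$ would produce an embedding of the Fano plane into $\cptwo$, contradicting Theorem~\ref{thm:Fano}; hence no such embedding of $\mathcal{G}$ exists. The one point demanding care is genericity: I must rule out that the auxiliary blown-down spheres force two of the seven triple points to collide or introduce an unexpected tangency. The $21=21$ intersection count settles this, since all pairwise intersections are already accounted for as transverse nodes at seven distinct points, leaving no room for a degeneration.
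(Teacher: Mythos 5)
Your proposal is correct and follows essentially the same route as the paper's proof: blow up at the three tangency points, use McDuff's theorem and Lemma~\ref{l:hom} to pin down the classes $[\widetilde Q]=h$, $[\widetilde{L_i}]=h-e_i$, $[E_i]=h-e_j-e_k$, blow down exceptional spheres in the classes $e_1,e_2,e_3$ via Lemma~\ref{l:blowdown}, and observe that the seven descending lines carry exactly the seven triple points of the Fano plane, contradicting Theorem~\ref{thm:Fano}. Your closing incidence count ($\binom{7}{2}=21=7\cdot\binom{3}{2}$) is a small addition the paper leaves implicit, but the argument is otherwise the same.
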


	\begin{proof}
		Let $Q$ be any smooth symplectic sphere in $\cptwo$ with $[Q]=2h$. Let $T_1$, $T_2$, and $T_3$ be three symplectic tangent lines at points $p_1$, $p_2$, and $p_3$ on $Q$. Suppose $T_1$, $T_2$, and $T_3$ all intersect at a common triple point.
		
		Follow Figure~\ref{fig:GtoFano} with the rest of the proof. Blow up once at each of $p_1$, $p_2$, and $p_3$. Then the proper transform of $Q$ is a symplectic sphere of self-intersection $+1$ so by Theorem~\ref{thm:mcduff} it can be identified with $\cpone$ with homology class $h$ via a symplectomorphism $\Psi$ of $\cptwo\# 3\cptwobar$. Under this identification, Lemma~\ref{l:hom} and the intersections of the components determines the homology classes of the remaining curves as follows:
		\[
		\Psi_*[\overline{Q}]=h, \qquad \Psi_*[\overline{T_i}] =h-e_i, \qquad \Psi_*[E_i]=h-e_1-e_2-e_3+e_i.
		\]
		
		Next, by Lemma~\ref{l:blowdown}, we can blow down exceptional spheres in the homology classes $e_i$ such that any intersection with $\Psi(\overline{Q})$, $\Psi(\overline{T_i})$, or $\Psi(\overline{E_i})$ is positive. After blowing down three times, the proper transform in $\cptwo$ of each of the seven surfaces is in the homology class $h$ and there are the following triple intersections: $P_i=\Psi({\overline{Q}})\cap \Psi(E_i)\cap \Psi(\overline{T_i})$ for $i=1,2,3$, $R_1=\Psi(\overline{T_1})\cap \Psi(E_2) \cap \Psi(E_3)$, $R_2=\Psi(\overline{T_2})\cap \Psi(E_1) \cap \Psi(E_3)$, $R_3=\Psi(\overline{T_3})\cap \Psi(E_1) \cap \Psi(E_2)$. If there were a seventh triple intersection between $\Psi(\overline{T_1})$,  $\Psi(\overline{T_2})$, and $\Psi(\overline{T_3})$ this would give an embedding of the Fano configuration into $\cptwo$.
		
		Thus we have shown that the Fano plane is birationally derived from $\mathcal{G}$, so by the contrapositive of Proposition~\ref{l:biratexist}, there is no symplectic realization of $\mathcal{G}$ in $\cptwo$. (In fact, the same argument using the full strength of Theorem~\ref{thm:mcduff} shows that there is no symplectic realization of a configuration $\mathcal{G}$ with the same self-intersection numbers into any closed symplectic $4$-manifold.)
	\end{proof}
	
	Note that the exceptional spheres representing $e_1,e_2,e_3$ are not contained in the total transform of the embedding of $\mathcal{G}$, so the Fano plane is birationally derived from $\mathcal{G}$, but they are not birationally equivalent.

	\begin{figure}
		\centering
		\includegraphics[scale=.5]{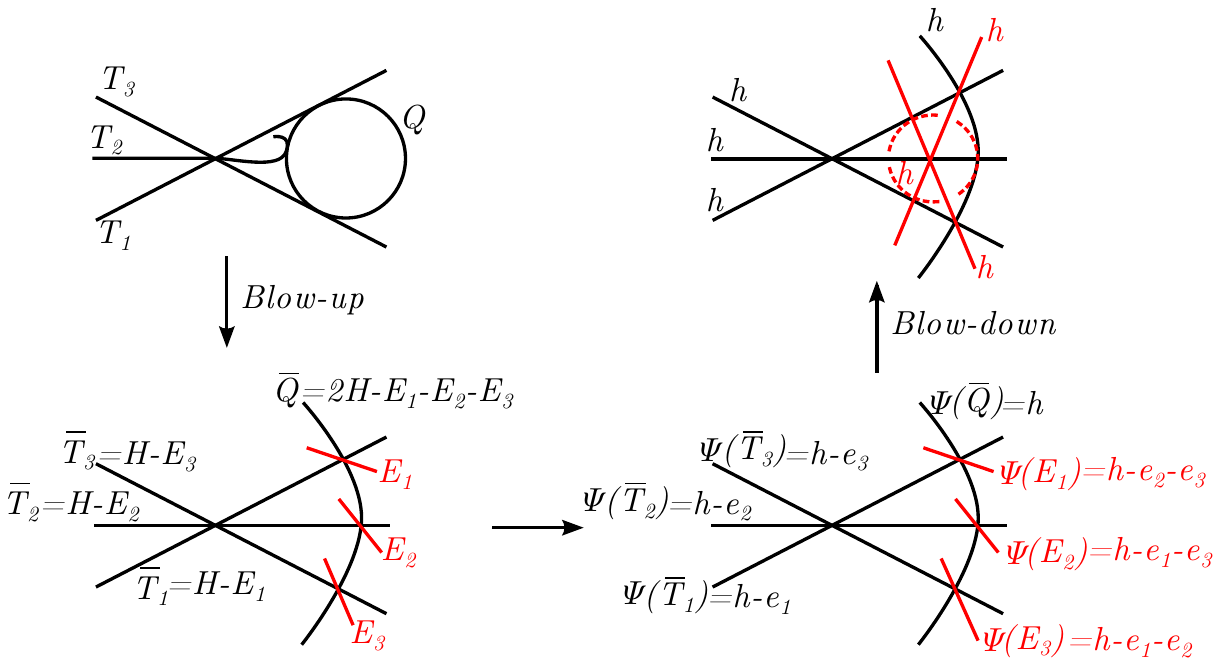}
		\caption{Birational derivation from $\Gc$ to the Fano configuration.}
		\label{fig:GtoFano}
	\end{figure}

\begin{remark}
The existence of a symplectic embedding of $\mathcal{G}$ into $\cptwo$ can also be obstructed using a $J$-holomorphic linear pencil based at the triple point $p$, and finding a contradiction using Riemann--Hurwitz.
Indeed, consider the projection $\pi: Q \to \cpone$ induced by the pencil;
by positivity of intersections, this is a branched covering map;
each tangent to $Q$ through $p$ gives a branching point of $\pi$ of index $2$, and Riemann--Hurwitz yields:
\[
2\cdot 2 = 2 \chi(Q) = \chi(\cpone) + \sum_{q\in Q} e_\pi(q) \ge  5,
\]
a contradiction.
\end{remark}

$\mathcal{G}$ is dual to the configuration $\mathcal{G}^\star$, comprised of three conics in a pencil (intersecting at four triple points) and a line tangent to all three of them.
We can also obstruct $\mathcal{G}^\star$.

\begin{prop}\label{p:Gstar}
There is no symplectic embedding of the configuration $\mathcal{G}^\star$ in $\cptwo$.
\end{prop}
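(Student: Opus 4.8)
The plan is to run the Riemann--Hurwitz argument dual to the one in the Remark following Proposition~\ref{p:GFano}, now using a pseudoholomorphic pencil of \emph{conics} in place of a pencil of lines. Suppose $\mathcal{G}^\star$ embeds symplectically in $\cptwo$, so that we have three conics $Q_1,Q_2,Q_3$ (each in class $2h$) meeting pairwise exactly in the four common base points $b_1,\dots,b_4$, together with a line $L$ tangent to each $Q_i$, at points away from the $b_j$. First I would invoke Lemma~\ref{l:Jcompatible} to fix an almost complex structure $J$ compatible with $\omega$ for which $Q_1,Q_2,Q_3$ and $L$ are all $J$-holomorphic; positivity of the intersections in $\mathcal{G}^\star$ guarantees this space of $J$ is non-empty.

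Next I would consider the family $\mathcal{M}$ of $J$-holomorphic conics through $b_1,\dots,b_4$. By Gromov's theory the moduli space of simple $J$-holomorphic spheres in class $2h$ has complex dimension $5$, and passage through a point is one complex condition, so $\mathcal{M}$ is a pencil: a $\cpone$ whose generic member is embedded by adjunction. Two distinct members meet, by positivity of intersections, in exactly $2h\cdot 2h = 4$ points counted with multiplicity; since they already share $b_1,\dots,b_4$, they meet nowhere else, so the members foliate $\cptwo\setminus\{b_1,\dots,b_4\}$ and the $J$-conic through five points is unique whenever it exists. The key observation is that $Q_1,Q_2,Q_3$, being three distinct $J$-conics through $b_1,\dots,b_4$, are three distinct members of $\mathcal{M}$.

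I would then restrict the pencil to $L$. Since $L$ avoids the base points, sending $x\in L$ to the member of $\mathcal{M}$ through $x$ defines a holomorphic map $\Phi\colon L \to \cpone$ that is defined on all of $L$ and extends over the finitely many values corresponding to reducible members. Its degree equals the number of points in which a generic member meets $L$, namely $[C]\cdot[L] = 2h\cdot h = 2$, again by positivity. A member is tangent to $L$ exactly when its two points of intersection with $L$ collide, yielding a ramification point of $\Phi$ of index $2$; by positivity such a tangency is simple, since higher tangency would force $[L]\cdot[Q_i] \ge 3 > 2$. Each of $Q_1,Q_2,Q_3$ is a member tangent to $L$ at a distinct point (the tangency points lie away from the base locus and on distinct conics), so each contributes a ramification point with $e_\Phi-1 = 1$, and the total ramification of $\Phi$ is at least $3$. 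But Riemann--Hurwitz for the degree-$2$ branched cover $\Phi\colon \cpone\to\cpone$ forces the total ramification to be exactly $2\cdot 2 - 2 = 2$, a contradiction; hence no symplectic embedding of $\mathcal{G}^\star$ exists.

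The main obstacle is the middle step: setting up the pseudoholomorphic pencil of conics and confirming that it behaves like the classical one, namely a $\cpone$ of members foliating $\cptwo$ away from the base locus and restricting to a degree-$2$ branched cover of $L$. This is where the genuinely symplectic input enters, and it rests on the standard Gromov-theoretic facts for degree-$2$ spheres (existence through five points together with uniqueness from positivity of intersections). Equivalently, one could blow up $b_1,\dots,b_4$ and phrase the whole argument in terms of the induced conic-bundle (ruling) structure on $\cptwo\#4\cptwobar$, where the $\widetilde Q_i$ become three fibers and $\widetilde L$ a degree-$2$ multisection, exactly as in the proof of Lemma~\ref{l:genusboundsruled}. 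Once the pencil is in place, the Riemann--Hurwitz count is immediate.
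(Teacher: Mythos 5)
Your argument is correct, but it takes a genuinely different route from the paper's. The paper obstructs $\Gc^\star$ by a birational derivation: blow up three of the four base points, apply McDuff's Theorem~\ref{thm:mcduff} together with Lemma~\ref{l:hom} to identify the three conics with lines and the tangent line with a sphere in class $2h-e_1-e_2-e_3$, and then blow down via Lemma~\ref{l:blowdown} to land on the configuration $\Gc$ (three concurrent lines tangent to a conic), which is already obstructed in Proposition~\ref{p:GFano}. Your proof instead runs Riemann--Hurwitz directly on a pseudoholomorphic pencil of conics; this is precisely the projective dual of the Remark following Proposition~\ref{p:GFano}, where the paper sketches an alternative Riemann--Hurwitz proof for $\Gc$ using the pencil of lines through the triple point. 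The trade-off is real: the paper's argument only ever needs pencils of $J$-lines, whose existence and uniqueness for an arbitrary compatible $J$ is classical Gromov theory, whereas you need the full pencil structure for the conic class through four base points at a fixed, non-generic $J$ --- existence of a member through every point of $L$, the $\cpone$ parametrization, and control of the (at most three) reducible members. You correctly flag this as the crux, and your proposed repair is the right one: no three base points lie on a $J$-line (positivity of intersections with the $Q_i$ forces this), so after blowing up $b_1,\dots,b_4$ the proper transforms of the conics are embedded spheres in the square-zero class $F=2h-e_1-\cdots-e_4$ with $c_1(F)=2$, and the same automatic-regularity-plus-uniqueness package the paper uses in Lemma~\ref{l:genusboundsruled} (via \cite[Proposition~4.1]{Mc}) yields a unique member of the compactified moduli through each point, making $\Phi\colon L\to\cpone$ a degree-$2$ branched cover. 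Two points are worth making explicit in a final write-up: the three tangency points are pairwise distinct because $Q_i\cdot Q_j=4$ is exhausted transversally at the base points, so no two conics meet on $L$; and if $L$ passes through the node of a reducible member, that only adds ramification, so the contradiction $\sum(e_\Phi(x)-1)\ge 3>2$ is unaffected. Each approach buys something: yours is shorter and conceptually unified with the line-pencil remark, while the paper's stays inside its uniform blow-up/blow-down toolkit and, as a by-product, exhibits $\Gc$ as birationally derived from $\Gc^\star$, a reduction that is reused in the quintic obstructions (Propositions~\ref{p:daisies} and~\ref{p:obstrG}).
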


\begin{proof}
We will show that there is a birational derivation from $\mathcal{G}^\star$ to a configuration containing $\mathcal{G}$.
Given a symplectic realization of $\mathcal{G}^\star$, blow up at three of the four basepoints of the pencil. The proper transforms of the conics are symplectic +1-spheres, intersecting at a single point (namely, the fourth basepoint). 
We can apply Theorem~\ref{thm:mcduff} to identify one of the three conics with $\cpone$. Then Lemma~\ref{l:hom} implies the other two conics are also symplectic lines, the image of the tangent line has class $2h-e_1-e_2-e_3$, and the three exceptional spheres represent $h-e_1-e_2$, $h-e_2-e_3$, and $h-e_1-e_3$. Blowing down positively intersecting exceptional spheres in classes $e_1,e_2,e_3$ using Lemma~\ref{l:blowdown}, the total transform of the realization of $\mathcal{G}^\star$ blows down to a conic simultaneously tangent to each of the three concurrent lines, together with a triangle of lines inscribed in the conic. Since this configuration contains $\mathcal{G}$, the configuration $\mathcal{G}^\star$ is obstructed by Proposition~\ref{l:biratexist}.
\end{proof}

We also obstruct the following related configurations:
\begin{itemize}
\item[$\Gc_4$:] comprising two conics with a single point of tangency of order 4, and a line tangent to both;
\item [$\Gc_{2,2}$:] comprising two conics with two simple tangencies, and a line tangent to both.
\end{itemize}

\begin{prop}\label{p:G422}
There is no symplectic embedding of any of the configurations $\Gc_4$ or $\Gc_{2,2}$ in $\cptwo$.
\end{prop}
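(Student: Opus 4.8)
The plan is to reduce both cases to a single geometric statement and then obstruct it by counting common tangent lines. Suppose $\Gc_{2,2}$ (resp. $\Gc_4$) embeds in $\cptwo$, with conics $Q_1,Q_2$ and common tangent line $L$. Fix an $\omega$-compatible $J$ for which $Q_1$, $Q_2$ and $L$ are all $J$-holomorphic (Lemma~\ref{l:Jcompatible}). At each point $x_i$ where $Q_1$ and $Q_2$ are tangent they share a tangent direction, so the unique $J$-holomorphic line $T_i$ tangent to $Q_1$ at $x_i$ is automatically tangent to $Q_2$ there as well. Thus $T_1,T_2$ (or, in the $\Gc_4$ case, the single tangent $T$ at the order-$4$ point) are common tangent $J$-lines, while $L$ would be a \emph{further} common tangent, tangent to $Q_1$ and $Q_2$ at points $p_1\neq p_2$ distinct from the contact points. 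Since the whole intersection $Q_1\cdot Q_2=4$ is concentrated at tangential contacts, the strategy is to show that these contact points already account for \emph{all} common tangent $J$-lines, contradicting the existence of $L$. Conceptually this is the projective-duality fact that common tangents of two conics are the intersection points of the dual conics, and a maximal tangential contact exhausts them.

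To implement this symplectically I would set up an incidence correspondence in the spirit of Proposition~\ref{p:bitangent}. For $u\in Q_1$ let $\ell_u$ be the unique $J$-holomorphic line tangent to $Q_1$ at $u$ (existence and uniqueness as in the proof of Proposition~\ref{p:addline}), and put
\[
I=\{(u,v)\in Q_1\times Q_2 : v\in \ell_u\cap Q_2\}.
\]
Positivity of intersections between $J$-holomorphic curves should make the two projections $I\to Q_1$ and $I\to Q_2$ into honest branched covers: the first of degree $\ell_u\cdot Q_2=2$, and the second of degree $2$ as well, since through a generic $v\in Q_2$ there are exactly two $J$-lines tangent to $Q_1$ (as in Lemma~\ref{l:defellp}). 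A point $u$ is a branch point of $I\to Q_1$ exactly when $\ell_u$ is tangent to $Q_2$, i.e.\ when $\ell_u$ is a common tangent; a point $v$ is a branch point of $I\to Q_2$ exactly when the two tangents from $v$ to $Q_1$ coincide, i.e.\ when $v\in Q_1$, so the branch locus of $I\to Q_2$ is $Q_1\cap Q_2$.

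For $\Gc_{2,2}$ the branch locus of $I\to Q_2$ is $\{x_1,x_2\}$ with simple ramification, so Riemann--Hurwitz gives $\chi(I)=2\chi(Q_2)-2=2$, and $I$ is a sphere. Applying Riemann--Hurwitz to $I\to Q_1$ then forces exactly $2\chi(Q_1)-\chi(I)=2$ branch points, i.e.\ exactly two common tangent $J$-lines. As $T_1$, $T_2$ and $L$ are three distinct common tangents (distinct because they touch $Q_1$ at the three distinct points $x_1,x_2,p_1$), this is a contradiction, so $\Gc_{2,2}$ cannot embed.

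The hard part is twofold. First, one must make the branched-cover setup rigorous in the symplectic category: that $I$ is a smooth closed surface and that both projections are genuine branched covers of the stated degrees, directly from positivity of intersections and the uniqueness of $J$-tangent lines. Second, and more seriously, one must handle $\Gc_4$, where $Q_1\cap Q_2$ is the single point $x$ carrying a contact of order $4$; a double cover cannot branch over one point alone, which signals that the order-$4$ contact forces $I$ to degenerate at $x$. Here I expect the delicate point to be the local analysis at the high-order tangency, showing that the single tangent $T$ at $x$ absorbs the entire ramification budget (a common tangent of the appropriate multiplicity), so that again no common tangent is left for $L$; a preliminary blow-up at $x$ to lower the order of contact, or a direct Euler-characteristic computation concentrating the ramification of $I\to Q_2$ at $x$, seem the two natural routes to carry this out.
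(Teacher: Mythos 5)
Your strategy --- counting common tangent $J$-lines via the incidence correspondence $I$ and Riemann--Hurwitz --- is genuinely different from the paper's proof, which instead blows up at the tangency point(s) (three times at the order-$4$ contact for $\Gc_4$; twice at one tangency and once at the other for $\Gc_{2,2}$), applies McDuff's Theorem~\ref{thm:mcduff} and Lemma~\ref{l:hom} to pin down the homology classes of all components, and then blows down via Lemma~\ref{l:blowdown} to birationally derive the configuration $\Gc$ of three concurrent tangents to a conic, already obstructed in Proposition~\ref{p:GFano}. For $\Gc_{2,2}$ your outline is essentially sound and is in the spirit of the paper's own tangent-line Riemann--Hurwitz arguments (Proposition~\ref{p:bitangent}, Lemmas~\ref{l:defellp}--\ref{l:no4tan}): the degree-$2$ cover $I\to Q_2$ branched exactly over $Q_1\cap Q_2=\{x_1,x_2\}$ forces $\chi(I)=2$, and then $I\to Q_1$ has ramification budget $2$, while $x_1$, $x_2$, $p_1$ contribute three simple ramification points. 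But you never establish that $I$ is a closed surface and that both projections are honest branched covers: the branch points of $I\to Q_1$ are precisely the points where $\ell_u$ is tangent to $Q_2$, i.e.\ the very points your count hinges on, and smoothness of $I$ at such a point requires a local analysis of how the $J$-tangent line $\ell_u$ varies --- positivity of intersections gives the fiber count away from these points but not the local model at them. The paper's device of replacing incidence varieties by continuous maps to $\cpone$ built from pencils and tangent lines (as carried out in detail in Proposition~\ref{p:bitangent}) is the natural way to make this rigorous, and it takes real work there.

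More seriously, the $\Gc_4$ case is not proved; you only name two candidate routes. Your own parity observation shows the naive setup must fail: two conics with an order-$4$ contact \emph{do} exist (complex examples abound), so if $I$ were a closed surface with $I\to Q_2$ a degree-$2$ cover branched at the single point $x$, Riemann--Hurwitz would give the impossible $\chi(I)=3$ --- hence $I$ necessarily degenerates at $(x,x)$, and the Euler-characteristic bookkeeping breaks exactly where the argument needs it. A correct execution of your ``preliminary blow-up'' route is, in effect, what the paper does: after blowing up three times at $x$ the two conics become $+1$-spheres, homology forces the classes shown in Figure~\ref{fig:G4toG}, and blowing down derives $\Gc$, whose obstruction (see the remark after Proposition~\ref{p:GFano}) is precisely a tangent-count Riemann--Hurwitz contradiction --- so your conceptual picture is realized there, but via the birational machinery you were trying to avoid. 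As written, your proposal leaves the smoothness of $I$ unaddressed in the $\Gc_{2,2}$ case and supplies no argument at all for $\Gc_4$.
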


\begin{proof}
Suppose that there existed an embedding of $\Gc_4$ in $\cptwo$.
After blowing up three times at the tangency point of the two conics, the proper transforms of the two conics are two $+1$-spheres. See Figure~\ref{fig:G4toG}.
Applying Theorem~\ref{thm:mcduff}, we obtain a symplectomorphism of $\cptwo$ blown up three times that sends the proper transform of one of the conics to $\cpone$.
Using Lemma~\ref{l:hom} and intersection numbers, the proper transforms of all the conics are in the homology class $h$, the proper transforms of the three exceptional divisors (in the order in which we made the blow-ups) are mapped to spheres in homology classes $e_1-e_2$, $e_2-e_3$, and $h-e_1-e_2$, and the proper transform of the line is sent to the homology class $2h-e_1-e_2-e_3$.
Using Lemma~\ref{l:blowdown} to blow down a positively intersecting sphere in the class $e_3$ followed by the $(-1)$-spheres in the configuration representing $e_2$ and $e_1$, we get a birational derivation from $\Gc_4$ to $\Gc$.

\begin{figure}
	\labellist
	\pinlabel $+4$ at 50 160
	\pinlabel $+4$ at 0 160
	\pinlabel $+1$ at 43 70
	\pinlabel $h$ at 340 190
	\pinlabel $h$ at 387 190
	\pinlabel $+1$ at 315 110
	\pinlabel $+1$ at 395 110
	\pinlabel $+1$ at 355 70
	\pinlabel $2h-e_1-e_2-e_3$ at 355 0
	\pinlabel $-1$ at 410 160
	\pinlabel $h-e_1-e_2$ at 430 140
	\pinlabel $-2$ at 275 135
	\pinlabel $e_2-e_3$ at 285 190
	\pinlabel $-2$ at 225 135
	\pinlabel $e_1-e_2$ at 215 190
	\pinlabel $h$ at 540 190
	\pinlabel $h$ at 562 220
	\pinlabel $h$ at 590 190
	\pinlabel $2h$ at 567 50
	\endlabellist
	\centering
	\includegraphics[width=0.8\textwidth]{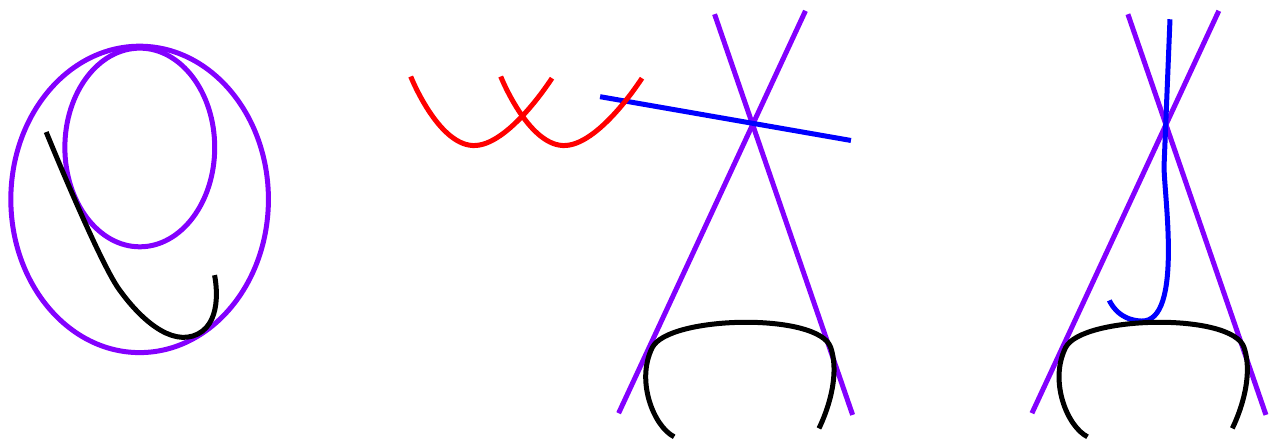}
	\caption{Birational derivation of $\mathcal{G}$ from $\mathcal{G}_4$.}
	\label{fig:G4toG}
\end{figure}

For $\Gc_{2,2}$ we blow up twice at one of the tangencies, and once at the other. See Figure~\ref{fig:G22toG}. Identifying the proper transforms of the conics with $\cpone$, the homology classes of the other curves in the total transform are uniquely determined by Lemma~\ref{l:hom} and intersection numbers. The proper transform of the line is again identified with a sphere in the class $2h-e_1-e_2-e_3$, and exceptional sphere at the intersection where we blew up once represents $h-e_1-e_2$. The two exceptional curves from the other blow-ups are identified with spheres representing the classes $h-e_2-e_3$ and $e_2-e_1$. Using Lemma~\ref{l:blowdown} to blow-down exceptional spheres in classes $e_1, e_2, e_3$, the configuration blows down to a configuration containing $\Gc$ (with an additional line). Therefore by Proposition~\ref{l:biratexist}, a symplectic embedding of $\Gc_{2,2}$ into $\cptwo$ is obstructed.
\end{proof}

\begin{figure}
	\labellist
	\pinlabel $+4$ at 70 180
	\pinlabel $+4$ at 0 180
	\pinlabel $+1$ at 150 130
	\pinlabel $h$ at 325 220
	\pinlabel $h$ at 375 220
	\pinlabel $+1$ at 325 243
	\pinlabel $+1$ at 375 243
	\pinlabel $+1$ at 355 70
	\pinlabel $2h-e_1-e_2-e_3$ at 355 20
	\pinlabel $-1$ at 415 160
	\pinlabel $h-e_2-e_3$ at 445 135
	\pinlabel $-1$ at 400 200
	\pinlabel $h-e_1-e_2$ at 440 180
	\pinlabel $-2$ at 235 125		
	\pinlabel $e_2-e_1$ at 240 165
	\pinlabel $h$ at 560 215
	\pinlabel $h$ at 582 245
	\pinlabel $h$ at 610 215
	\pinlabel $2h$ at 555 70
	\pinlabel $h$ at 530 145
	\endlabellist
	\centering
	\includegraphics[width=0.8\textwidth]{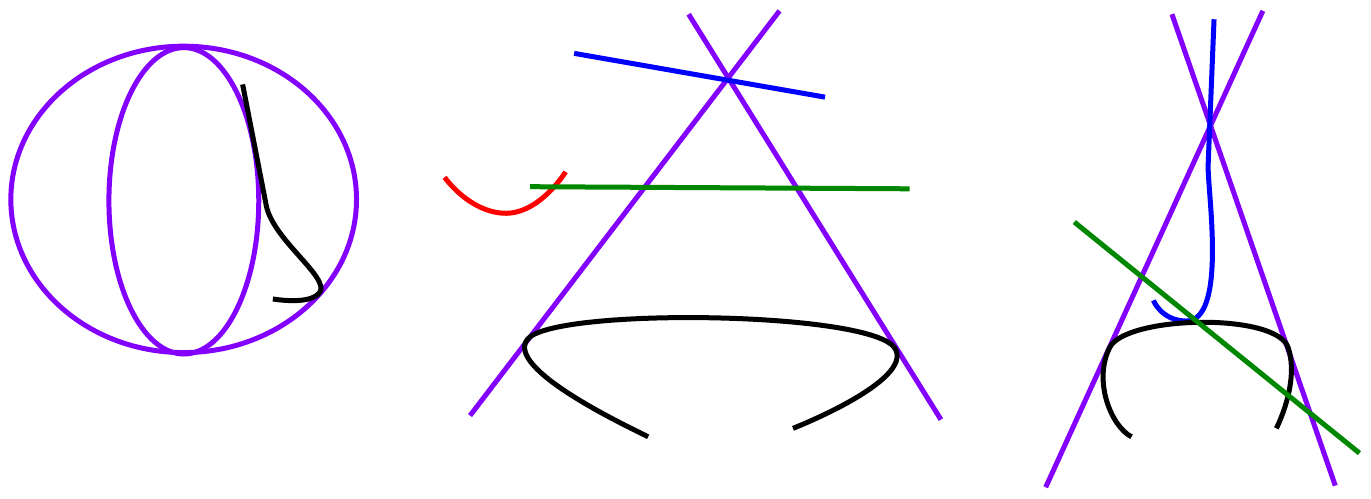}
	\caption{Birational derivation of $\mathcal{G}$ from $\mathcal{G}_{2,2}$.}
	\label{fig:G22toG}
\end{figure}

% !TEX root = ../rationalcuspidal.tex

\section{Unicuspidal curves with one Newton pair}\label{s:unicusp}
{

	The goal of this section is to prove Theorem~\ref{t:unicusp}, that every symplectic unicuspidal curve in $\cptwo$ is isotopic to a complex curve. 	In fact, for each of such curve we will classify \emph{all} of its symplectic embeddings into \emph{any} closed symplectic manifold. Correspondingly, we classify all the strong symplectic fillings of the corresponding contact manifolds.
%	\begin{theorem}\label{t:unicuspidal-isotopy}
%	In $\cptwo$, every symplectic rational unicuspidal curve whose unique singularity is the cone on a torus knot is symplectically isotopic to a complex curve.
%	\end{theorem}
%	

	According to~\cite[Theorem~2.3]{Liu} (see also~\cite[Remark~6.18]{BCG}), if a rational cuspidal curve with a unique singularity of type $(p,q)$ satisfies the adjunction formula, that is $(p-1)(q-1) = (d-1)(d-2)$, then $(p,q)$ belongs to the list of~\cite[Theorem 1.1]{FdBLMHN}; namely, $(p,q)$ is one of:
	\begin{itemize}
		\item $(p,p+1)$, with $p\ge 2$, and the curve has degree $p+1$;
		\item $(p,4p-1)$, with $p\ge 2$, and the curve has degree $2p$
		\item $(F_{j-2},F_{j+2})$, with $j \ge 5$ and odd, and the curve has degree $F_j$;
		\item $(F_j^2; F_{j+2}^2)$ with $j \ge 3$ and odd, and the curve has degree $F_jF_{j+2}$;
		\item $(3,22)$, and the curve has degree $8$;
		\item $(6,43)$ and the curve has degree $16$.
	\end{itemize}
	
	Here, $F_j$ denotes the $j^{\rm th}$ Fibonacci number; recall that the sequence $\{F_j\}$ is defined by the recursion $F_{j+1} = F_j + F_{j-1}$, starting from $F_0 = 0$, $F_1 = 1$.
	
	The infinite families all have log Kodaira dimension $-\infty$ and the two sporadic cases are of log general type (see \cite[Section 1.2]{FdBLMHN}).

	Since symplectic curves satisfy the adjunction formula~\eqref{e:adjunction}, we can restrict to the six cases above.
	For each of them, we will use a resolution to classify symplectic fillings of the associated contact 3-manifold, and correspondingly classify the symplectic embeddings of these cuspidal curves in closed symplectic manifolds up to symplectic isotopy.
	
	In Section~\ref{ss:res}, we will describe the choice of resolutions that we use to classify the embeddings.
	Each such resolution will contain a smooth symplectic $+1$-sphere as the proper transform of the cuspidal curve.
	In Section~\ref{ss:homclass}, we use McDuff's theorem and the lemmas of Section~\ref{s:caps} to classify all homological embeddings of each of the resolutions.
	In this story, the two Fibonacci families will play a different role;
	we will treat them in Section~\ref{ss:fibonaccis}.
	In Section~\ref{ss:isofills}, we will look at geometric realization of these homological embeddings, and will prove a strengthening of Theorem~\ref{t:unicusp}.
	Finally, in Section~\ref{ss:QHBd}, we will talk about rational blow-down relations.

	\subsection{The resolutions}\label{ss:res}
	
	Here we describe our preferred resolutions for the singular curves in each of the six cases listed above, except the Fibonacci families (for which we will have a different argument in Section~\ref{ss:fibonaccis} below).
	In general, our preferred resolution will not be either the minimal or the normal crossing divisor resolution of the singularity;
	the goal will be to find a smooth symplectic $+1$-sphere.
	
	%The normal crossing expansion of the $(p,p+1; p+1)$ curve has two legs given by expansions of $-\frac{p+1}{1}=[-(p+1)]$ and $-\frac{p}{p-1}=[(-2)^{p-1}]$, and the third length one leg is initially decorated by $(p+1)^2-p(p+1)=p+1$.
	We start with the first family: a degree-$(p+1)$ curve with a unique singularity of type $(p,p+1)$.
	We first find the normal crossing resolution. 
	In the notation of Section~\ref{ss:resolution}, we have $q=p+1$, $p^* = p$ (since $p^2 \equiv 1 \pmod{p+1}$) and $q^* = 1$.
	We are interested in the continued fraction expansions of $\frac{p}{p-1}$ and $\frac{p+1}{p+1-p}$, which are $[2^{[p-1]}]^-$ and $[p+1]^-$, respectively.
	Since the curve has degree $p+1$, its self-intersection is $(p+1)^2$, and therefore the self-intersection of the proper transform in the normal crossing divisor resolution has self-intersection $(p+1)^2-p(p+1) = p+1$.
	From this normal crossing resolution, we blow up $p$ additional times along this third leg (i.e. at the intersection between the latest exceptional divisor and the proper transform of the curve).
	The net effect of this operation is that the central vertex is decorated by Euler class $-2$ and the third leg consists of a chain of $(p-1)$ $(-2)$-vertices, a $(-1)$-vertex, and a $(+1)$-vertex.
	We will refer to this symplectic plumbing as $\mathcal{A}_p$; see Figure~\ref{f:Ap}.
	
	\begin{figure}
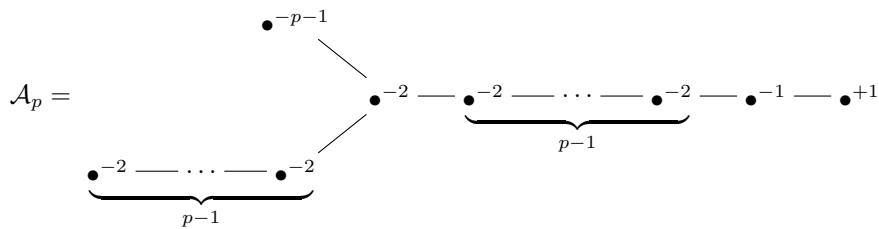

	\[
	\Ac_p = \xygraph{
	!{<0cm,0cm>;<1.25cm,0cm>:<0cm,1cm>::}
	!{(0,0) }*+{\bullet^{-2}}="c"
	!{(-1,1) }*+{\bullet^{-p-1}}="l1"
	!{(1,0) }*+{\bullet^{-2}}="l3a"
	!{(2,0) }*+{\dots}="l3b"
	!{(2,-0.5) }*+{\underbrace{\hphantom{aaaaaaaaaaaaaaaa}}_{p-1}}="brace1"
	!{(3,0) }*+{\bullet^{-2}}="l3c"
	!{(4,0) }*+{\bullet^{-1}}="l3d"
	!{(5,0) }*+{\bullet^{+1}}="l3e"
	!{(-1,-1) }*+{\bullet^{-2}}="l2a"
	!{(-2,-1) }*+{\dots}="l2b"
	!{(-2,-1.5) }*+{\underbrace{\hphantom{aaaaaaaaaaaaaaaa}}_{p-1}}="brace2"
	!{(-3,-1) }*+{\bullet^{-2}}="l2c"
	"c"-"l1"
	"c"-"l2a"
	"c"-"l3a"
	"l2a"-"l2b"
	"l2b"-"l2c"
	"l3a"-"l3b"
	"l3b"-"l3c"
	"l3c"-"l3d"
	"l3d"-"l3e"
	}
	\]
	\caption{The top and bottom legs correspond to the continued fraction expansions $\frac{p+1}{1} = [p+1]^-$ and $\frac{p}{p-1} = [2^{[p-1]}]^-$, respectively. The chain on the right is artificially longer than needed to make a $+1$-sphere appear.}\label{f:Ap}
	\end{figure}

	We now turn to the second case: a degree-$2p$ curve with a singularity of type $(p,4p-1)$.
	The normal crossing resolution of this unicuspidal curve has two legs with expansions of $\frac{p}{1}=[p]^-$ and $\frac{4p-1}{4p-5}=[2^{[p-2]},3,2,2]^-$, and the third leg has one vertex, which is initially decorated by $(2p)^2-p(4p-1)=p$.
	We blow up along this leg $p-1$ additional times, so that the central vertex is decorated by $-2$, and the third leg becomes a chain of $p-2$ vertices decorated by $-2$, one by $-1$, and one by $+1$.
	We will refer to this symplectic plumbing as $\mathcal{B}_p$.
	\begin{figure}
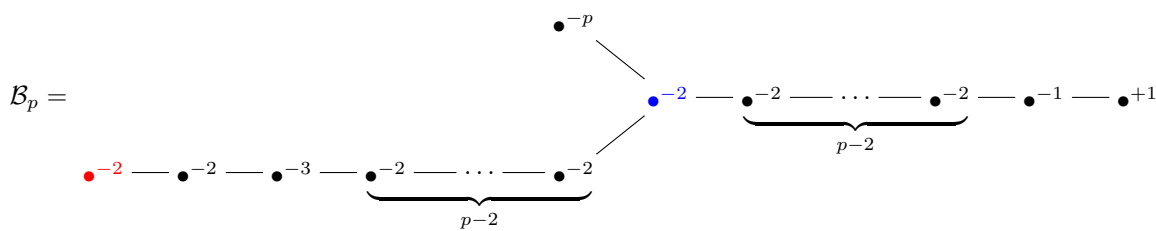

	\[
	\Bc_p = \xygraph{
		!{<0cm,0cm>;<1.25cm,0cm>:<0cm,1cm>::}
		!{(0,0) }*+[blue]{\bullet^{-2}}="c"
		!{(-1,1) }*+{\bullet^{-p}}="l1"
		!{(1,0) }*+{\bullet^{-2}}="l3a"
		!{(2,0) }*+{\dots}="l3b"
		!{(2,-0.5) }*+{\underbrace{\hphantom{aaaaaaaaaaaaaaaa}}_{p-2}}="brace1"
		!{(3,0) }*+{\bullet^{-2}}="l3c"
		!{(4,0) }*+{\bullet^{-1}}="l3d"
		!{(5,0) }*+{\bullet^{+1}}="l3e"
		!{(-1,-1) }*+{\bullet^{-2}}="l2a"
		!{(-2,-1) }*+{\dots}="l2b"
		!{(-2,-1.5) }*+{\underbrace{\hphantom{aaaaaaaaaaaaaaaa}}_{p-2}}="brace2"
		!{(-3,-1) }*+{\bullet^{-2}}="l2c"
		!{(-4,-1) }*+{\bullet^{-3}}="l2d"
		!{(-5,-1) }*+{\bullet^{-2}}="l2e"
		!{(-6,-1) }*+[red]{\bullet^{-2}}="l2f"
		"c"-"l1"
		"c"-"l2a"
		"c"-"l3a"
		"l2a"-"l2b"
		"l2b"-"l2c"
		"l2c"-"l2d"
		"l2d"-"l2e"
		"l2e"-"l2f"
		"l3a"-"l3b"
		"l3b"-"l3c"
		"l3c"-"l3d"
		"l3d"-"l3e"
	}
	\]
	\caption{The top and bottom legs correspond to the continued fraction expansions $\frac{p}{1} = [p]^-$ and $\frac{4p-1}{4p-5} = [2^{[p-2]},3,2,2]$, respectively. The chain on the right is artificially longer than needed to make a $+1$-sphere appear.}\label{f:Bp}
	\end{figure}

	The normal crossing resolution for the singularity of type $(3,22)$ has two legs expanding $\frac{3}{2}=[2,2]^-$ and $\frac{22}{7}=[4,2^{[6]}]^-$, with the third leg decorated by $8^2-3\cdot 22=-2$.
	Since we want a sphere of square $+1$, we blow back down to the minimal resolution, whose neighborhood will be called $\Ec$. The triple edge indicates a multiplicity three tangency between the $(+1)$-sphere and the $(-1)$-sphere.

	\begin{figure}
		\[
			\Ec = \xygraph{
				!{<0cm,0cm>;<1.25cm,0cm>:<0cm,1cm>::}
				!{(0,0) }*+{\bullet^{-2}}="c"
				!{(1,0) }*+{\bullet^{-2}}="l3a"
				!{(2,0) }*+{\bullet^{-2}}="l3b"
				!{(3,0) }*+{\bullet^{-2}}="l3c"
				!{(4,0) }*+{\bullet^{-1}}="l3d"
				!{(5,0) }*+{\bullet^{+1}}="l3e"
				!{(-1,0) }*+{\bullet^{-2}}="l2a"
				!{(-2,0) }*+{\bullet^{-2}}="l2b"
				"c"-"l2a"
				"c"-"l3a"
				"l2a"-"l2b"
				"l3a"-"l3b"
				"l3b"-"l3c"
				"l3c"-"l3d"
				"l3d"-"l3e"
				"l3d"-@/_.1cm/"l3e"
				"l3d"-@/^.1cm/"l3e"
			}
		\]
		\caption{}\label{f:322}
	\end{figure}

	The normal crossing resolution for $(6,43)$ has two legs corresponding to the continued fraction expansions $\frac{6}{5}=[2^{[5]}]^-$ and $\frac{43}{7}=[7,2^{[6]}]^-$, and the third leg is labeled by $16^2-6\cdot 43=-2$.
	Again we must blow down three times to get a $(+1)$-sphere.
	We end up between the minimal smooth resolution and the minimal normal crossing resolution, with a configuration indicated by the graph of Figure~\ref{f:643}.
	We call the neighborhood $\Dc$.
	Here the $(+1)$- and $(-4)$-spheres intersect tangentially with multiplicity $3$, and the $(-1)$-sphere intersects these two at the same point transversally.
	\begin{figure}
	\[
	\Dc = \xygraph{
		!{<0cm,0cm>;<1.25cm,0cm>:<0cm,1cm>::}
		!{(0,0) }*+{\bullet^{-2}}="c"
		!{(1,0) }*+{\bullet^{-2}}="l3a"
		!{(2,0) }*+{\bullet^{-2}}="l3b"
		!{(3,0) }*+{\bullet^{-2}}="l3c"
		!{(4,0) }*+{\bullet^{-4}}="l3d"
		!{(5,0) }*+{\bullet^{+1}}="l3e"
		!{(-1,0) }*+{\bullet^{-2}}="l2a"
		!{(-2,0) }*+{\bullet^{-2}}="l2b"
		!{(2,1) }*+{\bullet^{-2}}="l1a"
		!{(3,1) }*+{\bullet^{-2}}="l1b"
		!{(4,1) }*+{\bullet^{-1}}="l1c"
		"c"-"l2a"
		"c"-"l3a"
		"l2a"-"l2b"
		"l3a"-"l3b"
		"l3b"-"l3c"
		"l3c"-"l3d"
		"l3d"-"l3e"
		"l3d"-@/_.1cm/"l3e"
		"l3d"-@/^.1cm/"l3e"
		"l1a"-"l1b"
		"l1b"-"l1c"
		"l1c"-"l3e"
		"l1c"-"l3d"
	}
	\]
	\caption{}\label{f:643}
	\end{figure}

	\subsection{The homological embeddings}\label{ss:homclass}
	
	In this subsection, we will apply McDuff's theorem to each of the resolutions of the previous subsection.
	In order to shorten up the statements in this section, using Theorem~\ref{thm:mcduff} we will \emph{implicitly} identify the $(+1)$-sphere in each of the configurations with a line in a blow-up of $\cptwo$, and correspondingly its homology class will be identified with $h$. We work with a standard basis for $H_2(\cptwo\#_N\cptwobar)=\langle h, e_1,\dots, e_N\rangle$.
	
	We start with the configuration $\Ac_p$, corresponding to curves with a singularity of type $(p,p+1)$.
	
	\begin{lemma}\label{l:homAp}
		In the configuration $\Ac_p$ of Figure~\ref{f:Ap}, the homology classes of the curves in the chain which excludes the $(-p-1)$-sphere are:
		\[
		(h, h-e_0-e_1, e_1-e_2,\dots, e_{p-1}-e_p, e_p-e_{p+1}, e_{p+1}-e_{p+2},\dots, e_{2p-1}-e_{2p})
		\]
		and the symplectic $(-p-1)$-sphere represents
		\[
		e_0-e_1-\dots-e_{p}.
		\]
	\end{lemma}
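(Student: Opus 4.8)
The plan is to turn this into a purely homological statement via McDuff's theorem, then run the lemmas of Section~\ref{s:caps} along the single linear chain of the plumbing, and finish with a short intersection-number computation that pins down the $(-p-1)$-sphere. First I would apply Theorem~\ref{thm:mcduff} to the $+1$-sphere $l3e$ of $\Ac_p$, identifying it with $\cpone$ and its class with $h$. Reading off the graph in Figure~\ref{f:Ap}, the only sphere meeting $l3e$ is the adjacent $(-1)$-sphere $l3d$, so $l3d\cdot h = 1$ while every other sphere in the configuration is disjoint from $\cpone$ and has zero $h$-coefficient. This is what lets Lemma~\ref{l:hom} do its work.

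Next I would classify the ``low'' spheres one level at a time. For $l3d$ we have $a_0 = 1$ and $(l3d)^2 = -1$, so Lemma~\ref{l:hom}\ref{adj} together with the sign constraint \ref{sgn} forces it to be $h-e_0-e_1$ after re-indexing. Each $(-2)$-sphere in the chain has $a_0 = 0$, so by Lemma~\ref{l:hom}\ref{e:exceptional-disjoint} and self-intersection $-2$ it is of the form $e_i-e_j$. I would then regard $l3c,\dots,l3a,c,l2a,\dots,l2c$ as one linear chain of $2p-1$ $(-2)$-spheres attached at one end to $l3d$, and apply Lemma~\ref{l:2chain} to get the two options~\ref{i:to}/\ref{i:from}. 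Since $l3d$ meets $\cpone$ and $(l3d)^2 = -1 > 2-2p = 1-(2p-1)$ for every $p\ge 2$, Lemma~\ref{l:2chainfix} excludes option~\ref{i:from}; hence the chain is $\Sigma_i = e_i - e_{i+1}$ for $i=1,\dots,2p-1$. Matching $\Sigma_1 = e_1-e_2$ (its $+1$-class $e_1$ being the class it shares with $l3d$, as dictated by Lemma~\ref{l:consecutive}) and reading the central node as $\Sigma_p = e_p-e_{p+1}$ reproduces exactly the claimed tuple $(h,\,h-e_0-e_1,\,e_1-e_2,\dots,e_{2p-1}-e_{2p})$.

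It remains to determine the $(-p-1)$-sphere $l1$, which meets only the central sphere $c$ and is otherwise disjoint, so again $a_0 = 0$ and by Lemma~\ref{l:hom}\ref{e:exceptional-disjoint} with $(l1)^2 = -(p+1)$ it equals $F = \sum_i c_i e_i$ with exactly one $c_i = +1$ and $p$ coefficients equal to $-1$. The intersection conditions $F\cdot(e_i-e_{i+1}) = 0$ for $i\neq p$, $F\cdot(e_p-e_{p+1}) = 1$, and $F\cdot(h-e_0-e_1) = 0$ translate respectively into $c_1=\dots=c_p$, $c_{p+1}=\dots=c_{2p}$, $c_{p+1}-c_p = 1$, and $c_0+c_1 = 0$. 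The only solution compatible with ``one $+1$ and $p$ entries $-1$'' is $c_0 = 1$, $c_1=\dots=c_p = -1$, and all other $c_i = 0$, giving $F = e_0-e_1-\dots-e_p$, as claimed.

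The only step where anything genuine happens is excluding the reversed chain (option~\ref{i:from}): this is precisely where the hypothesis $p\ge 2$ is used, through the length bound of Lemma~\ref{l:2chainfix}, since the chain must be long enough that a $(-1)$-sphere cannot sit at its reversed end. Everything else is bookkeeping forced by adjunction and the fixed intersection pattern, and the final determination of $F$ is a routine solve of the linear system above.
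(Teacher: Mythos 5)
Your proposal is correct and takes essentially the same route as the paper: McDuff's identification, then Lemma~\ref{l:2chainfix} to pin down the linear chain (exactly the paper's first step), and your explicit linear system for the $(-p-1)$-sphere is just an unwound version of the paper's one-line appeal to Lemmas~\ref{l:consecutive}, \ref{l:pos}, and~\ref{l:share2}, using the same key observation that $e_{p+1}$ already carries a positive coefficient elsewhere. One cosmetic remark: citing Lemma~\ref{l:consecutive} to match $\Sigma_1$ with the $(-1)$-sphere is slightly off, since that lemma assumes both spheres satisfy $[C_i]\cdot h = 0$ while $[l3d]\cdot h = 1$, but the fact you actually need --- that the $+1$-class of $\Sigma_1$ must appear with coefficient $-1$ in $[l3d]$ --- is immediate from positivity and is precisely what is established inside the proof of Lemma~\ref{l:2chainfix}, so nothing breaks.
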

	
	\begin{proof}
	%\MG{Small rephrasing.}
		The classes of the components of the chain, excluding the $(-p-1)$-sphere, are uniquely determined by Lemma~\ref{l:2chainfix}.
		The remaining $(-p-1)$-sphere intersects once positively with the class $e_{p}-e_{p+1}$ and zero with the other classes.
		Since the class $e_{p+1}$ appears with positive coefficient already, by Lemmas~\ref{l:consecutive},~\ref{l:pos}, and~\ref{l:share2} the class of this last sphere is uniquely determined as stated.
	\end{proof}

	The cap corresponding to the rational cuspidal curve with singularity of type $(p,4p-1)$ has similar restrictions on its possible homology classes.

	\begin{lemma} \label{l:homBp}
		 In the configuration $\Bc_p$ of Figure~\ref{f:Bp}, the long chain starting with the $+1$-sphere and including the entire configuration except the $(-p)$-sphere has two possible homology configurations differing only in the last sphere of the chain when $p\geq 3$. The first possibility is:
		
		\[
		(h,h-e_0-e_1,e_1-e_2,\dots, e_{p-2}-e_{p-1},{\color{blue}e_{p-1}-e_p},e_p-e_{p+1},\dots, e_{2p-3}-e_{2p-2},
		\]
		\[
		e_{2p-2}-e_{2p-1}-e_{2p},e_{2p}-e_{2p+1},{\color{red}e_{2p+1}-e_{2p+2}}).
		\]
		
		For the other possibility, the last sphere can represent the class ${\color{red} e_{2p-1}-e_{2p}}$.
		
		In both cases, there is a unique possibility for the class represented by the $(-p)$-sphere: 
		\[
		e_0-e_1-\dots - e_{p-1}.\qedhere
		\]
		When $p=2$, there is an additional case where the long chain starting at the $+1$-sphere represents
		\[ (h,h-e_0-e_1, {\color{blue}e_1-e_2}, e_0-e_1-e_3, e_3-e_4, {\color{red}e_4-e_5})  \]
		and the length-one arm is a $(-p)$-sphere in the class $e_2-e_6$.
	\end{lemma}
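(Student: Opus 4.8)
The plan is to run the same homological bookkeeping used for $\Ac_p$ in Lemma~\ref{l:homAp}, but now following the longer second leg carefully, since that is where the two (and, for $p=2$, three) possibilities originate. By the standing convention of this subsection the $+1$-sphere is identified with $\cpone$ and so represents $h$; its unique neighbour in the plumbing, the $(-1)$-sphere, therefore satisfies $[\,\cdot\,]\cdot h=1$, i.e. has $h$-coefficient $1$, and by the $a_0=1$ case of Lemma~\ref{l:hom} together with its square being $-1$ it must (after re-indexing) be $h-e_0-e_1$. Every other sphere is disjoint from $\cpone$, so by item~\ref{e:exceptional-disjoint} of Lemma~\ref{l:hom} each $(-2)$-sphere has the form $e_i-e_j$, the $(-3)$-sphere has the form $e_i-e_j-e_k$, and the $(-p)$-sphere has the form $e_i-e_{j_1}-\dots-e_{j_{p-1}}$.

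First I would pin down the long $(-2)$-chain running from the $(-1)$-sphere through the central $(-2)$-vertex and down the second leg as far as the last $(-2)$-sphere before the $(-3)$-vertex; this is a chain of $k=2p-3$ spheres of square $-2$ attached to $\Sigma_0=h-e_0-e_1$. Since $[\Sigma_0]^2=-1$ and $[\Sigma_0]\cdot h=1$, Lemma~\ref{l:2chainfix} shows option~\ref{i:from} would require $-1\le 1-(2p-3)$, i.e. $p\le 2$; hence for $p\ge 3$ the chain is forced into option~\ref{i:to}, giving the classes $h-e_0-e_1,\,e_1-e_2,\dots,e_{2p-3}-e_{2p-2}$, with the blue central vertex equal to $e_{p-1}-e_p$. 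Next I would determine the $(-p)$-sphere, which meets only this central vertex: by Lemma~\ref{l:pos} it cannot carry $+e_p$ (that class is already positive in $e_p-e_{p+1}$), so it carries $-e_{p-1}$, and disjointness from the successive arm-$3$ vertices $e_{i-1}-e_i$, repeatedly invoking Lemma~\ref{l:share2}, propagates $-e_i$ down to $-e_1$; finally disjointness from $h-e_0-e_1$ forces a leading $+e_0$, yielding exactly $e_0-e_1-\dots-e_{p-1}$.

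The crux is the tail of the second leg: the $(-3)$-vertex and the final two $(-2)$-vertices. I would first show the $(-3)$-sphere must continue the chain ``forwards'', sharing $e_{2p-2}$ with positive sign and so equalling $e_{2p-2}-e_{2p-1}-e_{2p}$; the only alternative, re-using $e_{2p-3}$ negatively, makes disjointness from the preceding chain vertices (Lemma~\ref{l:share2}) propagate negative coefficients backwards and demand more than the two available $(-1)$-entries, a contradiction by descent. The same reasoning forces the penultimate $(-2)$-sphere to be $e_{2p}-e_{2p+1}$, up to the relabelling that swaps the two symmetric negative legs $e_{2p-1},e_{2p}$ of the $(-3)$-vertex. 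For the \emph{final} $(-2)$-sphere, however, two genuinely distinct options survive: sharing $e_{2p+1}$ with a fresh class (giving $e_{2p+1}-e_{2p+2}$), or re-using the pair $e_{2p-1},e_{2p}$ already present in the $(-3)$-vertex (giving $e_{2p-1}-e_{2p}$, consistent with Lemma~\ref{l:share2} since $e_{2p}$ sits negatively in both spheres while $e_{2p-1}$ flips sign). These are the two configurations in the statement, and since they involve a different number of exceptional classes they are not identified by any re-indexing.

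The main obstacle, and the reason $p=2$ is singled out, is that when $p=2$ the second leg has no $(-2)$-spheres between the central vertex and the $(-3)$-vertex, so both the $(-3)$-sphere and the $(-p)$-sphere attach directly to the central $e_1-e_2$. Here the bound of Lemma~\ref{l:2chainfix} ($k=1$) no longer excludes option~\ref{i:from}, and the backward-descent argument that forced the $(-3)$-vertex to continue forwards has no room to run. Consequently a new branch opens in which the $(-3)$-sphere re-uses $e_0,e_1$ (becoming $e_0-e_1-e_3$) while the $(-p)$-sphere attaches to the opposite sign-branch of the centre (becoming $e_2-e_6$); I would verify directly that this satisfies all the disjointness relations, and eliminate the remaining spurious branches using Lemma~\ref{l:pos} and Lemma~\ref{l:share2} (for instance the branch where both tail spheres carry $-e_1$ collapses downstream). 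This recovers the two main configurations together with the extra case recorded in the statement, so that the bulk of the argument is the finite but delicate tail analysis, carried out separately for $p\ge 3$ and for $p=2$.
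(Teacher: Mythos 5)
Correct, and essentially the paper's own argument: the paper merely shortcuts your chain and $(-p)$-sphere bookkeeping by observing that $\Bc_p$ is $\Ac_{p-1}$ with three extra vertices of weights $-3,-2,-2$ and citing Lemma~\ref{l:homAp}, then pins down the tail and the $p=2$ role-switch of the $(-3)$- and $(-p)$-spheres exactly as you do. One small inaccuracy, which your promised direct verification absorbs: at $p=2$ the extra case is not produced by the chain-orientation loophole in Lemma~\ref{l:2chainfix} (for $k=1$ the to/from dichotomy is vacuous up to relabelling, so the central vertex is still $e_1-e_2$), but solely by your second mechanism, namely the $(-3)$-sphere reusing $e_0,e_1$ once the backward descent has no room to run.
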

	
	Note that the second option is a homological embedding into $\cptwo\# (2p+2)\cptwobar$ whereas the first option is a homological embedding into $\cptwo \# (2p+3)\cptwobar$.
	Since the cap has $b_2^+=1$ and $b_2^-=2p+2$, the symplectic filling complementary to the first embedding will have $b_2=1$ and the one complementary to the second embedding will be a rational homology ball filling.
	In particular, the filling complementary to the second homology embedding is the only one that could give the complement of the rational cuspidal curve in $\cptwo$.
	
	\begin{proof}
		The $\Bc_p$ configuration is identical to the $\Ac_{p-1}$ configuration with three additional vertices of weights $-3,-2,-2$ on the lower left leg.
		Therefore all of the homology classes excluding these last three, are determined by Lemma~\ref{l:homAp}. By Lemmas~\ref{l:consecutive} and~\ref{l:share2}, the $(-3)$-sphere must represent $e_{2p-2}-e_{2p-1}-e_{2p}$ unless $p=2$ (we will come back to this case). 
		When $p\geq 3$, the subsequent $(-2)$-sphere must represent $e_{2p}-e_{2p+1}$, and the last $(-2)$-sphere can either represent $e_{2p+1}-e_{2p+2}$ or $e_{2p-1}-e_{2p}$ by Lemma~\ref{l:2chain}.
		
	When $p=2$, the $(-3)$-sphere is not fully determined by the previous chain and the roles of the $(-3)$-sphere and the $(-p)$-sphere can switch yielding the additional option.
	\end{proof}

	\begin{lemma}\label{l:homC}
		There are three homological embeddings of the symplectic configuration $\Ec$ into blow-ups of $\cptwo$ given as follows, listed linearly starting from the $+1$-sphere.
		\begin{equation}\label{eqn:homC1}
		(h,3h-2e_0-e_1-e_2-e_3-e_4-e_5-e_6, e_1-e_7,e_7-e_8,e_8-e_9,e_9-e_{10},e_{10}-e_{11},e_{11}-e_{12})
		\end{equation}
		\begin{equation}\label{eqn:homC2}
		(h,3h-2e_0-e_1-e_2-e_3-e_4-e_5-e_6, e_1-e_7, e_2-e_1, e_3-e_2, e_4-e_3, e_5-e_4, e_6-e_5) 
		\end{equation}
		\begin{equation}\label{eqn:homC3}
		(h,3h-2e_0-e_1-e_2-e_3-e_4-e_5-e_6, e_0-e_1, e_1-e_2, e_2-e_3, e_3-e_4, e_4-e_5, e_5-e_6) 
		\end{equation}
	\end{lemma}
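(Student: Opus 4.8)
The plan is to determine the homology classes one sphere at a time, starting from the $+1$-sphere and propagating outward along the linear plumbing, using the lemmas of Section~\ref{s:caps}. Following the convention stated above, I identify the $+1$-sphere with $h$. The $(-1)$-sphere is the only other sphere meeting it, and by the triple edge in Figure~\ref{f:322} it satisfies $[(-1)\text{-sphere}]\cdot h = 3$. Writing its class as $3h + \sum a_i e_i$, the case $a_0 = 3$ of Lemma~\ref{l:hom} forces a unique index with coefficient $-2$ and all others in $\{0,-1\}$, while the equation $[(-1)\text{-sphere}]^2 = -1$ forces exactly six of the coefficients to equal $-1$. Up to relabeling this yields the class $3h - 2e_0 - e_1 - \cdots - e_6$ occurring in all three embeddings; this step is forced and immediate.

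Next I analyze the chain of six $(-2)$-spheres, all disjoint from the $+1$-sphere, so each has $h$-coefficient $0$ and, by Lemma~\ref{l:hom} item~\ref{e:exceptional-disjoint}, has the form $e_a - e_b$. By Lemma~\ref{l:2chain} the chain telescopes, either as option~\ref{i:to} (a run $e_{n_1}-e_{n_2},\, e_{n_2}-e_{n_3},\dots$) or as option~\ref{i:from} (its reverse), for distinct indices $n_1,\dots,n_7$. The decisive computation is the intersection of each $(-2)$-sphere with the $(-1)$-sphere $3h-2e_0-e_1-\cdots-e_6$. For the sphere adjacent to the $(-1)$-sphere this intersection equals $1$, which leaves exactly two possibilities: $e_i - e_{\mathrm{new}}$ with $i\in\{1,\dots,6\}$, or $e_0 - e_i$ with $i\in\{1,\dots,6\}$. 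For every other $(-2)$-sphere the intersection must vanish, and a short case-check shows this forces both of its indices to be new (outside $\{0,\dots,6\}$) or both to lie in $\{1,\dots,6\}$; in particular $e_0$ can appear only in the sphere adjacent to the $(-1)$-sphere.

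Finally I combine the telescoping dichotomy with these two junction possibilities and propagate the index constraints along the chain, using the rule that a ``new'' index propagates only to new indices while a $\{1,\dots,6\}$ index propagates only within $\{1,\dots,6\}$ (these being exactly the cases keeping the intersection with the $(-1)$-sphere zero), together with the prohibition on reusing $e_0$. I expect option~\ref{i:to} with $e_i - e_{\mathrm{new}}$ to yield \eqref{eqn:homC1}, option~\ref{i:to} with $e_0 - e_i$ to yield \eqref{eqn:homC3}, and option~\ref{i:from} with $e_i - e_{\mathrm{new}}$ to yield \eqref{eqn:homC2}; the remaining combination, option~\ref{i:from} with $e_0 - e_i$, should be impossible, since it would force $e_0$ to reappear in the second sphere of the chain. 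Lemma~\ref{l:pos} and the distinctness built into Lemma~\ref{l:2chain} confirm that each surviving class is admissible.

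The main obstacle is the bookkeeping of the third paragraph: I must interleave the uniform telescoping direction of Lemma~\ref{l:2chain} with the pointwise ``intersection-zero'' constraints carefully enough to see that exactly these three families survive, that no index mixing or alternative junction is possible, and that the spurious fourth combination truly collapses. I also need to track the relabeling freedom so that the three lists are genuinely distinct embeddings rather than reindexings of one another; concretely they land in $\cptwo\#13\cptwobar$, $\cptwo\#8\cptwobar$, and $\cptwo\#7\cptwobar$ respectively, which already distinguishes them.
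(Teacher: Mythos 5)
Your proposal is correct and follows essentially the same route as the paper's proof: the $(-1)$-sphere's class is forced by the $a_0=3$ case of Lemma~\ref{l:adjclass} together with adjunction, the $(-2)$-chain is constrained by the dichotomy of Lemma~\ref{l:2chain}, and the junction analysis with the class $3h-2e_0-e_1-\cdots-e_6$ singles out exactly the three surviving combinations (your fourth combination collapses for precisely the reason you give, since the second sphere would reuse $e_0$). Your hand-propagated rule --- both indices new or both in $\{1,\dots,6\}$, with $e_0$ confined to the junction sphere --- is exactly the content the paper extracts by citing Lemma~\ref{l:2chainfix} in the option~\ref{i:from} case, so the bookkeeping you flag as the main obstacle does close up as you expect.
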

	
	\begin{proof}
		The class of the $(-1)$-sphere which intersects $\cpone$ with multiplicity three is determined by Lemma~\ref{l:adjclass}.
		The remaining chain of $(-2)$-spheres has one of two forms determined by Lemma~\ref{l:2chain}. If it has form~\ref{i:from}, except for the first sphere in the chain, all of the exceptional classes appearing in the chain must have coefficient $-1$ in the previous sphere (Lemma~\ref{l:2chainfix}), which uniquely yields option~\ref{eqn:homC2}. If the chain has the form in Lemma~\ref{l:2chain} for option~\ref{i:to}, the exceptional class with positive coefficient for the first $(-2)$-sphere is either $e_0$ or $e_1$ (without loss of generality). To ensure the correct intersection numbers, this uniquely determines options~\ref{eqn:homC3} and~\ref{eqn:homC1} respectively.
	\end{proof}
	
	The possibilities for embeddings of $\Dc$ are determined in the same way for the lower chain.
	The intersection numbers then limit the possibilities for the indices of the exceptional classes on the upper chain as follows.
	
	\begin{lemma}\label{l:homD}
		There are six homological embeddings of the symplectic configuration $\Dc$ into blowups of $\cptwo$.
		As in Lemma~\ref{l:homC}, there are three possibilities for the lower chain:
		\begin{enumerate}
			\item \label{eqn:homD1}$(h,3h-2e_0-e_1-\dots -e_9, e_1-e_{10},e_{10}-e_{11},e_{11}-e_{12},e_{12}-e_{13},e_{13}-e_{14},e_{14}-e_{15})$
			\item \label{eqn:homD2} $(h,3h-2e_0-e_1-\dots-e_9, e_1-e_{10}, e_2-e_1, e_3-e_2, e_4-e_3, e_5-e_4, e_6-e_5)$
			\item \label{eqn:homD3} $(h,3h-2e_0-e_1-\dots-e_9, e_0-e_1, e_1-e_2, e_2-e_3, e_3-e_4, e_4-e_5, e_5-e_6)$
		\end{enumerate}
		When the lower chain is option~\ref{eqn:homD1}, the upper chain can be
		\begin{equation}\label{eqn:homD4}
		(h-e_8-e_9, e_8-e_7, e_7-e_6)
		\end{equation}
		When the lower chain is option~\ref{eqn:homD1} or, respectively,~\ref{eqn:homD2}, the upper chain can be
		\begin{equation}\label{eqn:homD5}
		(h-e_0-e_{16}, e_{16}-e_{17}, e_{17}-e_{18}) \hspace{.25cm} \text{ respectively } \hspace{.25cm}
		(h-e_0-e_{11}, e_{11}-e_{12}, e_{12}-e_{13})
		\end{equation}
		When the lower chain is any of the three options, the upper chain can be
		\begin{equation}\label{eqn:homD6}
		(h-e_8-e_9, e_8-e_7, e_9-e_8)
		\end{equation}
	\end{lemma}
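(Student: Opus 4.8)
The plan is to adapt the proof of Lemma~\ref{l:homC} for the linear ``lower'' chain of $\Dc$---the chain running from the $+1$-sphere through the $(-4)$-sphere and the six $(-2)$-spheres---and then to analyze the three-sphere ``upper'' arm $l1c-l1b-l1a$ separately, letting the intersection pattern of $\Dc$ decide which homological arms pair with which lower chain.

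First I would observe that the lower chain of $\Dc$ is combinatorially identical to $\Ec$, the only difference being that the sphere meeting $\cpone$ with multiplicity $3$ is now a $(-4)$-sphere rather than a $(-1)$-sphere. Since its $h$-coefficient is still $3$, the $a_0=3$ case of Lemma~\ref{l:adjclass} forces its class to be $3h-2e_0-e_1-\dots-e_9$ up to reindexing (the three extra $e_i$'s relative to $\Ec$ account for the self-intersection $9-4-9=-4$), and it still meets $\cpone$ three times. The attached chain of six $(-2)$-spheres is then controlled by Lemmas~\ref{l:2chain} and~\ref{l:2chainfix} exactly as in Lemma~\ref{l:homC}, producing the three possibilities~\ref{eqn:homD1},~\ref{eqn:homD2}, and~\ref{eqn:homD3}.

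Next I would pin down the upper arm. The $(-1)$-sphere $l1c$ meets $\cpone$ once, so $a_0=1$, and the $a_0=1$ case of Lemma~\ref{l:adjclass} gives $[l1c]=h-e_a-e_b$. Imposing $[l1c]\cdot(3h-2e_0-e_1-\dots-e_9)=1$, the single transverse intersection with the $(-4)$-sphere, yields $c_a+c_b=-2$, where $c_i$ denotes the coefficient of $e_i$ in the $(-4)$-class; hence either $\{e_a,e_b\}=\{e_0,e_{\rm new}\}$ with $e_{\rm new}$ fresh (Case~I), or $e_a,e_b\in\{e_1,\dots,e_9\}$ (Case~II). The two $(-2)$-spheres of the arm must be disjoint from both the $+1$- and the $(-4)$-spheres; disjointness from the $(-4)$-sphere forces each arm class $e_i-e_j$ to satisfy $c_i=c_j$, and combined with Lemmas~\ref{l:2chain} and~\ref{l:2chainfix} this gives, in Case~II, an arm of form~\ref{i:to} using the four indices $e_6,e_7,e_8,e_9$ (option~\ref{eqn:homD4}) or an arm of form~\ref{i:from} that reuses a class of $l1c$ and uses only $e_7,e_8,e_9$ (option~\ref{eqn:homD6}); in Case~I, since $e_0$ (coefficient $-2$) can never sit in a $(-2)$-class, only the form~\ref{i:to} arm survives, giving option~\ref{eqn:homD5} with fresh indices.

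Finally, the arm must also be disjoint from the lower $(-2)$-chain, and this is exactly what selects the admissible pairings, by an index count. Recording which of $e_0,\dots,e_9$ are consumed by the lower $(-2)$-chain in each case---only $e_1$ in~\ref{eqn:homD1}; $e_1,\dots,e_6$ in~\ref{eqn:homD2} and~\ref{eqn:homD3}; and $e_0$ only in~\ref{eqn:homD3}---I would check that the arm~\ref{eqn:homD4} (requiring $e_6,e_7,e_8,e_9$ to be unused by the lower chain) is compatible only with~\ref{eqn:homD1}; that the arm~\ref{eqn:homD6} (requiring only $e_7,e_8,e_9$) is compatible with all of~\ref{eqn:homD1}--\ref{eqn:homD3}; and that the Case~I arm~\ref{eqn:homD5} (requiring $e_0$ unused) is compatible with~\ref{eqn:homD1} and~\ref{eqn:homD2} but not~\ref{eqn:homD3}. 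This yields $1+3+2=6$ embeddings. The main obstacle I anticipate lies in this last step: one must both verify that each listed pairing is genuinely realizable (checking every required disjointness, including that $l1c$ itself does not meet the lower $(-2)$-chain) and rule out all remaining pairings, which in turn requires making the ``up to reindexing'' normalization precise so that the specific indices appearing in the statement are justified rather than being merely one convenient choice.
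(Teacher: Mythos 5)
Your proposal is correct and follows essentially the same route as the paper, which states this lemma with only the one-line justification that ``the possibilities for embeddings of $\Dc$ are determined in the same way for the lower chain'' (as in Lemma~\ref{l:homC}) and that ``the intersection numbers then limit the possibilities for the indices of the exceptional classes on the upper chain'' --- precisely the two-step argument you spell out. Your details check out: the adjunction computation forcing $3h-2e_0-e_1-\dots-e_9$ for the $(-4)$-sphere, the constraint $c_a+c_b=-2$ splitting the $(-1)$-arm into your Cases~I and~II, and the disjointness count against each lower chain (using that a class disjoint from a full $(-2)$-chain must have equal coefficients on all the chain's exceptional classes, which leaves only three free indices among $e_1,\dots,e_9$ in cases~\ref{eqn:homD2} and~\ref{eqn:homD3} and rules out $e_0$ in case~\ref{eqn:homD3}) correctly yield the $1+3+2=6$ pairings of the statement.
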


	\subsection{The Fibonacci families}\label{ss:fibonaccis}

		We start with the third family, $(F_{j-2},F_{j+2})$ with $j$ odd.
		We recall that Fibonacci numbers satisfy the identity $F_j^2 = F_{j-2}F_{j+2}-1$, so that by Moser~\cite{Moser} the boundary $Y_j$ of a regular neighbourhood of a curve in this family, with the orientation induced by the cuspidal contact structure $\xi_j$, is
		\[
			Y_j = -S^3_{F_j^2}(T(F_{j-2},F_{j+2})) = -L(F_j^2,-F_{j-2}^2) = L(F_j^2,F_{j-2}^2).
		\]
		Note that the identity $F_{j-2}^2 = F_{j-4}F_{j}-1$ implies that $F_{j-2}^2 \equiv -1 \pmod {F_{j}}$, and that $\gcd(F_{j-2},F_{j}) = F_{\gcd(j-2,j)} = F_1 = 1$, since $j$ is odd.
		In particular, $Y_j = L(m^2,mk-1)$ for some $m,k$ with $\gcd(m,k)=1$.
		
		Since there is a symplectic realization of a rational cuspidal curve in $\cptwo$ in this family, coming from algebraic geometry~\cite{FdBLMHN}, $\xi_j$ has a rational homology ball symplectic filling.
		By Proposition~\ref{p:QHBimpliesSTD}, $\xi_j$ is the canonical contact structure on $Y$ (perhaps up to conjugation), and by Proposition~\ref{p:uniqueQHBfilling}, this filling is unique up to symplectic deformation.
		
		We now turn to the third family, $(F_j^2, F_{j+2}^2)$, with $j$ odd.
		Call $(Y'_j,\xi'_j)$ the cuspidal contact manifold; by Moser~\cite{Moser},
		\[
			Y'_j = -S^3_{F_j^2F_{j+2}^2}(T(F_{j}^2,F_{j+2}^2)) = -(L(F_j^2,-F_{j-2}^2)\#L(F_{j+2}^2,-F_{j}^2)) = Y_j \# Y_{j+2}.
		\]
		As above, by~\cite{FdBLMHN}, $\xi'_j$ has a rational homology ball (strong) symplectic filling $W_0$;
		moreover, $(Y',\xi'_j)$ is the contact connected sum $(Y_j,\eta_j)\#(Y_{j+2},\eta_{j+2})$.
		Note that $\xi'_j$ is planar, since all contact structures on lens spaces are;
		we recall that, by~\cite{Wendl}, all strong symplectic fillings of planar contact structures can be deformed to Stein fillings, so that $W_0$ is in fact the boundary connected sum of two rational homology ball fillings of $\eta_j$ and $\eta_{j+2}$, by Eliashberg~\cite{Eliashberg}.
		This implies that, up to conjugation, $\eta_j = \xi_j$ and $\eta_{j+2} = \xi_{j+2}$, and that they both have a unique rational homology ball symplectic filling;
		again, by~\cite{Eliashberg}, so does $\xi'_j$.

	\subsection{Isotopy classification}\label{ss:isofills}

	\begin{theorem}\label{t:isofills}
		In $\cptwo$, every symplectic rational unicuspidal curve whose unique singularity is the cone on a torus knot is symplectically isotopic to a complex curve. 
		\begin{itemize}
			\item The only minimal symplectic embedding of a rational unicuspidal curve with a $(p,p+1)$-singularity with normal Euler number $(p+1)^2$ into a closed symplectic manifold is the unique embedding into $\cptwo$.
			%\MG{I think that, whenever we're not in $\cptwo$, there should be symplectomorphisms and symplectic deformations, and we should specify that the embedding of the curve is relatively minimal.}
			\item There are exactly two minimal symplectic embeddings of a rational unicuspidal curve with a $(p,4p-1)$-singularity with normal Euler number $4p^2$ into closed symplectic manifolds. One into $\cptwo$ and another into $S^2\times S^2$, each unique up to symplectomorphism and symplectic deformation.
			\item There are exactly three minimal symplectic embeddings of a rational unicuspidal curve with a $(3,22)$-singularity with normal Euler number $64$. They are embeddings into $\cptwo$, $\cptwo\#\cptwobar$, and $\cptwo\#6\cptwobar$ with each unique up to symplectomorphism and symplectic deformation.
			\item There are exactly six minimal symplectic embeddings of a rational unicuspidal curve with a $(6,43)$-singularity with normal Euler number $256$. There is a single symplectic embedding into $\cptwo$, $\cptwo\#\cptwobar$, $\cptwo\#4\cptwobar$, and $\cptwo\#9\cptwobar$, up to symplectomorphism. There are two symplectic embeddings into $\cptwo\#6\cptwobar$, up to symplectomorphism and symplectic deformation.
			\item For the rational unicuspidal curves in the Fibonacci families, there is a unique symplectic embedding in $\cptwo$, and there is always at least one other minimal symplectic embedding into another rational surface with larger $b_2$.
		\end{itemize}
		In each of these cases, each symplectic isotopy class of embeddings corresponds to a distinct minimal symplectic filling of the associated contact manifold, distinguished by their second homology and intersection forms.
	\end{theorem}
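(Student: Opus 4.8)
The plan is to prove the refined statement family by family, following the six-case list of~\cite{FdBLMHN} to which the adjunction formula~\eqref{e:adjunction} restricts us. The two Fibonacci families are dispatched entirely by the lens-space considerations of Section~\ref{ss:fibonaccis}: there the cuspidal contact manifold is, up to orientation, a lens space whose relevant contact structure carries a \emph{unique} rational homology ball filling. Since the algebraic curve of~\cite{FdBLMHN} realizes such a filling as the complement of the curve in $\cptwo$, every symplectic curve in these families is isotopic to the complex one, which is all the theorem asserts for them. The substance of the argument is therefore the four families with explicit caps $\Ac_p$, $\Bc_p$, $\Ec$, $\Dc$, and I will take the homological classifications of Lemmas~\ref{l:homAp}, \ref{l:homBp}, \ref{l:homC}, and~\ref{l:homD} as the starting point.

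For each of these caps, McDuff's Theorem~\ref{thm:mcduff} (applied through the $+1$-sphere, identified with a line) guarantees that a symplectic embedding lands in a blow-up of $\cptwo$, and the cited lemmas enumerate the finitely many homological types. For each type I would run the following realization-and-uniqueness engine. Using Lemma~\ref{l:blowdown}, blow down $J$-holomorphic exceptional spheres in the classes $e_i$, in an order compatible with the constraints discussed after that lemma, until the configuration descends to $\cptwo$; the image is a reducible configuration of lines and conics whose singularities and tangencies are dictated by the way the $e_i$ entered the plumbing classes. This configuration is, possibly after a further birational reduction, one of those classified in Section~\ref{s:rediso}, where it is shown either to be obstructed—e.g. by containing a birationally derived Fano plane, Theorem~\ref{thm:Fano}—or to form a unique, non-empty equisingular isotopy class containing a complex representative. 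Since the configuration is birationally derived from the cap, Proposition~\ref{l:biratderiveunique} transports uniqueness (and the presence of a complex curve) up to the cap; and since the cap is the resolution of $C$, so that the two are birationally equivalent, Corollary~\ref{l:biratiso} transports it down to $C$ itself. In the $(p,p+1)$ case this reduces directly to the curve-with-maximally-tangent-line configuration of Proposition~\ref{p:d-with-tangent}.

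It then remains to count the surviving classes and identify the ambient manifolds. Each homological type that is not obstructed yields exactly one equisingular isotopy class of embedding of $C$; the ambient manifold $X$ is recovered by blowing the cap down along its resolution spheres and is pinned down by the bookkeeping $b_2(X) = b_2(\text{ambient of cap}) - k$, where $k$ is the fixed number of resolution blow-ups in the cap. A rational-homology-ball complement detects the embedding into $\cptwo$, whose complex representative is the curve of~\cite{FdBLMHN} and so establishes the first assertion of the theorem. The remaining types are identified by the second homology and intersection form of their complementary fillings exactly as tabulated after Lemmas~\ref{l:homBp}--\ref{l:homD}: the second $(p,4p-1)$-type, whose complement has $b_2=1$, is realized minimally in $S^2\times S^2$ (distinguished from $\cptwo\#\cptwobar$ by the intersection form together with relative minimality), and the analogous readout produces $\cptwo\#\cptwobar$, $\cptwo\#4\cptwobar$, $\cptwo\#6\cptwobar$, and $\cptwo\#9\cptwobar$ in the $(3,22)$ and $(6,43)$ cases. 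Each ambient is minimally embedded because every exceptional sphere meets $C$, and distinct classes give distinct minimal fillings of $(Y_C,\xi_C)$, separated precisely by $b_2$ and the intersection form of the complement.

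The main obstacle I expect lies in the middle step: correctly matching each homological type to a classified reducible configuration. This hinges on resolving the orientation ambiguity of the $(-2)$-chains—options~\ref{i:to} versus~\ref{i:from} of Lemma~\ref{l:2chain}—where some orientations are geometrically realizable and some are not, and on careful tracking of the admissible blow-down orders. I would pay particular attention to the exceptional value $p=2$ in Lemma~\ref{l:homBp}, where the roles of the $(-3)$- and $(-p)$-spheres can interchange and produce an extra homological type; this must be shown either to be obstructed or to be isotopic to one already counted, so that the total number of minimal embeddings in the $(p,4p-1)$ family remains exactly two.
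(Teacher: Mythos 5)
Your overall architecture coincides with the paper's proof: the Fibonacci families are handled via uniqueness of rational homology ball fillings of the corresponding lens spaces, and the four remaining families via McDuff's theorem through the $+1$-sphere, the homological classifications of Lemmas~\ref{l:homAp}--\ref{l:homD}, blow-down by Lemma~\ref{l:blowdown}, and transport of uniqueness by Propositions~\ref{l:biratderiveunique} and~\ref{l:biratiso}, with the ambient manifolds and fillings separated by intersection forms of complements (evenness detecting $S^2\times S^2$; in the $(6,43)$ case the determinants $256$ versus $64$ distinguish the two fillings complementary to the embeddings into $\cptwo\#6\cptwobar$). However, your middle step has a genuine gap in the $(6,43)$ case: you assert that every blown-down configuration is, possibly after a further birational reduction, one of those classified in Section~\ref{s:rediso}, and this is false for the cap $\Dc$. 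There the configurations descend to a singular cubic with an order-$3$ tangent (``inflection'') line plus an additional line through the inflection point, and in the homological type where that additional line must be tangent to the cubic at another smooth point, nothing in Section~\ref{s:rediso} applies. The paper supplies a bespoke argument inside the theorem's proof: projecting the cubic from the inflection point and precomposing with the normalization gives a degree-$2$ branched cover of $\cpone$, and Riemann--Hurwitz allows exactly two simple ramification points; when the descended cubic is cuspidal, the inflection line and the cusp already exhaust them, so no such tangent line exists, while in the nodal case there is exactly one further tangency, giving existence and uniqueness. This dichotomy depends on geometric data (whether the exceptional sphere in class $e_0$ meets the configuration transversally or tangentially), not on the homology class alone, and without it you can complete neither the count of six embeddings nor the certification of two distinct classes in $\cptwo\#6\cptwobar$.

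Two smaller points. First, your claim that the $(p,p+1)$ case ``reduces directly'' to Proposition~\ref{p:d-with-tangent} departs from the paper, which blows $\Ac_p$ (and $\Bc_p$) all the way down to \emph{two lines} and invokes Proposition~\ref{p:addline}; Proposition~\ref{p:d-with-tangent} is instead the key input for the $(3,22)$ cap $\Ec$ with $d=3$, where one must additionally observe that the nodal and cuspidal descended cubics have symplectically isotopic proper transforms upstairs, so that the two geometric possibilities do not inflate the count of isotopy classes. Second, your flag on the exceptional $p=2$ homological type in Lemma~\ref{l:homBp} is well taken: the paper's proof of the theorem does not explicitly dispatch that extra type, so insisting that it be obstructed or absorbed into an existing class before concluding ``exactly two'' embeddings in the $(p,4p-1)$ family is a legitimate point of care rather than a defect of your plan.
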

	
	\begin{proof}		
	We start with the non-Fibonacci families.
		
	Suppose we have a symplectic embedding of the cuspidal curve into $(M,\omega)$. Then we have an embedding of the resolution $\Ac_p$, $\Bc_p$, $\Ec$ or $\Dc$ into $M \#_K \cptwobar$. By Theorem~\ref{thm:mcduff} we can identify $M\#_K\cptwobar$ with $\cptwo\#_N\cptwobar$ and the $(+1)$-sphere with a line. In particular, $M$ is either $\cptwo\#_m\cptwobar$ for some $m\geq 0$ or $S^2\times S^2$ (the symplectic structures on these manifolds are unique up to symplectomorphism and symplectic deformation). The homological embeddings of the resolutions in $\cptwo\#_N\cptwobar$ are classified in Lemmas~\ref{l:homAp},~\ref{l:homBp},~\ref{l:homC}, and~\ref{l:homD}. Next use Lemma~\ref{l:blowdown} to find exceptional spheres representing the classes $e_i$ to blow down to $\cptwo$, keeping track of the intersections of these exceptional spheres with the resolution to determine how it descends. The core spheres of $\Ac_p$ or $\Bc_p$ descend to two symplectic spheres each in the homology class $h$ of the line. Therefore any embedding of a cuspidal curve of type $\Ac_p$ or $\Bc_p$ into a closed symplectic manifold $(M,\omega)$ has a birational derivation to $(\cptwo,L_1\cup L_2)$ where $L_1$ and $L_2$ are symplectic lines. There is a unique symplectic isotopy class of two symplectic lines by Proposition~\ref{p:addline}.
	%Every embedding of the resolution is obtained by blowing up two symplectic lines according to the homology classes specified, 
	Therefore by Proposition~\ref{l:biratderiveunique}, there is a unique symplectic isotopy class of such curves for each possible $(M,\omega)$ that they embed into.
	
	Lemma~\ref{l:homAp} gives a unique homological embedding of $\Ac_p$ (which is obtained from the cuspidal curve by blowing up $2p+1$ times) into $\cptwo\#(2p+1)\cptwobar$. Therefore the only relatively minimal embedding of these cuspidal curves is into $\cptwo$.
	
	Lemma~\ref{l:homBp} gives two homological embeddings of $\Bc_p$ (which is obtained from the cuspidal curve by blowing up $2p+2$ times), one into $\cptwo\#(2p+3)\cptwobar$ and the other into $\cptwo\#(2p+2)\cptwobar$. Therefore the cuspidal curve has one symplectic embedding into $\cptwo$ and another into either $\cptwo\#\cptwobar$ or $S^2\times S^2$.
	We will see in Proposition~\ref{p:QHblowdown} below that the latter embedding is obtained from the former by doing a rational blow-up of $T^*\R\P^2$, thus showing that the ambient 4-manifold is indeed $S^2\times S^2$.
	It is however instructive to see this directly, so we flesh out the argument here: recall that we have $C \subset X$ a curve, and that the we look at the embedding of the total transform of $C$ in a blow-up $\overline{X}$ of $X$.
	The intersection form of $X$ is recovered from that of $\overline{X}$ by ``algebraically blowing down'' all the exceptional divisors in in the total transform of $C$;
	this boils down to taking the orthogonal of the homology classes.
	That is, we have to look at the orthogonal of the classes
	\[
	h-e_0-e_1, e_{2p-2}-e_{2p-1}-e_{2p}, e_0-e_1-\dots-e_{p-1}, e_{i}-e_{i+1} \quad (i\in\{1,\dots,2p-3,2p,2p+1\})
	\]
	in $H_2(\overline{X}) = \langle h,e_0,\dots,e_{2p+2}\rangle$; orthogonality to the classes $e_i-e_{i+1}$ forces classes in the orthogonal to be of the form
	\[
	ah-b_0e_0-b_1(e_1+\dots+e_{2p-2})-b_{2p-1}e_{2p-1}-b_{2p}(e_{2p}+e_{2p+1}+e_{2p+2}).
	\]
	%Orthogonality to the remaining classes force $a=b_0+b_1$, $b_{1} = b_{2p-1}+b_{2p}$, and $b_0 = (p-1)b_1$; that is, the orthogonal is generated by classes with $(b_{2p-1}, b_{2p}) = (1,0), (0,1)$, i.e. 
	The orthogonal is generated by $C_1 = ph-(p-1)e_0-e_1-\dots-e_{2p-1})$, $C_2 = ph-(p-1)e_0-e_1-\dots-e_{2p-2}-e_{2p}-e_{2p+1}-e_{2p+2}$.
	We calculate $C_1^2 = 0$ and that $C_2^2 = -2$, so that the intersection form of $X$ is even, hence necessarily $X = S^2\times S^2$.
	%(As a sanity check, $C_1\cdot C_2 = 1$, so the determinant of the intersection matrix matrix $\left[\begin{array}{cc}0&1\\1&-2\end{array}\right]$ is 1---that is, $C_1$ and $C_2$ do indeed give a basis for $H_2(X)$.)
	
	For $\Ec$, after blowing down the exceptional spheres in the $e_i$ classes using Lemma~\ref{l:blowdown}, the configuration descends to a symplectic nodal or cuspidal cubic (depending on whether the exceptional sphere of class $e_0$ intersects the configuration transverally or tangentially), together with a symplectic line which is tangent to the cubic with multiplicity $3$. Each such configuration has a unique symplectic isotopy class by Proposition~\ref{p:d-with-tangent} with $d=3$. To connect these two possibilities, observe that each of these possibilities is realizable in the complex setting. There is a deformation from a complex algebraic cuspidal cubic $\{x^3-y^2z=0\}$ with inflection line $\{z=0\}$, to a nodal cubic $\{x^3-y^2z-\varepsilon x^2z=0\}$ with the same inflection line $\{z=0\}$. Blowing up at the cusp or node in the $\varepsilon$ family provides an equisingular isotopy of the proper transforms in $\cptwo\#\cptwobar$. Therefore there is a unique symplectic isotopy class of the configuration $\mathcal{C}'$ in $\cptwo\#\cptwobar$ consisting of one smooth rational curve in the class $3h-2e_0$ and a line in class $h$, such that the two components intersect tangentially at a single point of multiplicity $3$. The birational transformation described above, shows that the configuration $\Ec$ has a birational transformation to $\mathcal{C}'$ in $\cptwo\#\cptwobar$. Therefore by Proposition~\ref{l:biratderiveunique}, for each possible $(M,\omega)$ determined by a homological embedding, there is a unique symplectic isotopy classes of embeddings of the cuspidal curve of type $\Ec$. The three homological embeddings of $\Ec$ (which is obtained from the cuspidal curve by $7$ blow-ups) are into $\cptwo\#7\cptwobar$, $\cptwo\#8\cptwobar$ and $\cptwo\#13\cptwobar$. Therefore the cuspidal curve has one minimal symplectic embedding in $\cptwo$, one in either $\cptwo\#\cptwobar$ or $S^2\times S^2$ and one into $\cptwo\#6\cptwobar$.
	To determine that the second embedding is into $\cptwo\#\cptwobar$ instead of $S^2\times S^2$ we find a homology basis complementary to the classes of the exceptional divisors in the embedding of $\Ec$ ($3h-2e_0-e_1-\dots-e_6$, $e_1-e_7$, $e_2-e_1$, $e_3-e_2$, $e_4-e_3$, $e_5-e_4$, and $e_6-e_5$). Such a basis is given by $10h-6e_0-3(e_1+\dots+e_7)$ and $6h-3e_0-2(e_1+\dots+e_7)$. The intersection form generated by this basis is the odd form for $\cptwo\#\cptwobar$:
	\[
	\left[\begin{array}{cc} 1&0\\0&-1 \end{array} \right].
	\]

	The configuration $\Dc$ descends similarly under blowing down, to a rational cubic with an inflection line, plus an additional line passing through the inflection point and otherwise intersecting the cubic either generically, tangentially, or transversally through the singular point, depending on the choice of homological embedding. There is a unique symplectic isotopy class of these configurations by Proposition~\ref{p:addline} when the line added intersects the cubic transversally at the inflection point and two other generic points, or at the inflection point and the node/cusp point. Using the complex deformation between the cuspidal and nodal curves as in the previous case, we find a corresponding configuration in $\cptwo\#\cptwobar$ with a unique symplectic isotopy class to which the homological embeddings of $\Dc$ descending to these types of configurations birationally derives.
	
	In the case that the line intersects the cubic once at the inflection point and tangentially at another smooth point, we will show that there is a unique symplectic isotopy class of such lines when the cubic is nodal and no such line if the cubic has a cusp. Thus such realizations of $\Dc$ will always have birational derivations to a nodal cubic with the two additional lines.
	
	To prove this, choose an almost complex structure $J$ which makes the cubic $R$ together with its inflection line $J$-holomorphic (note the space of such $J$ is contractible by Lemma~\ref{l:Jcompatible}). Let $p$ denote the inflection point on $R$. Consider the pencil of $J$-lines through $p$, $\pi: \cptwo\setminus\{p\}\to \cpone$ and restrict this to $R\setminus \{p\}$. This restriction extends over $p$ by sending $p$ to the image of the line tangent to $R$ at $p$. We pre-compose with the normalization of $R$.
	
	In the cusp case, projecting the cubic from the inflection point gives a degree-2 map $\pi\colon\cpone\to\cpone$ with at least two ramification points (corresponding to the inflection line and the cusp respectively).
	Therefore Riemann--Hurwitz reads: $2 = 2\cdot 2 - \sum (e_\pi(p)-1)$, which implies that these are the only two ramification points, from which we deduce that there is no other tangent drawn to the cubic from the inflection point.
	In the nodal case, the projection $\pi \colon \cpone \to \cpone$ of the cubic from $p$ has no ramification at the node, and ramification $2$ at the inflection line, so there is exactly another point of ramification $2$, which corresponds to a tangency to the cubic.
	Therefore, there is a unique realization for the configuration obtained from $\Dc$ by blowing down, which in turns gives a unique isotopy class for the unicuspidal cubic.

	Therefore, for each fixed homological embedding of the resolution, there is a unique symplectic embedding of the cuspidal curve. The resolution is obtained from the cuspidal curve by performing $10$ blow-ups. Therefore the embeddings of the resolution into $\cptwo\#10\cptwobar$, $\cptwo\#11\cptwobar$, two into $\cptwo\#16\cptwobar$, one into $\cptwo\#19\cptwobar$, and $\cptwo\#14\cptwobar$ correspond to embeddings of the cuspidal curve into $\cptwo$, $\cptwo\#\cptwobar$ or $S^2\times S^2$, two embeddings into $\cptwo\#6\cptwobar$, and one into $\cptwo\#9\cptwobar$ and $\cptwo\#4\cptwobar$. 
	
	It remains to distinguish the two embeddings into $\cptwo\#6\cptwobar$ and prove that this cuspidal curve embeds into $\cptwo\#\cptwobar$ instead of $S^2\times S^2$. For the latter question, the answer is similar to the $\Ec$ case, but $e_7$ is replaced by $e_{10}$. Therefore a basis for the homology complementary to the exceptional divisors is given by $10h-6e_0-3(e_1+\dots+e_6+e_{10})$ and $6h-3e_0-2(e_1+\dots+e_6+e_{10})$ which have the odd intersection form of $\cptwo\#\cptwobar$.
	
	To distinguish the two embeddings into $\cptwo\#6\cptwobar$, we show that their complementary symplectic fillings have different intersection forms. The two homological embeddings we are considering from Lemma~\ref{l:homD} both use option \ref{eqn:homD1} for the lower chain, and differ in the upper chain, being either option \ref{eqn:homD4} or \ref{eqn:homD6}.
	
	A homology basis for the complement of the embedding of the cap (not just the exceptional spheres but also the proper transform representing $h$) in the first case (option~\ref{eqn:homD4}) is given by the following.
	\[
	\{e_0-e_4-e_5,e_2-e_1-e_{10}-\dots-e_{15}, e_2-e_3,e_3-e_4,e_4-e_5,e_0+e_9-e_6-e_7-e_8\}.
	\]
	giving an intersection form represented by
	\[
	Q_1 = \left[\begin{array}{rrrrrr} -3&0&0&-1&0&-1\\ 0&-8&-1&0&0&0\\ 0&-1&-2&1&0&0\\ -1&0&1&-2&1&0\\ 0&0&0&1&-2&0\\ -1&0&0&0&0&-5\end{array}  \right]
	\]
	In the second case (option \ref{eqn:homD6}) a basis is given by the following.
	\[
	\{e_2-e_1-e_{10}-\dots-e_{15}, e_2-e_3,e_3-e_4,e_4-e_5,e_5-e_6, e_0-e_5-e_6\}.
	\]
	giving an intersection form represented by
	\[
	Q_2 = \left[\begin{array}{rrrrrr}-8&-1&0&0&0&0\\-1&-2&1&0&0&0\\0&1&-2&1&0&0\\0&0&1&-2&1&-1\\0&0&0&1&-2&0\\0&0&0&-1&0&-3\end{array}  \right].
	\]
	Since $\det Q_1 = 256$ and $\det Q_2 = 64$, the two embeddings are not even homeomorphic.

	We now conclude with the two Fibonacci families;
	in the previous subsection, we have shown that in each case the cuspidal contact structure has a unique rational homology ball filling up to symplectic deformation.
	In particular, there is a unique symplectomorphism class of curves in $\cptwo$ for each of the members of either family, and by Gromov
	there is a unique symplectic isotopy class, so the complex representatives are necessarily in that isotopy class.
	%\MG{Added sentences here to justify a ``yes'' in the table for quintics.}
	Additionally, we observe that each member in the the first Fibonacci family as at least another fillings, while each member in the second Fibonacci family has at least two. Indeed, in the first Fibonacci family, each of the cuspidal contact 3-manifolds $(Y_C,\xi_C)$ is a universally tight lens space. Each universally tight contact lens space admits at least another filling (namely, a plumbing of symplectic spheres); gluing this filling to the cuspidal cap yields a closed symplectic 4-manifold into which $C$ embeds (with minimal complement).
	In the second Fibonacci family, each of the cuspidal contact 3-manifolds $(Y_C,\xi_C)$ is a connected sum of two distinct universally tight lens spaces (each admitting a rational homology ball symplectic filling), so it has at least three non-rational homology ball minimal symplectic fillings, obtained by boundary connected summing either a plumbing filling and a rational homology ball filling for each of the summand, or the two plumbings.
	\end{proof}
		
%	\subsubsection{Stein handlebody fillings}
%	
%	\begin{figure}
%		\centering
%		\includegraphics[scale=.85]{}
%		\caption{Stein handlebody for the filling corresponding to the $\Ac_p$ family.}
%		\label{fig:ApStein}
%	\end{figure}
%	
%	
%	\subsubsection{Lefschetz fibrations}
%	
%	\begin{figure}
%		\centering
%		\includegraphics[scale=.55]{}
%		\caption{Lefschetz fibration for the filling corresponding to the $\Ac_p$ family.}
%		\label{fig:ApLF}
%	\end{figure}
%	
%	\begin{figure}
%		\centering
%		\includegraphics[scale=.5]{} \hspace{.5cm} \includegraphics[scale=.5]{}
%		\caption{Lefschetz fibrations for the two fillings corresponding to the $\Bc_p$ family.}
%		\label{fig:BpLF}
%	\end{figure}
%	
	
	\subsection{Rational blow-down relations}\label{ss:QHBd}
	
	In this section we study the relationships between the two fillings of the cuspidal contact structure associated to the rational unicuspidal curves in the second family, i.e. those with a singularity of type $(p,4p-1)$. The two fillings are complements of concave neighborhoods of the two different embeddings.
	
	We study the two sporadic cases $(3,22)$ and $(6,43)$ in a companion paper~\cite{companion}, by showing that the corresponding cuspidal contact structure is in fact the canonical contact structure on the link of a singularity (compare with Stipsicz--Szab\'o--Wahl~\cite{SSzW}) and studying the relationship between the different fillings.
	
	As argued in Section~\ref{ss:fibonaccis}, the two Fibonacci families correspond to lens spaces with their canonical contact structure, for which fillings and rational blow-down relationships have been extensively studied~\cite{BhupalOzbagci}.
	
	Recall that each rational curve with self-intersection $4p^2$ and a singularity of type $(p,4p-1)$ comes with two fillings, $W_0, W_1$, with $b_2(W_i) = i$.
	
	\begin{prop}\label{p:QHblowdown}
	Let $C$ be a rational unicuspidal curve with a singularity of type $(p,4p-1)$ and self-intersection $4p^2$, and $\xi$ the corresponding cuspidal contact structure.
	Then the two symplectic fillings of $\xi$ are related by a rational blow-down of a symplectic $(-4)$-sphere.
	\end{prop}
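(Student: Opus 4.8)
The plan is to realize $W_0$ as the symplectic rational blow-down of a $(-4)$-sphere sitting inside $W_1$. Recall from the classification of Section~\ref{ss:isofills} that such a $C$ has exactly two minimal symplectic embeddings: one into $\cptwo$, whose complement is the rational homology ball $W_0$, and one into $S^2\times S^2$, whose complement $W_1$ has $b_2(W_1)=1$. I would fix the embedding $C\subset S^2\times S^2$ and let $F_1,F_2$ denote the two ruling classes. Adjunction forces $[C]=2pF_1+pF_2$ (up to swapping the factors): writing $[C]=aF_1+bF_2$, the relations $[C]^2=2ab=4p^2$ and $K\cdot[C]=-2(a+b)=-6p$ give $\{a,b\}=\{p,2p\}$. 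Choose a small concave neighbourhood $N$ of $C$, so that $W_1=(S^2\times S^2)\setminus\operatorname{int}N$.

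First I would produce the sphere. The line in $H_2(S^2\times S^2)$ orthogonal to $[C]$ is spanned by $\sigma=F_2-2F_1$, and one checks $\sigma^2=-4$, $K\cdot\sigma=2$ (so that an embedded representative has $g=0$ by adjunction), and $\sigma\cdot[C]=0$. Using McDuff's structure theory for ruled surfaces to fix a ruling (as in the proof of Lemma~\ref{l:genusboundsruled}, and~\cite{Mc}), the surface $(S^2\times S^2,\omega)$ is identified with the Hirzebruch surface $\mathbb{F}_4$, whose negative section is an embedded symplectic sphere $S$ in the class $\sigma$. Choosing $J$ so that both $C$ and $S$ are $J$-holomorphic (Lemma~\ref{l:Jcompatible}) and applying positivity of intersections together with $S\cdot C=\sigma\cdot[C]=0$ shows $S\cap C=\varnothing$; after shrinking $N$ we obtain $S\subset W_1$.

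Next I would apply the symplectic rational blow-down of the $(-4)$-sphere $S$~\cite{Symington}: the disk-bundle neighbourhood $\nu(S)$, with boundary $L(4,1)$, is replaced by the rational homology ball $B=T^*\rptwo$. Because this surgery is supported away from $N$, the curve $C$ is left untouched and survives, with the same singularity of type $(p,4p-1)$ and self-intersection $4p^2$, in the resulting closed symplectic manifold $M'$; in particular $M'$ contains a rational cuspidal curve. A computation of $\chi$, $\sigma$ and $\pi_1$ (via Mayer--Vietoris and van Kampen, noting that a ruling fibre meets $S$ once so a meridian of $S$ bounds a disk, whence $(S^2\times S^2)\setminus\nu(S)$ is simply connected, and that $\pi_1(L(4,1))\to\pi_1(B)$ is onto) gives $b_2(M')=1$, $\sigma(M')=1$ and $\pi_1(M')=1$. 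Thus $M'$ is a homotopy, hence rational homology, $\cptwo$ containing a rational cuspidal curve, and Theorem~\ref{thm:rational} identifies $M'$ with $\cptwo$. The complement of $C$ in $M'=\cptwo$ is precisely the rational blow-down of $W_1$ along $S$; by the uniqueness of the $\cptwo$-embedding of $C$ established in Section~\ref{ss:isofills}, this complement is symplectomorphic to $W_0$. Hence $W_0$ and $W_1$ are related by the rational blow-down of $S$, as claimed.

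The main obstacle is the existence of the symplectic $(-4)$-sphere $S$: the class $\sigma=F_2-2F_1$ supports a symplectic sphere only when $\omega(F_2)>2\omega(F_1)$, i.e.\ only when $(S^2\times S^2,\omega)$ lies in the $\mathbb{F}_4$ chamber of its symplectic cone rather than the $\mathbb{F}_0$ or $\mathbb{F}_2$ chambers. Since $[C]=2pF_1+pF_2$ has positive area for every choice of positive $\omega(F_1),\omega(F_2)$, the curve $C$ stays symplectic as $\omega$ is deformed within its deformation class on $S^2\times S^2$, so---working, as throughout, with fillings up to symplectic deformation---I would first deform $\omega$ into the $\mathbb{F}_4$ chamber and only then extract $S$. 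Carrying out this deformation compatibly with the fixed cap $N$ and the contact boundary, and thereby certifying that the negative section genuinely lies in $W_1$, is the step that will require the most care.
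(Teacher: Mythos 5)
Your overall strategy is exactly the paper's: exhibit a symplectic $(-4)$-sphere in $W_1$ representing the generator of the orthogonal complement of the cap classes (your $\sigma=F_2-2F_1$ is precisely the paper's class $e_{2p-1}-e_{2p}-e_{2p+1}-e_{2p+2}$ after blowing down), rationally blow it down away from the concave neighborhood of $C$, and conclude by the uniqueness of the rational homology ball filling --- your detour through Theorem~\ref{thm:rational} is not even needed, since the new complement is a rational homology ball strong filling of $\xi_C$ and hence equals $W_0$ by the classification. The gap is exactly where you flagged it, and your proposed fix does not work as stated. The assertion that ``$C$ stays symplectic as $\omega$ is deformed within its deformation class'' because $[C]$ has positive area is a non sequitur: being symplectic is the pointwise condition $\omega|_{TC}>0$, which a fixed surface can lose under a deformation of $\omega$ even when $\langle[\omega],[C]\rangle$ stays positive. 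Worse, the natural way to deform while keeping $C$ symplectic --- McDuff inflation along $C$ itself --- cannot reach the $\mathbb{F}_4$ chamber: since $[C]\cdot F_1=p$ and $[C]\cdot F_2=2p$, inflating along $C$ changes the ratio to $(\omega(F_2)+2pt)/(\omega(F_1)+pt)$, which tends monotonically to $2$ and never crosses the wall $\omega(F_2)=2\omega(F_1)$. So if the given form satisfies $\omega(F_2)\le 2\omega(F_1)$, your deformation step fails outright with the tools you invoke.

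Two repairs are available. Within your framework: fix $J$ compatible with $\omega$ making $C$ $J$-holomorphic (Lemma~\ref{l:Jcompatible}), take a $J$-holomorphic sphere $F$ in the fiber class $F_1$ (square $0$, so inflation applies), and inflate along $F$; the forms $\omega_t=\omega+t\,\tau_F$ tame $J$ for all $t\ge 0$, so $C$ stays symplectic while $\omega_t(F_2)/\omega_t(F_1)\to\infty$, and since $(Y_C,\xi_C)$ depends only on the topological data of $C$, the deformed complement is still the filling $W_1$ up to the equivalence in force. The paper's own route is slicker and avoids the chamber discussion entirely: it works in the blown-up realization of Lemma~\ref{l:homBp} and simply \emph{chooses} the realization so that the last three blow-ups are performed, with small weights, on the exceptional sphere in class $e_{2p-1}$; its proper transform is then an embedded symplectic $(-4)$-sphere in the orthogonal class, disjoint from the cap, and the conclusion follows from the two-filling classification exactly as in your last step. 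This freedom of choosing a convenient representative is legitimate precisely because the fillings are classified up to symplectic deformation --- the same observation you were reaching for, applied at the level of the construction rather than of a fixed pair $(\omega,C)$.
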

	
	\begin{proof}
		We need to look for a symplectic $(-4)$-sphere in the filling with $b_2=1$, which we will call $W_1$.
		Recall from Lemma~\ref{l:homBp} that the embedding of the spheres of the plumbing $\Bc_p$ whose complement is $W_1$ are:
		\[h,h-e_0-e_1,e_1-e_2,\dots,e_{2p-3}-e_{2p-2},e_{2p-2}-e_{2p-1}-e_{2p}, e_{2p}-e_{2p+1}, e_{2p+1}-e_{2p+2}, e_0-e_1-\dots-e_{p-1}\]
		The homology class $e_{2p-1}-e_{2p}-e_{2p+1}-e_{2p+2}$ generates the orthogonal of these classes in $H_2(\cptwo\#(2p+2)\cptwobar)$.
		We can choose to realize the configuration above by performing the last three blow-ups on the exceptional sphere in class $e_{2p-1}$. Then, the proper transform of $e_{2p-1}$ is an embedded symplectic sphere in the class $e_{2p-1}-e_{2p}-e_{2p+1}-e_{2p+2}$ with self-intersection $-4$, in the complement of the embedding of $\Bc_p$.
		Since $\xi$ only has two strong symplectic fillings, the rational blow-down of this $(-4)$-sphere must give the unique rational homology ball filling.
	\end{proof}
	
% !TEX root = ../rationalcuspidal.tex

\section{Low-degree cuspidal curves}\label{s:lowdegrees}
		
	The goal of this section is to prove Theorem~\ref{t:lowdeg}; namely, we want to prove that every rational cuspidal curve of degree up to $5$ has a unique isotopy class, and that this class contains a complex representative.
%	More precisely, the entire section will consist of the proof of following statement, which is a slightly stronger version of Theorem~\ref{t:unicusp}.
%	
%	\begin{theorem}\label{thm:quintics}
%		Every symplectic rational cuspidal curve of degree at most 5 has a unique equisingular symplectic isotopy class containing complex representatives.
%	\end{theorem}
	
	We split the proof degree by degree; there are no singular lines or conics, so we only need to start at degree 3.
	By the degree-genus formula~\eqref{e:degree-genus}, a cuspidal cubic can only have one singularity, which is necessarily a simple cusp.
	This case was already considered in Section~\ref{s:unicusp} and in~\cite{OhtaOno}.
	
	In the next two subsections, we will look at quartics and quintics.
	As in the previous section, we will actually provide classifications of symplectic embeddings of these cuspidal curves (with prescribed normal Euler number) into any closed symplectic manifold, equivalently classifying the symplectic fillings of the associated contact structures.
	
	\subsection{Quartics}
	
	By the degree-genus formula~\eqref{e:degree-genus}, a cuspidal quartic can only have multiplicity multi-sequence $[[3]]$ or $[[2,2,2]]$; correspondingly, the allowed types of configurations of singularities are the following:
	\begin{itemize}
		\item $[3]$: a single cusp of type $(3,4)$ which we showed has a unique symplectic isotopy class of embeddings into $\cptwo$ in Section~\ref{s:unicusp};
		\item $[2,2,2]$: a single cusp of type $(2,7)$ which was also shown to have a unique embedding into $\cptwo$ in Section~\ref{s:unicusp};
		\item $[2,2],[2]$: one cusp of type $(2,5)$ and one of type $(2,3)$ will be shown to have a unique relatively minimal symplectic embedding into each of $\cptwo$, $\cptwo\#\cptwobar$, and $S^2\times S^2$ in Proposition~\ref{p:2quartic};
		\item $[2], [2], [2]$: three simple cusps will be shown to have a unique symplectic embedding into $\cptwo$ in Proposition~\ref{p:3quartic}.
	\end{itemize}
		
	As it turns out, all these configurations are realized by a rational cuspidal quartic in $\cptwo$, and each realization has a unique equisingular isotopy class in $\cptwo$. The bicuspidal quartic also has realizations in $\cptwo\#\cptwobar$ and $S^2\times S^2$.
		
	\begin{prop}\label{p:2quartic}
		Let $C$ be a curve with normal Euler number $16$ with one cusp of type $(2,5)$ and one of type $(2,3)$. Then the only relatively minimal symplectic embeddings of $C$ are into $\cptwo$, $\cptwo\#\cptwobar$, and $S^2\times S^2$. For each of these there is a unique non-empty symplectic isotopy class of embeddings up to symplectomorphism.
		Correspondingly, the associated contact structure $\xi_C$ has three fillings; one is a rational homology ball, and the other two have second Betti number $1$ and are distinguished by their intersection forms.
	\end{prop}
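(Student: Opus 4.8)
The plan is to follow the cap-and-classify scheme of Section~\ref{s:unicusp}. First I record the numerics: since $C\cdot C=16$ and $[\cpone]$ generates $H_2(\cptwo)$, the curve $C$ has degree $4$; as $p_g(C)=0$ and the $(2,5)$- and $(2,3)$-cusps have $\delta$-invariants $2$ and $1$ (computed from the multiplicity sequences $[2,2]$ and $[2]$ via~\eqref{e:deltamulti}), we get $p_a(C)=3=\tfrac12(4-1)(4-2)$, so $C$ is consistent with the adjunction formula~\eqref{e:adjunction}. The strategy then has three stages: build a symplectic cap containing a smooth $+1$-sphere, classify the homological embeddings of this cap into blow-ups of $\cptwo$ using Theorem~\ref{thm:mcduff} and the homology lemmas of Section~\ref{s:caps}, and finally promote each homological embedding to a geometric uniqueness statement by blowing down to a reducible configuration in $\cptwo$ covered by Section~\ref{s:rediso}.

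For the cap, I resolve both cusps. By Lemma~\ref{l:selfint-defect} the normal-crossing resolution lowers the self-intersection of the proper transform by $2\cdot 5=10$ at the $(2,5)$-cusp and by $2\cdot 3=6$ at the $(2,3)$-cusp, leaving a proper transform of square $16-16=0$. Since McDuff's theorem requires a $+1$-sphere, I instead blow back down a single $(-1)$-curve adjacent to the proper transform in one of the two fans---exactly as in the caps $\Ec$ and $\Dc$ of Section~\ref{ss:res}---raising it to a smooth symplectic $+1$-sphere $S$ at the cost of a controlled tangency with the adjacent exceptional spheres. The resulting plumbing-with-tangency $P$ is a cap for $(Y_C,\xi_C)$ with $b_2^+(P)=1$.

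Identifying $S$ with a line (class $h$) via Theorem~\ref{thm:mcduff}, the remaining homology classes are pinned down by adjunction (Lemma~\ref{l:hom}), positivity of intersections (Lemmas~\ref{l:consecutive},~\ref{l:pos},~\ref{l:share2}), and the $(-2)$-chain dichotomy (Lemmas~\ref{l:2chain} and~\ref{l:2chainfix}). The only freedom lies in the branching of the chains, and the constraints coming from the $(-3)$-vertex of the $(2,5)$-fan and from the attaching conditions to $S$ should cut the possibilities down to exactly three, precisely as in Lemma~\ref{l:homBp}. For each, I then compute the orthogonal complement of the classes of the total transform to read off the intersection form of the complementary filling: one embedding has a rational homology ball complement, forcing the ambient manifold to be $\cptwo$; the other two have $b_2=1$, one with an odd and one with an even form. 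As in the $\Bc_p$ computation of Section~\ref{ss:isofills}, the even form forces the ambient manifold to be $S^2\times S^2$ while the odd one gives $\cptwo\#\cptwobar$, so the two $b_2=1$ fillings are distinguished exactly by the parity of their intersection forms.

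Finally, for each of the three homological embeddings I use Lemma~\ref{l:blowdown} to blow down the exceptional classes $e_i$ and land in a reducible configuration of lines and conics in $\cptwo$ (I expect a conic, or a nodal/cuspidal cubic, together with a maximally tangent line, in the spirit of the $\Ec$ blow-down). Such a configuration has a unique non-empty equisingular isotopy class containing a complex representative by the appropriate result of Section~\ref{s:rediso} (Corollary~\ref{c:oneconicthreelines}, Corollary~\ref{c:twoconics}, or Proposition~\ref{p:d-with-tangent}), so Proposition~\ref{l:biratderiveunique} and Corollary~\ref{l:biratiso} upgrade this to uniqueness and complex realizability of each embedding of $C$; existence in all three manifolds then follows simultaneously, with the $S^2\times S^2$ embedding obtained from the $\cptwo$ one by a rational blow-down of a $(-4)$-sphere in the style of Proposition~\ref{p:QHblowdown}. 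The main obstacle is this last, geometric stage: I must verify that the tangency introduced when forming $P$ descends under the blow-down to an intersection pattern genuinely covered by one of the uniqueness results of Section~\ref{s:rediso}, and carefully match each of the three homological types to its configuration. By contrast, the homological enumeration---though combinatorially delicate in confirming that exactly three classes survive and in pinning down $\cptwo\#\cptwobar$ versus $S^2\times S^2$ by parity---is essentially mechanical once the lemmas of Section~\ref{s:caps} are in hand.
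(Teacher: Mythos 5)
Your proposal reproduces the paper's proof almost step for step: your cap (normal crossing resolution at the $(2,3)$-cusp, normal crossing minus one final blow-up --- equivalently minimal resolution plus one extra blow-up --- at the $(2,5)$-cusp) is exactly the paper's cap, the homological classification via Lemmas~\ref{l:hom}--\ref{l:2chainfix} does yield exactly three embeddings, and the ambient manifolds are identified by orthogonal-complement computations just as you describe. Two steps, however, need correcting.

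First, your parity claim is misplaced. The even/odd dichotomy applies to the \emph{rank-two} orthogonal complement of the six exceptional divisor classes in $H_2(\cptwo\#7\cptwobar)$, which computes the intersection form of the closed ambient manifold obtained by blowing the curve back down: an even (hyperbolic) form forces $S^2\times S^2$, an odd one gives $\cptwo\#\cptwobar$. It does \emph{not} distinguish the two $b_2=1$ fillings. The filling is the complement of the entire cap --- all seven divisor classes including $h$ --- so its form has rank one, and in both cases it is even: the paper exhibits generators $e_4-e_5-e_6-e_7$ and $e_1-e_2-e_3-e_4+2e_5+2e_6+2e_7$ of squares $-4$ and $-16$ respectively. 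The fillings are therefore distinguished by these self-intersection numbers (equivalently, the determinants of their forms), and your sentence ``the two $b_2=1$ fillings are distinguished exactly by the parity of their intersection forms'' is false as stated, since both forms are even.

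Second, your expected blow-down target is wrong, though in a direction that simplifies the last stage. Applying Lemma~\ref{l:blowdown} to spheres in all the $e_i$ classes, the curves of the cap with nonzero $h$-coefficient ($h$, $h-e_1-e_2$, $h-e_3-e_4-e_5$, $h-e_1-e_3$) descend, in all three homological cases, to four lines with one triple point and three double points; the pure-$e$ classes are absorbed by the blow-downs, and no conic or cubic with a tangent line appears. Uniqueness is then immediate from Proposition~\ref{p:addline} --- none of Corollary~\ref{c:oneconicthreelines}, Corollary~\ref{c:twoconics}, or Proposition~\ref{p:d-with-tangent} is needed --- and the worry about a residual tangency descending through the blow-down is moot, since the extra blow-up at the $(2,5)$-cusp separates the order-$2$ tangency and the cap is a simple normal crossing configuration. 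Your alternative route to the $S^2\times S^2$ embedding via a $(-4)$-sphere rational blow-down in the style of Proposition~\ref{p:QHblowdown} is a legitimate substitute for the paper's implicit existence argument (reconstructing each embedding by blowing up the unique line arrangement). With these two repairs your argument coincides with the paper's.
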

	
	\begin{proof}
		Suppose $C$ embeds minimally symplectically in $(X,\omega)$. Blow up at the $(2,3)$-cusp of $C$ to the normal crossing resolution, and blow up at the $(2,5)$-cusp once more than its minimal resolution (not quite normal crossing). See Figure~\ref{fig:2quartic}. The proper transform of $C$ becomes a smooth symplectic $+1$-sphere, so we apply McDuff's theorem to identify it with a line in some blow up of $\cptwo$.
		
		We determine the homology classes of the remaining divisors in the total transform of $C$ relative to this identification. 
		Using Lemma~\ref{l:hom}, and the intersection relations, the homology classes of the divisors have three possibilities differing from each other only in the $(-2)$- and $(-3)$-spheres in the normal crossing resolution of the $(2,3)$-cusp. 
		The divisors in the resolution of the $(2,5)$-cusp must be $h-e_1-e_2$, $h-e_3-e_4-e_5$ and $e_5-e_6$, and the $(-1)$-curve in the resolution of the $(2,3)$-cusp must be in the class $h-e_1-e_3$ (up to relabeling the $e_i$). 
		The remaining $(-2)$-sphere class must be either $e_1-e_2$ or $e_3-e_4$. If it is $e_1-e_2$ there are two possibilities for the $(-3)$-sphere class: $e_3-e_5-e_6$ and $e_3-e_4-e_7$. If the $(-2)$-class is $e_3-e_4$, the $(-3)$-class must be $e_1-e_2-e_7$.
		See Figure~\ref{fig:2quartic}.
		
		\begin{figure}[h]
			\centering
			\labellist
			\pinlabel $+1$ at 170 87
			\pinlabel ${\scriptstyle h}$ at 170 72
			\pinlabel $-1$ at 275 65
			\pinlabel ${\scriptstyle h-e_1-e_2}$ at 298 50
			\pinlabel $-2$ at 233 65
			\pinlabel ${\scriptstyle h-e_3-e_4-e_5}$ at 208 55
			\pinlabel $-2$ at 203 30
			\pinlabel ${\scriptstyle e_5-e_6}$ at 203 10
			\pinlabel $-1$ at 97 63
			\pinlabel $\scriptstyle h-e_1-e_3$ at 63 65
			\pinlabel $-3$ at 97 41
			\pinlabel ${\color{red}\scriptstyle A:\; e_3-e_5-e_6}$ at 95 25
			\pinlabel ${\color{red}\scriptstyle B:\; e_3-e_4-e_7}$ at 95 15
			\pinlabel ${\color{blue}\scriptstyle C:\; e_1-e_2-e_7}$ at 95 5
			\pinlabel $-2$ at 41 30
			\pinlabel ${\color{red}\scriptstyle A,B:\; e_1-e_2}$ at 41 10
			\pinlabel ${\color{blue}\scriptstyle C:\; e_3-e_4}$ at 41 0
			\endlabellist
			\includegraphics[scale=.85]{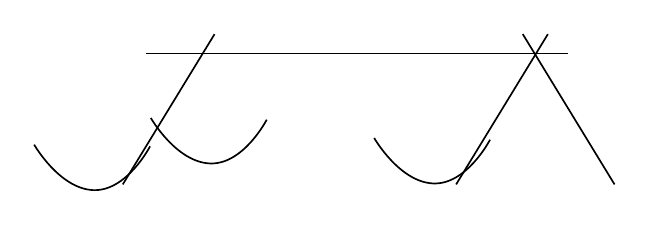}
			\caption{A resolution of a rational cuspidal quartic with one cusp of type $(2,3)$ and one of type $(2,5)$ with the possible homological embeddings.}
			\label{fig:2quartic}
		\end{figure}
		
		Observe that two of these three possibilities require $7$ exceptional classes and the third requires $6$ exceptional classes. Since the resolution was obtained from the cuspidal curve by blowing up $6$ times, the two embeddings of the resolution using $7$ exceptional classes correspond to potential embeddings of the cuspidal curve into \emph{either} $\cptwo\#\cptwobar$ \emph{or} $S^2\times S^2$. 
		
		In option B, where the $(-3)$-sphere represents $e_3-e_4-e_7$ and the $(-2)$-sphere represents $e_1-e_2$, the homology of the complement of the exceptional divisors is generated by $2h-e_1-e_2-e_3-e_4$ and $4h-2e_1-2e_2-2e_3-e_4-e_5-e_6-e_7$. (The reader can check these have intersection $0$ with all the exceptional divisors in the resolution.) The intersection form generated by these two classes is
		\[
		\left[ \begin{array}{cc} 0&1\\1&0\end{array}  \right]
		\]
		so this corresponds to an embedding of the bicuspidal quartic into $S^2\times S^2$.
		
		In option C, where the $(-3)$-sphere represents $e_1-e_2-e_7$ and the $(-2)$-sphere represents $e_3-e_4$, the homology of the complement of the exceptional divisors in the resolution is generated by $3h-2e_1-e_2-e_3-e_4-e_5-e_6-e_7$ and $5h-3e_1-2e_2-2e_3-2e_4-e_5-e_6-e_7$ which gives an intersection form
		\[
		\left[ \begin{array}{cc} -1 & 0 \\ 0 & 1\end{array}  \right]
		\]
		so this corresponds to an embedding of the bicuspidal quartic into $\cptwo\#\cptwobar$.
		
		The embedding of the resolution using six $e_i$ classes (option A) corresponds to a potential embedding of the cuspidal curve into $\cptwo$.
		
		To verify that there is a unique isotopy class for each homology embedding, we apply Lemma \ref{l:blowdown} to find exceptional spheres in the $e_i$ classes. In all three cases, the configuration blows down to four lines with one triple point and three double points. This has a unique symplectic isotopy class by Proposition \ref{p:addline}, so this cuspidal quartic has a unique symplectic isotopy class in $\cptwo$ by Proposition~\ref{l:biratderiveunique}.
		
		The complement of the embedding of the cuspidal curve into $\cptwo$ is a rational homology ball filling. We can further check that the fillings complementary to the embeddings of the cuspidal curve into $\cptwo\#\cptwobar$ and $S^2\times S^2$ are in fact differentiated by their intersection forms. Since each has $b_2=1$, we simply need to find the self-intersection number of a primitive class orthogonal to all of the divisors in the cap (the exceptional divisors together with the proper transform which represents $h$). 
		In option B, the embedding into $S^2\times S^2$, such a class is represented by $e_4-e_5-e_6-e_7$ which has self-intersection number $-4$ (and actually is represented by a symplectic sphere which can be blown down to get the rational homology ball filling).
		In option C, the embedding into $\cptwo\#\cptwobar$, such a class is given by $e_1-e_2-e_3-e_4+2e_5+2e_6+2e_7$ which has self-intersection number $-16$. Therefore the two fillings are not homeomorphic.

%		We blow up at the tangency between $F$ and $C$, obtaining a new exceptional divisor $G$, and a triple transverse intersection between $F$, $C$, and $G$.
%		We now blow up at the tangency between $E_2$ and $C$, obtaining a new exceptional divisor $H$.
%		Finally, we blow up one more time at the intersection between $G$ and $C$; that is, at the singular point with multiplicity sequence $[2]$, we see the normal crossings divisor resolution.
%		We call $L$ the last exceptional divisor.
%		Now we have three homological embeddings; the all share $[C]=h$, $[L] = h-e_1-e_2$, $[G] = e_2-e_3$, $[F] = e_1-e_4-e_5$, and then
%		\begin{itemize}
%			\item[A.]
%			$[H] = h-e_1-e_4$, $[E_2] = h-e_2-e_3-e_6$, and $[E_1] = e_6-e_7$, or
%			\item[B.]
%			$[H] = h-e_2-e_3$, $[E_2] = h-e_1-e_4-e_6$, and $[E_1] = e_6-e_7$, or
%			\item[C.]
%			$[H] = h-e_2-e_3$, $[E_2] = h-e_1-e_4-e_6$, and $[E_1] = e_4-e_5$.
%		\end{itemize}
%		They all blow down to a configuration of four lines with a triple point, which has a unique realization;
%		in each case, the sequence of blow-ups is unique, so these embeddings give rise to three distinct fillings $W_A$, $W_B$ and $W_C$ of $\xi_{16}$.
%		Of these, $W_C$ is a rational homology ball, while $b_2(W_A) = b_2(W_B) = 1$.
%		As above, we distinguish between $W_A$ and $W_B$ by looking at their intersection forms:
%		the torsion-free part of $H_2(W_A)$ is generated by the homology class $e_1-e_2-e_3-e_4+2e_5+2e_6+2e_7$, while that of $H_2(W_B)$ is generated by the class $e_4-e_5-e_6-e_7$.
	\end{proof}
	
	Finally, we turn to the case of three cusps of type $(2,3)$.
	We call $E_1$, $E_2$, and $E_3$ the three exceptional divisors in the minimal resolution of the curve.
	
	\begin{prop}\label{p:3quartic}
		Let $C$ be a curve with normal Euler number $16$ with three cusps of type $(2,3)$. Then the only relatively minimal symplectic embedding of $C$ is into $\cptwo$ and this is unique up to symplectic isotopy.
		Correspondingly, the associated contact structure $\xi_C$ has a unique minimal symplectic filling, and it is a rational homology ball.
	\end{prop}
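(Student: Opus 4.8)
The plan is to follow the same strategy as in the proof of Proposition~\ref{p:2quartic}: resolve the curve to expose a smooth symplectic $+1$-sphere, use McDuff's Theorem~\ref{thm:mcduff} to place the whole picture inside a blow-up of $\cptwo$, classify the homological embeddings of the resulting cap, and finally blow down to a reducible configuration in $\cptwo$ whose isotopy class has already been understood. Concretely, suppose $C$ is minimally embedded in $(X,\omega)$. Since $C^2 = 16 \ge 2p_a(C)-1 = 5 > 0$, Proposition~\ref{p:rationalsurface} already guarantees that $X$ is a rational surface, so it suffices to pin down \emph{which} rational surface and to control the geometry. First I would blow up the three cusps and then blow up further, exactly as for the $(2,5)$ cusp in Proposition~\ref{p:2quartic}, until the proper transform $\widetilde C$ is a smooth symplectic $+1$-sphere, which Theorem~\ref{thm:mcduff} identifies with a line in the class $h$.

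The central step is the homological classification. Writing the exceptional divisors $E_1,E_2,E_3$ of the minimal resolution and the extra exceptional spheres in terms of $(h,e_1,\dots,e_N)$, I would apply Lemma~\ref{l:hom} together with the intersection constraints coming from the cap to enumerate all admissible classes. The goal is to show that, up to relabeling the $e_i$, there is a \emph{unique} relatively minimal homological embedding, and that it uses exactly the number of exceptional classes that forces $X=\cptwo$; any embedding requiring extra exceptional classes should either produce an exceptional sphere disjoint from the cap (violating minimality) or be arithmetically obstructed. This is precisely where the three-cusp case differs from the bicuspidal quartic of Proposition~\ref{p:2quartic}, which admitted three distinct minimal embeddings: here the extra symmetry of the configuration must collapse the list to a single possibility. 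Reading off the intersection form of the complementary filling then shows it is a rational homology ball, so that $\xi_C$ has a unique minimal symplectic filling.

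For uniqueness of the equisingular isotopy class and for the existence of a complex representative, I would blow down the exceptional classes using Lemma~\ref{l:blowdown} and identify the image in $\cptwo$ with a reducible configuration that we have already classified. The natural target is the configuration $\Lc'_0$, the tricuspidal quartic together with the triangle of lines through its three cusps, which was shown in the proof of Proposition~\ref{p:L} to be birationally equivalent to the configuration $\mathcal{H}_0$ of a conic inscribed in a triangle of lines, and hence to have a unique equisingular isotopy class containing a complex curve. Transporting this uniqueness back through the birational transformation via Corollary~\ref{l:biratiso} (or Proposition~\ref{l:biratderiveunique}) gives that $C$ has a unique equisingular isotopy class, realized by a complex tricuspidal quartic. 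Equivalently, one can run the argument forward: after fixing a compatible $J$, any symplectic realization of $C$ extends \emph{canonically} to a realization of $\Lc'_0$ by adjoining the three $J$-holomorphic lines through the pairs of cusps, so the uniqueness of $\Lc'_0$ descends to $C$ by restricting the isotopy of the larger configuration.

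The main obstacle I anticipate is the homological classification itself. The three cusps of a tricuspidal quartic cannot be resolved simultaneously into a \emph{clean} $+1$-sphere plumbing without creating tangencies or triple points in the total transform, so the pure chain lemmas (Lemmas~\ref{l:2chain} and~\ref{l:2chainfix}) do not apply verbatim and must be supplemented by careful bookkeeping of these degenerate intersections, in the spirit of Section~\ref{s:rediso}. The second delicate point, in contrast to the bicuspidal case, is verifying that no relatively minimal embedding exists in $\cptwo\#\cptwobar$ or $S^2\times S^2$: this requires showing that every homological embedding beyond the expected one either fails relative minimality or cannot be realized geometrically.
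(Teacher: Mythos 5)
Your skeleton matches the paper's, but your central expectation at the homological stage is wrong, and this is where the genuine gap lies. After blowing up twice at each cusp (so each cusp contributes a $(-1)$-sphere of the form $h-e_i-e_j$ and a $(-2)$-sphere of the form $h-e_a-e_b-e_c$; no long chains, so your worry about Lemmas~\ref{l:2chain} and~\ref{l:2chainfix} is moot --- Lemma~\ref{l:hom} plus direct intersection bookkeeping suffices), the classification does \emph{not} collapse to a single possibility: there are \emph{three} homological embeddings, two using seven exceptional classes (corresponding to potential minimal embeddings of $C$ into a $b_2=2$ manifold) and one using six (corresponding to $\cptwo$). The two extra embeddings are neither arithmetically obstructed nor in violation of relative minimality --- the two escape routes you propose both fail. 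The paper kills them geometrically: blowing down the $e_i$-spheres via Lemma~\ref{l:blowdown}, each of these two embeddings birationally derives the Fano plane (seven symplectic lines with seven triple points), which does not embed in $\cptwo$ by Theorem~\ref{thm:Fano}, so neither embedding can be realized by Proposition~\ref{l:biratexist}. You flag at the end that ruling out $\cptwo\#\cptwobar$ and $S^2\times S^2$ is ``delicate'' and must be done ``geometrically,'' but you supply no mechanism; without the Fano obstruction (or a substitute) the statement that $\cptwo$ is the only ambient manifold --- and hence the uniqueness of the minimal filling --- is unproven. Note the contrast with Proposition~\ref{p:2quartic} is exactly the opposite of what you predict: there all three homological embeddings \emph{are} realized, while here the extra ones are realizable homologically but obstructed geometrically.

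For the remaining part --- uniqueness of the isotopy class in $\cptwo$ --- your route is correct and genuinely different from the paper's. The paper blows down the third homological embedding to an arrangement of seven lines with six triple points and two double points and builds it line by line with Proposition~\ref{p:addline}, then invokes Proposition~\ref{l:biratderiveunique}. You instead adjoin the triangle of lines through the pairs of cusps (each such line meets the quartic transversally in exactly two singular points, so Proposition~\ref{p:addline}(2) applies at each step) to reduce to $\Lc'_0$, whose uniqueness via the birational equivalence with $\mathcal{H}_0$ the paper establishes inside the proof of Proposition~\ref{p:L}; transporting back through Corollary~\ref{l:biratiso} gives the result. This is a clean and valid alternative --- arguably it makes the complex realizability more transparent, since $\Lc'_0$ visibly contains the complex tricuspidal quartic --- but it only addresses uniqueness within $\cptwo$, so it does not repair the missing exclusion step above.
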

	
	\begin{proof}
		Suppose $C$ embeds minimally symplectically in $(X,\omega)$. Blow up at each cusp twice giving a resolution in between the minimal resolution and the normal crossing resolution. See Figure~\ref{fig:3quartic}. The proper transform of $C$ becomes a smooth symplectic $+1$-sphere, so we apply McDuff's theorem to identify it with a line in some blow up of $\cptwo$, and determine the homology classes of the exceptional divisors relative to this identification. The exceptional divisors from the first two cusps are uniquely determined up to relabeling the $e_i$, and there are three options for the exceptional divisors for the third cusp as shown in Figure~\ref{fig:3quartic}.
		
		\begin{figure}[h]
			\centering
			\labellist
			\pinlabel $+1$ at 130 87
			\pinlabel ${\scriptstyle h}$ at 130 72
			\pinlabel $-1$ at 35 65
			\pinlabel $\scriptstyle h-e_1-e_2$ at 3 30
			\pinlabel $-2$ at 75 65
			\pinlabel $\scriptstyle h-e_3-e_4-e_5$ at 102 50
			\pinlabel $-1$ at 160 65
			\pinlabel $\scriptstyle h-e_1-e_3$ at 130 30
			\pinlabel $-2$ at 200 65
			\pinlabel $\scriptstyle h-e_2-e_4-e_6$ at 230 50
			\pinlabel $-1$ at 290 65
			\pinlabel $-2$ at 328 65
			\pinlabel ${\color{red}\scriptstyle A:\; h-e_1-e_4}$ at 265 14
			\pinlabel ${\color{blue}\scriptstyle B,C:\; h-e_2-e_3}$ at 265 4
			\pinlabel ${\color{red}\scriptstyle A:\; h-e_2-e_3-e_7}$ at 382 25
			\pinlabel ${\color{blue}\scriptstyle B:\; h-e_1-e_4-e_7}$ at 382 15
			\pinlabel ${\color{blue}\scriptstyle C:\; h-e_1-e_5-e_6}$ at 382 5
			\endlabellist
			\includegraphics[scale=.85]{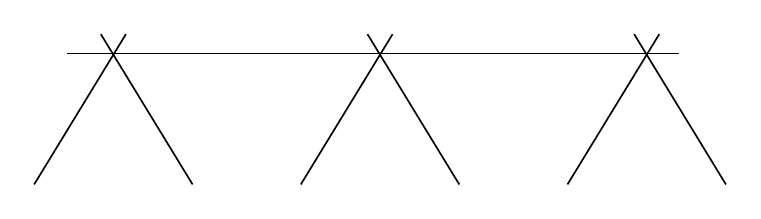}
			\caption{A resolution of a rational cuspidal quartic with three cusps of type $(2,3)$ with the possible homological embeddings.}
			\label{fig:3quartic}
		\end{figure}
		
		Next, we blow down exceptional spheres in the $e_i$ classes in each case using Lemma \ref{l:blowdown}. The two homological embeddings utilizing seven $e_i$ classes (options A and B) both blow down to a Fano configuration of seven lines with seven triple points (three are already in the resolution and four more come from blowing down $e_1,e_2,e_3,e_4$). The Fano configuration is obstructed from realization by symplectic (or even smooth) lines by Theorem \ref{thm:Fano}. Since both embeddings of the cuspidal curve into any symplectic manifold with $b_2=2$ have birational derivations to the Fano configuration, they cannot exist by Proposition \ref{l:biratexist}.
		
		The last homological embedding (option C) uses six $e_i$ classes so since the resolution was obtained from the cuspidal curve by performing $6$ blow-ups, this option corresponds to an embedding of the cuspidal curve into $\cptwo$. We now verify this exists uniquely, by blowing down exceptional spheres in $e_i$ classes using Lemma \ref{l:blowdown}. In this case, the configuration blows down to seven lines intersecting in $6$ triple points and two double points. This can be built up line by line using Proposition \ref{p:addline}. Therefore it has a unique symplectic isotopy class so there is a unique symplectic isotopy class of $C$ in $\cptwo$ by Proposition \ref{l:biratderiveunique}.
	\end{proof}

% !TEX root = ../rationalcuspidal.tex

\subsection{Quintics}

	A quintic $C$ in $\cptwo$ has arithmetic genus $p_a(C) = \frac12(5-1)(5-2) = 6$. This allows for a large number of potential ways to absorb that genus into cusps.
	Some of these types of collections of cusp are realized by rational cuspidal complex curves of degree five, and others cannot be realized. Here we prove that each equisingular type which is realized by complex curves, has a unique symplectic isotopy class of realizations, and that equisingular types which do not admit a complex realization do not admit a symplectic realization either. 
	
%	\begin{theorem} \label{thm:quintics}
%		Every symplectic rational cuspidal quintic has a unique equisingular symplectic isotopy class containing complex representatives.
%	\end{theorem}
	
	To prove Theorem~\ref{thm:quintics} for quintics, and the classifications of embeddings into other closed symplectic manifolds, we first consider what are the possible multiplicity multi-sequences which yield a rational cuspidal curve of arithmetic genus $p_a(C)=6$. 
	
	If $C$ is rational and cuspidal, then $p_a(C) = \sum \delta(p)$, where the sum is taken over all singular points of $C$.
	By Equation~\eqref{e:deltamulti}, summing $\frac12 m(m-1)$ over all multiplicity sequences of singular points, we have to obtain 6; this leaves the following possibilities to arise as multiplicity multi-sequences of the singularities of $C$: $[[4]]$, $[[3,3]]$, $[[3,2,2,2]]$, and $[[2,2,2,2,2,2]]$.
	
	As observed in Section~\ref{ss:resolution}, not all sequences of integers correspond to multiplicity sequences of singularities.
	In particular, among the possible sequences coming from the list above, $[3,2,2]$ and $[3,2,2,2]$ are not allowed.
	This leaves the following possibilities for each individual singularity:
	\begin{itemize}
		\item $[4]$, which corresponds to the singularity of type $(4,5)$;
		\item $[3^{[k]}]$, which corresponds to the singularity of type $(3,3k+1)$ ($k=1,2$);
		\item $[3,2]$, which corresponds to the singularity of type $(3,5)$;
		\item $[2^{[k]}]$, which corresponds to the singularity of type $(2,2k+1)$, $(k=1,\dots, 6$).
	\end{itemize}
	
	In particular, all singularities we encounter here have one Puiseux pair.
	Starting from degree 6 on, there will appear curves with multiple Puiseux pairs.
	For instance, in degree 6, there is a unicuspidal curve whose singularity has multiplicity sequence $[4,2,2,2,2]$;
	the link of this singularity is the iterated torus knot $T(2,3;2,17)$ (i.e the $(2,17)$-cable of $T(2,3)$).

%\MG{I commented the previously existing list and replaced it with a more complete table.}
	Theorem~\ref{thm:quintics} for degree 5 results from analyzing all the possibilities, which we do in the propositions making up the rest of this subsection. We provide a table summarizing the results and giving references (Table~\ref{table:quintics}).
	
	\begin{table}[h!]
	\centering
	\begin{tabular}{lllll}
	{\bf Singularities (MS)} & {\bf Singularities (L)} & {\bf In $\cptwo$} & {\bf Elsewhere} & {\bf Reference}\\
	{$[4]$} & $(4,5)$ & Yes & No & Theorem~\ref{t:isofills}\\
	{$[3,3]$} & $(3,7)$ & No & No & Proposition~\ref{p:noadjhom}\\
	{$[3],[3]$} & $(3,4),(3,4)$ & No & No & Proposition~\ref{p:noadjhom}\\
	{$[3,2],[2,2]$} & $(3,5), (2,5)$ & Yes & No & Proposition~\ref{p:3222exist}\\
	{$[3,2],[2],[2]$} & $(3,5), (2,3), (2,3)$ & No & No & Proposition~\ref{p:newfano}\\
	{$[3],[2,2,2]$} & $(3,4), (2,7)$ & Yes & No & Proposition~\ref{p:3222exist}\\ 
	{$[3],[2,2],[2]$} & $(3,4), (2,5), (2,3)$ & Yes & No & Proposition~\ref{p:3222exist}\\
	{$[3],[2]^3$} & $(3,4), (2,3)^3$ & No & No & Proposition~\ref{p:newfano}\\
	{$[2^{[6]}]$} & $(2,13)$ & Yes & Yes & Theorem~\ref{t:isofills}\\
	{$[2^{[5]}], [2]$} & $(2,11), (2,3)$ & No & Yes & Proposition~\ref{p:4blowups}\\
	{$[2^{[4]}], [2,2]$} & $(2,9), (2,5)$ & Yes & Yes & Proposition~\ref{p:4blowups}\\
	{$[2^{[4]}], [2],[2]$} & $(2,9), (2,3), (2,3)$ & No & No & Proposition~\ref{p:obstrG}\\
	{$[2^{[3]}], [2^{[3]}]$} & $(2,7), (2,7)$ & No & Yes & Proposition~\ref{p:4blowups}\\
	{$[2^{[3]}], [2,2], [2]$} & $(2,7), (2,5), (2,3)$ & No & No & Proposition~\ref{p:obstrG}\\
	{$[2^{[3]}], [2]^3$} & $(2,7), (2,3)^3$ & Yes & No & Proposition~\ref{p:27232323}\\
	{$[2,2]^3$} & $(2,5)^3$ & Yes & No & Proposition~\ref{p:252525}\\
	{$[2,2],[2,2],[2],[2]$} & $(2,5), (2,5), (2,3), (2,3)$ & No & No & Proposition~\ref{p:obstrG}\\
	{$[2,2], [2]^4$} & $(2,5), (2,3)^4$ & No & No & Proposition~\ref{p:daisies}\\
	{$[2]^6$} & $(2,3)^6$ & No & No & Proposition~\ref{p:daisies}
	\end{tabular}
	\caption{The first two columns contain the collections of singularities of each quintic, expressed as multiplicity sequences (MS) or as the type of their links (L). The third and fourth columns tell whether they are realized as quintics in $\cptwo$ or in any other rational surface, respectively. Finally, the fifth column gives a reference to the relevant statement.}\label{table:quintics}
	\end{table}
%	Theorem~\ref{thm:quintics} for degree 5 results from analyzing all the possibilities, which we do in the Propositions making up the rest of this subsection. We provide a summary list here.
%	
%			\begin{itemize}
%			\item Curves with one cusp of type $[4]=(4,5)$ or one cusp of type $[2,2,2,2,2,2]=(2,13)$ are cases where we classified embeddings in Section~\ref{s:unicusp}.
%			\item The two options for the multi-sequence $[[3,3]]$ are obstructed in Proposition~\ref{p:noadjhom}.
%			\item The multi-sequence $[[3,2,2,2]]$ can be split into cusps in five different ways, which are addressed in Propositions~\ref{p:3222exist} (existence and uniqueness in $\cptwo$) and Proposition~\ref{p:newfano} (obstructed).
%			\item The multi-sequence $[[2,2,2,2,2,2]]$ can be split into cusps in ten different ways. We obstruct any symplectic embedding for five of these cases in Propositions~\ref{p:daisies} and~\ref{p:obstrG}. In three more cases, we classify all embeddings where we find minimal symplectic embeddings into $\cptwo\#4\cptwobar$ in Proposition~\ref{p:4blowups}. Finally, the last two cases have unique symplectic embeddings in $\cptwo$, as we show in Propositions~\ref{p:252525} and~\ref{p:27232323}.
%		\end{itemize}
		
		For the multi-sequence $[[3,3]]$, as mentioned in Example~\ref{ex:33}, the semigroup condition obstructs such quintic curves from being realized even smoothly in $\cptwo$.
		For the two possible cusp types with this multi-sequence, we prove with other techniques that there is no symplectic embedding of these curves in \emph{any} closed symplectic manifold. 
		Our proof actually shows that a resolution of such a curve admits no adjunctive embedding into a blow-up of $\cptwo$ when we identify a $+1$-sphere in the resolution with the line.
		This is the crudest type of obstruction from the perspective of this paper, occurring already at the level of homology.
		
		\begin{prop} \label{p:noadjhom}
			There is no symplectic embedding into \emph{any} closed symplectic manifold of a rational cuspidal curve $C$ with normal Euler number $25$ and with a single cusp of type $(3,7)$ or with two cusps of type $(3,4)$. Thus, the corresponding contact manifolds have no symplectic fillings.
			In particular, there is no symplectic rational cuspidal quintic in $\cptwo$ with one singularity of type $(3,7)$ or two of type $(3,4)$.
		\end{prop}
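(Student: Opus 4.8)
The plan is to reduce both statements to a purely homological impossibility, exactly as in the analogous Propositions for $\Ec$, $\Dc$ and the quartics. First I would resolve $C$ to a plumbing of symplectic spheres containing a smooth symplectic $+1$-sphere; applying McDuff's Theorem~\ref{thm:mcduff} to that sphere identifies the blow-up $X\#N\cptwobar$ (for \emph{any} closed symplectic $X$ containing $C$) with a blow-up of $\cptwo$, with the $+1$-sphere sent to $h$. The contradiction I will derive is independent of $X$ and $N$, so it simultaneously obstructs embeddings into every closed symplectic manifold. The engine is the combination of adjunction for smooth spheres (Lemma~\ref{l:adjclass}), the positivity constraint that each exceptional class is positive in at most one sphere (Lemma~\ref{l:pos}), and the chain Lemmas~\ref{l:2chain} and~\ref{l:2chainfix}.

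First I would fix the resolutions. For two cusps of type $(3,4)$ (each with multiplicity sequence $[3]$), Lemma~\ref{l:selfint-defect} gives a proper transform of square $25 - 3\cdot 4 - 3\cdot 4 = 1$ already at the normal crossing resolution; its graph is a $+1$-sphere $L$ meeting two central $(-1)$-spheres $Z_1,Z_2$, each carrying a $[2,2]$-leg and a single $(-4)$-sphere $D_i$, with the first $(-2)$-sphere of the leg called $A_i$. For a single cusp of type $(3,7)$ (multiplicity sequence $[3,3]$) the normal crossing resolution has proper transform of square $25 - 3\cdot 7 = 4$, so I would blow up three more times at generic smooth points to reach a $+1$-sphere $L$; this adds three disjoint $(-1)$-spheres $G_1,G_2,G_3$ meeting $L$ once, alongside the central $(-1)$-sphere $Z$ with its $[2,2]$-leg and its $[4,2]$-leg (a $(-4)$-sphere $P$ followed by a $(-2)$-sphere $Q$).

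Next I would read off the homology classes. Writing $L=h$, any sphere meeting $L$ once is $h-e_i-e_j$ by Lemma~\ref{l:adjclass}, every $(-2)$-sphere is $e_i-e_j$, and every $(-4)$-sphere is $e_i-e_j-e_k-e_l$. In the two-cusp case $Z_1\cdot Z_2=0$ forces a shared index, say $Z_1=h-e_1-e_2$ and $Z_2=h-e_1-e_3$. Because $A_i$ and $D_i$ each meet $Z_i$ once, their unique positive exceptional class is forced into $\{e_1,e_2\}$ for $i=1$ and into $\{e_1,e_3\}$ for $i=2$. Thus the four spheres $A_1,D_1,A_2,D_2$ have positive classes all lying in the three-element set $\{e_1,e_2,e_3\}$, while Lemma~\ref{l:pos} allows each class to be positive in at most one sphere. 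This pigeonhole contradiction obstructs the bicuspidal curve.

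For the $(3,7)$ cusp the obstruction is arithmetic. A short argument shows the four pairwise-disjoint $(-1)$-spheres $Z,G_1,G_2,G_3$ meeting $L$ must share a common index, say $e_1$, so $Z=h-e_1-e_2$ and $G_i=h-e_1-e_{i+2}$. Since $A$ is a $(-2)$-sphere with a single negative index, it cannot be disjoint from two of the $G_i$ while being positive on $e_1$, so $A$ is positive on $e_2$ and hence $P$ is positive on $e_1$; disjointness of $P$ from $G_1,G_2,G_3$ then forces $P=e_1-e_3-e_4-e_5$ exactly. Finally $Q$ is disjoint from $Z,G_1,G_2,G_3$, which forces its coefficients of $e_2,e_3,e_4,e_5$ all to equal $-c_Q(e_1)$, whence $Q\cdot P=-4\,c_Q(e_1)$; but this must equal $1$, impossible over $\Z$. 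The hard part throughout is the bookkeeping: choosing resolutions so that a $+1$-sphere actually appears and the chain lemmas apply, and then extracting the forced positive classes and disjointness relations without error. The conceptual crux, and what makes the obstruction ``crude,'' is that in each case a small over-determined subsystem — four leg-spheres competing for three positive classes, or the single sphere $Q$ forced to meet $P$ with multiplicity $-4c_Q(e_1)=1$ — already fails integrally.
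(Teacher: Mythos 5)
Your proposal is correct, and it runs on the same engine as the paper — resolve until a smooth symplectic $+1$-sphere appears, apply McDuff's Theorem~\ref{thm:mcduff}, then derive a contradiction from the adjunction constraints of Lemma~\ref{l:adjclass} and positivity via Lemma~\ref{l:pos} — but your combinatorial execution genuinely differs in both cases, and in ways worth recording. For the $(3,7)$-cusp, the paper performs its three extra blow-ups iteratively along the edge between the last exceptional divisor and the proper transform, producing the long chain of Figure~\ref{fig:37} whose classes are pinned down one vertex at a time by Lemma~\ref{l:2chainfix}, with the contradiction surfacing at the terminal $(-2)$-sphere; you instead blow up at three generic smooth points, so that $L$ meets four pairwise disjoint $(-1)$-spheres $Z,G_1,G_2,G_3$ of the form $h-e_a-e_b$, which must share a common index (true: three such orthogonal classes can form the triangle $h-e_1-e_2$, $h-e_1-e_3$, $h-e_2-e_3$, but no fourth class is orthogonal to all three, so four of them force a common index — you should spell this out, since it carries the argument), and the leg $Z$--$P$--$Q$ then dies for the purely arithmetic reason $-4c_Q(e_1)=1$. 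I verified your forced classes ($A=e_2-e_j$ with $j\ge 6$, $P=e_1-e_3-e_4-e_5$, and $c_2=c_3=c_4=c_5=-c_Q(e_1)$ for $Q$): all correct. For two $(3,4)$-cusps your argument is actually shorter than the paper's: the proof accompanying Figure~\ref{fig:3434} first determines both $[2,2]$-chains, eliminating one option for the right-hand chain using the left $(-1)$-sphere, and only then finds the two $(-4)$-spheres competing for the single remaining positive class $e_0$; your pigeonhole — the four spheres $A_1,D_1,A_2,D_2$ adjacent to $Z_1=h-e_1-e_2$ and $Z_2=h-e_1-e_3$ need four distinct positive classes inside the three-element set $\{e_1,e_2,e_3\}$ — skips the chain analysis entirely and needs only $Z_1\cdot Z_2=0$ and Lemma~\ref{l:pos} (so your invocation of Lemmas~\ref{l:2chain} and~\ref{l:2chainfix} as part of the "engine" is a slight overstatement; you never use them). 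What each approach buys: the paper's chain-by-chain bookkeeping is the uniform method it reuses throughout the quintic classifications, where the surviving homological embeddings must then be realized and blown down, whereas your generic-point blow-ups and pigeonhole are more economical for a pure obstruction but yield less structural information about near-miss embeddings.
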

		
		\begin{proof}
			For the case of a single cusp of type $(3,7)$, we blow up the normal crossing resolution three additional times, and obtain the configuration shown in Figure~\ref{fig:37}.
			
			\begin{figure}[h]
				\centering
				\labellist
				\pinlabel $+1$ at 45 87
				\pinlabel ${\scriptstyle h}$ at 45 72
				\pinlabel $-1$ at 90 87
				\pinlabel ${\scriptstyle h-e_0-e_1}$ at 90 72
				\pinlabel $-2$ at 135 87
				\pinlabel ${\scriptstyle e_1-e_2}$ at 135 72
				\pinlabel $-2$ at 185 87
				\pinlabel ${\scriptstyle e_2-e_3}$ at 185 72
				\pinlabel $-2$ at 240 87
				\pinlabel ${\scriptstyle e_3-e_4}$ at 240 72
				\pinlabel $-2$ at 285 87
				\pinlabel ${\scriptstyle e_4-e_5}$ at 285 72
				\pinlabel $-2$ at 330 87
				\pinlabel ${\scriptstyle e_5-e_6}$ at 330 72
				\pinlabel $-4$ at 198 50
				\pinlabel ${\scriptstyle e_0-e_1-e_2-e_3}$ at 242 50
				\pinlabel $-2$ at 220 28
				\pinlabel {\tiny no possibilities} at 220 12
				\endlabellist
				\includegraphics[scale=.85]{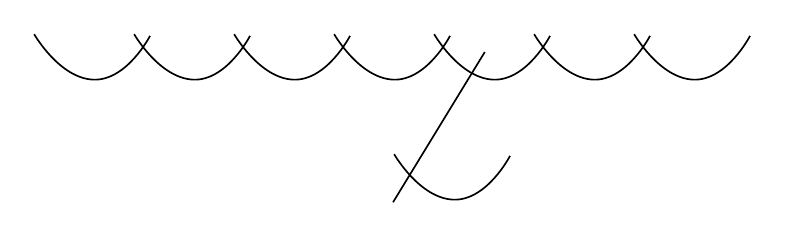}
				\caption{A resolution of a rational cuspidal quintic with one cusp of type $(3,7)$ with the only possible homological embedding.}
				\label{fig:37}
			\end{figure}

%			\begin{equation*}
%				\xygraph{
%					!{<0cm,0cm>;<0.9cm,0cm>:<0cm,0.9cm>::}
%					!~-{@{-}@[|(2.5)]}
%					!{(0,0.8) }*+{\bullet}="c"
%					!{(1.5,0.8) }*+{\bullet}="d"
%					!{(3,0.8) }*+{\bullet}="e"
%					!{(4.5,0.8) }*+{\bullet}="f"
%					!{(6,0.8) }*+{\bullet}="x"
%					!{(7.5,0.8) }*+{\bullet}="y"
%					!{(9,0.8) }*+{\bullet}="z"
%					!{(6,-0.7) }*+{\bullet}="h"
%					!{(6,-2.2) }*+{\bullet}="i"
%					!{(0,1.2) }*+{+1}
%					!{(5.5,-0.8) }*+{-4}
%					!{(6.35,-0.8) }*+{B}
%					!{(1.5,1.2) }*+{-1}
%					!{(3,1.2) }*+{-2}
%					!{(4.5,1.2) }*+{-2}
%					!{(6,1.2) }*+{-2}
%					!{(6.35,0.5) }*+{A}
%					!{(7.5,1.2) }*+{-2}
%					!{(9,1.2) }*+{-2}
%					!{(5.5,-2.2) }*+{-2}				
%					!{(6.35,-2.2) }*+{C}
%					"c"-"d"
%					"d"-"e"
%					"e"-"f"
%					"f"-"x"
%					"x"-"h"
%					"x"-"y"
%					"y"-"z"
%					"i"-"h"
%				}
%			\end{equation*}
			
			We identify the $+1$-curve with a line in $\cptwo\#N\cptwobar$ by Theorem~\ref{thm:mcduff}. The homology classes of the upper chain are uniquely determined by Lemmas~\ref{l:hom} and~\ref{l:2chainfix} to be
			\[ (h, h-e_0-e_1, e_1-e_2, e_2-e_3, e_3-e_4, e_4-e_5, e_5-e_6).  \]
			The $(-4)$-sphere must have the form $e_{i_0}-e_{i_1}-e_{i_2}-e_{i_3}$. By Lemma~\ref{l:pos} the classes $e_1,\dots, e_5$ cannot appear with positive coefficient twice. Using this and the intersections with the other classes in the chain, the $(-4)$-sphere represents $e_0-e_3-e_2-e_1$. The remaining $(-2)$-sphere, $C$ must have the form $[C]=e_i-e_0$ since $[C]\cdot(e_0-e_1-e_2-e_3)=1$ and $e_1,e_2,e_3$ cannot appear with positive coefficient again. Because $[C]\cdot(h-e_0-e_1)=0$, we must have $[C]=e_1-e_0$, but then $[C]\cdot(e_1-e_2)=-1$ when it should be $0$. Therefore there is no adjunctive embedding of this resolution configuration.

			For the curve with two cusps of type $(3,4)$, the normal crossing resolution graph is given by Figure~\ref{fig:3434}
			
			\begin{figure}[h]
				\centering
				\labellist
				\pinlabel $-2$ at 45 87
				\pinlabel ${\scriptstyle f_2-f_3}$ at 45 70
				\pinlabel $-2$ at 90 87
				\pinlabel ${\scriptstyle f_1-f_2}$ at 90 70
				\pinlabel $-1$ at 145 87
				\pinlabel ${\scriptstyle h-e_0-f_1}$ at 148 70
				\pinlabel $+1$ at 185 87
				\pinlabel ${\scriptstyle h}$ at 188 70
				\pinlabel $-1$ at 228 87
				\pinlabel ${\scriptstyle h-e_0-e_1}$ at 228 70
				\pinlabel $-2$ at 285 87
				\pinlabel ${\scriptstyle e_1-e_2}$ at 285 70
				\pinlabel $-2$ at 330 87
				\pinlabel ${\scriptstyle e_2-e_3}$ at 330 70
				\pinlabel {{\tiny (Not }${\scriptstyle e_0-e_1)}$} at 325 60
				\pinlabel $-4$ at 100 50
				\pinlabel ${\scriptstyle e_0-e_4-e_5-e_6}$ at 145 50
				\pinlabel $-4$ at 248 50
				\pinlabel {\tiny no possibilities} at 290 50
				\endlabellist
				\includegraphics[scale=.85]{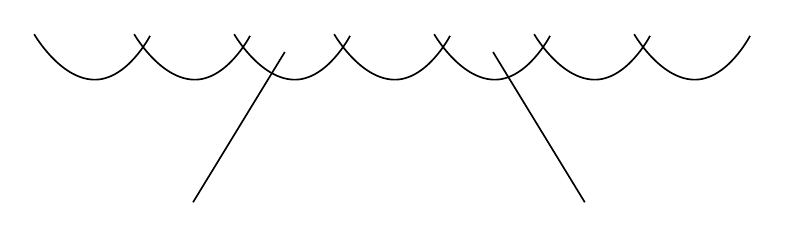}
				\caption{A resolution of a rational cuspidal quintic with two cusps of type $(3,4)$ with the only possible homological embedding.}
				\label{fig:3434}
			\end{figure}
			
%			\begin{equation*}
%				\xygraph{
%					!{<0cm,0cm>;<0.9cm,0cm>:<0cm,0.9cm>::}
%					!~-{@{-}@[|(2.5)]}
%					!{(0,0.8) }*+{\bullet}="c"
%					!{(1.5,0.8) }*+{\bullet}="d"
%					!{(3,0.8) }*+{\bullet}="e"
%					!{(4.5,0.8) }*+{\bullet}="f"
%					!{(6,0.8) }*+{\bullet}="x"
%					!{(7.5,0.8) }*+{\bullet}="y"
%					!{(9,0.8) }*+{\bullet}="z"
%					!{(6,-0.7) }*+{\bullet}="h"
%					!{(3,-0.7) }*+{\bullet}="i"
%					!{(0,1.2) }*+{-2}
%					!{(2.5,-0.8) }*+{-4}
%					!{(1.5,1.2) }*+{-2}
%					!{(3,1.2) }*+{-1}
%					!{(4.5,1.2) }*+{+1}
%					!{(6,1.2) }*+{-1}
%					!{(7.5,1.2) }*+{-2}
%					!{(9,1.2) }*+{-2}
%					!{(5.5,-0.7) }*+{-4}				
%					!{(0,0.5) }*+{\vphantom{A}_{B_3}}
%					!{(3.4,0.5) }*+{\vphantom{A}_{B_1}}
%					!{(1.5,0.5) }*+{\vphantom{A}_{B_2}}
%					!{(3.4,-0.7) }*+{\vphantom{A}_{B_3}}
%					!{(6.4,0.5) }*+{\vphantom{A}_{A_1}}
%					!{(7.5,0.5) }*+{\vphantom{A}_{A_2}}
%					!{(9,0.5) }*+{\vphantom{A}_{A_3}}
%					!{(6.4,-0.7) }*+{\vphantom{A}_{A_4}}
%					"c"-"d"
%					"d"-"e"
%					"e"-"f"
%					"f"-"x"
%					"x"-"h"
%					"x"-"y"
%					"y"-"z"
%					"i"-"e"
%				}
%			\end{equation*}

			Again, identify the $+1$-sphere with a line in $\cptwo\#N\cptwobar$. 
			By Lemmas~\ref{l:2chain} and~\ref{l:2chainfix}, the chain emanating to the right from the $+1$-sphere initially has two possibilities
			\[ (h-e_0-e_1,e_1-e_2,e_2-e_3) \quad \text{ or } \quad (h-e_0-e_1, e_1-e_2, e_0-e_1) \]
			The other $(-1)$-sphere must have the form $h-e_0-f_1$ or $h-e_1-f_1$ based on its intersection with the $+1$ and other $(-1)$-spheres. This rules out the second possibility for the chain emanating to the right because $e_0-e_1$ intersects non-trivially with both options. The chain emanating to the left from the $+1$-sphere is uniquely determined similarly as shown in Figure~\ref{fig:3434}.

			But then the two $(-4)$-spheres must each have one exceptional class with coefficient $+1$, and that class must appear with coefficient $-1$ in the $(-1)$-sphere they are adjacent to. Since $e_1$ and $f_1$ have already appeared with positive coefficient, and no exceptional class can appear with positive coefficient twice by Lemma~\ref{l:pos}, there is no possible homological embedding.
		\end{proof}
		%	\begin{remark}In both cases, these can also be obstructed using correction terms---the corresponding surgeries on torus knots do not admit smooth rational homology ball fillings.
		%	\MG{Added B\'ezout.}
		%	Alternatively, they can be ruled out by appealing to B\'ezout's theorem: the line tangent to the conic (which exists and is unique, as a $J$-holomorphic curve) would intersects locally the quintic six times, contradicting positivity of intersections.
		%	\MG{I think the reference here is one of Maciej's papers...}
		%	(Note that, in fact, half of the correction terms obstruction comes from B\'ezout's theorem.)
		%	\LS{And the spectral semi-continuity?}\end{remark}
		
		Now we consider three related cases of rational cuspidal quintics which symplectically embed uniquely into $\cptwo$. Each of these curves is known to have a complex algebraic realization in $\cptwo$.
		
		\begin{prop}\label{p:3222exist}
			If a rational cuspidal curve $C$ has normal Euler number $25$ and one of the following three cusp collections
			\begin{enumerate}
				\item one of type $(3,5)$ and one of type $(2,5)$
				\item one of type $(3,4)$ and one of type $(2,7)$
				\item one of type $(3,4)$, one of type $(2,5)$, and one of type $(2,3)$.
			\end{enumerate}
			then the only relatively minimal symplectic embedding of $C$ is into $\cptwo$ and this embedding is unique up to symplectic isotopy. 
			Equivalently, the corresponding contact structure $\xi_C$ has a unique minimal filling which is a rational homology ball.
			%	The contact structure $\xi_C$ associated to a rational cuspidal curve of self-intersection $25$, and two cusps of types $(3,4)$ and $(2,7)$, has a unique strong filling; this is a rational homology ball, and it follows that there is a unique quintic in $\cptwo$ with those singularities.
		\end{prop}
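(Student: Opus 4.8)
The plan is to treat the three cases in parallel, since they share the multiplicity multi-sequence $[[3,2,2,2]]$ and hence the same blow-up bookkeeping, even though their cuspidal contact structures differ. In each case I would first fix a resolution of $C$ intermediate between the minimal and the normal crossing resolution, chosen so that the proper transform $\widetilde C$ becomes a smooth symplectic $+1$-sphere. Subtracting the sum of the squares of all multiplicities from $C\cdot C = 25$ brings $\widetilde C\cdot\widetilde C$ to $+4$ in the minimal resolution (which requires four blow-ups in each of the three cases), and three further blow-ups at the tangencies between $\widetilde C$ and the last exceptional divisors, together with a smooth point of $\widetilde C$ if needed, bring it to $+1$; thus the resolution is obtained from $C$ by exactly $N=7$ blow-ups. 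Suppose $C$ embeds relatively minimally in a closed symplectic $(X,\omega)$; then the resolution embeds in $X\# 7\cptwobar$, and McDuff's Theorem~\ref{thm:mcduff} identifies $X\# 7\cptwobar$ with $\cptwo\# M\cptwobar$ so that $\widetilde C$ becomes a line $\cpone$ of class $h$.

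Next I would enumerate the possible homology classes of the remaining spheres of the total transform relative to this identification, exactly as in the quartic cases. The chains of $(-2)$-spheres are pinned down (up to the two global options) by Lemma~\ref{l:2chain} and Lemma~\ref{l:2chainfix}, the $(-1)$-spheres and the higher-multiplicity vertices by Lemma~\ref{l:hom}, and the remaining freedom is killed by Lemmas~\ref{l:consecutive}, \ref{l:pos}, and~\ref{l:share2} together with the intersection relations dictated by the plumbing graph. The key accounting point is that the number of exceptional classes appearing in a surviving homological embedding records $b_2^-(X)$: an embedding using $7$ classes forces $X=\cptwo$, while one using $7+k$ classes would force $b_2(X)=k+1$. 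I expect the enumeration to leave a single homological embedding using $7$ classes, hence $X=\cptwo$, and I would rule out any spurious embedding into a larger manifold in the manner of Proposition~\ref{p:3quartic}: blowing down via Lemma~\ref{l:blowdown} produces a configuration that birationally derives a configuration obstructed in Section~\ref{s:reducibleobstructions} (the Fano plane of Theorem~\ref{thm:Fano} being the prototype), so that such an embedding cannot exist by Proposition~\ref{l:biratexist}.

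Finally, for the surviving $\cptwo$ embedding I would blow down the exceptional classes using Lemma~\ref{l:blowdown} and read off the resulting reducible configuration in $\cptwo$: a configuration of lines and conics of total degree at most five, of the kind classified in Section~\ref{s:rediso}. Each of the three cases should reduce to a configuration already shown to have a unique equisingular symplectic isotopy class carrying a complex representative (via the line-arrangement, one-conic-and-lines, or two-conics-with-a-common-tangent-line results, e.g. Corollary~\ref{c:twoconics}, Propositions~\ref{p:G1}--\ref{p:G3}, or Proposition~\ref{p:d-with-tangent}). Since this configuration is birationally derived from $C$, Proposition~\ref{l:biratderiveunique} then yields that $C$ has a unique symplectic isotopy class in $\cptwo$ and that this class contains a complex curve. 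The filling statement is then immediate: because $C$ is rational and cuspidal, $Y_C$ is a rational homology sphere, so the complement of the $\cptwo$-embedding is a rational homology ball filling of $\xi_C$, necessarily the unique minimal one.

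The main obstacle I anticipate is the homological enumeration itself: with three distinct cusp splittings the three plumbing graphs must be analyzed separately, and one must verify in each that exactly one class assignment survives and that it uses precisely the seven exceptional classes forcing $X=\cptwo$. Correctly identifying the blown-down reducible configuration, including which intersections are tangential and which triple points are forced to coincide, is the other delicate point, since a misidentification would match the wrong uniqueness statement from Section~\ref{s:rediso}.
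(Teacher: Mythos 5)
Your proposal follows essentially the same route as the paper: resolve to a smooth $+1$-sphere via exactly seven blow-ups, apply McDuff's Theorem~\ref{thm:mcduff}, pin down the homology classes with Lemmas~\ref{l:hom}, \ref{l:consecutive}, \ref{l:pos}, \ref{l:share2}, \ref{l:2chain}, and~\ref{l:2chainfix}, blow down via Lemma~\ref{l:blowdown}, and conclude with Proposition~\ref{l:biratderiveunique}; the seven-exceptional-class count forcing $X=\cptwo$ is exactly the paper's accounting. Two small corrections to your anticipated endgame: in all three cases the homological embedding turns out to be \emph{unique} outright, so no Fano-type obstruction step in the manner of Proposition~\ref{p:3quartic} is ever needed; and the blown-down configurations are four lines with a triple point, five lines with two triple points, and (in the third case) a conic together with \emph{five} lines — total degree seven, not at most five — whose uniqueness is established by iterating Proposition~\ref{p:addline} starting from the conic, rather than by Corollary~\ref{c:twoconics}, Propositions~\ref{p:G1}--\ref{p:G3}, or Proposition~\ref{p:d-with-tangent}. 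Since you flagged both points as the delicate ones, these are misestimates of the details rather than gaps in the method.
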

		
		\begin{proof}
			For each of three cusp types, we blow up to a resolution where the proper transform of the cuspidal curve is smooth and has self-intersection $+1$. We apply McDuff's theorem to identify the $+1$-sphere with a line in $\cptwo\# N\cptwobar$.
			
			For the curve with a $(3,5)$- and $(2,5)$-cusp take the normal crossing resolution at the $(3,5)$-cusp, and the minimal smooth resolution at the $(2,5)$-singularity, blown up one additional time as in Figure~\ref{fig:3525}. Note this resolution was obtained from the cuspidal curve by blowing up a total of $7$ times (there are $7$ exceptional divisors). Relative the identification of the $+1$-sphere with a line, the homology classes of the other curves are uniquely determined up to relabeling the $e_i$ by Lemma~\ref{l:hom} and the intersection relations as shown in Figure~\ref{fig:3525}. Note that this homological embedding uses seven $e_i$ classes, so if this resolution is minimally embedded, the ambient symplectic $4$-manifold must be $\cptwo\#7\cptwobar$. Thus if we return to the cuspidal curve by blowing down seven times to reverse the resolution, the cuspidal curve can only minimally symplectically embed in $\cptwo$. Blowing down spheres in the $e_i$ classes using Lemma~\ref{l:blowdown} gives a birational derivation from the cuspidal curve to a line arrangement of four lines with a single triple point. This line arrangement has a unique symplectic isotopy class by Proposition~\ref{p:addline}. Therefore this cuspidal quintic has a unique symplectic isotopy class in $\cptwo$ by Proposition~\ref{l:biratderiveunique}.
			
			\begin{figure}[h]
				\centering
				\labellist
				\pinlabel $+1$ at 200 87
				\pinlabel ${\scriptstyle h}$ at 200 72
				\pinlabel $-1$ at 320 63
				\pinlabel ${\scriptstyle h-e_1-e_2}$ at 345 50
				\pinlabel $-2$ at 280 63
				\pinlabel ${\scriptstyle h-e_3-e_4-e_5}$ at 255 55
				\pinlabel $-2$ at 250 30
				\pinlabel ${\scriptstyle e_5-e_6}$ at 250 10
				\pinlabel $-1$ at 142 63
				\pinlabel $\scriptstyle h-e_1-e_3$ at 110 65
				\pinlabel $-3$ at 145 45
				\pinlabel $\scriptstyle e_1-e_2-e_7$ at 145 30
				\pinlabel $-2$ at 90 30
				\pinlabel $\scriptstyle e_3-e_4$ at 90 10
				\pinlabel $-3$ at 43 30
				\pinlabel $\scriptstyle e_4-e_5-e_6$ at 43 10
				\endlabellist
				\includegraphics[scale=.85]{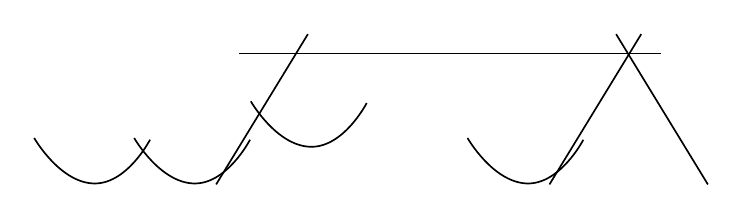}
				\caption{A resolution of a rational cuspidal quintic with one cusp of type $(3,5)$ and one of type $(2,5)$ with the only possible homological embedding.}
				\label{fig:3525}
			\end{figure}
			
			For the cuspidal curve with a $(3,4)$- and $(2,7)$-cusp, blow up two times more than the minimal resolution at the $(3,4)$-cusp, and blow up one time more than the minimal resolution at the $(2,7)$-cusp as in Figure~\ref{fig:3427}. The homology classes relative the $+1$-line are uniquely determined as shown, and use seven $e_i$ classes (the same as the number of exceptional divisors in the resolution). Therefore, the cuspidal curve embeds symplectically minimally only in $\cptwo$. Using Lemma~\ref{l:blowdown} to blow down exceptional spheres in the $e_i$ classes to $\cptwo$, the configuration descends to $5$ lines $\{L_i\}$ where $L_1,L_2,L_3$ intersect at a triple point and $L_3,L_4,L_5$ intersect at a triple point, and the other intersections are generic. This has a unique symplectic isotopy class by Proposition~\ref{p:addline}. Since this line arrangement is birationally derived from the embedding of the cuspidal curve, Proposition~\ref{l:biratderiveunique} implies that the cuspidal curve has a unique symplectic isotopy class in $\cptwo$.
			
			\begin{figure}[h]
				\centering
				\labellist
				\pinlabel $+1$ at 210 87
				\pinlabel ${\scriptstyle h}$ at 210 72
				\pinlabel $-3$ at 320 63
				\pinlabel ${\scriptstyle h-e_3-e_4-e_5-e_6}$ at 360 50
				\pinlabel $-1$ at 280 63
				\pinlabel ${\scriptstyle h-e_1-e_2}$ at 262 50
				\pinlabel $-2$ at 250 30
				\pinlabel ${\scriptstyle e_2-e_7}$ at 250 10
				\pinlabel $-2$ at 122 65
				\pinlabel $\scriptstyle h-e_2-e_4-e_7$ at 95 52
				\pinlabel $-1$ at 165 65
				\pinlabel $\scriptstyle h-e_1-e_3$ at 182 52
				\pinlabel $-2$ at 90 30
				\pinlabel $\scriptstyle e_4-e_5$ at 90 10
				\pinlabel $-2$ at 43 30
				\pinlabel $\scriptstyle e_5-e_6$ at 43 10
				\endlabellist
				\includegraphics[scale=.85]{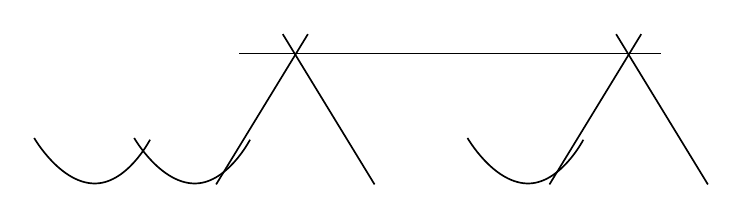}
				\caption{A resolution of a rational cuspidal quintic with one cusp of type $(3,4)$ and one of type $(2,7)$ with the only possible homological embedding.}
				\label{fig:3427}
			\end{figure}

			For the curve with cusps of type $(3,4)$, $(2,5)$, and $(2,3)$, we blow up to the minimal resolution and then once more at each of the $(3,4)$- and $(2,5)$-cusps resulting in Figure~\ref{fig:342523}. The homology classes relative the $+1$-line are uniquely determined as shown. (The uniqueness requires a little additional thought in this case, because some of the curves a priori have other options (the $(-3)$-curve and the $(-2)$-curves disjoint from the line). However, any combinations of these other than the one shown violate the intersection relations.) The unique homological embedding uses the same number of $e_i$ classes as exceptional divisors in the resolution. Therefore, the cuspidal curve embeds symplectically minimally only in $\cptwo$. 
			
			\begin{figure}[h]
				\centering
				\labellist
				\pinlabel $+1$ at 260 92
				\pinlabel ${\scriptstyle h}$ at 260 77
				\pinlabel $-3$ at 370 68
				\pinlabel ${\scriptstyle h-e_2-e_5-e_6-e_7}$ at 405 55
				\pinlabel $-1$ at 325 68
				\pinlabel ${\scriptstyle h-e_1-e_3}$ at 308 55
				\pinlabel $-2$ at 295 35
				\pinlabel ${\scriptstyle e_3-e_4}$ at 295 15
				\pinlabel $-2$ at 182 70
				\pinlabel $\scriptstyle h-e_3-e_4-e_6$ at 155 57
				\pinlabel $-1$ at 225 70
				\pinlabel $\scriptstyle h-e_1-e_2$ at 242 57
				\pinlabel $-2$ at 150 35
				\pinlabel $\scriptstyle e_6-e_7$ at 150 15
				\pinlabel $-1$ at 90 35
				\pinlabel $\scriptstyle 2h-e_1-e_2-e_3-e_4-e_5$ at 63 10
				\endlabellist
				\includegraphics[scale=.85]{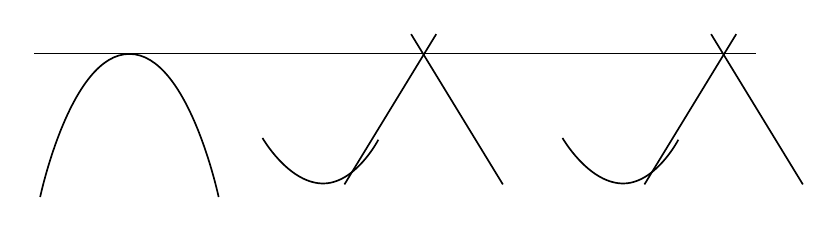}
				\caption{A resolution of a rational cuspidal quintic with one cusp of type $(3,4)$, one of type $(2,5)$, and one of type $(2,3)$ with the only possible homological embedding.}
				\label{fig:342523}
			\end{figure}
			
			Using Lemma~\ref{l:blowdown} to blow down exceptional spheres in the $e_i$ classes gives a birational derivation from the cuspidal curve to a configuration with a conic and five lines with intersections as shown in Figure~\ref{fig:F}. This configuration has a unique equisingular symplectic isotopy class by iteratively applying Proposition~\ref{p:addline} starting with $Q_0$ and adding $L_1,\dots, L_5$ one at a time in order as shown in Figure~\ref{fig:F}. Therefore this cuspidal quintic has a unique symplectic isotopy class in $\cptwo$ by Proposition~\ref{l:biratderiveunique} because it has a birational derivation to a configuration with a unique symplectic isotopy class.
			
			\begin{figure}[h]
				\centering
				\labellist
				\pinlabel ${\scriptstyle e(3,4)}$ at 123 20
				\pinlabel ${\scriptstyle e(6,7)}$ at 20 35
				\pinlabel ${\scriptstyle e(1)}$ at 106 38
				\pinlabel ${\scriptstyle e(2)}$ at 156 60
				\pinlabel ${\scriptstyle e(5)}$ at 114 65
				\pinlabel $Q_0$ at 150 85
				\pinlabel $L_1$ at 120 105
				\pinlabel $L_2$ at 165 22
				\pinlabel $L_3$ at 170 81
				\pinlabel $L_4$ at 84 80
				\pinlabel $L_5$ at 50 57
				\endlabellist
				\includegraphics[scale=1.3]{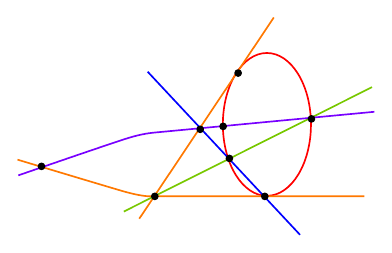}
				\caption{The configuration in $\cptwo$ resulting from blowing down $e_1,\dots, e_7$ from Figure~\ref{fig:342523}. The images of the exceptional spheres are indicated by $e(i)$. This has a unique equisingular symplectic isotopy class by iteratively applying Proposition~\ref{p:addline} starting with $Q_0$ and adding $L_1,\dots, L_5$ one at a time in order.}
				\label{fig:F}
			\end{figure}
			
		\end{proof}
		
		The remaining two possible cuspidal quintics with multi-sequence $[[3,2,2,2]]$ are obstructed from being realized by complex curves in $\cptwo$ by the Riemann--Hurwitz formula (see Example~\ref{ex:RH}). Because we can use $J$-holomorphic curves to recover the Riemann--Hurwitz formula for symplectic curves, the same obstruction holds in the symplectic case. However, we can actually prove the stronger statement, that these rational cuspidal curves cannot symplectically embed in any closed symplectic manifold. Moreover, reversing our argument will recover an obstruction to a symplectic Fano plane, first proven in~\cite{RuSt}.
		
			\begin{prop}\label{p:newfano}
				A rational cuspidal curves $C$ with normal Euler number $25$ and either
				\begin{enumerate}
					\item one cusp of type $(3,5)$ and two of type $(2,3)$, or
					\item one cusp of type $(3,4)$ and three of type $(2,3)$
				\end{enumerate}	
				has no symplectic embedding into any closed symplectic $4$-manifold.
				Equivalently, the associated contact structure $\xi_C$ has no symplectic filling.
			\end{prop}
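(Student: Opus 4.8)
The plan is to run the birational strategy of Proposition~\ref{p:3quartic}, which obstructs the bicuspidal quartic by producing a Fano plane. Concretely, suppose $C$ embeds symplectically into some closed $(X,\omega)$. In both cases the $\delta$-invariant $6$ is carried by the same multi-sequence $[[3,2,2,2]]$, so I would first blow up each cusp until the proper transform $\widetilde C$ becomes a smooth symplectic sphere of self-intersection $+1$; a direct self-intersection count (as in Section~\ref{ss:resolution}, and Lemma~\ref{l:selfint-defect}) shows this requires seven blow-ups in each case: in (1), three at the $(3,5)$-cusp and two at each $(2,3)$-cusp; in (2), one at the $(3,4)$-cusp --- leaving a triple tangency of $\widetilde C$ with an exceptional sphere, exactly as in the configurations $\Ec$ and $\Dc$ --- and two at each $(2,3)$-cusp. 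Since the total transform now lives in $X\#N\cptwobar$ and contains a smooth $+1$-sphere, McDuff's Theorem~\ref{thm:mcduff} identifies $X\#N\cptwobar$ with a blow-up of $\cptwo$ in which $\widetilde C$ plays the role of $\cpone$; I set $[\widetilde C]=h$.

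Next I would enumerate the homology classes of the remaining spheres exactly as in Section~\ref{s:caps}: the chains of $(-2)$-spheres are pinned down by Lemmas~\ref{l:2chain} and~\ref{l:2chainfix}, the adjacent $(-1)$- and $(-3)$-spheres (and the sphere meeting $\widetilde C$ along the triple tangency in case (2)) are constrained by the adjunction and positivity Lemmas~\ref{l:hom},~\ref{l:consecutive},~\ref{l:pos}, and~\ref{l:share2}, and the remaining freedom is cut down by the intersection pattern of the resolution graph. For each surviving homological embedding I would then use Lemma~\ref{l:blowdown} to blow down exceptional spheres in all the $e_i$ classes down to $\cptwo$, tracking the images of the configuration. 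The goal is to show that in every case the image is a line arrangement in $\cptwo$ containing seven lines meeting in seven triple points, i.e.\ a Fano plane --- the triple points arising partly from incidences already present among the resolution divisors and partly from the blow-downs, as in Figure~\ref{fig:GtoFano}. Proposition~\ref{l:biratexist} then turns the assumed embedding of $C$ in $X$ into a symplectic Fano plane in $\cptwo$, contradicting Theorem~\ref{thm:Fano}. This is precisely the birational counterpart of the Riemann--Hurwitz computation of Example~\ref{ex:RH}, which is what the remark about ``reversing the argument'' refers to.

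The main obstacle is the exhaustiveness of the homology classification. Unlike the quartic of Proposition~\ref{p:3quartic}, where one homological embedding (into $\cptwo$) blew down to a realizable arrangement and only the spurious $b_2=2$ embeddings produced a Fano, here \emph{every} homological embedding --- including any putative embedding into $\cptwo$ itself --- must degenerate to a configuration containing a Fano, in agreement with the fact that these quintics are already obstructed by Riemann--Hurwitz in $\cptwo$. The delicate point is therefore to verify that each of the finitely many admissible sign- and index-choices for the chains and their neighbours is either excluded by an intersection constraint or blows down to a configuration containing a Fano subconfiguration; once the list of homology classes is in hand, the geometric identification of the seven triple points after blow-down is a routine translation of incidences, and the two cases (1) and (2) are handled by the same bookkeeping applied to their respective resolution graphs.
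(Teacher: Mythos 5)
Your proposal is correct and follows essentially the same route as the paper: blow up seven times (distributed exactly as you say: three at the $(3,5)$-cusp and two at each $(2,3)$-cusp in case (1); one at the $(3,4)$-cusp, leaving the triple tangency with a $(-1)$-sphere as in $\Ec$ and $\Dc$, and two at each $(2,3)$-cusp in case (2)) to reach a smooth $+1$-sphere, apply Theorem~\ref{thm:mcduff}, pin down the homological embedding, note that exactly seven exceptional classes appear so only $X=\cptwo$ survives, and blow down via Lemma~\ref{l:blowdown}. The exhaustiveness worry you flag is resolved favorably: the paper finds the homological embedding is \emph{unique} in each case (Figures~\ref{fig:32-2-2} and~\ref{fig:34-2-2-2}), and your expected blow-downs are confirmed --- in case (1) precisely the Fano plane, in case (2) the Fano plane plus a singular cubic --- so Proposition~\ref{l:biratexist} applies as you intend. (A small bookkeeping note: here the $(-2)$-spheres all meet the $+1$-curve, with classes of the form $h-e_i-e_j-e_k$, so Lemma~\ref{l:hom} plus intersection numbers suffice; the chain Lemmas~\ref{l:2chain} and~\ref{l:2chainfix} play no role.) The one genuine divergence is the final obstruction in case (1): you invoke Theorem~\ref{thm:Fano} from~\cite{RuSt}, whereas the paper closes the $\cptwo$ case with the Riemann--Hurwitz inequality of Example~\ref{ex:RH} and records the Fano derivation only afterwards. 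This ordering matters: because the quintic is already excluded by Riemann--Hurwitz, the paper's birational derivation yields Corollary~\ref{cor:Fano} (the symplectic Fano obstruction) \emph{independently} of~\cite{RuSt}; with your ordering the Fano obstruction must be imported from~\cite{RuSt}, which is perfectly valid for proving Proposition~\ref{p:newfano} but forfeits that independent corollary. In case (2) your argument coincides with the paper's, which does conclude via Theorem~\ref{thm:Fano} (or Corollary~\ref{cor:Fano}) there.
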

			
			\begin{proof}
			We begin with the first case where $C$ has one cusp of type $(3,5)$ and two of type $(2,3)$.
				Suppose $C$ embeds symplectically minimally into $(X,\omega)$. Blow up four times to the minimal resolution and then once more at each of the three tangency points, until $C$ has self-intersection $+1$, and the configuration is given as in Figure~\ref{fig:32-2-2}.
				Apply McDuff's theorem to identify $X\# 7 \cptwobar$ with $\cptwo\#N\cptwobar$ sending $C$ to a line. The homology classes of the other curves are uniquely determined by Lemma~\ref{l:hom} and the algebraic intersection numbers as in Figure~\ref{fig:32-2-2}. There are exactly seven $e_i$ classes appearing so if $C$ was minimally symplectically embedded in $X$, we must have $X\#7\cptwobar = \cptwo\#7\cptwobar$ so $X=\cptwo$. This together with the Riemann--Hurwitz obstruction suffices to ensure $C$ does not embed into any closed symplectic manifold.
				\begin{figure}
					\centering
					\labellist
					\pinlabel $+1$ at 0 30
					\pinlabel $-2$ at 0 131
					\pinlabel $-1$ at 118 150
					\pinlabel $-2$ at 165 165
					\pinlabel $-1$ at 295 170
					\pinlabel $-2$ at 375 160
					\pinlabel $-1$ at 480 145
					\pinlabel $-3$ at 450 210
					\pinlabel $\vphantom{a}_h$ at 0 57
					\pinlabel $\vphantom{a}_{h-e_3-e_4-e_5}$ at 0 162
					\pinlabel $\vphantom{a}_{h-e_1-e_2}$ at 95 184
					\pinlabel $\vphantom{a}_{h-e_2-e_4-e_6}$ at 187 185
					\pinlabel $\vphantom{a}_{h-e_1-e_3}$ at 290 190
					\pinlabel $\vphantom{a}_{h-e_2-e_3-e_7}$ at 375 180
					\pinlabel $\vphantom{a}_{h-e_1-e_4}$ at 441 90
					\pinlabel $\vphantom{a}_{e_4-e_5-e_6}$ at 400 220
					\endlabellist
					\includegraphics[scale=.66]{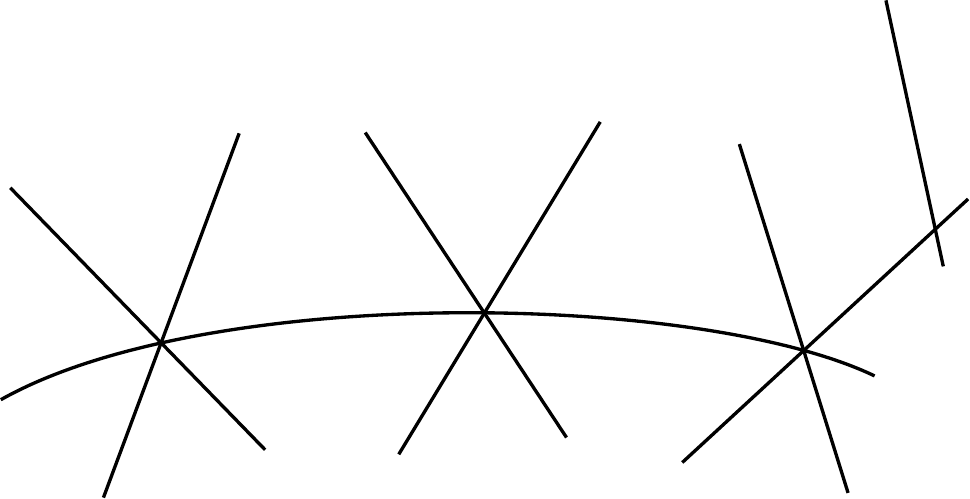}
					\caption{The unique homology classes of an embedding of the blow-up of a cuspidal curve with one cusp of type $(3,5)$ and two of type $(2,3)$.}
					\label{fig:32-2-2}
				\end{figure}
				However, going a bit further, if we apply Lemma~\ref{l:blowdown} to blow down exceptional spheres representing the $e_i$, the configuration blows down to seven lines intersecting in seven triple points: the Fano plane (Figure~\ref{fig:Fano}). (The exceptional spheres representing $e_1,e_2,e_3,e_4$ blow down to triple point intersections between lines.) Therefore the Fano line arrangement in $\cptwo$ can be birationally derived from this cuspidal quintic.
			
			Thus, we obtain a proof of the following corollary, independently of~\cite{RuSt}.
			(They prove the stronger statement in the smooth, rather than symplectic, category.)
			
			\begin{cor} \label{cor:Fano}
				The Fano configuration cannot be realised as a configuration of symplectic lines in $\cptwo$.\qed
			\end{cor}
				
				The cuspidal quintic with one $(3,4)$-cusp and three $(2,3)$-cusps is obstructed similarly. This time we blow up to the minimal resolution and then blow-up one additional time at each of the three $(2,3)$-cusp points (a total of $7$ blow-ups). The resulting configuration is shown in Figure~\ref{fig:34-2-2-2}. Identifying the $+1$-sphere with a line, the homology classes of the other curves are uniquely determined by Lemma~\ref{l:hom} and the intersection relations as shown.
				There are exactly seven $e_i$ classes used so the only possible $4$-manifold where this cuspidal curve can minimally symplectically embed is $\cptwo$. Blowing down using Lemma~\ref{l:blowdown} yields a configuration consisting of the Fano configuration together with a singular cubic. Since the Fano configuration cannot exist (by Theorem~\ref{thm:Fano} or Corollary~\ref{cor:Fano}), this cuspidal quintic cannot embed into $\cptwo$ by Proposition~\ref{l:biratexist}.
			\end{proof}				
				\begin{figure}
					\centering
					\labellist
					\pinlabel $+1$ at 0 110
					\pinlabel $h$ at 0 140
					\pinlabel $-1$ at 90 80
					\pinlabel $3h-2e_1-e_2-\dots -e_7$ at 110 170
					\pinlabel $F$ at 35 200
					\pinlabel $-1$ at 255 230
					\pinlabel $E'_1$ at 240 180
					\pinlabel $h-e_1-e_2$ at 200 220
					\pinlabel $-1$ at 399 230
					\pinlabel $E'_2$ at 384 180
					\pinlabel $h-e_1-e_3$ at 344 220
					\pinlabel $-1$ at 543 230
					\pinlabel $E'_3$ at 528 180
					\pinlabel $h-e_1-e_4$ at 488 220
					\pinlabel $-2$ at 255 30
					\pinlabel $E_1$ at 240 70
					\pinlabel $h-e_3-e_4-e_5$ at 190 20
					\pinlabel $-2$ at 399 30
					\pinlabel $E_2$ at 384 70
					\pinlabel $h-e_2-e_4-e_6$ at 334 20
					\pinlabel $-2$ at 543 30
					\pinlabel $E_3$ at 528 70
					\pinlabel $h-e_2-e_3-e_7$ at 478 20
					\endlabellist
					\includegraphics[scale=.66]{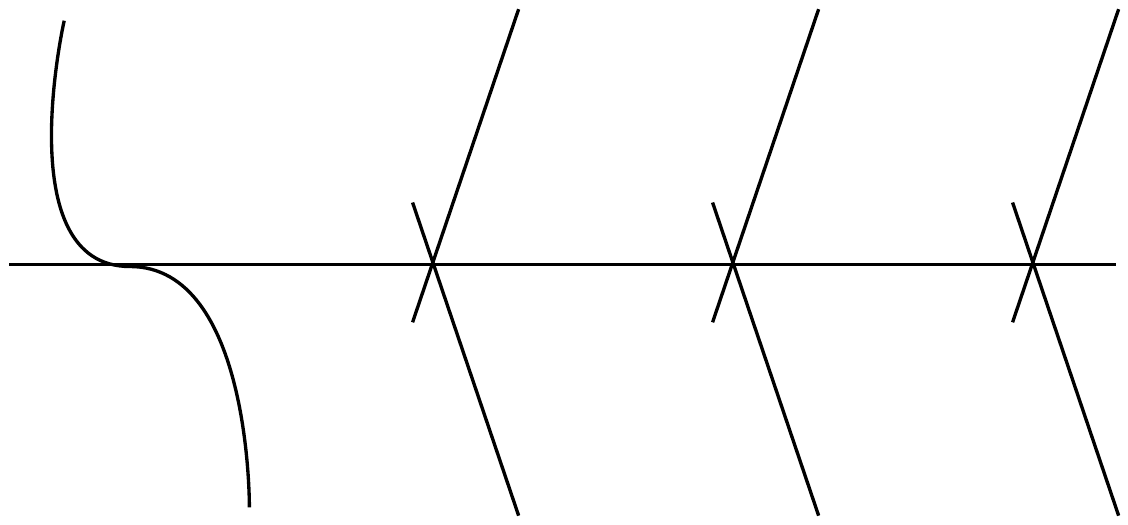}
					\caption{The unique homology classes of an embedding of the blow-up of a cuspidal curve with one cusp of type $(3,4)$ and three of type $(2,3)$.}
					\label{fig:34-2-2-2}
				\end{figure}
			
		There are two other quintics which are obstructed from realization by complex curves in $\cptwo$ using Riemann--Hurwitz (Example~\ref{ex:RH}). We can obstruct these curves from embedding symplectically into any closed symplectic $4$-manifold also.
		
			\begin{prop}\label{p:daisies}
				Let $C$ a rational cuspidal curve with normal Euler number $25$, with either 
				\begin{enumerate}
					\item one cusp of type $(2,5)$ and four of type $(2,3)$, or 
					\item six cusps of type $(2,3)$.
				\end{enumerate}
				Then $C$ does not embed symplectically into any closed symplectic $4$-manifold and contact structure $\xi_C$ associated to $C$ is not symplectically fillable.
			\end{prop}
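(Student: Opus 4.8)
The plan is to assume $C$ embeds symplectically and minimally in some closed symplectic $4$-manifold $(X,\omega)$ and to derive a contradiction, treating both cusp distributions uniformly. Since $C\cdot C=25$ and $p_a(C)=6$, we have $C\cdot C=25\ge 2p_a(C)-1=11>0$, so Proposition~\ref{p:rationalsurface} forces $X$ to be a rational surface; as $C\cdot C$ is odd, $X$ cannot be $S^2\times S^2$, and since every other rational surface is a (possibly trivial) blow-up of $\cptwo$, we may write $[C]=dh-\sum_{i=1}^m m_ie_i$ with $d=[C]\cdot h$. Relative minimality together with positivity of intersections (after choosing $J$ as in Lemmas~\ref{l:Jcompatible} and~\ref{l:blowdown}) gives $m_i=[C]\cdot e_i\ge 1$, and the adjunction formula~\eqref{e:adjunction} yields $\sum m_i=3d-15$ and $\sum m_i^2=d^2-25$. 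In the base case $X=\cptwo$ (so $m=0$, $d=5$) the contradiction is exactly the Riemann--Hurwitz computation of Example~\ref{ex:RH}.

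To handle arbitrary $X$ I would follow the birational strategy of the paper. Blow $C$ up to its minimal resolution: the proper transform $\widetilde C$ is a smooth symplectic $+1$-sphere (self-intersection $25-24=1$) meeting each cusp-resolution exceptional sphere $G_j$ in a single order-$2$ tangency. Identifying $\widetilde C$ with a line via McDuff's theorem (Theorem~\ref{thm:mcduff}), each $G_j$ satisfies $[G_j]\cdot h=2$ and $[G_j]^2=-1$, so Lemma~\ref{l:hom} forces the conic class $[G_j]=2h-e_a-e_b-e_c-e_d-e_\bullet$. Using disjointness of the $G_j$ and Lemmas~\ref{l:share2} and~\ref{l:pos} to control how their exceptional classes overlap, I would then blow down via Lemma~\ref{l:blowdown} to a configuration in $\cptwo$ consisting of a line tangent to several conics of a pencil; this contains the configuration $\Gc^\star$, which is obstructed by Proposition~\ref{p:Gstar}, and Proposition~\ref{l:biratexist} gives the contradiction. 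The six-cusp case produces six such tangent conics and the $\{[2,2],[2],[2],[2],[2]\}$ case produces five (with one extra $(-2)$-sphere $e_a-e_b$ attached to a single conic), but in both the relevant tangent-line-plus-pencil subconfiguration $\Gc^\star$ is present; when the overlap combinatorics instead yields a Fano plane one invokes Theorem~\ref{thm:Fano}.

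The hard part is ruling out embeddings into nontrivial blow-ups of $\cptwo$: unlike in Proposition~\ref{p:newfano}, the order-$2$ tangencies here cannot be resolved while keeping a $+1$-sphere (the normal-crossing resolution has self-intersection $25-36=-11$), so the simple ``count the exceptional classes'' argument does not pin down $X=\cptwo$. I would close this gap by combining the full homological classification above (verifying that every admissible overlap pattern of the conic classes blows down to a configuration containing $\Gc^\star$ or a Fano plane) with a numerical Riemann--Hurwitz input: projecting $\widetilde C$ along each ruling pencil $\lvert h-e_i\rvert$ and counting the ramification forced by the multiplicity-$2$ cusps gives $[C]\cdot(h-e_i)=d-m_i\ge 4$, i.e. $m_i\le d-4$, whence $d^2-25=\sum m_i^2\le(d-4)(3d-15)$ excludes $6\le d\le 8$, with sharper bounds over higher ruling classes eliminating $d\ge 9$. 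I expect this classification-plus-Riemann--Hurwitz bookkeeping to be the principal obstacle. Finally, since any strong symplectic filling of $\xi_C$ caps off against the concave neighborhood of $C$ to a closed symplectic manifold containing $C$, non-embeddability immediately gives the claimed non-fillability of $\xi_C$.
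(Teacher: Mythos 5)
Your overall route coincides with the paper's: pass to the minimal resolution (six blow-ups, proper transform a $+1$-sphere), apply McDuff's theorem, force each exceptional sphere tangent to the line into a conic class $2h-e_{i_1}-\dots-e_{i_5}$ via Lemma~\ref{l:hom}, kill the $\cptwo$ case by Riemann--Hurwitz (Example~\ref{ex:RH}), and kill the other case by blowing down to a pencil of conics with a common tangent line containing $\Gc^\star$ (Propositions~\ref{p:Gstar} and~\ref{l:biratexist}). However, the difficulty you flag as ``the principal obstacle'' --- that the order-$2$ tangencies prevent the exceptional-class count from pinning down the ambient manifold --- is not real, and the idea that resolves it is the one your proposal is missing. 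Lemma~\ref{l:hom} applies to any smooth symplectic sphere meeting the line non-negatively, tangent or not, so the homological classification goes through at the level of the minimal resolution; the decisive combinatorial point is that pairwise-disjoint conic classes must share exactly four exceptional classes pairwise, and once there are more than three of them only two global patterns survive: either every conic is $2h-e_1-e_2-e_3-e_4-e_i$ with distinct $i$, or every conic is $2h-(e_1+\dots+e_6)+e_j$ with $j\in\{1,\dots,6\}$. The remaining $(-2)$-sphere(s) are then determined by intersections, so the total number of exceptional classes is forced to be $10$ or $6$, i.e. $X=\cptwo\#4\cptwobar$ or $X=\cptwo$, with no free degree $d$ to chase at all.

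Because of this, your substitute mechanism is both unnecessary and, as stated, a genuine gap: the inequality $d^2-25=\sum m_i^2\le(d-4)\sum m_i=(d-4)(3d-15)$ reduces to $2d^2-27d+85\ge 0$ and only excludes $d\in\{6,7,8\}$, while the ``sharper bounds over higher ruling classes'' needed for $d\ge 9$ are asserted without proof. Indeed no purely numerical argument of this kind can close the case uniformly, since a consistent homological embedding into $\cptwo\#4\cptwobar$ genuinely exists for both curves and is excluded only geometrically, by the $\Gc^\star$ blow-down. Two smaller corrections: the Fano contingency you allow for does not arise for these two curves --- the Fano derivation belongs to the $[[3,2,2,2]]$ quintics of Proposition~\ref{p:newfano}, whereas here the six-exceptional-class pattern is obstructed by Example~\ref{ex:RH}; and in the five-cusp case the blow-down of the $2h-e_1-e_2-e_3-e_4-e_i$ pattern yields five conics through four common points with a common tangent line (three of which already exhibit $\Gc^\star$), so no separate analysis of the extra $(-2)$-sphere $e_5-e_{10}$ is needed beyond recording its class.
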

			
			\begin{proof}
				Assume $C$ embeds in $(X,\omega)$. In both cases, we blow up $6$ times to the minimal resolution, where the proper transform of $C$ has self-intersection $+1$. We apply McDuff's theorem and classify homological embeddings of the configurations.
				
				Every exceptional curve which is simply tangent to the $+1$-curve will represent a conic with homology class $2h-e_{i_1}-\dots - e_{i_5}$ by Lemma~\ref{l:hom}. Since any two of these conics are disjoint, each pair must share exactly four exceptional classes with coefficient $-1$. When there are more than three such conics, there are two ways this can happen: either all of the conics have the form $2h-e_1-e_2-e_3-e_4-e_i$ (for all different values of $i$) or all of the conics have the form $2h-e_1 - \dots -e_6+e_j$ for $j\in \{1,\dots, 6\}$ (this is possible when there are at most $6$ conics).
				
				Keeping these generalities in mind, we now focus on the case with five cusps. There are five conics tangent to the $+1$-line, which must have homology classes fitting one of the two possibilities discussed above. The remaining $(-2)$-sphere intersects one of these conics and in each case, its homology class is uniquely determined bsaed on its intersections with the conics as in Figure~\ref{fig:25232323}. In the first embedding where the conics all have the form $2h-e_1-e_2-e_3-e_4-e_i$, there are $10$ exceptional classes so $X\#6\cptwobar\cong \cptwo\#10\cptwobar$, so this corresponds to an embedding of the cuspidal curve in $X=\cptwo\#4\cptwobar$. 
				
				Blowing down exceptional spheres in $e_i$ classes using Lemma~\ref{l:blowdown}, the configuration in $\cptwo\#10\cptwobar$ descends to five conics all intersecting at four common fixed points (the images of $e_1,e_2,e_3,e_4$), with a line tangent to all five conics. This contains the configuration $\Gc^{\star}$ which is obstructed by Proposition~\ref{p:Gstar}. Since we birationally derived an obstructed configuration, by Proposition~\ref{l:biratexist} this homological embedding of the minimal resolution configuration into $\cptwo\#10\cptwobar$ cannot be realized by a symplectic embedding (equivalently the cuspidal curve does not have a corresponding relatively minimal embedding into $\cptwo\#4\cptwobar$).
				
				In the second embedding, only $6$ exceptional classes are used so this corresponds to a symplectic embedding of the cuspidal curve in $\cptwo$. This is obstructed by the Riemann--Hurwitz obstruction (see Example~\ref{ex:RH}).
				
				\begin{figure}
					\centering
					\labellist
					\pinlabel $+1$ at 335 95
					\pinlabel ${\scriptstyle h}$ at 335 80
					\pinlabel $-1$ at 500 70
					\pinlabel ${\color{red} \scriptstyle{2h-e_1-e_2-e_3-e_4-e_9}}$ at 492 10
					\pinlabel $\scriptstyle{2h-e_1-e_3-e_4-e_5-e_6}$ at 492 0
					\pinlabel $-1$ at 403 70
					\pinlabel ${\color{red} \scriptstyle{2h-e_1-e_2-e_3-e_4-e_8}}$ at 395 10
					\pinlabel $\scriptstyle{2h-e_1-e_2-e_4-e_5-e_6}$ at 395 0
					\pinlabel $-1$ at 307 70
					\pinlabel ${\color{red} \scriptstyle{2h-e_1-e_2-e_3-e_4-e_7}}$ at 297 10
					\pinlabel $\scriptstyle{2h-e_1-e_2-e_3-e_5-e_6}$ at 297 0
					\pinlabel $-1$ at 211 70
					\pinlabel ${\color{red} \scriptstyle {2h-e_1-e_2-e_3-e_4-e_6}}$ at 200 10
					\pinlabel $\scriptstyle {2h-e_1-e_2-e_3-e_4-e_6}$ at 202 0
					\pinlabel $-1$ at 115 70
					\pinlabel ${\color{red} \scriptstyle {2h-e_1-e_2-e_3-e_4-e_5}}$ at 104 10
					\pinlabel $\scriptstyle {2h-e_1-e_2-e_3-e_4-e_5}$ at 104 0					
					\pinlabel $-2$ at 40 30
					\pinlabel ${\scriptstyle \color{red}e_5-e_{10}}$ at 38 12
					\pinlabel $\scriptstyle e_1-e_6$ at 38 0
					\endlabellist
					\includegraphics[scale=.85]{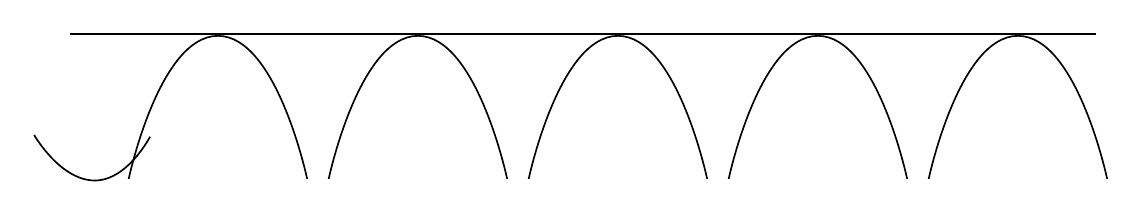}
					\caption{Minimal resolution of a rational cuspidal quintic with one cusp of type $(2,7)$, one of type $(2,5)$, and one of type $(2,3)$ with the possible homological embeddings.}
					\label{fig:25232323}
				\end{figure}
								
				In the six-cuspidal case, there are six conics tangent to the $+1$-curve $C$.
				Again, there are two possible homological embeddings, fully determined by the two options for the classes of the conics described above. The first case uses $10$ exceptional classes and thus corresponds to an embedding of the cuspidal curve into $\cptwo\#4\cptwobar$. Blowing down with Lemma~\ref{l:blowdown}, the configuration descends to six conics in a pencil with a line tangent to all six. This contains the configuration $\Gc^{\star}$ so by Propositions~\ref{p:Gstar} and~\ref{l:biratexist}, there is no relatively minimal symplectic embedding of this cuspidal curve into $\cptwo\#4\cptwobar$. The second homological embedding uses $6$ exceptional classes and thus corresponds to an embedding of the cuspidal curve into $\cptwo$, which is obstructed by Riemann--Hurwitz (Example~\ref{ex:RH}).
			\end{proof}
			
	In the previous two cases, we had potential symplectic embeddings of the minimal resolution into $\cptwo\#10\cptwobar$ with homology classes determined relative the $+1$-curve being identified with a line. Blowing down exceptional spheres in the $e_i$ classes lead to a configuration of conics and a line that we could obstruct because it contained $\Gc^\star$. Next, we give three more cases where we can obstruct all of the possible configurations of conics and lines resulting from blowing down $e_i$ classes.
	
	\begin{prop}\label{p:obstrG}
		Let $C$ a rational cuspidal curve with normal Euler number $25$, with either 
		\begin{enumerate}
			\item one cusp of type $(2,9)$ and two of type $(2,3)$,
			\item one cusp of type $(2,7)$, one of type $(2,5)$, and one of type $(2,3)$, or
			\item two cusps of type $(2,5)$, and two of type $(2,3)$.
		\end{enumerate}
		Then $C$ does not embed symplectically into any closed symplectic $4$-manifold and contact structure $\xi_C$ associated to $C$ is not symplectically fillable.
	\end{prop}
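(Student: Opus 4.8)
The plan is to mirror the strategy of Proposition~\ref{p:daisies} and the $\Gc^\star$ obstructions: resolve, identify a $+1$-sphere with a line via McDuff's theorem, classify the possible homological embeddings, blow down, and exhibit an obstructed subconfiguration. Concretely, suppose $C$ embeds minimally symplectically in $(X,\omega)$. In all three cases every cusp is of type $(2,2k_i+1)$ with $\sum_i k_i = 6$, so blowing up $\sum_i k_i = 6$ times to the minimal resolution turns the proper transform $\widetilde C$ into a smooth symplectic sphere of self-intersection $25 - 4\sum_i k_i = 1$. The minimal resolution of each cusp contributes a $(-1)$-sphere tangent (with multiplicity $2$) to $\widetilde C$, carrying a chain of $k_i - 1$ further $(-2)$-spheres. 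By Theorem~\ref{thm:mcduff} I would identify $X\#6\cptwobar$ with $\cptwo\#N\cptwobar$ sending $\widetilde C$ to $h$; then Lemma~\ref{l:hom} forces each tangent $(-1)$-sphere to be a conic class $2h - e_{i_1}-\cdots-e_{i_5}$, and disjointness of the legs forces any two such conic classes to share exactly four exceptional classes.

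Next I would carry out the homological classification using Lemmas~\ref{l:hom}, \ref{l:consecutive}, \ref{l:share2}, \ref{l:pos}, \ref{l:2chain}, and~\ref{l:2chainfix}; the $(-2)$-chains attached to the conics coming from the longer cusps $(2,5)$, $(2,7)$, $(2,9)$ should pin down the indices appearing in those conic classes. For each resulting homological embedding I would invoke Lemma~\ref{l:blowdown} to blow down exceptional spheres in the $e_i$-classes all the way to $\cptwo$, reading off the configuration formed by the line $h$ together with the images of the conics (and of the $(-2)$-chains). The aim is to show that this configuration always contains one of the configurations obstructed earlier---a common tangent line to a pencil of conics ($\Gc^\star$, Proposition~\ref{p:Gstar}), or a common tangent line to two conics that are themselves tangent ($\Gc_4$ or $\Gc_{2,2}$, Proposition~\ref{p:G422}), with $\Gc$ or the Fano plane as further possibilities if lines appear (Propositions~\ref{p:GFano} and Theorem~\ref{thm:Fano})---so that Proposition~\ref{l:biratexist} rules out the embedding of $C$.

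The main obstacle, and the reason these three cases are separated from Proposition~\ref{p:daisies}, is the branch of the classification that uses only six exceptional classes, corresponding to a putative embedding into $\cptwo$ itself. In Proposition~\ref{p:daisies} that branch was disposed of by the Riemann--Hurwitz inequality~\eqref{e:riemannhurwitz}, but a direct check (projecting from any cusp) shows that~\eqref{e:riemannhurwitz} is \emph{satisfied} for all three multi-sequences here, so it yields no contradiction. Thus I must obstruct the $\cptwo$-branch purely through birational derivation, and the delicate point is determining the exact tangency orders between the blown-down conics: two conics sharing four exceptional classes generically meet at four transverse points, which is only the \emph{allowed} configuration $\Gc_1$. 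It is precisely the $(-2)$-chains recording the higher tangencies of the $(2,5)$/$(2,7)$/$(2,9)$ cusps that must be shown to either force a tangency between a pair of conics (producing the obstructed $\Gc_4$ or $\Gc_{2,2}$) or force the conics through four common points (producing $\Gc^\star$). Tracking this tangency data correctly through the chains---and verifying that no homological embedding escapes with only transverse, hence unobstructed, intersections---is where the real work lies.
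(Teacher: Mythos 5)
Your proposal follows essentially the same route as the paper's proof: minimal resolution after six blow-ups, McDuff identification of the $+1$-sphere with a line, a two-branch homological classification in which one branch (all conics of the form $2h-e_1-e_2-e_3-e_4-e_i$) blows down onto $\Gc^\star$ and the other, six-exceptional-class branch is killed by tracking coincident blow-down points, which force a multiplicity-$4$ tangency (giving $\Gc_4$) in the $(2,9)$ case and double tangencies (giving $\Gc_{2,2}$) in the other two, with Proposition~\ref{l:biratexist} concluding. The ``real work'' you correctly flag --- showing the $(-2)$-chains force several $e_i$'s to blow down to a common point, so the conics cannot escape into the unobstructed transverse configuration $\Gc_1$ --- is exactly what the paper carries out case by case, and your diagnosis that Riemann--Hurwitz is satisfied here and cannot substitute for this analysis matches the paper's reason for separating these cases from Proposition~\ref{p:daisies}.
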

		
	\begin{proof}
		In all three cases, the minimal resolution results from $6$ blow-ups, and the proper transform of the cuspidal curve becomes a smooth $+1$-sphere $C$ which we use to apply McDuff's theorem. The six exceptional divisors are either $(-1)$-curves tangent to $C$ or $(-2)$-curves disjoint from $C$. The two choices (up to relabeling the $e_i$) of homological embedding for the tangent $(-1)$-curves each determine the homology classes of the $(-2)$-spheres. The minimal resolutions with their two possible homological embeddings are shown in Figures~\ref{fig:292323},~\ref{fig:272523}, and~\ref{fig:25252323}.
		
		For each of the three cuspidal curves, there is one homological embedding option where all of the tangent conics represent homology class of the form $2h-e_1-e_2-e_3-e_4-e_i$ (the upper option in red in the figures). If we use Lemma~\ref{l:blowdown} to blow down exceptional spheres in the $e_i$ classes, this gives a birational derivation to a configuration that contains (or equals) $\Gc^{\star}$. Since $\Gc^{\star}$ is obstructed by Proposition~\ref{p:Gstar}, there cannot be any symplectic embedding of the resolution with such homology classes in $\cptwo\#10\cptwobar$.
		
		For each of the cuspidal curves we consider the remaining homological embedding, and blow down exceptional spheres representing the $e_i$ using Lemma~\ref{l:blowdown}.
		
		\begin{figure}
			\centering
			\labellist
			\pinlabel $+1$ at 380 95
			\pinlabel $\scriptstyle h$ at 380 80
			\pinlabel $-1$ at 456 70
%			\pinlabel $G$ at 465 50
			\pinlabel ${\color{red}\scriptstyle 2h-e_1-e_2-e_3-e_4-e_7 }$ at 425 10
			\pinlabel $\scriptstyle 2h-e_1-e_2-e_3-e_5-e_6$ at 425 0
			\pinlabel $-1$ at 345 70
%			\pinlabel $F$ at 354 50
			\pinlabel ${\color{red}\scriptstyle 2h-e_1-e_2-e_3-e_4-e_6}$ at 314 10
			\pinlabel $\scriptstyle 2h-e_1-e_2-e_3-e_4-e_6$ at 314 0
			\pinlabel $-1$ at 233 70
%			\pinlabel $E_1$ at 243 50
			\pinlabel ${\color{red}\scriptstyle 2h-e_1-e_2-e_3-e_4-e_5}$ at 205 10
			\pinlabel $\scriptstyle 2h-e_1-e_2-e_3-e_4-e_5$ at 205 0
			\pinlabel $-2$ at 136 30
%			\pinlabel $E_2$ at 136 45
			\pinlabel ${\color{red}\scriptstyle e_5-e_8}$ at 134 10
			\pinlabel $\scriptstyle e_1-e_6$ at 134 0
			\pinlabel $-2$ at 88 30
%			\pinlabel $E_3$ at 88 45
			\pinlabel ${\color{red}\scriptstyle e_8-e_9}$ at 88 10
			\pinlabel $\scriptstyle e_2-e_1$ at 88 0
			\pinlabel $-2$ at 40 30
%			\pinlabel $E_4$ at 40 45
			\pinlabel ${\color{red}\scriptstyle e_9-e_{10}}$ at 40 10
			\pinlabel $\scriptstyle e_3-e_2$ at 40 0
			\endlabellist
			\includegraphics[scale=.85]{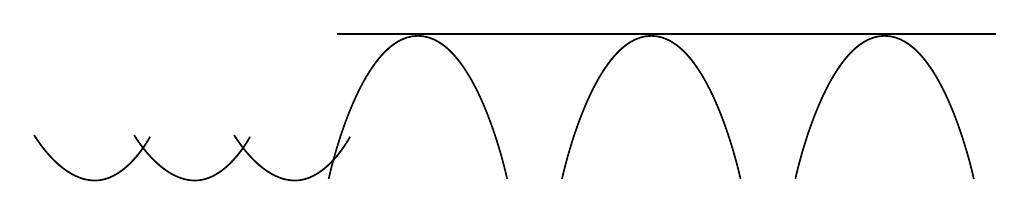}
			\caption{Minimal resolution of a rational cuspidal quintic with one cusp of type $(2,9)$ and two of type $(2,3)$ with the possible homological embeddings.}
			\label{fig:292323}
		\end{figure}
		
		For the curve with one $(2,9)$-cusp and two $(2,3)$-cusps, the minimal resolution is shown in Figure~\ref{fig:292323}. Focusing on the lower (black) homological embedding, we blow down spheres representing classes $e_6,e_1,e_2$, and $e_3$ all to the same point. Because all four of these exceptional spheres intersect the two conics on the right (which resolved the two $(2,3)$-cusps), the resulting configuration in the blow-down will contain two conics which intersect at a single point with multiplicity $4$, one more conic, and a line tangent to all three conics. Focusing on the first two conics and the tangent line, we see this configuration contains $\Gc_4$, obstructed by Proposition~\ref{p:G422}. Therefore by Proposition~\ref{l:biratexist}, the cuspidal curve with a $(2,9)$-cusp and two $(2,3)$-cusps cannot symplectically embed into $\cptwo$.

		\begin{figure}
			\centering
			\labellist
			\pinlabel $+1$ at 310 95
			\pinlabel $\scriptstyle h$ at 310 80
			\pinlabel $-1$ at 388 70
%				\pinlabel $F_1$ at 413 50
			\pinlabel ${\color{red} \scriptstyle {2h-e_1-e_2-e_3-e_4-e_7}}$ at 352 10
			\pinlabel $\scriptstyle{2h-e_1-e_2-e_3-e_5-e_6}$ at 352 0
			\pinlabel $-2$ at 414 30
%				\pinlabel $F_2$ at 440 45
			\pinlabel ${\color{red}\scriptstyle e_7-e_8}$ at 419 10
			\pinlabel $\scriptstyle e_3-e_4$ at 419 0
			\pinlabel $-1$ at 284 70
%				\pinlabel $G_1$ at 304 50
			\pinlabel ${\color{red} \scriptstyle {2h-e_1-e_2-e_3-e_4-e_6}}$ at 252 10
			\pinlabel $\scriptstyle{2h-e_1-e_2-e_3-e_4-e_6}$ at 252 0
			\pinlabel $-1$ at 182 70
%				\pinlabel $E_1$ at 203 50
			\pinlabel ${\color{red} \scriptstyle {2h-e_1-e_2-e_3-e_4-e_5}}$ at 153 10
			\pinlabel $\scriptstyle {2h-e_1-e_2-e_3-e_4-e_5}$ at 153 0
			\pinlabel $-2$ at 88 30
%				\pinlabel $E_2$ at 88 45
			\pinlabel ${\color{red}\scriptstyle e_5-e_9}$ at 88 10
			\pinlabel $\scriptstyle e_1-e_6$ at 88 0
			\pinlabel $-2$ at 40 30
%				\pinlabel $E_3$ at 40 45
			\pinlabel ${\color{red}\scriptstyle e_9-e_{10}}$ at 40 10
			\pinlabel $\scriptstyle e_2-e_1$ at 40 0
			\endlabellist
			\includegraphics[scale=.85]{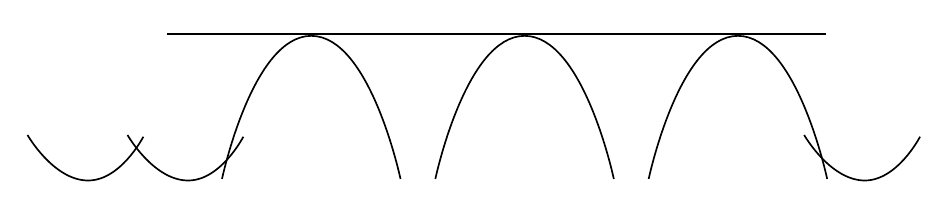}
			\caption{Minimal resolution of a rational cuspidal quintic with one cusp of type $(2,7)$, one of type $(2,5)$, and one of type $(2,3)$ with the possible homological embeddings.}
			\label{fig:272523}
		\end{figure}
		
		For the curve with one cusp each of types $(2,7)$, $(2,5)$, and $(2,3)$, the minimal resolution with the possible homological embeddings appears in Figure~\ref{fig:272523}. Again focusing on the lower (black) homological embedding, which is the only one left to rule out, we blow down exceptional spheres in $e_i$ classes using Lemma~\ref{l:blowdown}. We see that exceptional spheres in classes $e_1,e_2$, and $e_6$ all blow down to the same point, and exceptional spheres in classes $e_3$ and $e_4$ blow down to the same point. The two leftmost conics (coming from the last resolving exceptional divisors of the $(2,7)$- and $(2,3)$-cusps) each intersect $e_1,e_2,e_3$, and $e_4$. Therefore after blowing down these two conics become tangent at two different points and the $+1$-sphere descends to a line tangent to each. Therefore $\Gc_{2,2}$ appears as a subconfiguration of the result of blowing down. Since $\Gc_{2,2}$ is obstructed by Proposition~\ref{p:G422}, the cuspidal curve cannot symplectically embed because it has a birational derivation to an obstructed configuration.

		\begin{figure}
			\centering
			\labellist
			\pinlabel $+1$ at 265 95
			\pinlabel $\scriptstyle h$ at 265 80
			\pinlabel $-1$ at 437 70
			\pinlabel ${\color{red} \scriptstyle {2h-e_1-e_2-e_3-e_4-e_8}}$ at 408 10
			\pinlabel $\scriptstyle{2h-e_1-e_2-e_4-e_5-e_6}$ at 408 0
			\pinlabel $-2$ at 467 30
			\pinlabel ${\color{red}\scriptstyle e_8-e_9}$ at 473 10
			\pinlabel $\scriptstyle e_2-e_3$ at 473 0
			\pinlabel $-1$ at 340 70
			\pinlabel ${\color{red} \scriptstyle {2h-e_1-e_2-e_3-e_4-e_7}}$ at 309 10
			\pinlabel $\scriptstyle{2h-e_1-e_2-e_3-e_5-e_6}$ at 309 0
			\pinlabel $-1$ at 237 70
			\pinlabel ${\color{red} \scriptstyle {2h-e_1-e_2-e_3-e_4-e_6}}$ at 208 10
			\pinlabel $\scriptstyle {2h-e_1-e_2-e_3-e_4-e_6}$ at 208 0
			\pinlabel $-1$ at 136 70
			\pinlabel ${\color{red} \scriptstyle {2h-e_1-e_2-e_3-e_4-e_5}}$ at 107 10
			\pinlabel $\scriptstyle {2h-e_1-e_2-e_3-e_4-e_5}$ at 107 0
			\pinlabel $-2$ at 40 30
			\pinlabel ${\color{red}\scriptstyle e_5-e_{10}}$ at 40 10
			\pinlabel $\scriptstyle e_1-e_6$ at 40 0
			\endlabellist
			\includegraphics[scale=.85]{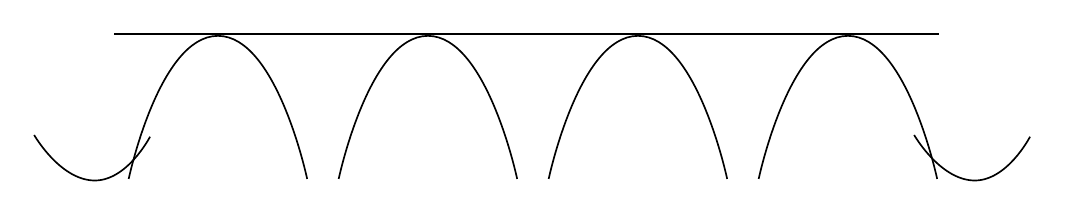}
			\caption{Minimal resolution of a rational cuspidal quintic with two cusps of type $(2,5)$ and two of type $(2,3)$ with the possible homological embeddings.}
			\label{fig:25252323}
		\end{figure}
		
		For the curve with two $(2,5)$-cusps and two $(2,3)$-cusps the minimal resolution with homological embeddings is in Figure~\ref{fig:25252323}. To rule out the second homological embedding shown in black, we blow down using Lemma~\ref{l:blowdown}. The exceptional spheres $e_1$ and $e_6$ blow down to the same point as do the spheres $e_2$ and $e_3$. The two central conics coming from exceptional divisors where the $(2,3)$-cusps were blown up both intersect all four of these exceptional spheres. Therefore these two curves descend to two conics tangent at two points with a line tangent to both, which again is the obstructed configuration $\Gc_{2,2}$. Thus by Propositions~\ref{p:G422} and~\ref{l:biratexist}, this cuspidal curve cannot symplectically embed.
	\end{proof}

	In the next three cases, we look at the bicuspidal curves with multi-sequence $[[2,2,2,2,2,2]]$. This is the first case where we find relatively minimal symplectic embeddings into a nontrivial blow-up of $\cptwo$. All three cases have relatively minimal symplectic embeddings into $\cptwo\#4\cptwobar$ and one of the cases also has a symplectic embedding into $\cptwo$.
	
	\begin{prop}\label{p:4blowups}
		Let $C$ a rational cuspidal curve with normal Euler number $25$. 
		
		If $C$ has one cusp of type $(2,9)$ and one of type $(2,5)$, then $C$ has a unique minimal symplectic embedding in $\cptwo$ and a unique minimal symplectic embedding into $\cptwo\#4\cptwobar$ up to symplectic isotopy and symplectomorphism. Equivalently the corresponding contact structure $\xi_C$ has two symplectic fillings with $b_2 = 0,4$ respectively.
			
		If $C$ has one cusp of type $(2,11)$ and one of type $(2,3)$, or two cusps of type $(2,7)$ then $C$ has a unique minimal symplectic embedding into $\cptwo\#4\cptwobar$ up to symplectic isotopy and symplectomorphism. Equivalently the corresponding contact structure $\xi_C$ has a unique symplectic fillings with $b_2 = 4$.
	\end{prop}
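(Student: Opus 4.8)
The plan is to follow the strategy used throughout this subsection. For each of the three curves I would blow up to the minimal resolution, in which the proper transform of $C$ becomes a smooth symplectic $+1$-sphere, apply McDuff's Theorem~\ref{thm:mcduff} to identify this sphere with a line $h$ in $\cptwo\#N\cptwobar$, classify the possible homological embeddings of the resolution, and then, for each, blow down $J$-holomorphic exceptional spheres in the classes $e_i$ via Lemma~\ref{l:blowdown} until reaching a reducible configuration in $\cptwo$ that I can classify or obstruct using the results of Section~\ref{s:rediso}. In all three cases the multi-sequence $[[2^{[6]}]]$ is split into two cusps, so the minimal resolution is obtained from $6$ blow-ups; each $(2,2k+1)$-cusp contributes a $(-1)$-sphere simply tangent to $C$ (hence, by Lemma~\ref{l:hom}, a conic of class $2h-e_a-e_b-e_c-e_d-e_*$ once $C$ is identified with $h$) together with a chain of $k-1$ spheres of square $-2$.

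First I would carry out the homological classification. Writing $Q_1,Q_2$ for the two tangent conics, disjointness in the resolution gives $[Q_1]\cdot[Q_2]=0$, which forces them to share exactly four exceptional classes with coefficient $-1$; up to relabelling this leaves two possibilities (either the conics share a fixed quadruple $e_1,e_2,e_3,e_4$ and differ in a fifth class, or each omits a different one of $e_1,\dots,e_6$), exactly as in the proofs of Propositions~\ref{p:daisies} and~\ref{p:obstrG}. The attached $(-2)$-chains are then pinned down by Lemmas~\ref{l:2chain},~\ref{l:2chainfix},~\ref{l:consecutive},~\ref{l:pos}, and~\ref{l:share2}. This should yield, for each curve, exactly two homological embeddings: a ``short'' one using $6$ exceptional classes, corresponding to a candidate relatively minimal embedding of $C$ into $\cptwo$, and a ``long'' one using $10$ exceptional classes, corresponding to a candidate embedding into a four-fold blow-up of $\cptwo$. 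That this blow-up is $\cptwo\#4\cptwobar$ rather than an even manifold I would confirm as in the intersection-form computations of Proposition~\ref{p:2quartic}, by exhibiting a basis for the orthogonal complement of the total transform and checking that the induced form is odd of rank $4$.

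Next I would realize each homological embedding geometrically by blowing down all the $e_i$ with Lemma~\ref{l:blowdown}; in every case the non-exceptional components descend to two conics together with a common tangent line, i.e.\ one of the configurations $\Gc_1,\Gc_2,\Gc_3,\Gc_4,\Gc_{2,2}$, with the mutual tangency order of the two conics read off from the shared homology. For the cusps $(2,9)$ and $(2,5)$ the short embedding descends to $\Gc_3$, which has a unique isotopy class containing a complex curve by Proposition~\ref{p:G3}, so $C$ embeds uniquely into $\cptwo$; for $(2,11)$ and $(2,3)$ the short embedding descends to $\Gc_4$, and for two $(2,7)$-cusps to $\Gc_{2,2}$, both obstructed by Proposition~\ref{p:G422}, so no embedding into $\cptwo$ exists. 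For the long embeddings the blow-down gives realizable two-conic-plus-line configurations among $\Gc_1,\Gc_2,\Gc_3$, each with a unique isotopy class by Propositions~\ref{p:G1}--\ref{p:G3}; existence and uniqueness of the corresponding embedding of $C$ into $\cptwo\#4\cptwobar$ then follow from Propositions~\ref{l:biratderiveunique} and~\ref{l:biratiso}, while non-existence in the obstructed cases follows from Proposition~\ref{l:biratexist}.

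The main obstacle I expect is the completeness and precision of the homological bookkeeping: I must verify that the two sharing patterns for the conics, combined with the chain constraints, genuinely exhaust the homological embeddings (ruling out, in particular, mixed patterns for the two $(-2)$-chains), and then correctly identify which $\Gc_i$ each blow-down produces—since the distinction between the realizable $\Gc_3$ and the obstructed $\Gc_4,\Gc_{2,2}$ is exactly what separates the three curves. Once the blow-down configuration is pinned down, the existence/uniqueness dichotomy is an immediate consequence of the classifications in Section~\ref{s:rediso}.
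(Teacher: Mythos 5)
Your proposal is correct and follows essentially the same route as the paper's proof: minimal resolution ($6$ blow-ups), McDuff identification, homological classification into a short ($6$-class) and a long ($10$-class) embedding, then blow-down via Lemma~\ref{l:blowdown} to two conics with a common tangent line, concluding with $\Gc_3$ (unique) for the $(2,9),(2,5)$ short case, $\Gc_4$ and $\Gc_{2,2}$ (obstructed) for the $(2,11),(2,3)$ and $(2,7),(2,7)$ short cases, and the unique configuration for all three long cases. One small correction to your bookkeeping: with only two conics the two ``sharing patterns'' you import from Propositions~\ref{p:daisies} and~\ref{p:obstrG} coincide up to relabeling --- the conic classes are forced to be $2h-e_1-\cdots-e_5$ and $2h-e_2-\cdots-e_6$, and the short/long dichotomy arises entirely from the two Lemma~\ref{l:2chain} options for the $(-2)$-chains (mixed chain patterns being excluded by disjointness, exactly the verification you flag); note also that in the long embedding the blow-down always yields $\Gc_1$, and your extra intersection-form check for $\cptwo\#4\cptwobar$ is harmless but unnecessary, since with $b_2=5$ and signature $-3$ the form cannot be even.
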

	
	\begin{proof}
		In all three cases, we will blow up the cuspidal curve to its minimal resolution where the proper transform $C$ is a smooth $+1$-sphere that we identify with a line using McDuff's theorem. Because there are exactly two cusps, the resolution will always contain two $(-1)$-exceptional divisors tangent to the $+1$-curve and four other $(-2)$-exceptional divisors. 
		
		We determine homology classes relative the $+1$-curve. The two tangent $(-1)$-divisors necessarily must be $2h-e_1-e_2-e_3-e_4-e_5$ and $2h-e_2-e_3-e_4-e_5-e_6$ (up to relabeling the $e_i$). We then determine the options for the four $(-2)$-curves according to the intersection relations. These are shown in Figures~\ref{fig:21123},~\ref{fig:2925}, and~\ref{fig:2727}.
		
		\begin{figure}
			\centering
			\labellist
			\pinlabel $+1$ at 300 95
			\pinlabel $h$ at 300 80
			\pinlabel $-1$ at 391 70
			\pinlabel ${2h-e_2-\dots-e_6}$ at 360 20
			\pinlabel $-1$ at 280 70
			\pinlabel ${2h-e_1-\dots-e_5}$ at 250 20
			\pinlabel $-2$ at 185 30
			\pinlabel ${\color{red}e_1-e_7}$ at 185 12
			\pinlabel $e_5-e_6$ at 185 0
			\pinlabel $-2$ at 140 30
			\pinlabel ${\color{red} e_7-e_8}$ at 140 12
			\pinlabel $e_4-e_5$ at 140 0
			\pinlabel $-2$ at 90 30
			\pinlabel ${\color{red} e_8-e_9}$ at 90 12
			\pinlabel $e_3-e_4$ at 90 0
			\pinlabel $-2$ at 42 30
			\pinlabel ${\color{red}e_9-e_{10}}$ at 42 12
			\pinlabel $e_2-e_3$ at 42 0
			\endlabellist
			\includegraphics[scale=1]{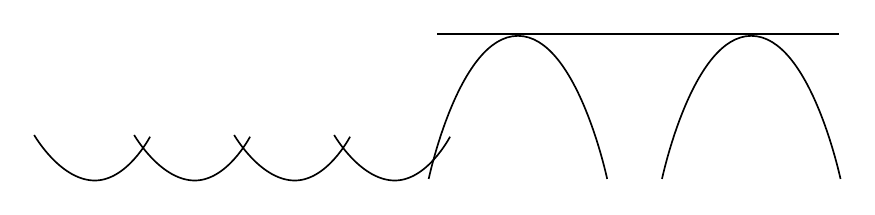}
			\caption{Minimal smooth resolution of a rational cuspidal quintic with one cusp of type $(2,11)$ and one of type $(2,3)$.}
			\label{fig:21123}
		\end{figure}
		
		\begin{figure}
			\centering
			\labellist
			\pinlabel $+1$ at 250 95
			\pinlabel $h$ at 250 80
			\pinlabel $-2$ at 380 30
			\pinlabel ${\color{red}e_6-e_{10}}$ at 380 12
			\pinlabel $e_2-e_1$ at 380 0
			\pinlabel $-1$ at 343 70
			\pinlabel ${2h-e_2-\dots-e_6}$ at 314 20
			\pinlabel $-1$ at 230 70
			\pinlabel ${2h-e_1-\dots-e_5}$ at 202 20
			\pinlabel $-2$ at 137 30
			\pinlabel ${\color{red} e_1-e_7}$ at 137 12
			\pinlabel $e_5-e_6$ at 137 0
			\pinlabel $-2$ at 90 30
			\pinlabel ${\color{red} e_7-e_8}$ at 90 12
			\pinlabel $e_4-e_5$ at 90 0
			\pinlabel $-2$ at 42 30
			\pinlabel ${\color{red}e_8-e_9}$ at 42 12
			\pinlabel $e_3-e_4$ at 42 0
			\endlabellist
			\includegraphics[scale=1]{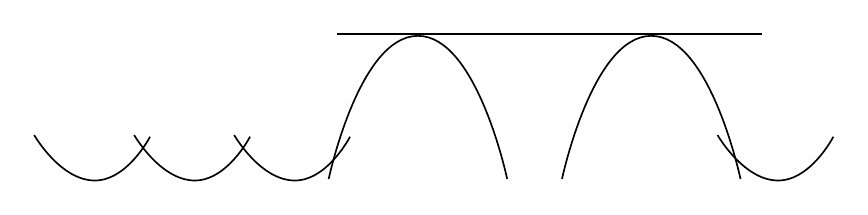}
			\caption{Minimal smooth resolution of a rational cuspidal quintic with one cusp of type $(2,9)$ and one of type $(2,5)$.}
			\label{fig:2925}
		\end{figure}

		\begin{figure}
			\centering
			\labellist
			\pinlabel $+1$ at 200 95
			\pinlabel $h$ at 200 80
			\pinlabel $-1$ at 295 70
			\pinlabel $F_1$ at 290 50
			\pinlabel ${2h-e_2-\dots-e_6}$ at 265 20
			\pinlabel $-2$ at 325 30
			\pinlabel $F_2$ at 325 45
			\pinlabel ${\color{red} e_6-e_7}$ at 325 12
			\pinlabel $e_2-e_1$ at 325 0
			\pinlabel $-2$ at 375 30
			\pinlabel $F_3$ at 375 45
			\pinlabel ${\color{red}e_7-e_8}$ at 375 12
			\pinlabel $e_3-e_2$ at 375 0
			\pinlabel $-1$ at 183 70
			\pinlabel $E_1$ at 178 50
			\pinlabel ${2h-e_1-\dots-e_5}$ at 154 20
			\pinlabel $-2$ at 88 30
			\pinlabel $E_2$ at 88 45
			\pinlabel ${\color{red} e_1-e_9}$ at 88 12
			\pinlabel $e_5-e_6$ at 88 0
			\pinlabel $-2$ at 40 30
			\pinlabel $E_3$ at 40 45
			\pinlabel ${\color{red}e_9-e_{10}}$ at 40 12
			\pinlabel $e_4-e_5$ at 40 0
			\endlabellist
			\includegraphics[scale=1]{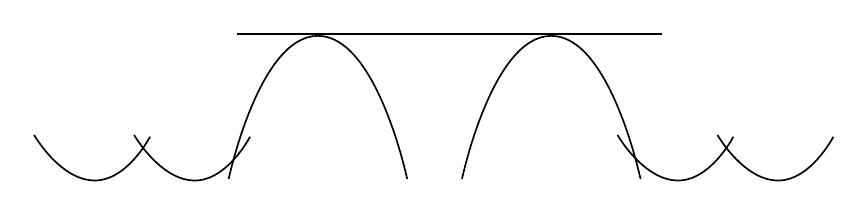}
			\caption{Minimal smooth resolution of a rational cuspidal quintic with two cusps of type $(2,7)$.}
			\label{fig:2727}
		\end{figure}
			
		In each case, there are two possible homological embeddings of the minimal resolution. One into $\cptwo\#6\cptwobar$ and the other into $\cptwo\#10\cptwobar$. If these homological embeddings can be realized symplectically, they would give symplectic embeddings of the cuspidal curve into $\cptwo$ and $\cptwo\#4\cptwobar$ respectively.
		
		In any of the three cases, if we assume we have a relatively minimal symplectic embedding of the minimal resolution into $\cptwo\#10\cptwobar$, and we blow down exceptional spheres representing the $e_i$ using Lemma~\ref{l:blowdown}, the resulting configuration in $\cptwo$ consists of two conics with a line tangent to each. This is the configuration $\Gc_1$ which has a unique symplectic isotopy class by Proposition~\ref{p:G1}. Therefore we have a birational derivation from the cuspidal curve in $\cptwo\#4\cptwobar$ to a configuration with a unique nonempty symplectic isotopy class so by Proposition~\ref{l:biratderiveunique}, each of the three types of cuspidal curves has a unique relatively minimal symplectic embedding in $\cptwo\#4\cptwobar$ up to symplectomorphism.
		
		For the homological embedding of the minimal resolution into $\cptwo\#6\cptwobar$, we consider each cuspidal curve separately.
		
		For the case with one cusp of type $(2,11)$ and one of type $(2,3)$, we start with a relatively minimal embedding of the resolution into $\cptwo\#6\cptwobar$ and blow down exceptional spheres in the $e_i$ classes using Lemma~\ref{l:blowdown}. The exceptional spheres in classes $e_2,e_3,e_4,e_5$, and $e_6$ all blow down to the same point. This produces a
		order-$4$ tangency between the two conics so the configuration descends to $\Gc_4$ which is obstructed (Proposition~\ref{p:G422}). Therefore this cuspidal curve does not symplectically embed in $\cptwo$.
		
		Next, we start with a symplectic embedding into $\cptwo\#6\cptwobar$ of the minimal resolution of the curve with one cusp of type $(2,9)$ and one of type $(2,5)$ with the second homological embedding given in Figure~\ref{fig:2925}. Blowing down exceptional spheres in the $e_i$ classes, we see that the spheres in classes $e_3,e_4,e_5$, and $e_6$ descend to the same point, and those in classes $e_1$ and $e_2$ descend to another point. The effect is that the conics become tangent at one point with multiplicity $3$ and intersect transversally at another point. This yields the configuration $\Gc_3$ which has a unique symplectic isotopy class by Proposition~\ref{p:G3}. Therefore by Proposition~\ref{l:biratderiveunique}, there is a unique relatively minimal symplectic embedding of this cuspidal curve into $\cptwo$.
		
		Finally, considering a potential symplectic embedding into $\cptwo\#6\cptwobar$ of the minimal resolution of the cupsidal quintic with two cusps of type $(2,7)$, the homological embedding is given in Figure~\ref{fig:2727}. Blowing down exceptional spheres using Lemma~\ref{l:blowdown} we find that exceptional spheres in classes $e_4, e_5$, and $e_6$ descend to one point and those in classes $e_1,e_2$, and $e_3$ descend to another point. This has the effect that the two conics intersect tangentially at two points. Therefore the configuration descends to $\Gc_{2,2}$ which is obstructed by Proposition~\ref{p:G422}. Thus by Proposition~\ref{l:biratexist} there is no symplectic embedding of this cuspidal quintic in $\cptwo$.
	\end{proof}
	
	\begin{remark}
		The rational cuspidal quintic with two cusps of type $(2,7)$ can be alternatively be obstructed in $\cptwo$ using spectrum semicontinuity as in Example~\ref{ex:2727}.
	\end{remark}

	Finally, there are two more possible collections of cusps with multi-sequence $[[2,2,2,2,2,2]]$. There can be three cusps of type $(2,5)$ or one of type $(2,7)$ and three of type $(2,3)$. Each of these cases turns out to have a unique symplectic embedding into $\cptwo$ and no other relatively minimal symplectic embeddings. Some of the reducible configurations of conics and lines that appear in these proofs required more extensive arguments to establish their uniqueness in Section~\ref{s:rediso}, so we separate these two cases as the trickiest of the bunch.
	
	\begin{prop}\label{p:252525}
		If a rational cuspidal curve $C$ has normal Euler number $25$ and  three cusps of type $(2,5)$,
		then only relatively minimal symplectic embedding of $C$ is into $\cptwo$ and this embedding is unique up to symplectic isotopy. 
		Equivalently, the corresponding contact structure $\xi_C$ has a unique minimal filling which is a rational homology ball.
	\end{prop}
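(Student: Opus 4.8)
The plan is to follow the strategy used in the previous propositions: pass to a resolution in which the proper transform of $C$ is a smooth symplectic $+1$-sphere, apply McDuff's theorem (Theorem~\ref{thm:mcduff}) to identify it with a line in a blow-up of $\cptwo$, classify the homological embeddings of the resolution, and then blow down the exceptional classes to a reducible configuration in $\cptwo$ whose isotopy type is already understood.

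First I would take the minimal resolution. Each $(2,5)$-cusp has multiplicity sequence $[2,2]$, so it is resolved by two blow-ups and lowers the self-intersection by $2^2+2^2 = 8$; with three cusps this uses $6$ blow-ups and brings the self-intersection of the proper transform $C$ down to $25 - 24 = 1$. In this resolution each cusp contributes a $(-1)$-sphere $F_i$ tangent (to order $2$) to $C$, together with a $(-2)$-sphere $G_i$ meeting $F_i$ once and disjoint from $C$. After identifying $C$ with a line, each $F_i$ is a smooth symplectic sphere with $F_i\cdot h = 2$ and $F_i^2 = -1$, so Lemma~\ref{l:hom} forces $[F_i] = 2h - e_{i_1} - \dots - e_{i_5}$, a conic class.

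Next I would carry out the homological classification. Since the $F_i$ are pairwise disjoint, $F_i\cdot F_j = 0$ forces their index sets to share exactly four of their five exceptional classes, and a short combinatorial argument shows that three such five-element sets, pairwise meeting in four elements, have triple intersection of size either $4$ or $3$. The first case uses ten exceptional classes (hence corresponds to a putative embedding into $\cptwo\#4\cptwobar$) and makes all three conics pass through four common points; the second uses six exceptional classes (an embedding into $\cptwo$) and makes the conics share three common points, with the attached $(-2)$-spheres $G_i$ forcing pairwise tangencies there. The classes of the $G_i$ are then pinned down by Lemmas~\ref{l:consecutive},~\ref{l:share2} and~\ref{l:pos}. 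In the first case, blowing down the exceptional classes with Lemma~\ref{l:blowdown} produces three conics in a pencil with a common tangent line, i.e.\ the configuration $\Gc^\star$, which is obstructed by Proposition~\ref{p:Gstar}; by Proposition~\ref{l:biratexist} there is therefore no relatively minimal symplectic embedding into $\cptwo\#4\cptwobar$.

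In the remaining ($\cptwo$) case, blowing down the exceptional classes yields a configuration of three conics, sharing three common points and pairwise tangent there, together with a line tangent to all three. I would show this has a unique equisingular symplectic isotopy class containing a complex representative by relating it, through a birational transformation, to a conic inscribed in a triangle of tangent lines (the configuration $\Hc$ of Figure~\ref{fig:H} handled in Proposition~\ref{p:H}, equivalently to one conic with three tangent lines via Corollary~\ref{c:oneconicthreelines} after a quadratic transformation based at the three common points). Proposition~\ref{l:biratderiveunique} then upgrades this to a unique symplectic isotopy class of $C$ in $\cptwo$ with a complex representative, and minimality together with the vanishing of $b_2$ of the complement identifies the filling as a rational homology ball. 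The main obstacle I anticipate is the bookkeeping of tangencies through the blow-down: homology alone does not distinguish tangency from transversality, so one must argue carefully that the $(-2)$-spheres $G_i$ produce exactly the pairwise tangencies of the three conics, and then invoke the delicate uniqueness result of Section~\ref{s:rediso} for the resulting tangentially degenerate configuration rather than a generic one.
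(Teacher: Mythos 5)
Your proposal matches the paper's proof essentially step for step: the minimal resolution with McDuff identification, the two homological embeddings (ten versus six exceptional classes), the obstruction of the $\cptwo\#4\cptwobar$ option by blowing down to $\Gc^\star$ via Propositions~\ref{p:Gstar} and~\ref{l:biratexist}, and the blow-down of the remaining option to three pairwise-tangent conics with a common tangent line, related by the quadratic transformation based at the three tangency points to the configuration $\Hc$ and finished by Propositions~\ref{p:H} and~\ref{l:biratderiveunique} (the paper makes this a genuine birational equivalence by first adding the three auxiliary lines via Proposition~\ref{p:addline}, but your derivation-only version suffices for the uniqueness direction). One caution: your parenthetical claiming the target is ``equivalently one conic with three tangent lines via Corollary~\ref{c:oneconicthreelines}'' is not a valid alternative --- $\Hc$ contains three additional lines whose pairwise intersections are constrained to lie on the conic, a configuration too rigid for the line-adding Proposition~\ref{p:addline}, which is precisely why its uniqueness requires the fixed-point/double-cover argument of Proposition~\ref{p:H}; your primary citation of that proposition is what actually carries the argument.
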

	
	\begin{proof}
		We look at the minimal smooth resolution at each singularity of $C$;
		the proper transform of $C$ is a smooth $+1$-sphere, and the total transform is given in Figure~\ref{fig:252525} with the possible homological embeddings where the $+1$-sphere is identified with a line.
		
		\begin{figure}
			\centering
			\labellist
			\pinlabel $+1$ at 265 95
			\pinlabel $\scriptstyle h$ at 265 80
			\pinlabel $-1$ at 379 70
			\pinlabel ${\color{red} \scriptstyle {2h-e_1-e_2-e_3-e_4-e_7}}$ at 346 10
			\pinlabel $\scriptstyle{2h-e_1-e_2-e_3-e_5-e_6}$ at 346 0
			\pinlabel $-2$ at 407 30
			\pinlabel ${\color{red}\scriptstyle e_7-e_{10}}$ at 412 10
			\pinlabel $\scriptstyle e_1-e_4$ at 412 0
			\pinlabel $-1$ at 240 70
			\pinlabel ${\color{red} \scriptstyle {2h-e_1-e_2-e_3-e_4-e_6}}$ at 206 10
			\pinlabel $\scriptstyle {2h-e_1-e_2-e_3-e_4-e_6}$ at 206 0
			\pinlabel $-2$ at 267 30
			\pinlabel ${\color{red}\scriptstyle e_6-e_9}$ at 272 10
			\pinlabel $\scriptstyle e_2-e_5$ at 272 0
			\pinlabel $-1$ at 91 70
			\pinlabel ${\color{red} \scriptstyle {2h-e_1-e_2-e_3-e_4-e_5}}$ at 55 10
			\pinlabel $\scriptstyle {2h-e_1-e_2-e_3-e_4-e_5}$ at 55 0
			\pinlabel $-2$ at 117 30
			\pinlabel ${\color{red}\scriptstyle e_5-e_8}$ at 122 10
			\pinlabel $\scriptstyle e_3-e_6$ at 122 0
			\endlabellist
			\includegraphics[scale=.85]{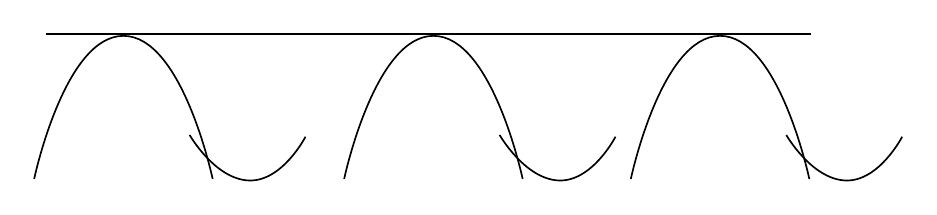}
			\caption{Minimal resolution of a rational cuspidal quintic with three cusps of type $(2,5)$ with the possible homological embeddings.}
			\label{fig:252525}
		\end{figure}

		Blowing down exceptional spheres in $e_i$ classes with Lemma~\ref{l:blowdown}, the homological embedding of the resolution into $\cptwo\#10\cptwobar$ blows down to the configuration $\Gc^\star$. Due to this birational derivation, Proposition~\ref{l:biratexist} and Proposition~\ref{p:Gstar} imply this minimal embedding is not realizable .
		
		The other homological embedding is of the minimal resolution into $\cptwo\#6\cptwobar$. After blowing down $e_i$ spheres we obtain three conics with three intersection points, such that each pair of them are tangent at a distinct intersection point, plus one line tangent to each of the three conics at a different point.
		
		We are better equipped to work with configurations with a single conic and many lines than many conics and a single line. Therefore, we will find a birationally equivalent configuration. First add one line through each of the three pairs of intersection points of the conics (each intersecting the tangent line generically in double points). This does not change the symplectic isotopy classification of the configuration by Proposition~\ref{p:addline}. The resulting configuration is birationally equivalent to the configuration $\Hc$ of one conic with six lines intersecting in triple points as shown in Figure~\ref{fig:H}. The birational equivalence comes from by blowing up once at each of the three intersection points of the conics and then blowing down the proper transforms of the three added lines. $\Hc$ has a unique equisingular symplectic isotopy class by Proposition~\ref{p:H}, so this cuspidal quintic also has a unique equisingular symplectic isotopy class in $\cptwo$ by Proposition~\ref{l:biratderiveunique}.
	\end{proof}

	\begin{prop}\label{p:27232323}
		If a rational cuspidal curve $C$ has normal Euler number $25$ and one cusp of type $(2,7)$ and three of type $(2,3)$,
		then only relatively minimal symplectic embedding of $C$ is into $\cptwo$ and this embedding is unique up to symplectic isotopy. 
		Equivalently, the corresponding contact structure $\xi_C$ has a unique minimal filling which is a rational homology ball.
	\end{prop}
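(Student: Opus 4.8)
The argument follows the same template as the preceding propositions. First I would pass to the minimal smooth resolution of $C$: the $(2,7)$-cusp requires three blow-ups (lowering the self-intersection by $2^2+2^2+2^2 = 12$) and each of the three $(2,3)$-cusps requires one blow-up (lowering it by $2^2 = 4$ each), for a total of six blow-ups and a proper transform of self-intersection $25 - 24 = +1$. The total transform then consists of this smooth symplectic $+1$-sphere together with four exceptional $(-1)$-spheres, each meeting the $+1$-sphere in a single order-$2$ tangency: one coming from the $(2,7)$-cusp and carrying a tail of two $(-2)$-spheres, and three coming from the $(2,3)$-cusps. I would apply McDuff's theorem (Theorem~\ref{thm:mcduff}) to identify the $+1$-sphere with a line $h$ in a blow-up of $\cptwo$.

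Next I would classify the homological embeddings. By Lemma~\ref{l:hom}, each of the four tangent $(-1)$-spheres, having intersection $2$ with $h$ and self-intersection $-1$, must represent a class $2h - e_{i_1} - \dots - e_{i_5}$; that is, it descends to a conic. Since the four conics are pairwise disjoint, each pair shares exactly four exceptional classes with coefficient $-1$, and (exactly as in Propositions~\ref{p:daisies} and~\ref{p:obstrG}) this forces one of two patterns: either all four conics share a common quadruple $e_1,e_2,e_3,e_4$, or they all have the shape $2h - e_1 - \dots - e_6 + e_j$. The classes of the two $(-2)$-spheres in the $(2,7)$-tail are then pinned down by Lemmas~\ref{l:2chain} and~\ref{l:2chainfix}. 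I expect exactly two homological embeddings to survive: one into $\cptwo\#10\cptwobar$ (the first pattern, using ten exceptional classes) and one into $\cptwo\#6\cptwobar$ (the second pattern, using six). Since the resolution is obtained from $C$ by six blow-ups, these correspond to candidate embeddings of $C$ into $\cptwo\#4\cptwobar$ and $\cptwo$ respectively.

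To dispose of the first pattern I would use Lemma~\ref{l:blowdown} to blow down exceptional spheres in the classes $e_i$. Blowing down $e_1,e_2,e_3,e_4$ forces all four conics through four common points, so they lie in a pencil, and the line $h$ is tangent to each of them; in particular the three $(2,3)$-conics together with $h$ realise $\Gc^\star$. By Proposition~\ref{p:Gstar} this configuration does not embed in $\cptwo$, so by Proposition~\ref{l:biratexist} the first homological pattern cannot be realised symplectically, and $C$ has no relatively minimal embedding into $\cptwo\#4\cptwobar$.

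For the second pattern I would again blow down the $e_i$ using Lemma~\ref{l:blowdown}, obtaining a reducible configuration of the line $h$ and four conics in $\cptwo$ whose mutual tangencies and intersections are dictated by the way the exceptional classes were shared. The crux of the proof, and the step I expect to be the main obstacle, is to identify this configuration, after possibly augmenting it by a controlled collection of auxiliary lines via Proposition~\ref{p:addline}, as being birationally equivalent to the configuration $\Lc$ of two conics tangent at two points together with three lines tangent to one conic and meeting pairwise on the other. Granting this, Proposition~\ref{p:L} and Corollary~\ref{l:biratiso} give that the blow-down configuration has a unique non-empty equisingular symplectic isotopy class containing a complex representative, and Proposition~\ref{l:biratderiveunique} transports uniqueness and complex realisability back up to $C$. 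Combining the two patterns, the only relatively minimal embedding of $C$ is the unique one into $\cptwo$, whose complementary filling is the rational homology ball $\cptwo \setminus \nu(C)$; this is the unique minimal filling of $\xi_C$. The bookkeeping in matching the blow-down of the second pattern to $\Lc$, namely tracking exactly which conics become pairwise tangent and verifying the incidences with $h$, is where the real work lies, just as the analogous identification with $\Hc$ is the heart of Proposition~\ref{p:252525}.
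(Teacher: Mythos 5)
Your proposal matches the paper's proof essentially step for step: the same minimal resolution with its smooth $+1$-sphere and McDuff identification, the same two homological patterns for the four tangent conic classes (ten exceptional classes versus six), the same disposal of the $\cptwo\#10\cptwobar$ pattern by blowing down to a configuration containing $\Gc^\star$ and invoking Propositions~\ref{p:Gstar} and~\ref{l:biratexist}, and the same endgame of reducing the $\cptwo$ pattern to the configuration $\Lc$ via Propositions~\ref{p:addline} and~\ref{p:L}. The one step you defer as ``where the real work lies'' is carried out in the paper exactly as you anticipate: the blow-down of the second pattern consists of three conics mutually tangent to order three at a single point $p_0$, a fourth conic tangent to order two there, three transverse triple points, and a line tangent to all four conics; one then adds (by Proposition~\ref{p:addline}) the common tangent line at $p_0$, blows up three times at $p_0$, and blows down the resulting three $(-1)$-spheres to land on $\Lc$ together with one extra line, which Proposition~\ref{p:addline} again allows one to discard.
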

	
	\begin{proof}
		In the minimal resolution, the proper transform of $C$ is smooth with self-intersection $+1$ so we use Theorem~\ref{thm:mcduff} to identify it with a line and classify the homological embeddings of the exceptional divisors in Figure~\ref{fig:27232323}.
		
		\begin{figure}
			\centering
			\labellist
			\pinlabel $+1$ at 225 95
			\pinlabel $\scriptstyle h$ at 225 80
			\pinlabel $-2$ at 427 30
			\pinlabel ${\color{red}\scriptstyle e_8-e_9}$ at 433 10
			\pinlabel $\scriptstyle e_2-e_3$ at 433 0
			\pinlabel $-2$ at 475 30
			\pinlabel ${\color{red}\scriptstyle e_9-e_{10}}$ at 475 10
			\pinlabel $\scriptstyle e_1-e_2$ at 475 0
			\pinlabel $-1$ at 400 70
			\pinlabel ${\color{red} \scriptstyle {2h-e_1-e_2-e_3-e_4-e_8}}$ at 368 10
			\pinlabel $\scriptstyle{2h-e_1-e_2-e_4-e_5-e_6}$ at 368 0
			\pinlabel $-1$ at 300 70
			\pinlabel ${\color{red} \scriptstyle {2h-e_1-e_2-e_3-e_4-e_7}}$ at 269 10
			\pinlabel $\scriptstyle{2h-e_1-e_2-e_3-e_5-e_6}$ at 269 0
			\pinlabel $-1$ at 199 70
			\pinlabel ${\color{red} \scriptstyle {2h-e_1-e_2-e_3-e_4-e_6}}$ at 168 10
			\pinlabel $\scriptstyle {2h-e_1-e_2-e_3-e_4-e_6}$ at 168 0
			\pinlabel $-1$ at 98 70
			\pinlabel ${\color{red} \scriptstyle {2h-e_1-e_2-e_3-e_4-e_5}}$ at 67 10
			\pinlabel $\scriptstyle {2h-e_1-e_2-e_3-e_4-e_5}$ at 67 0
			\endlabellist
			\includegraphics[scale=.85]{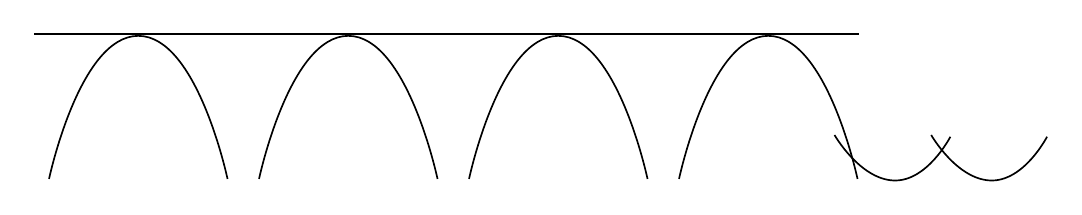}
			\caption{Minimal resolution of a rational cuspidal quintic with one cusp of type $(2,7)$ and three of type $(2,3)$ with the possible homological embeddings.}
			\label{fig:27232323}
		\end{figure}

		As above, using Lemma~\ref{l:blowdown} to blow down $e_i$ exceptional spheres, the first homological embedding blows down to a configuration that contains $\Gc^\star$, so it leads to no embedding by Proposition~\ref{p:Gstar}.
		
		The second homological embedding uses exactly six exceptional classes therefore corresponds to an embedding of the cuspidal curve in $\cptwo$. Blowing down exceptional spheres in $e_i$ classes in the second homological embedding yields a configuration of four conics and a line.
		At one point, $p_0$, three of the conics $Q_1,Q_2,Q_3$ intersect tangentially with multiplicity three and the fourth conic $Q_4$ is tangent to these three with multiplicity two. There are three transverse triple intersections of $Q_i,Q_j, Q_4$ for the three pairs $\{i,j\}\subset \{1,2,3\}$. The line $L$ is tangent to all four conics, but does not go through any of the intersections of the $Q_i$.
		
		We again look for a birational equivalence to a configuration with fewer conics and more lines. To ensure we get a birational equivalence, we first add a line $T$ tangent to the four conics at $p_0$. This does not change the symplectic isotopy uniqueness by Proposition~\ref{p:addline}. Next we blow up three times at $p_0$ to separate the conics at that point. The proper transform of $T$ becomes a $(-1)$-exceptional sphere which we can blow down, and then we blow down two more $(-1)$-exceptional spheres as in Figure~\ref{fig:Lbirat}. The result is the configuration $\Lc$ together with an additional line. 
		$\Lc$ is made up of two conics $P_1,P_2$ with two tangencies, together with three lines such that each line is tangent to $P_1$ and the pairwise intersections of the lines are three distinct points on $P_2$.
		The line $K$ that is added to $\Lc$ as the result of this birational equivalence is tangent to the two conics at one of their tangent intersection points. The symplectic isotopy classifications of $\Lc$ and $\Lc\cup K$ are equivalent by Proposition~\ref{p:addline}. Since $\Lc$ has a unique symplectic isotopy class by Proposition~\ref{p:L}, this cuspidal curve has a unique symplectic isotopy class in $\cptwo$ by Proposition~\ref{l:biratderiveunique}.
		\end{proof}
		
		\begin{figure}
			\centering
			\includegraphics[scale=.6]{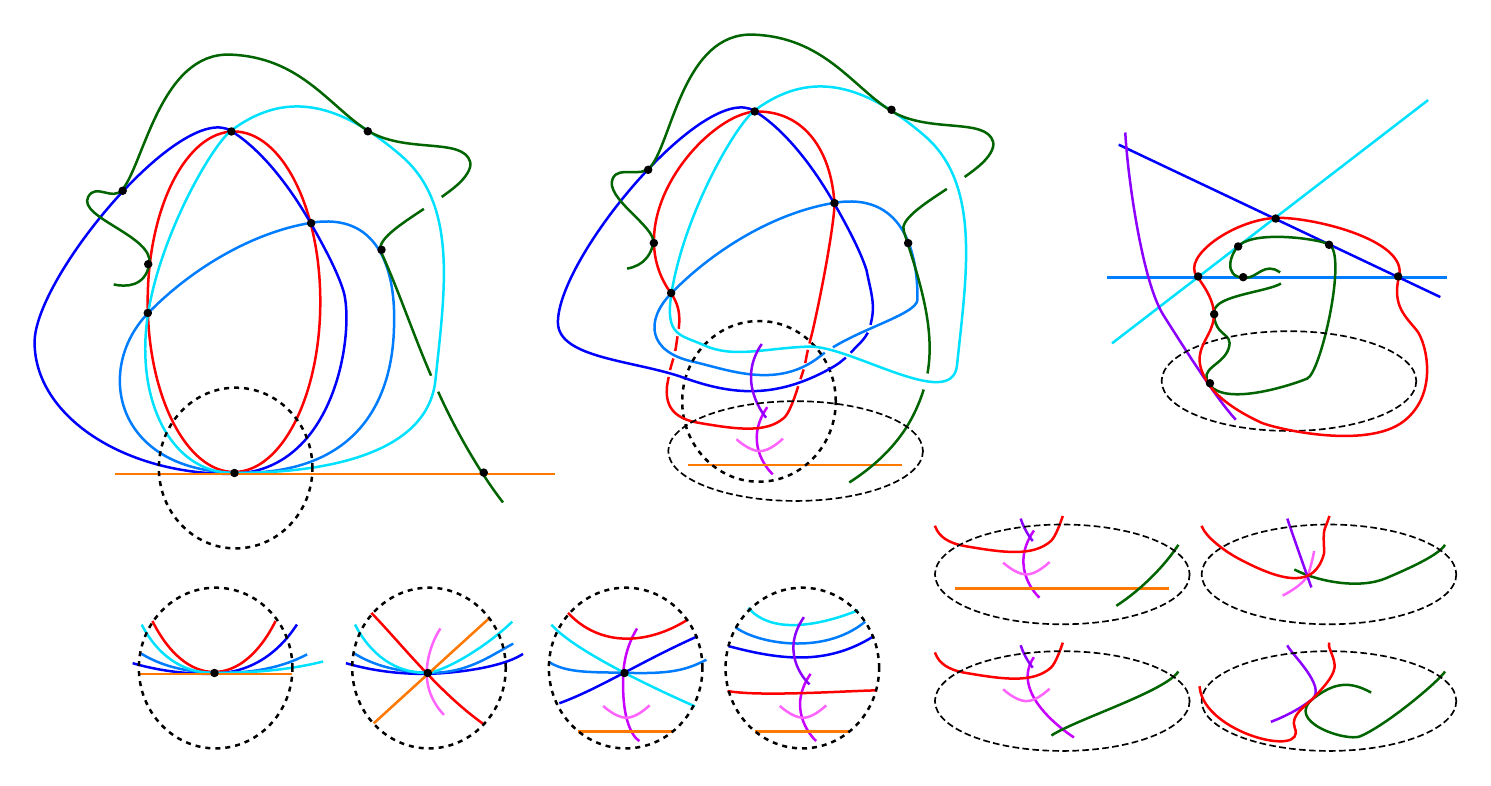}
			\caption{Birational equivalence.}
			\label{fig:Lbirat}
		\end{figure}

%$\Lc$ consists of three lines and two conics, such that one of the conics passes through the three intersection points of the lines and the other conic is tangent to each of the three lines, and the two conics intersect each other in two tangent points. 

}
\subsection{A note on sextics}\label{ss:sextics}

In degree 6, there is only one multiplicity multi-sequence that passes the semigroup condition, but such that we cannot find a cap with a sphere of positive self-intersection; this multiplicity multi-sequence is $[[3,2^{[7]}]]$.

\begin{prop}
	There is no rational cuspidal curve in $\cptwo$ whose multiplicity multi-sequence is $[[3,2^{[7]}]]$. 
\end{prop}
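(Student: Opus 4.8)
The plan is to first pin down the possible singularity configurations and then obstruct them using the two \emph{global} obstructions of Section~\ref{s:singularcurves} that do not require a positive-self-intersection cap, namely Riemann--Hurwitz~\eqref{e:riemannhurwitz} and spectrum semicontinuity~\eqref{e:spectrum}. Since a sextic has $p_a=10$, equation~\eqref{e:deltamulti} together with the fact that $[3,2,2]$ is not a multiplicity sequence forces the cusp carrying the entry $3$ to be of type $(3,4)$ (sequence $[3]$) or $(3,5)$ (sequence $[3,2]$), while the remaining seven (resp.\ six) entries equal to $2$ are distributed among cusps of type $(2,2k+1)$. The reason the usual strategy fails here, and the reason this multi-sequence is singled out, is that by Lemma~\ref{l:selfint-defect} applied at each cusp the proper transform in the minimal resolution satisfies $\widetilde C\cdot\widetilde C = 36 - \sum_j m_j^2 = 36-37 = -1$ (the sum of squared multiplicities being $37$ for every such splitting). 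Thus $\widetilde C$ is a $(-1)$-sphere rather than a positive sphere, so McDuff's theorem and the homological-embedding classification of Section~\ref{s:caps} are unavailable, and one must argue topologically.

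Next I would run Riemann--Hurwitz by projecting from the multiplicity-$3$ cusp $p$, which yields a degree-$(6-3)=3$ branched cover $\widetilde C\to\cpone$. Each of the $a$ cusps of type $(2,\cdot)$ contributes at least $1$ to the ramification, and $p$ itself contributes $m_{p,2}-1$, so~\eqref{e:riemannhurwitz} reads $6 \ge 2 + a + (m_{p,2}-1)$. This forces $a\le 4$ when the distinguished cusp is $(3,4)$ and $a\le 3$ when it is $(3,5)$, eliminating every configuration with sufficiently many branches (six or more cusps). The key limitation to record is that the ramification index at a $(2,2k+1)$ cusp depends only on its multiplicity $2$, so Riemann--Hurwitz sees only the \emph{number} of cusps and cannot distinguish one long cusp from several short ones; hence the configurations surviving this step are precisely the finitely many with few but long cusps, such as $(3,4)+(2,15)$, $(3,4)+(2,13)+(2,3)$, and $(3,5)+(2,13)$.

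For these remaining finitely many cases I would invoke spectrum semicontinuity~\eqref{e:spectrum}, which is exactly the tool the situation calls for: it is a topological obstruction, strictly stronger than the semigroup condition that all these multi-sequences pass (compare Example~\ref{ex:33} and Example~\ref{ex:2727}), and it requires only the existence of a transverse line, which is automatic for symplectic curves. Concretely, for each surviving splitting I would form the knot $K$ obtained as the connected sum of the torus-knot links of the cusps and check the inequalities comparing the Levine--Tristram signatures and nullities of $K$ and of $T(6,6)$ at a suitable root of unity $\zeta$, exhibiting a violation in each case. Since the surviving list is short, this reduces to a finite computation.

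The main obstacle is precisely this last step: Riemann--Hurwitz cleanly disposes of the high-branch-number configurations but is blind to the genus hidden in a single long $(2,2k+1)$ cusp, and there is no positive sphere to fall back on. So the crux is to verify by explicit Levine--Tristram signature computations that each of the few long-cusp configurations violates~\eqref{e:spectrum}. Should any case resist this, the fallback would be to blow that configuration up and seek a birational derivation, via Proposition~\ref{l:biratexist}, to one of the obstructed conic-line configurations $\Gc^\star$, $\Gc_4$, or $\Gc_{2,2}$ of Section~\ref{s:reducibleobstructions}.
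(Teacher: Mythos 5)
Your preliminary analysis is sound: the observation that $\widetilde C\cdot\widetilde C = 36-37=-1$ correctly identifies why the cap/McDuff machinery fails here, and the Riemann--Hurwitz step~\eqref{e:riemannhurwitz} projecting from the multiplicity-$3$ cusp is valid and does bound the number of cusps ($a\le 4$ for a $(3,4)$-cusp, $a\le 3$ for $(3,5)$). But the proof has a genuine gap at its decisive step. First, you understate what survives Riemann--Hurwitz: the survivors are the partitions of $7$ into at most $4$ parts (eleven splittings for the $(3,4)$ case) and of $6$ into at most $3$ parts (seven splittings for the $(3,5)$ case) --- roughly eighteen configurations, not a handful of long-cusp cases. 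Second, and more seriously, the claim that spectrum semicontinuity~\eqref{e:spectrum} ``exhibits a violation in each case'' is asserted, never argued: no root of unity is produced, no Levine--Tristram signature is computed, and there is no a priori reason the inequality fails for any of these splittings --- recall that $[[3,2^{[7]}]]$ \emph{passes} the semigroup condition, so these configurations are not obviously obstructed by knot-theoretic invariants of the link $K$. Your fallback is equally unsupported: birational derivations in this paper are powered by first producing a positive sphere and invoking McDuff's theorem, which is exactly what the defect $\widetilde C^2=-1$ rules out. So the proposal defers its entire mathematical content to an unexecuted computation with no guarantee of success.

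The paper's proof is completely different and sidesteps the case analysis. Every admissible cusp type here ($[2^{[k]}]$, $[3]$, $[3,2]$) is a \emph{simple} singularity ($A_{2k}$, $E_6$, $E_8$), and since all singularities are cusps the total Milnor number is $2p_a(C)=20$ for \emph{every} splitting of the multi-sequence. Smoothing $C$ to a genus-$10$ sextic $C'$ and passing to the double cover of $\cptwo$ branched over $C'$ produces a K3 surface, with $b_2^-=19$, containing pairwise disjoint copies of the double covers of $B^4$ branched over the Milnor fibres of the cusps; these are negative definite (simplicity of the singularities) with second Betti numbers summing to $20>19$, a contradiction. This argument is uniform over all splittings, requires no signature computations, and yields the stronger statement that the associated contact structures admit no rational homology ball fillings. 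If you want to salvage your route, you would need to actually carry out the Levine--Tristram computations for all surviving splittings --- and the existence of the K3 argument suggests the authors did not find that path workable.
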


Note that, in fact, this shows that the corresponding contact structures have no rational homology ball fillings (see Section~\ref{s:rational} above).

\begin{proof}
	The only allowed multiplicity sequences that are subsets of $[[3,2^{[7]}]]$ are $[2^{[k]}]$, $[3]$, and $[3,2]$; 
	these are all simple singularities of types $A_{2k}$, $E_6$, and $E_8$, respectively;
	their links are $T(2,2k+1)$, $T(3,4)$, and $T(3,5)$, respectively.
	
	Suppose such a curve exists;
	perturb it to get a smooth curve $C'$ of genus $10$, and take the double cover $\Sigma(\cptwo,C')$ of $\cptwo$ branched over $C'$;
	this is a $K3$ surface, and, by construction, it contains (as pairwise disjointly embedded) the double covers of $B^4$ branched over the Milnor fibre of all the singularities of $C$.
	
	All these Milnor fibres are negative definite, since the singularities are simple, and their second Betti numbers sum to 20, which contradicts the fact that $b_2^-(K3) = 19$.
\end{proof}

	This is where our approach for sextics differs from the previous cases we examined:
	here we are not trying to classify fillings, but rather looking specifically at embeddings in $\cptwo$, or, equivalently, at rational homology ball fillings.
	Moreover, the same argument can be used to narrow down the number of different splittings of the multiplicity multi-sequences that one needs to consider; namely, there can be no sextic whose singularities are all simple.

\begin{remark}
A similar argument to that of the proposition above, using 5-fold covers instead of 2-fold covers, can be used to obstruct the existence of the two quintics in $\cptwo$ with five and six singularities (which we excluded in Example~\ref{ex:RH}; the non-fillability of the corresponding cuspidal contact structures was established in Proposition~\ref{p:daisies}). A general result, of a more topological flavor, has been proved in~\cite[Theorem~4.7]{GKutle}.
\end{remark}

% !TEX root = ../rationalcuspidal.tex

\section{Differences between the complex and symplectic categories}\label{s:orevkov}

Every complex algebraic curve in $\cptwo$ is a symplectic surface (potentially singular), but most symplectic surfaces are not complex algebraic. However, in many situations, a symplectic surface is symplectically isotopic to a complex curve. This is known for smooth symplectic surfaces of degree at most $17$ and is conjectured for smooth symplectic surfaces in $\cptwo$ in general. However, for singular curves, there need not always be an equisingular symplectic isotopy from a symplectic configuration to a complex configuration. This is easiest to verify when there is a singular symplectic configuration whose singularity types cannot be realized by any complex curve configuration. Some examples of reducible configurations with this property come from surprising and ancient theorems in projective geometry. For example, the Pappus theorem shows that given a configuration of lines as in Figure~\ref{fig:Pappus}, the points $X,Y,Z$ are necessarily collinear. Therefore there is no complex line arrangement consisting of the lines in Figure~\ref{fig:Pappus} together with an extra line passing through $X$ and $Y$ but not $Z$. 

\begin{figure}
	\labellist
	\pinlabel $X$ at 143 88
	\pinlabel $Y$ at 230 78
	\pinlabel $Z$ at 258 89
	\endlabellist	
	\includegraphics[width=0.75\textwidth]{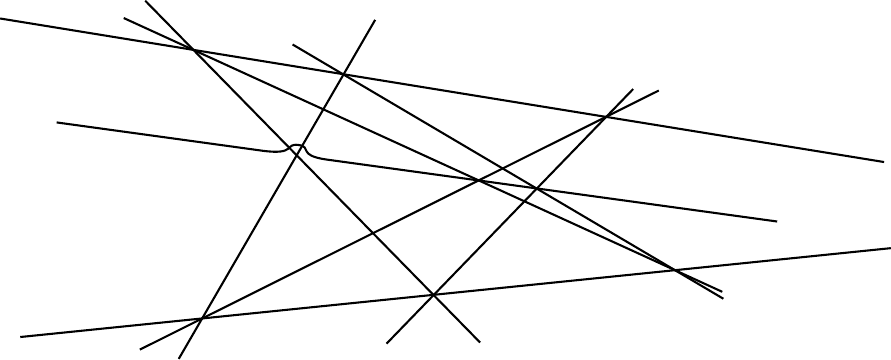}
	\caption{The fake Pappus configuration.}\label{fig:Pappus}
\end{figure}

Other examples of reducible symplectic configurations which are not realizable with complex algebraic curves come from pseudoline arrangements. A pseudoline arrangement is a collection of simple closed curves in $\rptwo$ each of which represents the homology class $[\R\P^1]$. It was proven in~\cite{RuSt} that any pseudoline arrangement in $\rptwo$ can be extended (after isotopy) to a symplectic line arrangement in $\cptwo$. Therefore any pseudoline arrangement which is not realizable by straight lines (over the complex numbers) gives another example of this phenomenon.

We have seen for rational cuspidal curves in the unicuspidal and low degree cases we looked at, every symplectic realization in $\cptwo$ is symplectically isotopic to a complex curve. This is in contrast to the differences appearing in the reducible configurations just mentioned. However, the differences between symplectic and complex singular curves are not restricted to reducible configurations. 

Using birational transformations, we provide here an example of an irreducible singular curve which is realizable symplectically but not complex algebraically. This example was given to us by Orevkov, and we are very grateful to him for explaining it to us. Although this curve is irreducible, its singularities are not cuspidal, they are locally reducible. It remains open as to whether there are any symplectic rational cuspidal curves in $\cptwo$ which are not symplectically isotopic to a complex rational cuspidal curve.

\begin{theorem}\label{t:orevkov}
There is an irreducible symplectic rational curve of degree $8$ in $\cptwo$ that is not equisingularly isotopic to a complex curve.
\end{theorem}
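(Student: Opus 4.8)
The plan is to use the birational transformation technology developed in Sections~\ref{ss:propertransform}--\ref{ss:birationalequiv}, together with the reducible configurations shown earlier to be non-realizable complex algebraically (the fake Pappus configuration of Figure~\ref{fig:Pappus}, or a non-stretchable pseudoline arrangement realized symplectically by~\cite{RuSt}), to manufacture the irreducible degree-$8$ curve. The idea is that a symplectic reducible configuration $\mathcal{C}$ whose incidence data cannot be realized by straight complex lines can be blown up and blown back down along a carefully chosen sequence so that the several components are \emph{fused} into a single irreducible rational curve $C$ of degree $8$, with prescribed (non-cuspidal, locally reducible) singularities. Concretely, I would start from a symplectic line arrangement carrying a forbidden incidence (so that by Pappus or by non-stretchability it admits no complex straight-line model), realize it symplectically via the pseudoline extension theorem of~\cite{RuSt}, and then apply a birational equivalence (Definition~\ref{ss:birationalequiv}) that carries this arrangement to an irreducible curve.

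First I would specify the target singularity data and check the numerics: the curve should be rational ($p_g = 0$) of degree $8$, so by the adjunction formula~\eqref{e:adjunction} its arithmetic genus is $p_a(C) = \frac12(8-1)(8-2) = 21$, and this genus must be accounted for by the $\delta$-invariants of the locally reducible singularities produced by the blow-down. Then I would exhibit an explicit sequence of symplectic blow-ups of the reducible configuration, followed by blow-downs whose exceptional locus is \emph{contained} in the total transform (so that we have a birational \emph{equivalence}, not merely a derivation, in the sense of Section~\ref{ss:birationalequiv}), identifying the proper transform with a single irreducible degree-$8$ symplectic curve. Because the configuration is symplectically realizable (by~\cite{RuSt} for pseudolines, or by a direct symplectic construction for the fake Pappus picture), Corollary~\ref{l:biratiso} guarantees that the resulting degree-$8$ curve is symplectically realizable.

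The crucial step is the \emph{obstruction} direction: I must show that no \emph{complex} curve realizes this singular type. Here I would again use that complex realizability is preserved under birational transformations (the last sentence of Proposition~\ref{l:biratderiveunique} and Corollary~\ref{l:biratiso}). If a complex irreducible degree-$8$ curve $C'$ had these singularities, then running the inverse birational equivalence would produce a complex realization of the original reducible configuration; but that configuration carries the forbidden incidence (collinearity forced by Pappus, or non-stretchability of the pseudoline arrangement), which no complex line arrangement can satisfy. This contradiction rules out a complex $C'$. The main obstacle I anticipate is \textbf{bookkeeping}: arranging the blow-up/blow-down sequence so that (i) the components genuinely fuse into an \emph{irreducible} curve rather than a reducible one, (ii) the resulting singularities are honestly locally reducible as claimed, and (iii) the exceptional spheres blown down are contained in the total transform so that the relation is a genuine birational equivalence and the obstruction transfers cleanly. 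Verifying irreducibility after fusion---that the normalization is a single $\cpone$ and that the forbidden incidence is faithfully encoded in the local singularity data of $C$---is the delicate point, and is presumably where Orevkov's specific construction does its work.
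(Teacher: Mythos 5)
Your overall strategy --- find a reducible configuration that exists symplectically but carries an incidence forbidden complex-algebraically, then transport it to an irreducible degree-$8$ curve by a birational transformation --- is indeed the skeleton of the paper's proof. But the central mechanism you propose is structurally impossible, and this is where the gap lies. You want to start from a line arrangement (fake Pappus, or a non-stretchable pseudoline arrangement) and ``fuse'' its components into a single irreducible curve via a birational equivalence. Blow-ups and blow-downs never merge distinct irreducible components: a blow-down is a diffeomorphism away from the exceptional sphere, so distinct components have distinct images, and the paper states explicitly in Section~\ref{ss:birationalequiv} that a birational equivalence \emph{preserves the number of components} (a derivation can only increase it, by promoting exceptional divisors to components). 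So no sequence of these moves turns an arrangement of lines into one irreducible octic. What the paper does instead is make the degree-$8$ curve the transform of a \emph{single} component: the forbidden incidence is Brianchon's theorem (the cevians joining the vertices of a triangle circumscribed about a conic to the opposite tangency points are concurrent), realized in a configuration $\mathcal{T}$ of one conic and six lines; under the birational equivalence of Lemma~\ref{l:CisT} the inscribed \emph{conic} becomes the octic $C$, while the tangency data of $\mathcal{T}$ is encoded as the tangent directions of the $(3,5)$-branches of the three locally reducible singularities $x(x^3+y^5)=0$ (each with $\delta = 7$, accounting for your correctly computed $p_a = 21$). A configuration with only transverse double points, like your line arrangements, has no tangency data to feed into this encoding, which is why the conic-with-tangent-lines incidence theorem is used rather than Pappus.

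There is a second, independent gap in your obstruction direction. You argue that a hypothetical complex $C'$ would, via the inverse birational equivalence, yield a complex realization of the forbidden arrangement; but the birational equivalence relates the \emph{augmented} configuration $\mathcal{D}$ (the octic together with three auxiliary lines and three auxiliary conics) to $\mathcal{T}$, not $C$ alone to anything --- again by component count, $C$ by itself cannot be birationally equivalent to a seven-component configuration. So from a bare complex $C'$ one must first construct the auxiliary curves \emph{complex-algebraically}: the paper verifies that the three lines through pairs of singular points exist, are distinct, and meet $C'$ only as prescribed (using B\'ezout-type degree counts), and that the conic through three points with two prescribed tangent directions exists, is unique (five linear conditions on six coefficients), and is nondegenerate. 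Only after this augmentation does the transfer of complex realizability (as in Proposition~\ref{l:biratderiveunique} and Corollary~\ref{l:biratiso}) apply and produce the contradiction with Brianchon. Your sketch treats this transfer as automatic; without the augmentation argument --- and without a construction in which the irreducible curve arises from one component rather than by fusion --- the proof does not go through.
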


More precisely, we will demonstrate a specific example of a symplectic surface $C$ of degree 8, with three \emph{reducible} singularities (not cuspidal), each isomorphic to a singularity of type $(3,5)$ plus a generic line.
That is, the singularity is defined by an equation locally modeled on $x(x^3+y^5) = 0$.
We then prove that no complex curve with the same singularities can exist.

\begin{figure}
	\centering
	\includegraphics[scale=.5]{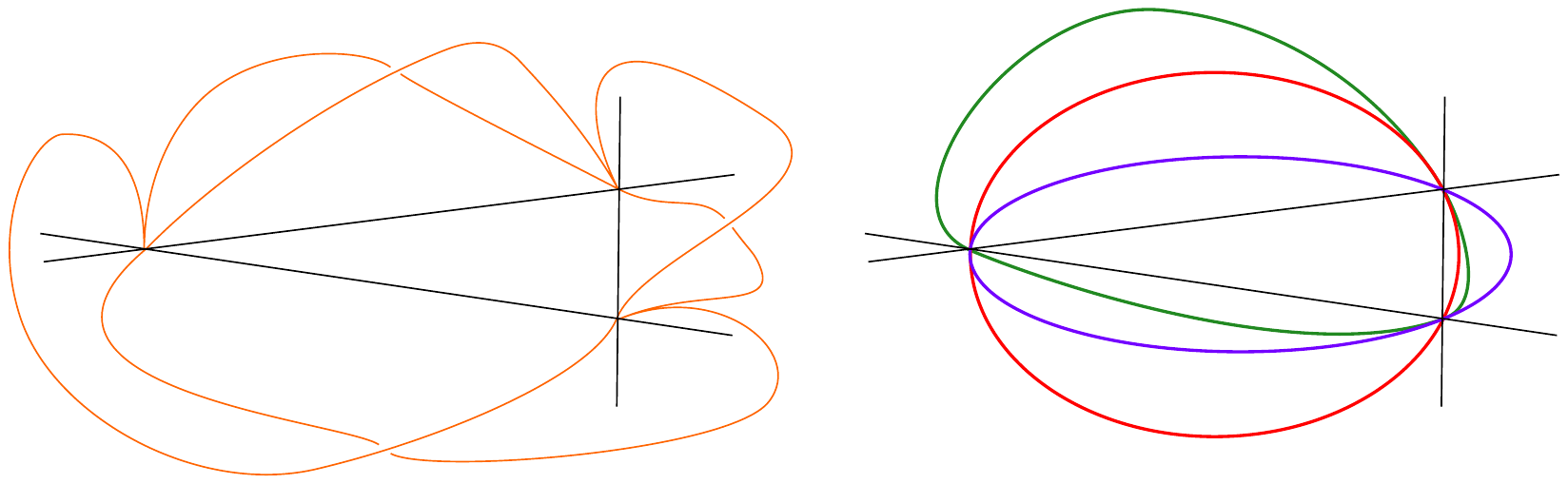}
	\caption{On the left is a degree-8 curve (orange) with three reducible singularities with one branch of type $(3,5)$ and the other smooth. A triangle of lines passes through these three singular points. On the right is the same triangle of lines with three conics. At each intersection point on the triangle, one pair of the three conics intersects tangentially and the third intersects these two transversally. Configuration $\mathcal{D}$ is built by overlaying the two sides of this configuration. The three lines on each side coincide and the tangent direction to the $(3,5)$-cusp part of each singularity of the orange curve agrees with the common tangent direction to two of the three conics at the intersection points.}
	\label{fig:configC}
\end{figure}

\begin{figure}
	\centering
	\includegraphics[scale=.3]{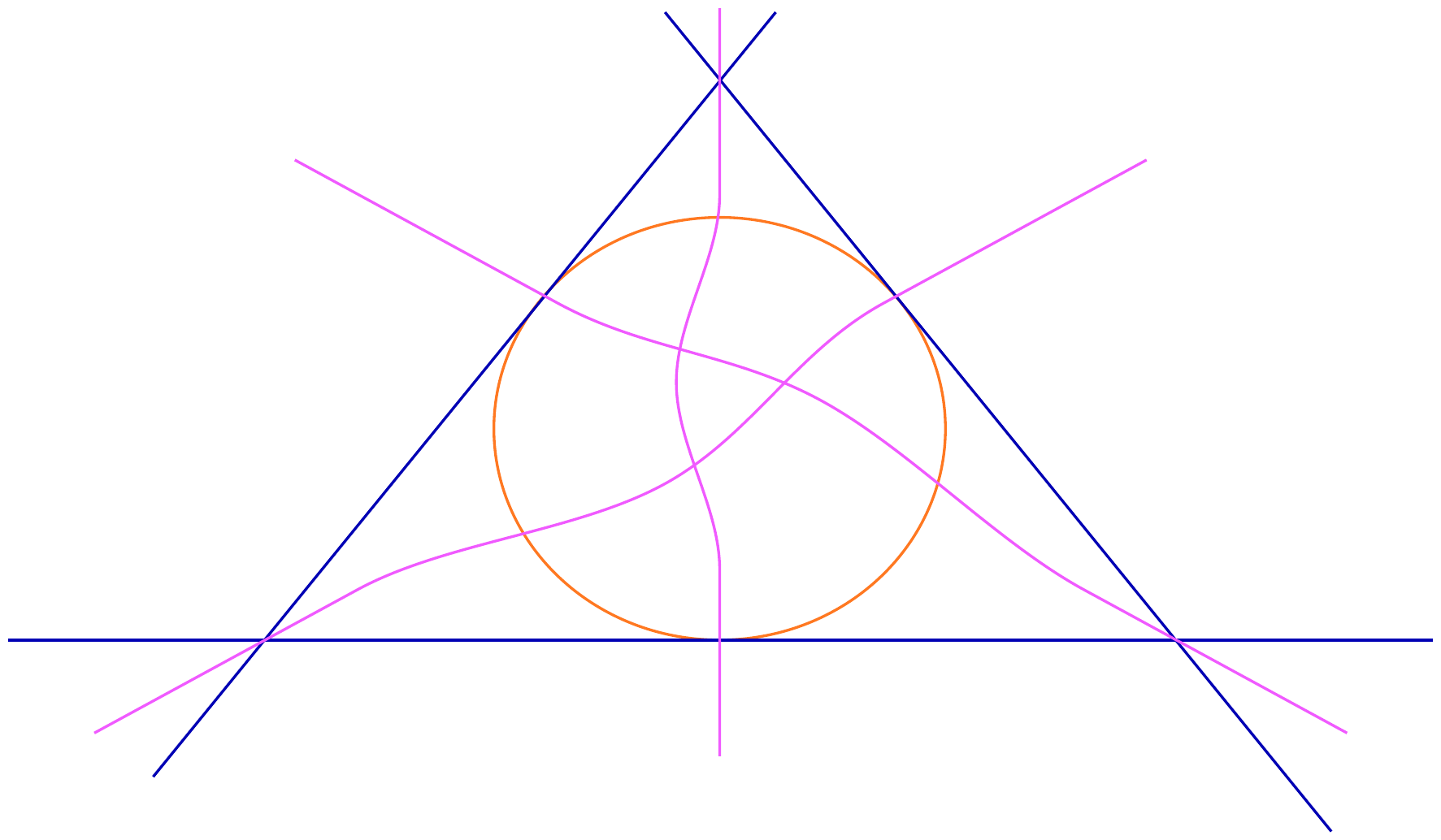}
	\caption{Configuration $\mathcal{T}$ consists of a conic with three tangent lines, together with three additional lines passing through the tangent intersections and the intersection of the other two lines, and intersecting each other in double points as shown. In a complex algebraic arrangement, these three additional lines would necessarily intersect each other all at the same point in a triple intersection.}
	\label{fig:configT}
\end{figure}

To understand the symplectic existence and complex obstruction to this rational singular curve, we relate it via a birational transformation to a reducible configuration that is easier to understand.

Define a configuration $\mathcal{D}$ by adding to the degree-$8$ rational singular curve $C$, three conics and three lines with the following intersection conditions. The three lines $\ell_1,\ell_2,\ell_3$ must pass through the three pairs of the singular points of $C$. The three conics $q_1,q_2,q_3$ must each pass through the three singular points of $C$, and each must be simply tangent to two of the cuspidal branches and transverse to the third (none should be tangent to the smooth branch of $C$ at the singularities or the lines $\ell_i$). The singular points of $C$ are the only singular points of $\mathcal{D}$ because there are no further intersections between $C,\ell_1,\ell_2,\ell_3,q_1,q_2,q_3$ for degree reasons. (See Figure~\ref{fig:configC}.)

Define another configuration $\mathcal{T}$, built from a triangle of three lines with an inscribed conic, and three \emph{non-concurrent} lines drawn from a vertex of the triangle to the opposite tangency point. (See Figure~\ref{fig:configT}.)

\begin{lemma}\label{l:CisT}
The two configurations $\mathcal{D}$ and $\mathcal{T}$ are birationally equivalent.
\end{lemma}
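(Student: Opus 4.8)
The plan is to realize the birational equivalence between $\mathcal{D}$ and $\mathcal{T}$ as an explicit sequence of symplectic blow-ups at the three singular points of $C$, followed by blow-downs of exceptional spheres lying in the total transform, exactly as in the examples of Section~\ref{ss:birationalequiv}. The key is that each reducible singularity of $C$ consists of a $(3,5)$-cusp branch together with a smooth (line) branch, and the conics $q_i$ are arranged to be tangent to precisely two of the three cuspidal branches. The guiding idea is that blowing up to resolve the $(3,5)$-cusps will simultaneously separate the conics and turn the lines $\ell_1,\ell_2,\ell_3$ into the tangent lines of a triangle inscribing one of the conics, with the curve $C$ itself descending (on the other side of the transformation) to the three non-concurrent cevian-type lines of $\mathcal{T}$.

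\textbf{Key steps in order.} First I would analyze the local model at a single singular point: the cuspidal branch of type $(3,5)$ has multiplicity sequence $[3,2]$ by the Example in Section~\ref{ss:resolution}, so resolving it to a normal crossing (or minimal) configuration requires a controlled number of blow-ups, and I would record at each stage how the proper transforms of $C$, of the two tangent conics, of the transverse conic, and of the two lines $\ell_i$ through that point meet the successive exceptional divisors. Second, performing these blow-ups at all three singular points simultaneously, I would use McDuff's Theorem~\ref{thm:mcduff} together with the homological lemmas of Section~\ref{s:caps} (Lemmas~\ref{l:hom},~\ref{l:consecutive},~\ref{l:2chain},~\ref{l:2chainfix}) to pin down the homology classes of every curve in the total transform; the identification of an appropriate $+1$-sphere with $\cpone$ makes these classes essentially forced. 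Third, I would locate inside this total transform the exceptional spheres (the $(-1)$-classes of the form $e_i$, realized disjointly via Lemma~\ref{l:blowdown}) whose blow-down produces $\mathcal{T}$, and check carefully that the exceptional locus of the blow-down map $\pi$ is \emph{contained} in the total transform $\widetilde{\mathcal{D}}$ — this is what upgrades a birational derivation to a genuine birational equivalence. Finally, I would verify that the intersection data of the image configuration matches $\mathcal{T}$: the three lines $\ell_i$ and one distinguished conic descend to a triangle with inscribed conic, while the three branches of $C$ descend to the three cevian lines, with the tangency conditions on the conics $q_i$ translating into the incidence pattern of $\mathcal{T}$.

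\textbf{Main obstacle.} The hard part will be bookkeeping the local geometry at each singularity so that the tangency conditions are tracked correctly through the blow-ups, and, most importantly, arranging the blow-up sequence so that the three conics $q_1,q_2,q_3$ are separated and the common tangent directions to pairs of conics are matched to the tangent directions of the $(3,5)$-branches of $C$ — this matching is precisely the content of the ``overlaying'' description of $\mathcal{D}$ in Figure~\ref{fig:configC}, and it is what guarantees that after blow-down two of the conics become genuinely tangent rather than merely incident. A secondary subtlety is confirming that every exceptional sphere we blow down genuinely lies in the total transform (not merely positively intersecting it), since this is exactly the condition distinguishing birational \emph{equivalence} from mere birational \emph{derivation}, and only the former lets us transfer the existence/uniqueness conclusions via Corollary~\ref{l:biratiso}. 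Once the homology classes are computed, the verification that the exceptional locus lies in $\widetilde{\mathcal{D}}$ should follow by inspecting which $e_i$-classes appear as components of the total transform, but getting the indexing and the figure to agree will be the most error-prone portion of the argument.
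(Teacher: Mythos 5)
Your strategy has a structural flaw that would prevent it from proving birational \emph{equivalence}, and your final component-matching is incorrect. The machinery you invoke --- McDuff's Theorem~\ref{thm:mcduff}, the homological lemmas of Section~\ref{s:caps}, and Lemma~\ref{l:blowdown} --- is precisely the machinery that produces birational \emph{derivations}: the exceptional spheres supplied by Lemma~\ref{l:blowdown} are auxiliary $J$-holomorphic spheres that merely intersect the configuration positively and are, by construction, \emph{not} components of the total transform $\widetilde{\mathcal{D}}$. So your closing hope that ``the verification that the exceptional locus lies in $\widetilde{\mathcal{D}}$ should follow by inspecting which $e_i$-classes appear as components'' is exactly where the argument breaks: for the spheres produced this way it generically fails, and you would be left transferring information in only one direction, which is not enough to invoke Corollary~\ref{l:biratiso}. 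Moreover, reaching a $+1$-sphere to apply McDuff's theorem would require resolving $C$ (with $[C]^2=64$) far past what is needed, making the homology bookkeeping both heavy and unnecessary.

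The paper's proof is elementary and avoids all of this: blow up \emph{once} at each of the three singular points (not to a full resolution --- one blow-up turns each $(3,5)$-branch into a simple cusp and separates the line branch), so that each exceptional divisor $e_i$ is tangent to the cusp of $\widetilde{C}$ and the proper transforms $\widetilde{\ell_i}$ become $(-1)$-spheres \emph{in the configuration}; blow these down to land on an intermediate configuration in $\cptwo$ consisting of a tricuspidal quartic (the image of $C$), three lines through pairs of cusps (the images of the $q_i$), and three cuspidal tangent lines meeting in three distinct points (the images of the $e_i$). Then blow up the three simple cusps, creating $f_1,f_2,f_3$, whereupon the proper transforms of the $q_i$ become $(-1)$-spheres in the configuration and are blown down, yielding $\mathcal{T}$. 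Because every blown-down sphere is a component of the total transform, the equivalence condition is automatic --- no homology classification is needed. Note also that the resulting correspondence contradicts the one you propose: $C$ itself descends to the inscribed conic (none of the $q_i$ does --- they are all blown down, as are the $\ell_i$, which therefore cannot ``descend to a triangle''), the second-stage exceptional divisors $f_i$ become the triangle of tangent lines, and the first-stage divisors $e_i$ become the three non-concurrent lines; the branches of the irreducible curve $C$ cannot individually descend to distinct lines, since its proper transform stays irreducible throughout.
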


\begin{proof}
Blow up at the singularities of $C$, creating three exceptional divisors $e_1,e_2,e_3$. The singularities of the proper transform $\widetilde{C}$ are three simple cusps, and for each cusp, there is one exceptional divisor $e_i$ intersecting $C$ tangentially at that cusp and transversally at one other smooth point of $\widetilde{C}$. Each $\ell_i$ has been blown up twice so the self-intersection number of each $\widetilde{\ell_i}$ is $-1$ so we can then blow down the proper transforms $\widetilde{\ell_1}, \widetilde{\ell_2}, \widetilde{\ell_3}$, returning to $\cptwo$ with a new configuration of curves. $C$ blows down to a quartic with three simple cusp singularities.  $q_1,q_2,q_3$ blow down to three lines, each passing through two of the simple cusps. The three exceptional divisors $e_1,e_2,e_3$ blow down to three tangent lines to the cusps of $C$, and they intersect in three distinct points (i.e. the contractions of $\ell_1,\ell_2,\ell_3$). See Figure~\ref{fig:configU}.

\begin{figure}
	\centering
	\includegraphics[scale=.5]{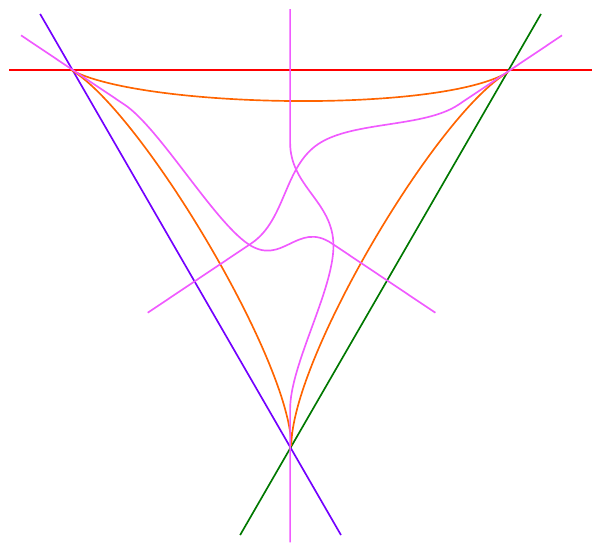}
	\caption{Intermediate configuration between configurations $\mathcal{D}$ and $\mathcal{T}$. The singular component is a degree $4$ curve with three simple cusps. There is one line through each pair of these three cusps, together with one line tangent to each cusp. The intersections of the three tangent lines are required to be three distinct double points instead of a coinciding triple point.}
	\label{fig:configU}
\end{figure}

Now blow up at the simple cusps of $C$, resolving the singularities and creating three tangent exceptional divisors $f_1,f_2,f_3$. The proper transforms of the lines $q_1,q_2,q_3$ become exceptional divisors which can then be blown down. The result is the configuration $\mathcal{T}$ where $C$ is sent to the smooth conic, $f_1,f_2,f_3$ become its tangent lines, and $e_1,e_2,e_3$ blow down to lines, each connecting a tangency of $C$ and $f_i$ with the intersection of $f_{i+1}$ and $f_{i-1}$ (here we consider the labels modulo 3).
\end{proof}

\begin{lemma}\label{l:Texists?}
The configuration $\mathcal{T}$ is symplectically realizable, but not complex realizable.
\end{lemma}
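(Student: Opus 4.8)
Looking at the configuration $\mathcal{T}$ --- a conic inscribed in a triangle of three tangent lines, plus three lines each joining a vertex of the triangle to the opposite tangency point, where these three lines are required to meet in three \emph{distinct} double points rather than a single triple point --- the statement has two halves that require genuinely different techniques.

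\begin{proof}[Proof of Lemma~\ref{l:Texists?}]
	\textbf{Symplectic realizability.}
	We build $\mathcal{T}$ up from the unique symplectic conic by adding lines one at a time, using Proposition~\ref{p:addline} and the configuration $\Hc$ of Proposition~\ref{p:H}.
	Start with a symplectic conic $Q$ and add three lines tangent to it; by repeated application of Proposition~\ref{p:addline} (and the fact that a single conic has a unique isotopy class by Gromov~\cite{Gr}), this gives the unique configuration $\mathcal H_0$ of a conic inscribed in a triangle of tangent lines, with vertices $A$, $B$, $C$.
	Now I want to add the three cevian lines: the line $\ell_1$ through $A$ and the tangency point of $Q$ with the opposite side $BC$, and likewise $\ell_2, \ell_3$.
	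In the analysis of $\Hc$ in Proposition~\ref{p:H}, we saw that if we instead required the pairwise intersections of the three cevians to lie on $Q$, there were exactly \emph{two} ways (up to isotopy) to add them, detected as the two fixed points of the composition $f^B_3\circ f^B_2\circ f^B_1$.
	The key observation here is that the configuration $\mathcal{T}$ imposes the \emph{opposite} combinatorial constraint: the three cevians are required to be pairwise non-concurrent and to meet the \emph{rest} of the configuration only at the prescribed points.
	Each cevian $\ell_i$ passes through one vertex (a triple point of $\mathcal H_0$) and one tangency point of $Q$, i.e.\ through exactly two special points of $\mathcal H_0$, so Proposition~\ref{p:addline}(\ref{i:trans}) applies and we may add them one at a time as the unique $J$-holomorphic lines through these two points, perturbing away any accidental incidences as in the proof of that proposition.
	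The only thing left to check is that a generic realization does \emph{not} accidentally make the three cevians concurrent: concurrency is a positive-codimension condition in the space of realizations, and since we are free to choose $\mathcal H_0$ (and hence the three tangency points and vertices) generically, we obtain a symplectic realization of $\mathcal T$ with three distinct double points. This proves symplectic existence.

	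\textbf{Complex obstruction.}
	Here the point is a classical projective-geometry fact: for a \emph{complex} (or real) conic inscribed in a triangle, the three cevians joining each vertex to the opposite point of tangency are necessarily concurrent --- this is exactly the Gergonne point of the triangle, and it follows from the converse of Ceva's theorem applied to the contact triangle.
	Thus in any complex algebraic realization the three cevians meet in a single triple point, so no complex curve can have the prescribed singularity data of $\mathcal{T}$, in which these three intersections are required to be three distinct double points.
	Concretely, one can argue via the dual/Brianchon picture or via Ceva: writing the tangency points in terms of the side lengths (or, projectively, the parameters cut out on the conic), the product of the three signed ratios along the sides equals $1$, forcing concurrency. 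Since $\mathcal T$ explicitly forbids this, $\mathcal T$ is not complex realizable.
\end{proof}

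The main obstacle is the complex non-realizability half: the symplectic existence is a routine assembly from the tools already developed (Propositions~\ref{p:addline} and~\ref{p:H}), but the obstruction rests on identifying the concurrency of the three cevians as a forced incidence over $\C$. I would want to state this cleanly as an instance of Ceva's theorem for the contact triangle of an inscribed conic, verify that it is genuinely a projective (not merely Euclidean) statement so that it applies to the complex conic, and confirm that the tangency conditions defining $\mathcal{T}$ indeed correspond exactly to the Gergonne configuration. This is the step where the symplectic and complex categories diverge, exactly parallel to the Pappus and pseudoline examples discussed at the start of Section~\ref{s:orevkov}.
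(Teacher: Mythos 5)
Your proof is correct in substance, and at its core it is the same argument as the paper's: the paper obstructs the complex realization by Brianchon's theorem applied to the degenerate hexagon $ABCDEF$ whose vertices alternate between the triangle's vertices and the tangency points (forcing the three cevians through a single point), and it obtains the symplectic realization by taking the concurrent complex configuration and perturbing it $C^1$-small near the triple point, using openness of the symplectic condition. Your Gergonne-point/Ceva framing of the complex half is the same fact in different clothing, and you correctly flag the one issue with it: the textbook Gergonne argument uses equal tangent lengths from a vertex, a Euclidean-metric fact that does not transfer to a complex conic. The clean projective statement valid over $\C$ is exactly Brianchon for the degenerate circumscribed hexagon, which you name as the fallback; so that half stands, with Brianchon (not Euclidean Ceva) as the actual justification.

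The one sentence in your symplectic half that does not hold up as written is the genericity claim: ``concurrency is a positive-codimension condition in the space of realizations, and since we are free to choose $\mathcal H_0$ generically\dots''. This cannot work as stated, because once $(\mathcal H_0, J)$ is fixed the three cevians are \emph{rigid} (the $J$-line through two points is unique), and for every integrable $J$ Brianchon forces concurrency for \emph{every} $\mathcal H_0$ — indeed all realizations of $\mathcal H_0$ lie in a single isotopy class, so no genericity of $\mathcal H_0$ alone can break the incidence; that is precisely why $\mathcal T$ is not complex realizable. The freedom that saves the construction is not in $\mathcal H_0$ but in abandoning $J$-holomorphicity of the third line: the local $C^2$-small splicing from the proof of Proposition~\ref{p:addline} replaces the unique $J$-line through the two required points by a nearby \emph{symplectic} line that retains the two prescribed incidences and misses $\ell_1\cap\ell_2$. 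Since you do invoke exactly that mechanism (``perturbing away any accidental incidences as in the proof of that proposition''), your construction goes through — but note that here the perturbation is essential rather than accidental, and it is literally the paper's proof: perturb the degenerate Brianchon configuration near the triple point. So the correct logical structure is: the concurrency is unavoidable in the $J$-holomorphic (a fortiori complex) category, and the symplectic realization exists only because one may leave that category by a small perturbation supported near the would-be triple point.
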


\begin{proof}
We start by proving that $\mathcal{T}$ is not complex realizable.
In fact, this follows from Brianchon's theorem;
this states that if an hexagon $ABCDEF$ is circumscribed to a conic, then $AD, BE$, and $CF$ are concurrent.
Indeed, if we call $ACE$ the vertices of the triangle in $T$, and $B,D,F$ the tangency points of the conics (so that $B$ lies on $AC$), then $ABCDEF$ is a degenerate hexagon circumscribed to a conic, and therefore $AD, BF$, and $CE$ are concurrent.

To prove the existence of a symplectic realization of $\mathcal{T}$, we just perturb the degenerate Brianchon configuration locally around the triple point of intersection of the secants. Since the symplectic condition is open, performing this operation in a $C^1$ small way preserves the fact that the curves are symplectic.
\end{proof}

\begin{proof}[Proof of Theorem~\ref{t:orevkov}]
We construct the curve $C$ starting from the configuration $\mathcal{T}$;
indeed, Lemma~\ref{l:Texists?} shows that $\mathcal{T}$ exists symplectically, and by Lemma~\ref{l:CisT}, $\mathcal{D}$ is birationally equivalent to it;
since $C$ is a component of $\mathcal{D}$, we have proved its existence as a symplectic curve.

Conversely, suppose that $C$ existed as a complex curve. We can augment any such realization of $C$ to a configuration $\mathcal{D}$ as follows. Let $p_1,p_2,p_3$ be the three singular points of $C$. There is a unique complex line through any pair of distinct points, so let $\ell_1,\ell_2,\ell_3$ be the unique lines through the three pairs of $p_1,p_2,p_3$. Each transverse intersection of $\ell_i$ with $C$ at $p_j$ contributes $+4$ to their intersection number. Therefore $p_1,p_2, p_3$ cannot be collinear since then $C$ would intersect a line with intersection number $12$ instead of $8$. Therefore the three lines $\ell_i$ are distinct. Furthermore, the intersections of $\ell_i$ with $C$ must be transverse, because a tangential intersection would add an additional positive intersection beyond the required $8$. Similarly, the $\ell_i$ cannot intersect $C$ at any other point besides the $p_j$. 

If we fix two distinct points $A,B\in \cptwo$ with complex lines in their tangent spaces $T_A$, $T_B$, and a third distinct point $C\in \cptwo$, there is a unique complex conic through $A$, $B$, and $C$ tangent to $T_A$ and $T_B$. (These five conditions give five linear constraints on the six projective coefficients of the six degree two monomials. The solution is unique because any two such conics would have intersection number $5$ instead of $4$.) In general, it is possible for this conic to be singular by degenerating into two lines. Let $T_1$, $T_2$ and $T_3$ be the complex tangent line directions to the simple cusps of $C$ at $p_1$, $p_2$, and $p_3$. Letting $A=p_i$, $T_A=T_i$, $B=p_{i+1}$, $T_B=T_{i+1}$, and $C=p_{i+2}$ with mod $3$ cyclic indices for $i=1,2,3$, we construct three conics as required in the configuration $\mathcal{D}$. To check that none of these conics is a degenerate pair of lines, we observe that the unique pair of lines through $AB$ and $AC$ is a pair of the $\ell_i$ which are not tangent to $T_A$ and thus cannot be the chosen conic.

Therefore if we had a complex realization of $C$, we could construct a complex realization of the configuration $\mathcal{D}$ and then use the birational transformation to construct a complex realization of the configuration $\mathcal{T}$, but this is impossible.
\end{proof}

% !TEX root = ../rationalcuspidal.tex

\appendix

\section{Symplectic rational ball fillings of lens spaces}\label{a:lens}

In this section, we prove two results on rational homology ball fillings of lens spaces. The first has recently appeared in independent work of Fossati~\cite[Theorem 4]{Fossati}.

\begin{prop}\label{p:QHBimpliesSTD}
	If a lens space $(L(p,q),\xi)$ has a weak symplectic rational homology ball filling, then there exist coprime integers $0<k<m$ such that $(p,q) = (m^2, mk-1)$, and $\xi$ is universally tight.
\end{prop}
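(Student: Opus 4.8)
The plan is to reduce to a strong filling, split the statement into its two conclusions (the arithmetic constraint on $(p,q)$ and the universal tightness of $\xi$), and extract both from the structure of the filling. Since $Y = L(p,q)$ is a rational homology sphere, $H^2(Y;\R) = 0$, so $\omega|_{Y}$ is exact and any weak filling can be deformed to a strong symplectic filling (see, e.g., \cite{OhtaOno}); we may thus assume $W$ is a strong symplectic rational homology ball filling of $(Y,\xi)$. Because every contact structure on a lens space is planar, by \cite{Wendl} we may further deform $W$ to a Stein filling carrying a Lefschetz fibration over $D^2$ whose planar page supports an open book for $\xi$; this puts at our disposal both the homological invariants $\chi(W)=1$, $\sigma(W)=0$, $b_2(W)=0$ of a rational homology ball and a positive factorization of the monodromy into Dehn twists.

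For the arithmetic constraint, the perfect-square part is purely homological. For any rational homology ball $W$ with $\partial W = Y$, Poincar\'e--Lefschetz duality and the universal coefficient theorem show that $G := \mathrm{im}\big(H_1(Y;\Z)\to H_1(W;\Z)\big)$ is a metabolizer for the linking form of $Y$, i.e. $|G|^2 = |H_1(Y)| = p$ and the linking form vanishes on $G$. For a cyclic group this forces $p = m^2$, with $G$ the unique order-$m$ subgroup. The linking form alone does not see $q$ (every order-$m$ subgroup of $\Z/m^2\Z$ is automatically isotropic), so to pin down $q$ one passes to the finer obstruction: the $\Spinc$ structures $\fs$ on $Y$ that extend over $W$ form a coset of $G$ of size $m$, and each satisfies $d(Y,\fs) = 0$. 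Feeding this vanishing, together with the count of exactly $m$ vanishing correction terms, into the Ozsv\'ath--Szab\'o recursion for the $d$-invariants of lens spaces cuts the candidates down as in Lisca's analysis \cite{Li}; crucially, the \emph{symplectic} hypothesis (via Lisca's classification of symplectic fillings of lens spaces \cite{Li}, or via the positive factorization produced above) refines the larger smooth list to exactly $(p,q) = (m^2, mk-1)$ with $0<k<m$ and $\gcd(m,k)=1$.

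For universal tightness, the goal is to rule out the virtually overtwisted contact structures on $L(m^2,mk-1)$. The constraint $\chi(W)=1$, $b_2(W)=0$ translates, through the Lefschetz fibration of \cite{Wendl}, into the requirement that the planar monodromy factor into positive Dehn twists in a minimal-length, homologically rigid way, so that no nonzero second homology is created. This minimal positive factorization is precisely the one realizing the quotient filling $B^4/\Gamma$, whose boundary contact structure is the universally tight $\xi$ obtained as the quotient of $(S^3,\xi_{\mathrm{st}})$; every virtually overtwisted structure, by Honda's continued-fraction classification, requires a strictly longer factorization and hence a filling with $b_2 > 0$, contradicting $b_2(W)=0$. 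Comparing with Lisca's explicit list of minimal symplectic fillings of universally tight lens spaces \cite{Li} confirms that a $b_2=0$ filling is incompatible with virtual overtwistedness, yielding that $\xi$ is universally tight (up to conjugation).

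The main obstacle is the last step: showing that a $b_2 = 0$ filling \emph{forces} the universally tight structure, equivalently that the positive factorization compatible with a rational homology ball is unique and matches the quotient construction. This is the conceptual crux, and it is also where the symplectic (as opposed to merely smooth) hypothesis does essential work, since smoothly the list of lens spaces bounding rational homology balls is strictly larger than $\{(m^2,mk-1)\}$. The number-theoretic extraction of $q = mk-1$ from the $d$-invariant data is technically involved but routine once the symplectic rigidity has reduced the problem to the minimal factorization.
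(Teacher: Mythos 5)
Your proposal has a genuine gap, and it sits exactly at the two places where the real work happens. First, the arithmetic refinement from ``$p=m^2$ and $d$-invariants vanish'' to ``$q=mk-1$ with $\gcd(m,k)=1$'' cannot be obtained the way you suggest. The smooth obstructions you invoke (metabolizer, vanishing correction terms, Lisca's lattice analysis in~\cite{Lisca-ribbon}) are insensitive to orientation of the filling, and indeed $L(m^2,mk+1)=-L(m^2,m(m-k)-1)$, so the $q\equiv+1\pmod m$ family bounds smooth rational balls just as the $q\equiv-1$ family does; likewise $\gcd(m,k)=2$ is explicitly allowed in Lisca's smooth list. So a genuinely contact-geometric input is needed precisely here, and the one you offer --- Lisca's classification of symplectic fillings~\cite{Li} --- is circular: that classification applies only to the \emph{standard, universally tight} contact structure on $L(p,q)$, which is the very conclusion still to be proved. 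Second, your universal-tightness argument asserts that every virtually overtwisted structure ``requires a strictly longer positive factorization and hence a filling with $b_2>0$.'' Honda's classification~\cite{Honda} classifies the contact structures, not their fillings, and no cited or proved lemma converts it into a lower bound on factorization lengths of arbitrary Stein fillings; this assertion is essentially the full content of the proposition restated, not a proof of it.

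For comparison, the paper closes both gaps with a single covering-space mechanism that your proposal never touches. After deforming $W$ to Stein via planarity and~\cite{Wendl} (your first step, which is fine), it passes to the universal cover $W'$ of $W$: this is a simply connected Stein filling of $(L'=L(m,q),\xi')$ with $H_2(W')=\Z^{m-1}$ and $c_1(J')=0$ (torsion pulled back to torsion-free cohomology). The degree formula for the cobordism map carrying the Ozsv\'ath--Szab\'o contact invariant then forces $d(-L',\ft_{\xi'})=\frac{m-1}{4}$, and a direct correction-term computation for $L(m,\pm1)$ (using $q\equiv\pm1\pmod m$ from~\cite[Lemma~4.5]{AG}) rules out $q\equiv+1\pmod m$ --- this is where orientation/contact data enters. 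The case $\gcd(m,k)=2$ is excluded by the same argument applied to an intermediate cover with boundary $L(2m,2m-1)$, whose unique tight structure by~\cite{Honda} has contact invariant in the wrong degree. Finally, universal tightness falls out for free: $L(m,m-1)$ carries a unique tight structure, which must be the universally tight one, so $\xi'$ and hence $\xi$ is universally tight. If you want to salvage your outline, you should replace both the appeal to~\cite{Li} and the factorization-length claim with an argument of this kind (or prove the factorization rigidity as a standalone lemma, which is a substantial theorem in its own right).
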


The family coincides with the family of cyclic quotient singularities with rational disc smoothing, which were classified by Wahl~\cite[Example~5.9.1]{Wahl} (see also \cite[Corollary~1.2]{Li}).
In the proof, we will use the Ozsv\'ath--Szab\'o contact invariant~\cite{OSz-contact} in Heegaard Floer homology~\cite{OSz-HF};
the relevant properties of the theory are the non-vanishing of the contact invariant for fillable contact structures~\cite[Theorem~1.5]{OSz-contact}, the fact that lens spaces are L-spaces (i.e. they have the smallest possible Heegaard Floer homology)~\cite[Proposition 3.1]{OSz-HFPA}, and the absolute grading on Heegaard Floer homology~\cite{OSz-correctionterms}.

\begin{proof}
	Call $L = L(p,q)$ and $W$ a weak symplectic rational homology ball filling of $(L,\xi)$.
	Since $\xi$ is fillable, it is tight~\cite{EliashbergGromov}.
	All tight contact structures on lens spaces are planar~\cite{Scho}, and it follows from Wendl's theorem that all weak symplectic fillings can be deformed to Stein~\cite{Wendl}.
	Let $J$ denote a complex structure such that $(W,J)$ is a Stein filling of $(L,\xi)$.
	
	In particular, $W$ admits a handle decompositions with no 3-handle. This implies that $\pi_1(W)$ is a quotient of $\pi_1(L)$, and hence it is cyclic.
	Since $L$ bounds a rational ball, $p = m^2$ for some positive integer $m$, and from the long exact sequence of the pair $(W,L)$ and the universal coefficient theorem, it follows that $H_1(W) = \Z/m\Z$ (this is classical; see e.g.~\cite[Proposition~2.2]{AG}).
	
	Let $W'$ be the universal cover of $W$.
	We know that $\chi(W') = m\chi(W) = m$.
	As $W'$ is simply connected, $H_1(W') = 0$. Since $W'$ has a decomposition without 3-handles, lifted from that of $W$, we deduce that $H_3(W') = 0$ and that $H_2(W')$ is torsion-free.
	Adding all pieces together, it follows that $H_2(W') = \Z^{m-1}$.
	The boundary $L'$ is the $m$-fold cover of $L$, and therefore $L' = L(m,q)$.
	Let $\xi'$ be the pull back of $\xi$ to $L$, filled by the pull back $J'$ of $J$.
	The first Chern class $c_1(J')$ of $J'$ is the pull-back under the covering map of $c_1(J)$.
	Since the latter is torsion and $H^2(W')$ is torsion-free, $c_1(J') = 0$.
	
	The Ozsv\'ath--Szab\'o contact invariant $c(\xi') \in \HFplus(-L', \mathfrak{t}_{\xi'})$ lives in degree $d(-L', \mathfrak{t}_{\xi'})$, computed as the degree of the map associated to the cobordism $X = W'\setminus B^4$.
	The degree of the map $F_{X,\mathfrak{s_{J'}}}$ carrying $c(\xi)$ to $c(\xi_{\rm st})$ is
	\[
	\deg F_{X,\fs_{J'}} = \frac{c_1^2(J') - 2\chi(X) - 3\sigma(X)}4 = \frac{m-1}4
	\]
	and hence $d(-L', \mathfrak{t}_{\xi'}) = \frac{m-1}4$.
	
	It follows from~\cite[Lemma~4.5]{AG} that $q \equiv \pm 1 \pmod m$. By explicitly computing correction terms of $L(m,\pm1)$, we see that if $q \equiv 1 \pmod m$ there is no \spinc structure on $L(m,q)$ with correction term $(m-1)/4$.
	
	Therefore we reduced to the case when $q \equiv -1 \pmod m$.
	That is, $q = mk-1$ for some $0<k \le m$.
	Since $L$ bounds a smooth rational homology ball, it follows from Lisca's classification of lens spaces that bound rational homology balls that $\gcd(m,k) = 1$ or $\gcd(m,k) = 2$~\cite{Lisca-ribbon} (see~\cite[Page 247]{Lecuona} for an amendment to the statement of~\cite[Theorem~1.2]{Lisca-ribbon} that includes the case $\gcd(m,k) = 2$).
	The former case corresponds to structures in Wahl's family, and we set out to exclude the second.
	
	In this case, $m = 2m''$ is even, and we can look at the $m''$-fold cover $(W'',J'')$ of $(W,J)$, with boundary $(L'',\xi'')$;
	observe that our assumption $\gcd(m,k) = 2$ implies that $L'' = L(2m,2m-1)$.
	The same argument as above shows that $d(-L'', \mathfrak{t}_{\xi''}) = \frac{m''-1}4 = \frac{m-2}8$.
	However, by classification of tight contact structures on lens spaces~\cite{Honda}, $L(2m,2m-1)$ admits a unique tight contact structure $\xi_0$, namely the boundary of the plumbing of $2m-1$ copies of $T^*S^2$, whose corresponding contact structure has $\deg c(\xi_0) = \frac{-2m+1}4$.
	
	To conclude the proof, we need to show that $\xi'$ is universally tight. As mentioned above, however, $\xi'$ is tight (because it is filled by $W'$), and $L' = L(m,m-1)$ admits a unique tight contact structure (by~\cite{Honda}).
	Since on each lens space there is always a universally tight contact structure (obtained by taking a quotient of $S^3$ with its standard contact structure), $\xi'$ is universally tight.
\end{proof}

Alternatively, in the last part of the proof, one can argue that the unique contact structure on $L(2m,2m-1)$ is also universally tight, and then so is $\xi$. Then Lisca's classification of fillings~\cite[Corollary~1.2]{Li} rules this possibility out.

The second result concerns the uniqueness of the rational homology ball filling.
It is very likely that the result is known to experts (either in the context of Milnor fibers of cyclic quotient singularities, or in the context of symplectic fillings of contact structures), but we were unable to locate the precise statement.

\begin{prop}\label{p:uniqueQHBfilling}
	The standard contact structure on the lens space $L(m^2,mk-1)$ has a unique rational homology ball symplectic filling up to symplectic deformation.
\end{prop}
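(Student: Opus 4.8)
The plan is to reduce the uniqueness of the rational homology ball filling of $(L(m^2,mk-1),\xi_{\rm st})$ to a statement about a closed symplectic manifold, where we can invoke the classification results already developed in the body of the paper. The key observation is that the lens space $L(m^2,mk-1)$ is exactly the cuspidal contact manifold $(Y_C,\xi_C)$ associated to a rational unicuspidal curve $C$ of degree $m$ with a singularity of type $(m,m+1)$ (the first family of Section~\ref{ss:res}): indeed, such a curve has self-intersection $m^2$ and its link is the torus knot $T(m,m+1)$, whose $m^2$-surgery yields precisely this lens space by Moser~\cite{Moser}. Given any rational homology ball symplectic filling $W$ of $(L(m^2,mk-1),\xi_{\rm st})$, we glue it to the neighborhood cap $(N,\omega_N)$ of $C$ along their common contact boundary to form a closed symplectic $4$-manifold $X$ containing the cuspidal curve $C$. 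Since $W$ is a rational homology ball, $X$ is a rational homology $\cptwo$, so by Theorem~\ref{thm:rational} we conclude $X$ is symplectomorphic to $\cptwo$ and $C$ is a symplectic rational cuspidal curve embedded in $\cptwo$.

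Once we are in $\cptwo$, the uniqueness of the filling follows from the uniqueness of the symplectic isotopy class of the embedded curve. First I would invoke the isotopy classification for the $(p,p+1)$-family established in Section~\ref{ss:isofills}: the theorem there shows that the only minimal symplectic embedding of a rational unicuspidal curve with a $(p,p+1)$-singularity and normal Euler number $p^2$ into a closed symplectic manifold is the unique embedding into $\cptwo$, and that this embedding is unique up to symplectic isotopy (relying on Lemma~\ref{l:homAp} for the homological classification and on the uniqueness of a configuration of two lines via Proposition~\ref{p:addline}). By Gromov's result that $\mathrm{Symp}(\cptwo,\omega_{\rm FS})$ is connected, uniqueness up to symplectic isotopy of the pair $(\cptwo,C)$ gives uniqueness up to symplectomorphism of the pair. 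The complement of a concave neighborhood of $C$ in $\cptwo$ is then the rational homology ball filling, and since the embedded curve is unique up to symplectomorphism of the pair, so is its complementary filling, up to symplectic deformation equivalence.

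The main obstacle I anticipate is matching conventions to ensure that the contact structure $\xi_{\rm st}$ on $L(m^2,mk-1)$ genuinely coincides with the cuspidal contact structure $\xi_C$ for the appropriate choice of $k$, rather than just being an abstract lens space with some filling. This is where Proposition~\ref{p:QHBimpliesSTD} does the crucial preparatory work: it guarantees that any lens space admitting a rational homology ball filling is of the form $L(m^2,mk-1)$ with $\xi$ universally tight, and a universally tight contact structure on such a lens space is unique. Since both the canonical cuspidal contact structure and the filling's induced contact structure are tight (indeed fillable, hence tight by~\cite{EliashbergGromov}), and $L(m^2,mk-1)$ carries a unique universally tight structure by~\cite{Honda}, we may identify $\xi_{\rm st}$ with $\xi_C$ (perhaps after passing to the conjugate, which is accounted for by the symmetry $\xi_C \leftrightarrow \overline{\xi_C}$). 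The remaining care needed is to verify that the gluing producing $X$ is genuinely symplectic, i.e. that the concave boundary of $N$ and the convex boundary of $W$ can be glued compatibly; this is precisely the setting where we use that $W$ may be deformed to a Stein (hence strong) filling via planarity of tight contact structures on lens spaces~\cite{Scho} and Wendl's theorem~\cite{Wendl}, so that the strong concave/convex matching of Liouville forms is available and the resulting $\omega$ on $X$ is a bona fide symplectic form.
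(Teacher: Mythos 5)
Your proof has a fatal gap at the very first step: the identification of $L(m^2,mk-1)$ with the cuspidal contact boundary of the $(p,p+1)$ family is false. A rational unicuspidal curve whose singularity has link $T(m,m+1)$ has degree $m+1$, not $m$ (adjunction forces $(m-1)m=(d-1)(d-2)$, so your ``degree $m$ with an $(m,m+1)$-cusp'' does not exist), and even for the correct degree the boundary is $-S^3_{(m+1)^2}(T(m,m+1))$, where the coefficient $(m+1)^2=m(m+1)+m+1$ differs from $pq=m(m+1)$ by $m+1\neq\pm 1$; by Moser~\cite{Moser} this is a Seifert fibered space with three exceptional fibers --- the boundary of the three-legged plumbing $\Ac_m$ of Figure~\ref{f:Ap} --- and not a lens space for any $m\ge 2$. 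The only unicuspidal families in the paper whose cuspidal boundaries are lens spaces $L(m^2,mk-1)$ are the Fibonacci families, precisely because $F_{j-2}F_{j+2}=F_j^2+1$ makes the surgery coefficient equal to $pq-1$. Moreover, the proposition concerns \emph{all} coprime pairs $0<k<m$ in Wahl's family, and for a general such pair there is no rational cuspidal curve realizing $L(m^2,mk-1)$ at all, so there is no concave cap to glue your filling to: the strategy cannot cover the statement even in principle.

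Even on the Fibonacci subfamily, where the identification does hold, your argument is circular: in Section~\ref{ss:fibonaccis} and in the classification theorem of Section~\ref{ss:isofills}, the uniqueness of the symplectic isotopy class of the Fibonacci curves in $\cptwo$ is \emph{deduced from} Propositions~\ref{p:QHBimpliesSTD} and~\ref{p:uniqueQHBfilling} --- those curves admit no $+1$-sphere cap and no homological classification in the style of Lemma~\ref{l:homAp}, which is exactly why the appendix exists. The paper's actual proof is independent of any curve geometry: it is a continued-fraction argument inside Lisca's classification~\cite{Li}. Writing $m^2/(mk-1)=[n_1,\dots,n_\ell]^-$, fillings up to deformation correspond to strings $\mathbf{m}$ with $1\le m_i\le n_i$ and $[\mathbf{m}]^-=0$; the rational homology ball condition forces $m_i=n_i$ at all but one index $j$, where $m_j=n_j-1$ and necessarily $n_j=2$; and the strict monotonicity in $j$ of $\sum_{h<j}(n_h-1)-\sum_{h>j}(n_h-1)$, together with the duality criterion for $[n_1,\dots,n_{j-1},1,n_{j+1},\dots,n_\ell]^-=0$, shows that at most one such index exists. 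Your structural idea --- glue the filling to a cap, invoke Theorem~\ref{thm:rational}, then classify curves --- is indeed the engine of Sections~\ref{s:unicusp} and~\ref{s:lowdegrees}, but there the logical flow runs from filling classifications to curve classifications, never the reverse.
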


The proof is, in fact, somewhat implicit in Lisca's classification paper~\cite{Li}.
We refer to the paper for notation and context. We start by recalling the following two facts about continued fractions. Both facts follow from Riemenschneier's point rule~\cite{Riemenschneier} (see~\cite[Lemma 2.6]{Li}).

If we have two continued fraction expansions $[\mathbf{a}]^-$ and $[\mathbf{b}]^-$ such that \[
1/[a_1,\dots,a_m]^- + 1/[b_1,\dots,b_\ell]^- = 1
\]
(or, equivalently, the two associated fractions are of the form $p/q$ and $p/(p-q)$ for some $q<p$ coprime positive integers), we say that $[\mathbf{a}]^-$ and $[\mathbf{b}]^-$ are \emph{dual} to each other.

If $[\mathbf{a}]^-$ and $[\mathbf{b}]^-$ are dual to each other, then $\sum (a_i-1) = \sum (b_j-1)$. We also have that
\[
[n_1,\dots,n_{j-1},1,n_{j+1},\dots,n_\ell]^- = 0
\]
if and only if $[n_{j-1},\dots,n_2,n_1]^-$ and $[n_{j+1},\dots,n_\ell]$ are dual to each other.

\begin{proof}
The (negative) continued fraction expansion of $\frac{m^2}{mk-1}$ is of the form $[n_1,\dots,n_{j-1},2,n_{j+1},\dots,n_\ell]^-$, such that $[n_1,\dots,n_{j-1},1,n_{j+1},\dots,n_\ell]^- = 0$ (see the proof of~\cite[Theorem~1.2(c)]{Li}).

Recall from the proof of~\cite[Theorem~1.1]{Li} that symplectic deformation classes of symplectic fillings of $L(p,q)$ with $\frac{p}{q} = [n_1,\dots,n_\ell]^-$ correspond to strings $\mathbf{m} = (m_1,\dots,m_\ell)$ such that $1 \le m_i \le n_i$ for each $i$ and $[m_1,\dots,m_\ell]^- = 0$.

Moreover, the Euler characteristics of the filling corresponding to $\mathbf{m}$ is $\sum (n_i-m_i)$, so that the filling is a rational homology ball if and only if $m_i = n_i$ for all indices except for one, for which $m_i = n_i-1$.
We claim that for such any $p,q$ there is at most one index $j$ such that the corresponding sequence $\mathbf{m} = (n_1,\dots,n_{j-1}, n_j-1, n_{j+1}, n_\ell)$ has $[\mathbf{m}]^- = 0$, and in that case $n_j = 2$.

If $n_j > 2$, then $[\mathbf{m}]^-$ is the continued fraction expansion of a rational number $p'/q' > 1$.
Suppose now $n_j = 2$. If $[\mathbf{m}]^- = 0$, then $[n_{j-1},\dots,n_{1}]^-$ and $[n_{j+1},\dots,n_{\ell}]^-$ are dual to each other; however, as $j$ varies between $1$ and $\ell$, the difference
\[
\sum_{h < j} (n_h-1) - \sum_{h > j} (n_h-1)
\]
is strictly decreasing, hence there is at most one value of $j$ such that it vanishes.
As we recalled above, the difference must vanish if $[n_{j-1},\dots,n_{1}]^-$ and $[n_{j+1},\dots,n_{\ell}]^-$ are dual to each other.

The proposition now follows.
\end{proof}

\bibliography{cuspidal}
\bibliographystyle{amsalpha}

%\printbibliography

\end{document}